\definecolor{darkgreen}{rgb}{0,0.45,0} 
\renewcommand{\phi}{\varphi}
\newcommand{\bA}{\ensuremath{\mathbf{A}}\xspace}
\newcommand{\bB}{\ensuremath{\mathbf{B}}\xspace}
\newcommand{\bL}{\ensuremath{\mathbf{L}}\xspace}
\newcommand{\bLL}{\ensuremath{\mathbf{L'}}\xspace}
\newcommand{\beff}{\ensuremath{\mathbf{f}}\xspace}
\newcommand{\sA}{\ensuremath{\mathscr{A}}\xspace}
\newcommand{\sAt}{\ensuremath{\mathscr{A}_\tight}\xspace}
\newcommand{\sAl}{\ensuremath{\mathscr{A}_\loose}\xspace}
\newcommand{\sB}{\ensuremath{\mathscr{B}}\xspace}
\newcommand{\sC}{\ensuremath{\mathscr{C}}\xspace}
\newcommand{\sD}{\ensuremath{\mathscr{D}}\xspace}
\newcommand{\sDt}{\ensuremath{\mathscr{D}_\tight}\xspace}
\newcommand{\sDl}{\ensuremath{\mathscr{D}_\loose}\xspace}
\newcommand{\sF}{\ensuremath{\mathscr{F}}\xspace}
\newcommand{\sFF}{\ensuremath{\mathscr{F}'}\xspace}
\newcommand{\sK}{\ensuremath{\mathscr{K}}\xspace}
\newcommand{\sKt}{\ensuremath{\mathscr{K}_\tight}\xspace}
\newcommand{\sKl}{\ensuremath{\mathscr{K}_\loose}\xspace}
\newcommand{\sV}{\ensuremath{\mathscr{V}}\xspace}
\newcommand{\sX}{\ensuremath{\mathscr{X}}\xspace}
\newcommand{\sY}{\ensuremath{\mathscr{Y}}\xspace}
\newcommand{\cB}{\ensuremath{\mathcal{B}}\xspace}
\newcommand{\Set}{\ensuremath{\mathbf{Set}}\xspace}
\newcommand{\Cat}{\ensuremath{\mathbf{Cat}}\xspace}
\newcommand{\Catg}{\ensuremath{\mathbf{Cat}_g}\xspace}
\newcommand{\Prof}{\ensuremath{\mathbf{Prof}}\xspace}
\newcommand{\bbA}{\ensuremath{\mathbb{A}}\xspace}
\newcommand{\bbB}{\ensuremath{\mathbb{B}}\xspace}
\newcommand{\bbC}{\ensuremath{\mathbb{C}}\xspace}
\newcommand{\bbD}{\ensuremath{\mathbb{D}}\xspace}
\newcommand{\bbE}{\ensuremath{\mathbb{E}}\xspace}
\newcommand{\bbF}{\ensuremath{\mathbb{F}}\xspace}
\newcommand{\bbK}{\ensuremath{\mathbb{K}}\xspace}
\newcommand{\bbX}{\ensuremath{\mathbb{X}}\xspace}
\newcommand{\fbar}{\ensuremath{\overline{f}}\xspace}
\newcommand{\gbar}{\ensuremath{\overline{g}}\xspace}
\newcommand{\wbar}{{\bar{w}}}
\newcommand{\ybar}{\ensuremath{\overline{y}}\xspace}
\renewcommand{\hbar}{\ensuremath{\overline{h}}\xspace}
\newcommand{\kbar}{\ensuremath{\overline{k}}\xspace}
\newcommand{\ph}{\ensuremath{\varphi}}
\newcommand{\phbar}{\ensuremath{\overline{\varphi}}\xspace}
\newcommand{\Psitil}{\ensuremath{\widetilde{\Psi}}}
\DeclareMathOperator\Lan{Lan}
\DeclareMathOperator\Ran{Ran}
\let\iso\cong
\DeclareMathOperator\ob{ob}
\newcommand{\op}{^{\mathrm{op}}}
\newcommand{\co}{^{\mathrm{co}}}
\newcommand{\twocat}{\ensuremath{\mathbf{2Cat}}\xspace}
\newcommand{\Alg}{\text{-}\mathrm{Alg}}
\newcommand{\Coalg}{\text{-}\mathrm{Coalg}}
\newcommand{\bbAlg}{\text{-}\mathbb{A}\mathrm{lg}}
\newcommand{\bbCoalg}{\text{-}\mathbb{C}\mathrm{oalg}}
\newcommand{\bbProf}{\ensuremath{\mathbb{P}\mathrm{rof}}\xspace}
\newcommand{\Ps}{\mathrm{Ps}}
\newcommand{\Oplax}{\mathrm{Oplax}}
\newcommand{\Nat}{\mathrm{Nat}}
\newcommand{\bbNat}{\mathbb{N}\mathrm{at}}
\newcommand{\bbLax}{\mathbb{L}\mathrm{ax}}
\newcommand{\bbPs}{\mathbb{P}\mathrm{s}}
\newcommand{\bbOplax}{\mathbb{O}\mathrm{plax}}
\newcommand{\maps}{\colon}
\newcommand{\lto}{\rightsquigarrow}
\renewcommand{\op}{^{\mathrm{op}}}
\newcommand{\tight}{\tau}
\newcommand{\loose}{\lambda}
\DeclareSymbolFont{bbold}{U}{bbold}{m}{n}
\DeclareSymbolFontAlphabet{\mathbbb}{bbold}
\newcommand{\bbDelta}{\ensuremath{\mathbbb{\Delta}}}
\newcommand{\bbone}{\ensuremath{\mathbbb{1}}\xspace}
\newcommand{\bbtwo}{\ensuremath{\mathbbb{2}}\xspace}
\newcommand{\bbthree}{\ensuremath{\mathbbb{3}}\xspace}
\newcommand{\dom}{\textnormal{dom}}
\newcommand{\cod}{\textnormal{cod}}
\newcommand{\Mnd}{\mathrm{Mnd}}
\newcommand{\QF}{\ensuremath{\mathcal{Q}}\xspace}
\newcommand{\RF}{\ensuremath{\mathcal{R}}\xspace}
\newcommand{\pto}{\relbar\joinrel\mapstochar\joinrel\rightarrow}
\newcommand{\ls}[1]{\Lambda(#1)}
\newcommand{\llim}[1]{\ensuremath{\overline{\{#1\}}}}
\def\defthm#1#2{%
  \newtheorem{#1}{#2}[section]%
  \expandafter\def\csname #1autorefname\endcsname{#2}%
  \expandafter\let\csname c@#1\endcsname\c@thm}
\newtheorem{thm}{Theorem}[section]
\theoremstyle{definition}
\title{Enhanced 2-categories and limits for lax morphisms}
\author{Stephen Lack \\
Mathematics Department \\
Macquarie University NSW 2109 \\
Australia \\
{\tt steve.lack@mq.edu.au}
\and
Michael Shulman \\
Department of Mathematics \\
University of California, San Diego \\
9500 Gilman Dr. \#0112 \\
San Diego, CA 92093-0112 \\
U.S.A.\\
{\tt mshulman@ucsd.edu}}
\date{}
\begin{document}
\maketitle

\begin{abstract}
 We study limits in 2-categories whose objects are categories with extra structure and whose morphisms are functors preserving the structure only up to a coherent comparison map, which may or may not be required to be invertible. This is done using the framework of 2-monads. In order to characterize the limits which exist in this context, we need to consider also the functors which do strictly preserve the extra structure. We show how such a 2-category of weak morphisms which is ``enhanced'', by specifying which of these weak morphisms are actually strict, can be thought of as category enriched over a particular base cartesian closed category \sF. We give a complete characterization, in terms of \sF-enriched category theory, of the limits which exist in such 2-categories of categories with extra structure.
\end{abstract}
\enlargethispage{\baselineskip}
\enlargethispage{\baselineskip}
\paragraph*{Keywords} 2-category, 2-monad, category with structure,
weak morphism, lax morphism, limit, enriched category
\enlargethispage{\baselineskip}

\section{Introduction}
\label{sec:introduction}

Just as sets with algebraic structure are often conveniently described
as the algebras for a monad, \emph{categories} with algebraic
structure are often conveniently described as the algebras for a
\emph{2-monad} (see~\cite{bkp:2dmonads}).  By a 2-monad we mean a
\emph{strict} 2-monad, i.e.\ a \Cat-enriched monad, and likewise its
algebras satisfy their laws strictly.  Experience shows that even when
the ``algebraic structure'' borne by a category or family of categories
satisfies laws only up to
specified isomorphisms, it is always possible, and usually more
convenient, to describe it using strict algebras for a strict 2-monad.

Thus, for example, there are 2-monads on the 2-category \Cat whose algebras are monoidal
categories, strict monoidal categories, symmetric monoidal categories,
categories with finite products, categories with finite products and finite coproducts connected by a distributive law, categories with finite limits,
categories with countable limits, and so on. A structure such as that of 
cartesian closed category is more delicate, since the internal hom is 
contra\-variant in the first variable; to deal with it, one can work 
not over \Cat itself, but over the 2-category \Catg of categories, 
functors, and natural isomorphisms. There is a 2-monad on \Catg whose
algebras are cartesian closed categories; similarly there are 
2-monads on \Catg for monoidal closed categories, symmetric monoidal 
closed categories, and elementary toposes.

Moreover, weak algebras can often be reduced directly to strict
algebras.  In good situations, such as when the base 2-category \sK is
locally presentable and the 2-monad $T$ has a rank, there is another
2-monad $T'$ whose strict algebras are the weak $T$-algebras.  (This
follows from the general theory of ``weak morphism classifiers'',
which we will recall in \S\ref{sec:weak-morph-class}, using an
auxiliary 2-monad whose algebras are 2-monads.)

However, even though we can usually consider only strict algebras for
strict 2-monads, no such simplification is possible for \emph{morphisms}
between such algebras; the strict morphisms are generally too strict
and we must consider weaker notions.  Thus, for any 2-monad $T$ on a
2-category \sK, in addition to the 2-category $T\Alg_s$ of
$T$-algebras and strict $T$-morphisms (this is the \Cat-enriched
Eilenberg-Moore category), we have the 2-categories $T\Alg$,
$T\Alg_l$, and $T\Alg_c$, whose objects are (strict) $T$-algebras and
whose morphisms are \emph{pseudo}, \emph{lax}, and \emph{colax}
$T$-morphisms, respectively.  Pseudo $T$-morphisms are defined to
preserve $T$-algebra structure up to a (suitably coherent) isomorphism, while lax and
oplax ones preserve it only up to a transformation in one direction or
the other.  Lax monoidal functors, for instance, are ubiquitous in
mathematics, pseudo ones are also common, and strict ones are quite
rare.  The properties of the 2-categories $T\Alg$, $T\Alg_l$, and
$T\Alg_c$ are therefore of interest; our present concern is with the
limits that they admit, in the 2-categorical sense
(cf.~\cite{kelly:2limits}).  (Of course, $T\Alg_s$ admits all limits
that \sK\ does, by general enriched category theory.)

In the case of $T\Alg$, this question was answered
in~\cite{bkp:2dmonads}.  For any 2-monad $T$ on a complete 2-category
\sK, the 2-category $T\Alg$ admits PIE-limits; that is, all limits
constructible from products, inserters, and equifiers
(see~\cite{pr:pie-limits}).  In particular, this includes all lax
limits and pseudo limits, and therefore all bilimits; thus from the
``fully weak'' point of view of bicategories, $T\Alg$ has all the
limits one might ask for.  Moreover, the PIE-limits in $T\Alg$ also
satisfy an additional strictness property: for each of 
products, inserters, and equifiers, there is a specified set of limit
projections each of which is a strict $T$-morphism, and which jointly
``detect strictness'' of $T$-morphisms. 

For
$T\Alg_l$ and $T\Alg_c$ the question is more difficult, and the existing answers
less complete.  It was shown in~\cite{lack:lim-lax} that $T\Alg_l$
admits the following limits whenever \sK does:
\begin{itemize}
\item All \emph{oplax} limits.
\item All limits of diagrams consisting of strict $T$-morphisms (that
  is, the inclusion functor $T\Alg_s \to T\Alg_l$ preserves limits).
\item Equifiers of pairs of 2-cells $\alpha,\beta\maps
  g\rightrightarrows f$ where $g$ is a strict $T$-morphism.
\item Inserters of pairs of morphisms $g,f\maps A\rightrightarrows B$
  where $g$ is a strict $T$-morphism.
\item Comma objects $(g/f)$ where $g$ is a strict $T$-morphism.
\end{itemize}
Once again, each of these limits has the property that some or
all of the limit projections are strict $T$-morphisms, and that they
jointly ``detect strictness.''  

The main result of this paper is a complete characterization of those
limits which lift to $T\Alg_l$ for any 2-monad $T$.
%(There is a small caveat to this; see below.)
As is evident
from the examples above, such a characterization must involve, not
only $T\Alg_l$ itself, but its relationship to $T\Alg_s$.  The obvious
relationship is the existence of the inclusion functor $T\Alg_s \to
T\Alg_l$, which is the identity on objects, faithful (on 1-morphisms), and locally fully faithful.
A fundamental observation is that the following notions are essentially
equivalent:
\begin{enumerate}
\item A 2-functor which is the identity on objects, faithful, and
  locally fully faithful.
\item A category enriched over the cartesian closed category \sF\
  whose objects are functors that are fully faithful and injective on objects.
  We sometimes call such functors \emph{full embeddings}.
\end{enumerate}
Therefore, rather than viewing $T\Alg_s$ and $T\Alg_l$ as two
2-categories related by a functor, we can combine them together into a
\emph{single} structure $T\bbAlg_l$, which happens to be a category
enriched over \sF, i.e.\ an \textbf{\sF-category}.  Intuitively, an
\sF-category has objects, two types of morphism of which one is a
special ``stricter'' case of the other, and 2-cells between these
morphisms.

In working with \sF-categories, we of course need words for the two
types of morphism.  In the \sF-category $T\bbAlg_l$ they are called ``strict'' and
``lax,'' but there are also other important \sF-categories one might
consider, such as $T\Alg_s\to T\Alg$ (where they are ``strict'' and
``pseudo'') and $T\Alg\to T\Alg_l$ (where they are ``pseudo'' and
``lax'').  Thus, in order to avoid favoring one of these cases in our
terminology, we introduce new words for the two types of morphism in
a general \sF-category: we call them \textbf{tight} and \textbf{loose}.

\begin{rmk}
  Since a full embedding is injective on objects, a loose morphism can
  ``be tight'' in at most one way.  More generally, we could consider
  \sFF-categories, where \sFF is the cartesian closed category whose
  objects are fully faithful functors.  \sFF-categories correspond to
  2-functors which are merely the identity on objects and locally
  fully faithful.  In an \sFF-category, it may be possible to make a
  given loose morphism into a tight morphism in more than one way
  (although all such ``tightenings'' will be isomorphic).

  Since ``tightenings are unique'' in the fundamental examples such as
  $T\Alg_s\to T\Alg_l$, and since it is slightly easier to say that
  such-and-such a morphism ``is tight'' than to say that it ``can be
  made into a tight morphism'' or ``is equipped with a tight morphism
  structure'', we have chosen to work in the slightly more restrictive
  setting of \sF-categories.  However, it is not hard to generalize
  all of our results to \sFF-categories.
\end{rmk}

The introduction of \sF-categories allows us to make use of the
familiar and powerful language of enriched category theory when
discussing the limits that lift to $T\Alg_l$.  In this language, these
limits will be characterized by certain \emph{weights} $\Phi\maps
\bbD\to\bbF$, where \bbD\ is a small \sF-category and \bbF\ denotes
\sF\ regarded as an \sF-category.  All the limits mentioned above that
exist in $T\Alg_l$, together with strictness and strictness-detection
of some of their projections, can now be described precisely as
certain \sF-weighted limits in the \sF-category $T\bbAlg_l$.

Of course we then have to actually give the characterization of the
\sF-weighted limits that lift.  This turns out to be a refinement of
the notion of \emph{flexible} limit from~\cite{bkps:flexible}, which
we now describe.  Recall that for any \Cat-weight $\Phi\maps \sD\to\Cat$,
there is another \Cat-weight $Q\Phi\maps \sD\to\Cat$ such that
\emph{pseudo} natural transformations $\Phi\lto\Psi$ are
in bijection with \emph{strict} natural transformations
$Q\Phi\to\Psi$.
A weight is said to be
\emph{flexible} if the projection $q\maps Q \Phi\to \Phi$ has a
strictly natural section (it always has a pseudonatural section).  Every
PIE-weight is flexible, but the converse is false: the splitting of
idempotents is a flexible weight that is not a PIE-weight (and in a
certain sense it is the ``only'' such, since together with PIE-limits
it generates all flexible limits).

Now $Q$ defines a comonad on the 2-category of \Cat-weights, which is in fact
pseudo-idempotent in the sense of~\cite{kl:property-like}---so that in particular, a weight admits at most one $Q$-coalgebra structure, up to unique isomorphism.
In fact we shall show that 
\emph{the PIE-weights are precisely the $Q$-coalgebras}.
(This has independently and separately been observed by John Bourke
and by Richard Garner.) Note that being flexible is
``half'' of being a $Q$-coalgebra.  What is missing is an
associativity axiom for the section $s\maps \Phi\to Q \Phi$, and it
turns out to be exactly this additional axiom which guarantees that
$\Phi$-weighted limits lift from \sK\ to $T\Alg$ for any $T$.

To generalize this statement to $T\Alg_l$, we first have to replace
2-categories by \sF-categories and then pseudo morphisms by lax ones.
Roughly, a \emph{pseudo \sF-transformation} consists of a pseudo
transformation on the loose morphisms which becomes strictly natural
when restricted to the tight ones, and likewise for lax and oplax
\sF-transformations (see \S\ref{sec:wf-transf}).  As is the case for
2-categories, such transformations are classified by \sF-comonads
$\QF_p$, $\QF_l$, and $\QF_c$, respectively.

It turns out that the \sF-limits which lift to $T\bbAlg_w$, where $w$
is one of $p,l,c$, are always $\QF_\wbar$-coalgebras, where $\wbar$ denotes
$w$ with sense reversed: $\bar{p}=p$, $\bar{l}=c$, and $\bar{c}=l$. But this
$\QF_\wbar$-coalgebra structure is not quite enough for the \sF-limits to lift.
There is also an additional ``tightness'' condition; this is what ensures
that the projections detect tightness, as is necessary for an
\sF-limit.
We call these ($w$-)\textbf{rigged} weights; they provide our promised
characterization of the limits which lift to $T\bbAlg_w$.

By the phrase \emph{enhanced 2-category theory} in the title, we mean
to indicate the study of structures akin to \sF-categories, which
combine a 2-category with additional data, to be studied as a unit.
Two existing notions that can also be viewed as enhanced
2-categories are \emph{proarrow equipments} and \emph{double
  categories}, both of which are in fact quite closely related to
\sF-categories.
For instance, from an \sF-category we can construct a double category
whose horizontal arrows are its tight arrows, whose vertical
arrows are its loose arrows, and whose squares are 2-cells
  \[\xymatrix{ \ar@{~>}[d]_h\ar[r]^{f} \drtwocell\omit & \ar@{~>}[d]^h \\
    \ar[r]_{g} & }\]
This double category comes with a \emph{connection} in the sense of~\cite{bm:dbl-thin-conn}
(although there the focus was on edge-symmetric double categories), and in fact \sF-categories are essentially
equivalent to double categories with connections.  The perspective
of \sF-categories, however, has the advantage that we can deploy all
the tools of enriched category theory.  In fact, the framework of double
categories allows a clear explanation of which limits should lift \cite{GrandisPare:LimitsDblCats}, 
as well as 
their universal property with respect to strict maps, but does not seem easily
to capture the universal property with respect to weak maps.

Then again, the {\em proarrow equipments} of \cite{wood:proarrows-i}
can be identified (modulo questions of 2-categorical strictness) with
\sF-categories in which every tight morphism has a loose right
adjoint.  This condition is motivated by the example where tight
arrows are functors and loose arrows are profunctors (also called
``modules'' or ``distributors''), but it is not satisfied in the
examples we are interested in such as $T\bbAlg_l$.  (Some authors,
such as \cite{Verity-thesis}, have also used the term \emph{equipment}
without this extra condition.)  In terms of double categories with
a connection, the condition that tight maps have loose right adjoints
corresponds to also having an ``op-connection;''
see~\cite{shulman:frbi}.

The title of the paper conveys our belief that the introduction of \sF-categories is a contribution of equal importance to the actual characterization of the limits that lift to \sF-categories of weak morphisms.
In fact, there is also a version of our main result using only 2-categories
(see \S\ref{sec:pie}), which in the case $w=p$ recovers the result that PIE-limits
lift to $T\Alg$.  However, while PIE-limits, that is to say
$Q_p$-coalgebras, are plentiful and useful in the 2-categorical
context, there seem to be fairly few $Q_l$- or $Q_c$-coalgebras until
we pass to the \sF-categorical context.  Thus, in the lax case, the
passage to \sF-categories significantly enlarges the class of limits
possessed by $T\Alg_l$ that we can describe.

There are many possible variations on the themes considered here. For example,
one could consider \sF-categories with lax morphisms as the loose maps
and pseudo morphisms as the tight ones. This gives rise to a different notion
of rigged weight. Then again, one could extend the very notion of \sF-category
to allow strict, pseudo, and lax morphisms to be encoded into the structure;
or, more radically, to combine both lax and colax morphisms. As a final example,
one could consider \sF-bicategories, in which composition is only associative 
and unital up to isomorphism. The obvious example is  \bbProf, in
 which the tight morphisms are functors and the loose ones are
profunctors.  This is known to admit many bicolimits 
(see~\cite{street:cauchy-enr}). We hope to address some of these issues
in a future paper.

\paragraph*{Acknowledgements.} Most of the results here were presented
at the conference CT2008 where the first-named author was an invited speaker.
He thanks the organizers for the invitation and for the extremely enjoyable and
rewarding conference. Both authors acknowledge with gratitude
partial support from the Australian Research Council, project DP0771252.
The second-named author gratefully acknowledges support from the United
States National Science Foundation.

\section{2-categorical preliminaries}
\label{sect:2-cats}

We begin with some background material from 2-category theory.
Most of this is standard, but \autoref{lem:w-idempotent} and
the terminological conventions of \S\ref{sect:comonad} do not appear
to be in the literature.

\subsection{Monads in 2-categories}
\label{sec:monads-in-2cat}

A \textbf{monad} in a 2-category \sK\ on an object $A$ is a monoid in
the monoidal category $\sK(A,A)$; thus it consists of a morphism
$t\maps A\to A$ and 2-cells $\mu\maps t t \to t$ and $\eta\maps 1\to
t$ satisfying the usual identities.  We write $\bbDelta_+$ for the
algebraic simplex category (a skeleton of the category of finite
totally ordered sets).  Since $\bbDelta_+$ is the free strict monoidal
category containing a monoid, a monad in \sK\ is equivalently a strict
monoidal functor $\bbDelta_+ \to \sK(A,A)$, or a strict 2-functor
$\cB\bbDelta_+ \to \sK$, where \cB\ indicates that we regard a strict
monoidal category as a 2-category with one object.

An \textbf{object of algebras} or \textbf{Eilenberg-Moore object} for
a monad in \sK\ is an object $A^t$ with a forgetful morphism $u\maps
A^t\to A$ such that for every object $X$, composing with $u$ exhibits
an isomorphism
\[\sK(X,A^t) \iso \sK(X,A)^{\sK(X,t)}\]
between $\sK(X,A^t)$ and the usual Eilenberg-Moore category of the
ordinary monad $\sK(X,t)$ on the ordinary category $\sK(X,A)$ induced
by whiskering with $t$.  Of course, an object $A^t$ with this property
may or may not exist for given \sK\ and $t$.  Making this universal
property explicit, it says that $u$ is a ``$t$-algebra'' in the sense
that we have a 2-cell $\alpha\maps t u \to u$ such that the usual
diagrams for an algebra commute:
\[\vcenter{\xymatrix{ttu \ar[r]^{t\alpha}\ar[d]_{\mu u} & tu \ar[d]^\alpha\\
    tu \ar[r]_\alpha & u}} \qquad\text{and}\qquad
\vcenter{\xymatrix{ u \ar[r]^{\eta u} \ar[dr]_{1_u} & tu \ar[d]^\alpha \\ & u}}
\]
and $u$ is the universal $t$-algebra, in a suitable 2-dimensional
sense.
It was shown in~\cite[I,7.12.4]{gray:formal-ct} that $A^t$ can be
described as a \emph{lax limit} of the diagram $\cB\bbDelta_+ \to \sK$.
The lax \emph{colimit} of this diagram turns out to be the
\textbf{Kleisli object} $A_t$, while if we consider $\bbDelta_+\op$
instead we obtain Eilenberg-Moore and Kleisli objects for
\emph{comonads}.

If $(A,t)$ and $(B,s)$ are monads in \sK, a \textbf{lax monad
  morphism} is a morphism $f\maps A\to B$ together with a 2-cell
$\fbar\maps sf\to ft$ satisfying suitable axioms.
(In~\cite{street:ftm} these were called \emph{monad functors}.)  These
are the lax morphisms of algebras for a suitable 2-monad or 2-comonad,
and also the lax natural transformations between 2-functors
$\cB\bbDelta_+\to\sK$.   A \textbf{monad 2-cell} $\alpha\colon
(f,\fbar) \to (g,\gbar)$ is a 2-cell $\alpha\colon f\to g$ in \sK such
that $s\alpha . \fbar = \gbar . \alpha t$.

We write $\Mnd_l(\sK)$ for the 2-category of monads, lax monad
morphisms, and monad 2-cells.  There is a functor $\sK\to \Mnd_l(\sK)$
assigning to each object its identity monad, and \sK has
Eilenberg-Moore objects if and only if this functor has a right
adjoint.  In particular, any lax monad morphism $(A,t) \to (B,s)$
induces a morphism $A^t \to B^s$ in a functorial way.

Dually, \textbf{colax monad morphisms} (also known as \emph{monad
  opfunctors}) come with a 2-cell $ft\to sf$ and induce morphisms
between Kleisli objects.  There is a 2-category $\Mnd_c(\sK)$ of
monads and colax monad morphisms, and \sK has Kleisli objects if and
only if the inclusion $\sK\to\Mnd_c(\sK)$ has a left adjoint.

A \textbf{distributive law} between monads $t$ and $s$ on the same
object $A$ consists of a 2-cell $st\to ts$ satisfying suitable axioms.
This is equivalent to giving a compatible monad structure on the
composite $ts$, and to giving a lifting of $t$ to the Eilenberg-Moore
object $A^s$, and also to giving an extension of $s$ to the Kleisli
object $A_t$.
It is also equivalent to giving a monad in $\Mnd_l(\sK)$ on the object
$(A,s)$, and to giving a monad in $\Mnd_c(\sK)$ on the object $(A,t)$.
When \sK has Eilenberg-Moore objects, the EM-object-assigning functor
$\Mnd_l(\sK)\to\sK$ takes each distributive law to the above-mentioned
lifting, and similarly for $\Mnd_c(\sK)$ and the extensions to Kleisli
objects.

It follows that there are four different 2-categories whose objects
are distributive laws in \sK.  We will need the following description
of one of them, which is easily verified by writing out the axioms.

\begin{lem}\label{lem:monad-stuff}
  The 2-category $\Mnd_l(\Mnd_c(\sK))$ can be described as follows.
  \begin{itemize}
  \item Its objects are distributive laws $k\colon S R\to R S$ on an
    object \sA in \sK.
  \item Its morphisms from $k_1$ to $k_2$ are morphisms $F\colon \sA_1
    \to \sA_2$ in \sK equipped with 2-cells $\psi\colon S_2 F\to F
    S_1$ making it a lax morphism of monads from $S_1$ to $S_2$ and
    $\chi\colon F R_1\to R_2 F$ making it a colax morphism of monads
    from $R_1$ to $R_2$, and such that the following diagram commutes:
    \begin{equation}
      \xymatrix{
        S_2 F R_1 \ar[d]_{S_2 \chi} \ar[r]^{\psi R_1} & F S_1 R_1 \ar[r]^{F k_1} & F R_1 S_1 \ar[d]^{\chi S_1} \\
        S_2 R_2 F \ar[r]_-{k_2 F} & R_2 S_2 F \ar[r]_{R_2 \psi} & R_2
        F S_1 }\label{eq:monad-stuff}
    \end{equation}
  \item Its 2-cells from $F$ to $G$ are 2-cells $\alpha\colon F\to G$
    in \sK which are both colax monad 2-cells and lax monad 2-cells.
  \end{itemize}
\end{lem}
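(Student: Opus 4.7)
The plan is to unfold $\Mnd_l(\Mnd_c(\sK))$ level by level, applying the definitions of monads, lax monad morphisms, and monad 2-cells in an ambient 2-category (here $\Mnd_c(\sK)$) to the known descriptions of the objects, colax monad morphisms, and colax monad 2-cells that comprise $\Mnd_c(\sK)$ itself. Since the forgetful 2-functor $\Mnd_c(\sK)\to\sK$ is the identity on underlying objects and faithful on 1- and 2-cells, this translation reduces every datum and axiom to elementary terms in \sK.

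For objects, a monad in $\Mnd_c(\sK)$ sits on a monad $(A,R,\mu_R,\eta_R)$ of \sK, and consists of a colax monad endomorphism $(S,\,k\colon SR\to RS)$ of $(A,R)$ together with multiplication and unit 2-cells $\mu_S\colon SS\to S$ and $\eta_S\colon 1\to S$ in \sK that are themselves colax monad 2-cells. The colax-2-cell axioms for $\mu_S$ and $\eta_S$ are exactly the two distributivity conditions making $k$ compatible with the monad structure on $S$, while the outer associativity and unit axioms in $\Mnd_c(\sK)$ reduce to those for $(S,\mu_S,\eta_S)$ as a monad in \sK. So objects are precisely distributive laws.

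For 1-cells, a lax monad morphism in $\Mnd_c(\sK)$ from $k_1$ to $k_2$ consists first of a 1-cell $(F,\chi)\colon (A_1,R_1)\to (A_2,R_2)$ in $\Mnd_c(\sK)$, i.e.\ a colax morphism of monads from $R_1$ to $R_2$, and then of a lax structure 2-cell in $\Mnd_c(\sK)$ between the composites $S_2\circ F$ and $F\circ S_1$. Computing these composites, the colax structure on $S_2 F$ is $(k_2 F)\circ (S_2\chi)\colon S_2 F R_1\to R_2 S_2 F$ and the colax structure on $F S_1$ is $(\chi S_1)\circ (F k_1)\colon F S_1 R_1\to R_2 F S_1$. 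The condition that a 2-cell $\psi\colon S_2 F\to F S_1$ in \sK be a colax monad 2-cell between these two composite colax morphisms spells out to exactly diagram \eqref{eq:monad-stuff}, and the remaining lax monad morphism axioms on $\psi$ with respect to $\mu_S$ and $\eta_S$ reduce to the ordinary lax monad morphism axioms in \sK. For 2-cells, a monad 2-cell in $\Mnd_c(\sK)$ is by definition simultaneously a 2-cell in $\Mnd_c(\sK)$ (that is, a colax monad 2-cell with respect to the $\chi$'s) and a lax monad 2-cell with respect to the $\psi$'s, yielding the stated description.

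The step requiring the most care, and which I expect to be the main obstacle, is the identification of the colax-monad-2-cell axiom for $\psi$ between the composite colax morphisms $S_2 F$ and $F S_1$ with diagram \eqref{eq:monad-stuff}: this is only a short pasting computation, but one must track all the whiskerings on the two sides and not conflate the roles of $k_1,k_2,\chi$. Everything else in the verification is a routine comparison of axioms.
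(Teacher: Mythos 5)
Your proposal is correct and coincides with the paper's own (implicit) argument: the paper simply asserts that the lemma ``is easily verified by writing out the axioms,'' and your level-by-level unfolding is exactly that verification. In particular you correctly compute the composite colax structures $(k_2F)\circ(S_2\chi)$ on $S_2F$ and $(\chi S_1)\circ(Fk_1)$ on $FS_1$, and correctly identify the colax-monad-2-cell condition on $\psi$ between them with diagram~\eqref{eq:monad-stuff}.
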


\begin{cor}\label{lem:monad-stuff-2}
  If \sK has Kleisli objects and $(F,\psi,\chi)$ is a morphism in
  $\Mnd_l(\Mnd_c(\sK))$ as in \autoref{lem:monad-stuff}, then its
  extension $\overline F\colon (\sA_1)_{R_1} \to (\sA_2)_{R_2}$ to
  Kleisli objects is naturally a lax monad morphism from
  $\overline{S_1}$ to $\overline{S_2}$.
\end{cor}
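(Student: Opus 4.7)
The plan is to exhibit the corollary as a direct consequence of two instances of 2-functoriality. First, since \sK has Kleisli objects, the inclusion $\sK \hookrightarrow \Mnd_c(\sK)$ admits a left adjoint $K$, which is in particular a 2-functor sending $(\sA, R)$ to $\sA_R$ and sending a colax monad morphism $(F, \chi)$ to its extension $\overline F$. Second, for any 2-category $\sL$ the construction $\Mnd_l(\sL)$ is itself 2-functorial in $\sL$: since monads, lax monad morphisms, and monad 2-cells are defined by universally quantified pasting equations in the 2-categorical structure alone, any 2-functor $H\colon \sL \to \sM$ induces a 2-functor $\Mnd_l(H)\colon \Mnd_l(\sL) \to \Mnd_l(\sM)$.

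Composing, I obtain a 2-functor $\Mnd_l(K)\colon \Mnd_l(\Mnd_c(\sK)) \to \Mnd_l(\sK)$, and then trace through what it does on our data. An object of the source is, by \autoref{lem:monad-stuff}, a distributive law $k\colon SR \to RS$ on $\sA$, equivalently a monad $S$ in $\Mnd_c(\sK)$ on $(\sA, R)$. Under $K$ the underlying object $(\sA, R)$ maps to $\sA_R$, and by the correspondence between distributive laws and extensions of $S$ to the Kleisli object recalled in \S\ref{sec:monads-in-2cat}, the monad $S$ maps to the extension $\overline S$ on $\sA_R$. A morphism $(F, \psi, \chi)\colon k_1 \to k_2$ amounts, again by \autoref{lem:monad-stuff}, to a colax monad morphism $(F, \chi)\colon (\sA_1, R_1) \to (\sA_2, R_2)$ together with a 2-cell $\psi\colon S_2 F \to F S_1$ that is simultaneously a monad 2-cell in $\Mnd_c(\sK)$; the compatibility \eqref{eq:monad-stuff} is precisely what ensures the latter. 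Applying $\Mnd_l(K)$ to this morphism then produces a lax monad morphism from $\overline{S_1}$ to $\overline{S_2}$ whose underlying map is $\overline F$ and whose structure 2-cell is $K(\psi)\colon \overline{S_2}\,\overline F \to \overline F\,\overline{S_1}$.

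This is the desired lax monad structure on $\overline F$, and its naturality in $(F, \psi, \chi)$ is just the 2-functoriality of $\Mnd_l(K)$. The main obstacle is really only bookkeeping: one must verify that $\Mnd_l(-)$ is 2-functorial on 2-categories, but since all of the defining axioms for monads, lax monad morphisms, and monad 2-cells are universally quantified pasting equations they are preserved by any 2-functor, so the verification is automatic. Beyond \autoref{lem:monad-stuff} itself, no further calculation is required.
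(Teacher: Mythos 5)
Your proposal is correct and is essentially the paper's own proof: the paper likewise observes that the Kleisli-object-assigning 2-functor $\Mnd_c(\sK)\to\sK$ induces, by the 2-functoriality of $\Mnd_l(-)$, a 2-functor $\Mnd_l(\Mnd_c(\sK))\to\Mnd_l(\sK)$, which is exactly your $\Mnd_l(K)$. Your additional tracing of objects and morphisms is just an unpacking of that one-line argument.
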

\begin{proof}
  The Kleisli-object-assigning 2-functor $\Mnd_c(\sK)\to\sK$ induces a
  2-functor $\Mnd_l(\Mnd_c(\sK))\to\Mnd_l(\sK)$.
\end{proof}

\subsection{2-monads}
\label{sec:2-monads}

A \textbf{2-monad} is a monad in the 2-category \twocat of
2-categories, 2-functors, and 2-natural transformations.  For a
2-monad $T$ on a 2-category \sK, we write $T\Alg_s = \sK^T$ for its
2-category of algebras.  Explicitly, an object of $T\Alg_s$ is a
(strict) $T$-algebra, consisting of an object $A\in\sK$ and a morphism
$a\maps TA\to A$ such that
\[\vcenter{\xymatrix{T^2A \ar[r]^{Ta}\ar[d]_{\mu A} & TA \ar[d]^a\\
    TA\ar[r]_a & A}}
\qquad\text{and}\qquad
\vcenter{\xymatrix{A \ar[r]^{\eta A} \ar[dr]_{1_A} & TA \ar[d]^a \\ & A }}
\]
commute (strictly).  A morphism in $T\Alg_s$ from $(A,a)$ to $(B,b)$
is called a \textbf{strict $T$-morphism}; it consists of a morphism
$f\maps A\to B$ in \sK\ such that
\[\xymatrix{TA \ar[r]^{Tf}\ar[d]_a & TB \ar[d]^b\\
  A \ar[r]_f & B}\]
commutes (strictly).  Finally, a 2-cell in $T\Alg_s$ from $f$ to $g$
is called a \textbf{$T$-transformation}, and consists of a 2-cell
$\alpha\maps f\to g$ in \sK\ such that
\[\xymatrix{
TA \rtwocell^{Tf}_{Tg}{~T\alpha} & TB\ar[r]^b & B &=& 
TA \ar[r]^{a} & A \rtwocell^{f}_{g}{\alpha} & B.} \]
% \[\xymatrix@C=3pc{TA \rtwocell^{Tf}_{Tg}{T\alpha}\ar[d]_a & TB \ar[d]^b\\
%   A \rtwocell^f_g{\alpha} & B}\]
% commutes.

However, we also have various weaker notions of morphism between
$T$-algebras.  A \textbf{lax $T$-morphism} $(f,\fbar)\maps (A,a)\to
(B,b)$ consists of $f\maps A\to B$ and a 2-cell
\[\xymatrix{TA \ar[r]^{Tf}\ar[d]_a  \drtwocell\omit{\fbar} & TB \ar[d]^b\\
  A \ar[r]_f & B}\]
such that certain diagrams of 2-cells commute \cite{bkp:2dmonads}.  
It is a \textbf{colax
  $T$-morphism} if \fbar\ goes in the other direction, and a
\textbf{pseudo $T$-morphism} if \fbar\ is an isomorphism.  We write
$T\Alg_l$, $T\Alg_c$, and $T\Alg=T\Alg_p$ for the 2-categories of
(strict) $T$-algebras and lax, colax, and pseudo $T$-morphisms,
respectively (each with an appropriate notion of $T$-transformation).

\begin{eg}
When $\sK=[\ob\sD,\Cat]$ for a
small 2-category \sD\ and $T$ is the 2-monad whose algebras are
2-functors $\sD\to\Cat$, then lax, oplax, and pseudo $T$-morphisms
coincide with lax, oplax, and pseudo natural transformations.  
\end{eg}

\begin{rmk}
  We will use the generic word \emph{weak} to refer to pseudo, lax, or colax without prejudice.
  We use the letter $w$ as a decoration or subscript to stand for one of $p$ (pseudo), $l$ (lax), or $c$ (colax).
  Thus, for instance, for any $w$ and any 2-monad $T$, we have a 2-category $T\Alg_w$.
  We write $\wbar$ to denote $w$ with sense reversed, i.e.\ $\bar{p}=p$, $\bar{l}=c$, and $\bar{c}=l$.
\end{rmk}

If $T$ and $S$ are 2-monads on 2-categories \sA and \sB, and
$(F,\psi)\colon (\sA,T) \to (\sB,S)$ is a lax morphism of monads in
\twocat, then as well as a 2-functor $T\Alg_s\to S\Alg_s$, it also 
induces a 2-functor $T\Alg_w \to S\Alg_w$
in a straightforward way.

Moreover, each 2-category $T\Alg_w$, like $T\Alg_s$, comes equipped with a
forgetful 2-functor $U_w\maps T\Alg_w\to \sK$ and a transformation
$TU_w\to U_w$ which again makes $U_w$ into a strict $T$-algebra.  The
difference is that now the transformation $TU_w\to U_w$ is only
pseudo, oplax, or lax natural, respectively as $w=p$, $l$, or $c$
(note the inversion of lax and oplax).

It is shown in~\cite{lack:psmonads} that composing with $U_w$ induces an isomorphism
\begin{equation}
  \Nat_{\wbar}(\sX,T\Alg_w) \iso \Nat_{\wbar}(\sX,T)\Alg_w\label{eq:natalg}
\end{equation}
where $\Nat_{\wbar}(\sX,\sY)$ denotes the 2-category of 2-functors and
$\wbar$-natural transformations between 2-categories \sX and \sY, and
$\Nat_{\wbar}(\sX,T)$ is the 2-monad induced on
$\Nat_{\wbar}(\sX,\sK)$ by composition with $T$.
This should be compared with the universal property
of $T\Alg_s$, which asserts that
\[\mbox{Nat}_s(\sX,T\Alg_s) \iso \mbox{Nat}_s(\sX,T)\Alg_s.\]
where $\mathrm{Nat}_s(\sX,\sY) = [\sX,\sY]$ denotes the 2-category of
2-functors and strict 2-natural transformations.

Since an \sF-categorical version of~\eqref{eq:natalg} will be central
to our characterization theorem, we recall briefly the idea behind it.
Suppose for simplicity that $\sX =\bbtwo$ and $w=l$.
Then an object of $\Oplax(\bbtwo,T\Alg_l)$ is simply
a lax $T$-morphism $(f,\fbar)\colon (A,a) \to (B,b)$.  On the other
hand, an $\Oplax(\bbtwo,T)$-algebra consists of a morphism $f\colon
A\to B$ in \sK (that is, an object of $\Oplax(\bbtwo,\sK)$) together
with an oplax natural transformation from $Tf$ to $f$; this consists
of morphisms $a\colon TA\to A$ and $b\colon TB\to B$ and a 2-cell
$\fbar\colon b.Tf \to f.a$.  The algebra axioms then assert precisely
that $(A,a)$ and $(B,b)$ are $T$-algebras and $(f,\fbar)$ is a lax
$T$-morphism.  This shows the bijection on objects.

Now a morphism in $\Oplax(\bbtwo,T\Alg_l)$ from $(f,\fbar)$ to
$(g,\gbar)\colon (C,c) \to (D,d)$ consists of lax $T$-morphisms
$(h,\hbar)\colon (A,a)\to (C,c)$ and $(k,\kbar)\colon (B,b) \to
(D,d)$, together with a $T$-transformation
$\alpha\colon (k,\kbar)(f,\fbar) \to (g,\gbar)(h,\hbar)$.
On the other hand, a lax morphism of $\Oplax(\bbtwo,T)$-algebras
consists of an oplax transformation from $f\colon A\to B$ to $g\colon
C \to D$, hence morphisms $h\colon A\to C$ and $k\colon B\to D$ and a
2-cell $\alpha\colon k f \to g h$, together with a modification
consisting of 2-cells $\hbar\colon c.Th \to h.a$ and $\kbar\colon d.Tk
\to k.b$.  The requisite axioms then assert precisely that $(h,\hbar)$
and $(k,\kbar)$ are lax $T$-morphisms and $\alpha$ is a
$T$-transformation.

Finally, of particular importance are those 2-monads $T$ such that for
$T$-algebras $(A,a)$ and $(B,b)$, every morphism $f \maps A\to B$ in
\sK\ supports a unique structure of $w$-$T$-morphism.
Following~\cite{kl:property-like}, we call such a 2-monad
\textbf{$w$-idempotent} (also in use is \textbf{(co-)KZ}, since they
were first isolated by Kock and later Z\"oberlein,
cf.~\cite{kock:kzmonads1,zoberlein:kzmonads,kock:kzmonads}).  The following
conditions are known to be equivalent to lax-idempotence of $T$:
\begin{itemize}
\item For every $T$-algebra $(A,a)$, we have $a\dashv \eta_A$ with
  identity counit.
\item For every $A\in\sK$, we have $\mu_A\dashv \eta_{TA}$ with
  identity counit.
\item For every $A\in\sK$, we have $T\eta_{A}\dashv\mu_A$ with
  identity unit.
\end{itemize}
For colax-idempotence, the adjunctions go the other way, and for
pseudo-idempotence, they are adjoint equivalences.  Moreover, if $T$
is $w$-idempotent for some $w$, then:
\begin{itemize}
\item any two $T$-algebra structures on $A\in\sK$ are
  isomorphic via a unique isomorphism of the form $(1_A,\alpha)$, and
\item for any two $w$-$T$-morphisms $(f,\fbar),(g,\gbar)\maps
(A,a)\rightrightarrows (B,b)$, any 2-cell $\alpha\maps f\to g$ in \sK\
is a $T$-transformation.
\end{itemize}
In particular, if $T$ is $w$-idempotent, then the forgetful
functor $T\Alg_w \to \sK$ is 2-fully-faithful (an isomorphism on
hom-categories).

\subsection{2-comonads}
\label{sect:comonad}

In this section we briefly treat the dual case of 2-comonads. A 2-comonad
on a 2-category \sK is of course the same as a 2-monad on $\sK\op$, so formally
there is not much to say.
(As usual, $\sK\op$ denotes reversal of 1-cells, but not 2-cells.)
However, since there has been little discussion of comonads
in the 2-dimensional context, we describe the conventions we adopt, and some
of their ramifications.

Just as for 2-monads, we only consider the strict notion of 2-comonad, consisting
of a 2-functor $W$ equipped with 2-natural transformations $d:W\to W^2$
and $e:W\to 1$ satisfying the usual laws. Once again, we consider only strict
coalgebras, consisting of an object $C$ equipped with a morphism $c:C\to WC$, 
once again satisfying the usual laws.
We need say nothing here about the notion of strict morphism and pseudo 
morphism of coalgebras; what is worth pointing out is the meaning of lax and 
colax. 

Our starting point is the fact that if $T$ is an endo-2-functor on a
2-category \sK, with a right adjoint $T^*$, then to give a 2-monad
structure on $T$ is equivalent to giving a 2-comonad structure on
$T^*$, and furthermore the Eilenberg-Moore 2-categories $T\Alg_s$ and
$T^*\Coalg_s$ agree.  We shall define lax and colax morphisms of
coalgebras in such a way that the isomorphism $T\Alg_s\cong
T^*\Coalg_s$ extends to an isomorphism $T\Alg_w\cong T^*\Coalg_w$.
This way, in concrete cases where the algebras/coalgebras are
understood, we may speak of $w$-morphisms without specifying whether
these are defined using $T$ or $T^*$.

A lax morphism of $T$-algebras $(A,a)\to(B,b)$ involves a morphism
$f:A\to B$ in \sK equipped with a suitable 2-cell
$$\xymatrix{
TA \ar[r]^{Tf} \ar[d]_{a} \drtwocell<\omit>{\overline{f}} & TB \ar[d]^{b} \\
A \ar[r]_{f} & B. }$$
This corresponds, under the adjunction $T\dashv T^*$, to a 2-cell 
$$\xymatrix{
A \ar[r]^{f} \ar[d]_{\tilde{a}} \drtwocell<\omit>{\tilde{f}} & B \ar[d]^{\tilde{b}} \\
T^* A \ar[r]_{T^* f} & T^* B. }$$
Accordingly, for a 2-comonad $W$ and $W$-coalgebras $(C,c)$ and $(D,d)$,
we define a \emph{lax morphism} of $W$-coalgebras to be a morphism $f:C\to D$
equipped with a 2-cell 
$$\xymatrix{
C \ar[r]^{f} \ar[d]_{c} \drtwocell<\omit>{\tilde{f}} & D \ar[d]^{d} \\
WC \ar[r]_{Wf} & WD }
$$
satisfying the usual coherence conditions (in dual form). We write
$W\Coalg_l$ for the 2-category of $W$-coalgebras, lax $W$-morphisms,
and $W$-transformations, and $W\Coalg_c$ and $W\Coalg$ for the evident
variants.

Now a 2-monad $T$ on \sK also induces a 2-comonad $T\op$ on $\sK\op$,
and the diagram for a lax morphism of $T$-algebras, when drawn in
$\sK\op$, becomes the diagram below (drawn with two different 
orientations to make the comparison easier).
$$\xymatrix{
TA \drtwocell<\omit>{\overline{f}} & TB \ar[l]_{Tf} & 
B \ar[r]^{f} \ar[d]_{b} \drtwocell<\omit>{^\tilde{f}} & A \ar[d]^{a}  \\
A \ar[u]^{a} & B \ar[l]^{f} \ar[u]_{b} &  TB \ar[r]_{Tf} & TA }$$
Thus we have a {\em colax} morphism of coalgebras, so
$T\op\Coalg_c=(T\Alg_l)\op$, and more generally
$T\op\Coalg_w=(T\Alg_\wbar)\op$ for any $w$.

Finally, just as a 2-monad $T$ is called \emph{$w$-idempotent} when
the forgetful 2-functor $T\Alg_w\to\sK$ is fully faithful, we say that
a 2-comonad $W$ is \textbf{$w$-idempotent} when the 2-functor
$W\Coalg_w\to\sK$ is fully faithful; i.e.\ when every morphism between
$W$-coalgebras admits a unique structure of $w$-$W$-morphism.  Since
$W\Coalg_w\cong W\op\Alg_\wbar$, this is equivalent to the 2-monad
$W\op$ being $\wbar$-idempotent.  On the other hand, if $W=T^*$ for a
2-monad $T$, then $W$ is $w$-idempotent if and only if $T$ is
$w$-idempotent.

As a case of particular interest, if \sD and \sK are
2-categories with \sK complete and cocomplete (such as \Cat), then the
forgetful 2-functor $[\sD,\sK]\to[\ob\sD,\sK]$ has both adjoints, and
is monadic and comonadic.  If $T$ and $T^*$ are the corresponding
monad and comonad, then lax $T$-morphisms, which as we have just seen
are the same as lax $T^*$-morphisms, can be identified with lax
natural transformations, and similarly in the pseudo and colax cases.

\subsection{Weak morphism classifiers}
\label{sec:weak-morph-class}

If $T$ is a 2-monad on a 2-category \sK and the 2-category
$T\Alg_s$ admits a certain kind of 2-colimit called a
\textbf{$w$-codescent object}, then the (non-full) inclusion
$T\Alg_s\to T\Alg_w$ has a left adjoint $Q_w$, whose value at a
$T$-algebra $(A,a)$ is the $w$-codescent object of the diagram
$$\xymatrix{
T^3A \ar@<2ex>[r]^{mTA} \ar[r]^{TmA}  \ar@<-2ex>[r]^{T^2a} & 
T^2A \ar@<2ex>[r]^{mA} \ar@<-2ex>[r]^{Ta} & TA \ar[l]_{TiA} }
$$
For instance, in the case $w=l$, this means that we have a universal
map $z\colon T A\to Q_l(A,a)$ equipped with a 2-cell $\zeta\colon
z.mA\to z.Ta$ satisfying two compatibility conditions.

The fact that $Q_w$ is left adjoint to the inclusion of $T\Alg_s\to
T\Alg_w$ means that $w$-morphisms $A\lto B$ are in bijection with
strict morphisms $Q_w A\to B$, and likewise for 2-cells between them.
The functor $Q_w$ is called the \textbf{$w$-morphism classifier}; see
\cite{bkp:2dmonads}.  Note that $Q_p$ is traditionally denoted $(-)'$.

Several conditions on $T$ ensuring that $T\Alg_s$ has $w$-codescent
objects are considered in~\cite{lack:codescent-coh}, including:
\begin{itemize}
\item \sK is cocomplete, and $T$ has a rank (that is, its 2-functor part preserves $\alpha$-filtered colimits for some $\alpha$).
\item \sK has, and $T$ preserves, $w$-codescent objects.
\item \sK has, and $T$ preserves coinserters and coequifiers.
\end{itemize}

Dually, if $W$ is a 2-comonad such that $W\Alg_s$ has $w$-descent
objects, then the inclusion $W\Alg_s\to W\Alg_w$ has a right adjoint
$R_w$ called the \textbf{$w$-morphism coclassifier}. Thus
$w$-$W$-morphisms $A\lto B$ are in bijection with strict morphisms
$A\to R_w B$, and likewise for 2-cells.

Finally, if a 2-monad $T$ has a right adjoint $T^*$, which becomes a
2-comonad with the same algebras and morphisms as in
\S\ref{sect:comonad}, then $T\Alg_s = T^*\Alg_s$ has all limits and
colimits that \sK does. Thus, if \sK has $w$-descent and $w$-codescent
objects, then $T\Alg_s\to T\Alg_w$ has both left and right adjoints,
giving natural bijections between weak morphsms $A\lto B$, strict
morphisms $Q_wA\to B$, and strict morphisms $A\to R_wB$, and likewise
for 2-cells.

We write $Q_w$ equally for the left adjoint $T\Alg_w \to T\Alg_s$ and
for the composite $T\Alg_s \hookrightarrow T\Alg_w \xrightarrow{Q_w} T\Alg_s$.
In this latter incarnation, $Q_w$ is a 2-comonad on $T\Alg_s$, since it is a right adjoint followed by its left adjoint.
Moreover, since $T\Alg_s\hookrightarrow T\Alg_w$ is the identity on objects, the adjunction $T\Alg_w
\rightleftarrows T\Alg_s$ can be identified with the
co-Kleisli adjunction for this comonad.  Dually, when $R_w$ exists for
a comonad $W$, it is a
2-monad on $W\Alg_s$ for which $W\Alg_w$ is the Kleisli category.

The components of the counit of the adjunction defining $Q_w$ are
strict $T$-morphisms $q_A\maps Q_w A\to A$, and the components of the
unit are $w$-$T$-morphisms $p_A\maps A\lto Q_w A$.  The triangle
identities for the adjunction say that $q\circ p = 1_A$ and $q\circ
Q_w(p)=1_{Q_w A}$.  Of course, $q$ is also the counit of the comonad
$Q_w$, and $Q_w(p)$ is its comultiplication.

\begin{lem}\label{lem:w-idempotent}
Let $T$ be a 2-monad on a 2-category \sK. Suppose that $T\Alg_s$ admits 
$w$-codescent objects, so that the 2-comonad $Q_w$ on $T\Alg_s$ 
which is the classifier for weak morphisms exists. If moreover \sK
admits $\wbar$-limits of arrows, then $Q_w$ is $w$-idempotent. 
%Then $Q_w$ is the free $w$-idempotent 2-comonad on $T\Alg_s$ on the comonad $FU$ generated by the adjunction $F\colon \sK \rightleftarrows T\Alg_s : U$. 
%   The 2-comonad $Q_w$ is $w$-idempotent.
% and the 2-monad $R_w$ are $w$-idempotent {\tt (or
%   maybe $\wbar$-idempotent?).}
\end{lem}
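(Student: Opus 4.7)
The 2-comonad $Q_w$ is $w$-idempotent iff $Q_w\op$ is $\wbar$-idempotent (by \S\ref{sect:comonad}), so I would reduce to showing the latter. This amounts, by the adjunction criteria for $\wbar$-idempotence from \S\ref{sec:2-monads}, to exhibiting for each $X \in T\Alg_s$ an adjunction in $T\Alg_s$ between the counit $q_{Q_w X}\colon Q_w Q_w X \to Q_w X$ and the comultiplication $\delta_X := Q_w(p_X)\colon Q_w X \to Q_w Q_w X$, with one triangle identity holding on the nose.

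The triangle identity of the classifier adjunction $Q_w \dashv J$, instantiated at $X$, provides $q_{Q_w X} \circ \delta_X = 1_{Q_w X}$ in $T\Alg_s$, so one side of the adjunction is free. What remains is a 2-cell $\epsilon_X$ relating $\delta_X \circ q_{Q_w X}$ with $1_{Q_w Q_w X}$ (in the appropriate direction for $w$) together with the second triangle identity. Via the 2-natural classifier bijection $T\Alg_s(Q_w Q_w X, Q_w Q_w X) \iso T\Alg_w(Q_w X, Q_w Q_w X)$, given by precomposition with $p_{Q_w X}$, the morphisms $\delta_X \circ q_{Q_w X}$ and $1_{Q_w Q_w X}$ correspond to the $w$-morphisms $\delta_X$ and $p_{Q_w X}$ respectively (using the other triangle identity $q_{Q_w X} \circ p_{Q_w X} = 1_{Q_w X}$). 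Thus $\epsilon_X$ is equivalent data to a 2-cell between these two $w$-morphisms $Q_w X \lto Q_w Q_w X$ in $T\Alg_w$.

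The $\wbar$-limits of arrows in \sK (which lift to $T\Alg_s$, since weighted limits are created by $T\Alg_s \to \sK$) furnish the required 2-cell. Specifically, the $\wbar$-limit of the arrow $q_X\colon Q_w X \to X$ carries a universal 2-cell between its projections. Combined with the equation of $w$-morphisms $\delta_X \circ p_X = p_{Q_w X} \circ p_X$---which comes from 2-naturality of $p$ applied to the $w$-morphism $p_X\colon X \lto Q_w X$---this universal 2-cell produces the desired 2-cell in $T\Alg_w$ between $\delta_X$ and $p_{Q_w X}$, and hence $\epsilon_X$ on the strict side. The second triangle identity is then verified by translating back through the classifier and using naturality of $p$ and $q$ together with $q_A \circ p_A = 1_A$. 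The principal obstacle is constructing $\epsilon_X$ in a way that makes the second triangle identity automatic; the $\wbar$-limits-of-arrows hypothesis is essential, since it is the universal apparatus for producing 2-cells in $T\Alg_s$ from the $w$-morphism data of $p_X$.
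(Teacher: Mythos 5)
Your overall skeleton is close to the paper's: both arguments reduce the lemma to exhibiting an adjunction with identity counit relating the comultiplication $\delta_X = Q_w(p_X)$ to a counit-type map, both use the classifier bijection to translate the required 2-cell into one between weak morphisms, and both appeal to the hypothesis on $\wbar$-limits of arrows. But the step where the 2-cell is actually produced --- which you yourself flag as ``the principal obstacle'' --- does not work as stated, and this is where all the content of the proof lives. You propose to take the $\wbar$-limit of the \emph{strict} arrow $q_X\colon Q_wX\to X$, justified by the fact that such limits are created by $T\Alg_s\to\sK$. That limit carries a universal 2-cell between two projections out of the limit object, with codomains $Q_wX$ and $X$; there is no visible way to turn this into a 2-cell between $\delta_X$ and $p_{Q_wX}$, two morphisms $Q_wX\lto Q_wQ_wX$. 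The equation $\delta_X p_X = p_{Q_wX}p_X$ does not close the gap either: since $p_{Q_wX}$ is not strict, ``agreeing after precomposition with $p_X$'' is not an instance of the classifier bijection for this pair, so it yields neither equality nor a canonical 2-cell between them.

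What the paper does instead is form the $\wbar$-limit $L$ of the \emph{weak} arrow $p_X\colon X\lto Q_wX$ in $T\Alg_w$ --- which exists not by enriched category theory but by the lifting theorem of \cite{lack:lim-lax}, with strict, strictness-detecting projections $u,v$ and universal 2-cell $\lambda\colon v\to p_Xu$. The cone $(1_X,\,p_X,\,\mathrm{id})$ induces a weak morphism $d\colon X\lto L$, which classifies to a strict $c\colon Q_wX\to L$; one checks $uc=q_X$ and $vc=1$ using strictness of the projections and the classifier bijection, and then $\lambda c$ is the desired 2-cell $\eta\colon 1_{Q_wX}\to p_Xq_X$. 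The triangle identities then come essentially for free: $\eta p_X=\lambda cp_X=\lambda d$ is an identity by the definition of $d$, and $q_X\eta$ is an identity because $q_X\eta p_X$ is. This gives $q_X\dashv p_X$ in $T\Alg_w$ with identity counit, and applying the 2-functor $Q_w$ gives $Q_w(q_X)\dashv \delta_X$ in $T\Alg_s$, which is one of the standard (dualized) criteria for $w$-idempotence. Your choice to aim directly for $q_{Q_wX}\dashv\delta_X$ in $T\Alg_s$ is an equivalent criterion and not wrong in principle, but it obscures the fact that the adjunction must first be built in $T\Alg_w$, where the weak morphism $p_X$ is available as the arrow whose $\wbar$-limit supplies the 2-cell; your proof is missing that construction and, as you acknowledge, the verification of the remaining triangle identity.
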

\begin{proof}
We write this out in the case $w=l$, and we write $Q$ for $Q_l$. The proof
is based on an argument in \cite{bkp:2dmonads} in the pseudo setting.
% The universality of $Q$ seems to be new.

Let $W = F U$ be the comonad on $T\Alg_s$ generated by the adjunction. Write
$w:W\to 1$ for the counit and $d:W\to W^2$ for the comultiplication. Then
$Q$ is given by the $l$-codescent object of
$$\xymatrix{
W^3 \ar@<2ex>[r]^{wW^2} \ar[r]^{WwW} \ar@<-2ex>[r]^{W^2w} & 
W^2 \ar@<2ex>[r]^{wW} \ar@<-2ex>[r]^{Ww} &
W \ar[l]_{d}  }$$
via a map $z:W\to Q$ and 2-cell $\zeta:z.wW\to z.Ww$. Let $p:U\to UQ$
be the composite 
$$\xymatrix{U \ar[r]^-{iU} & TU=UW \ar[r]^-{Uz} & UQ }.$$
Then $p\colon A\to Q A$ becomes a lax $T$-morphism
$(p,\bar{p}):(A,a)\to Q(A,a)$ where $\bar{p}=\zeta.iTA$.

By assumption, \sK has oplax limits of arrows.
By~\cite[Theorem~3.2]{lack:lim-lax}, therefore, $T\Alg_l$ also has oplax limits of
arrows, and the projections are strict and jointly detect strictness.
(This will also be a special case of our main theorem; see
\S\ref{sec:oll}.)
Let
$$\xymatrix @R1pc {
  & A \ar@{~>}[dd]^{p} \\
  L \ar[ur]^{u} \ar[dr]_{v}  \rtwocell<\omit>{^\lambda}  & {} \\
  & QA }$$
be the oplax limit of $p$ in $T\Alg_l$, so that $u$ and $v$ are
strict and jointly detect strictness.
There is a unique lax morphism $(d,\bar{d}):A\to L$ with
$u(d,\bar{d})=1$, $v(d,\bar{d})=(p,\bar{p})$, and $\lambda d$ the identity.

This map  $(d,\bar{d}):A\to L$ factorizes through $(p,\bar{p}):A\to QA$ via 
a unique strict map $c:QA\to L$. Now $uc$ is strict and $uc(p,\bar{p})=u(d,\bar{d})=1$, so $uc=q:QA\to A$. Similarly $vc$ is strict
and $vc(p,\bar{p})=v(d,\bar{d})=(p,\bar{p})$ and so $vc=1$. It follows that 
$\lambda c:vc\to puc$ is a 2-cell $\eta:1\to pq$ in $T\Alg_l$. 
% Then 
% the 2-cell $\eta s:s\to pqs$ has the form $\epsilon p:sqp=s\to pqs=p$
% for a unique 2-cell $\epsilon:sq\to 1$ in $T\Alg_s$. 
We shall
show that it is the unit of an adjunction $q\dashv p$ with identity counit;
in other words, that $q\eta$ and $\eta p$ are both identities.

Now $\eta p=\lambda cp=\lambda d$, which is an identity by definition of $d$.
On the other hand $\eta:1\to pq$ and so $q\eta:q\to qpq=q$. Since $q$ is strict, $q\eta$
will be an identity if and only if $q\eta p$ is; but this follows immediately
from the fact that $\eta p$ is an identity.

Thus $q\dashv p$ with identity counit, and so $Qq\dashv Qp$ with
identity counit.  But $Qp:Q\to Q^2$ is the comultiplication of the
comonad and $q$ is its counit, so $Q$ is lax-idempotent.
\end{proof}

In particular, for the case of pseudo morphisms, we have $pq\iso 1_{Q_p A}$, so that
$p$ and $q$ are inverse adjoint equivalences in $T\Alg_p$ \cite{bkp:2dmonads}.  
In this case, $q\maps Q_pA\to A$ is a \emph{cofibrant replacement} in a
suitable Quillen model structure on $T\Alg_s$ whose homotopy
2-category is $T\Alg_p$; see~\cite{lack:htpy-2monads}.  The cofibrant
objects, traditionally called \textbf{flexible} algebras, are those
for which there exists a \emph{strict} $T$-morphism $s\maps A\to Q_p
A$ with $qs=1_A$.  In this case we also have $sq\iso 1_{Q_p A}$, so
that $q$ is an equivalence in $T\Alg_s$ as well.  Of course, any
coalgebra for the comonad $Q_p$ is flexible, but not every flexible
object is a $Q_p$-coalgebra.  Note, though, that the flexible objects
are precisely the retracts of $Q_p$-coalgebras;
see~\cite{gt:nwfs,garner:soa} for a general theory of such ``algebraic
cofibrancy.''

\section{Enriched category theory over \sF}
\label{sec:f-categories}

\subsection{The cartesian closed category \sF}
\label{sec:ccc-f}

Let $\Cat^\bbtwo$ be the category of arrows in \Cat; we denote by \sF\
its full subcategory determined by the functors which are injective on objects
and fully faithful. We sometimes call such functors {\em full embeddings}.
Thus an object of \sF\ is a full embedding
\[\xymatrix{A_\tight \ar@{^(->}[r]^j &  A_\loose}\]
and a morphism in \sF\ is a commutative square
\[\xymatrix{A_\tight \ar@{^(->}[r]^{j_A}\ar[d]_{f_\tight} & A_\loose \ar[d]^{f_\loose} \\
  B_\tight \ar@{^(->}[r]_{j_B} & B_\loose.}\]
Since $j_B$ is monic in \Cat, in such a commutative square $f_\tight$ is
determined uniquely, if it exists, by $f_\loose$. 
We speak of $A_\tight$ as the \textbf{tight} part of $A$ and
$A_\loose$ as the \textbf{loose} part, and similarly for $f_\tight$
and $f_\loose$.

Note that \sF is  naturally  a 2-category: its 2-cells
$\alpha\colon f\to g$, as inherited from $\Cat^\bbtwo$, are commuting
diagrams of 2-cells of the form
\[\xymatrix{A_\tight
  \ar@{^(->}[r]^{j_A}\dtwocell_{g_\tight}^{f_\tight}{\alpha_\tight} &
  A_\loose \dtwocell^{f_\loose}_{g_\loose}{\alpha_\loose} \\
  B_\tight \ar@{^(->}[r]_{j_B} & B_\loose.}\]
Since $j_B$ is fully faithful, such a 2-cell $\alpha:f\to g$ is
determined by $\alpha_\loose\colon f_\loose \to g_\loose$.

Now, since full embeddings are the right class of a factorization
system on \Cat\ (the left class consists of functors that are
surjective on objects), \sF\ is reflective in $\Cat^\bbtwo$ and
therefore complete and cocomplete, with limits formed pointwise.
Colimits in \sF are formed by taking the colimit in $\Cat^\bbtwo$,
then applying the reflection, which amounts to taking the full embedding
part of the (surjective on objects, full embedding)
factorization of a functor. 

Moreover, \sF\ is cartesian closed. This can be seen as an instance of
the Day reflection theorem \cite{Day-reflection}, or can be checked
directly. To see explicitly what the internal hom $[B,C]$ in \sF must
be, it is convenient to introduce two special objects of \sF. We
denote by $1_\tight$ the terminal object $1\to 1$ of \sF, and we
denote by $1_\loose$ the object $0\to 1$, where $0$ is the empty
category and $1$ the terminal category. Note that $1_\tight$ and
$1_\loose$ together generate \sF as a 2-category; in fact, they are
the representables in $\Cat^\bbtwo$, seen as objects of \sF.
 Moreover, for any
$A\in \sF$ we have
\begin{align*}
  A_\tight &\cong \sF(1_\tight,A) \qquad\text{and}\\
  A_\loose &\cong \sF(1_\loose,A)
\end{align*}
where $\sF(-,-)$ denotes the \Cat-valued hom of the 2-category \sF.
In particular, this tells us that we must have
\[ [B,C]_\tight \cong \sF(1_\tight,[B,C]) \cong
\sF(1_\tight \times B,C) \cong \sF(B,C)
\]
and
\[ [B,C]_\loose \cong \sF(1_\loose,[B,C]) \cong
\sF(1_\loose \times B,C) \cong \Cat(B_\loose,C_\loose).
\]
That is, a tight object of $[B,C]$ is simply a morphism $B\to C$ in
\sF, while a loose object of $[B,C]$ is a functor $B_\loose \to
C_\loose$, and the morphisms in either case are natural
transformations $\xymatrix{B_\loose \rtwocell & C_\loose}$. (As with
the 2-cells in \sF, in the tight case this uniquely determines a
compatible transformation between tight parts.) Comparing
$[B,C]_\tight$ with $[B,C]_\loose$, we can say informally that a
morphism in $[B,C]$ is tight just when it ``preserves tightness,''
in the sense that  it takes tight objects of $B$ to tight objects of $C$.

We can equivalently construct the full embedding
$[B,C]_\tight \hookrightarrow [B,C]_\loose$ using the following pullback:
$$\xymatrix@C=3pc{
[B,C]_\tight \ar@{^(->}[r]^{j_{[B,C]}} \ar[dd] & [B,C]_\loose \ar@{=}[d] \\
& [B_\loose,C_\loose] \ar[d]^{[j_B,C_\loose]} \\
[B_\tight,C_\tight] \ar@{^(->}[r]_{[B_\tight,j_\loose]} & [B_\tight,C_\loose] 
}$$

In practice, many full subcategories are {\em replete}, in the sense that
any object isomorphic to one in the subcategory is itself in the subcategory. 
A non-replete full subcategory is equivalent to its repletion as a
category, but \emph{not} as an object of \sF.

There is also a larger sub-2-category \sFF\ of $\Cat^\bbtwo$
containing all the fully faithful functors, not necessarily injective
on objects. As mentioned in the introduction, all our results have
straightforward extensions to \sFF-categories, but we shall not
mention them.

\subsection{\sF-categories}
\label{sec:fcats}

Since \sF is cartesian closed, we can now consider the notion
of \textbf{\sF-category}, or category enriched in \sF. Of course, an
\sF-category \bbA has a collection of objects, together with hom-objects
$\bbA(x,y)$ in \sF and composition and identity maps also in \sF. Each
hom-object $\bbA(x,y)$ thus consists of two categories
$\bbA(x,y)_\tight$ and $\bbA(x,y)_\loose$ related by a full embedding.
It is easy to see that the categories $\bbA(x,y)_\tight$
must form the hom-categories of a 2-category $\sA_\tight$.
Likewise, the categories $\bbA(x,y)_\loose$ must form a 2-category $\sA_\loose$ with the same objects, and the full embeddings relating them must fit together into a
2-functor $J_{\bbA}:\sA_\tight\to\sA_\loose$ which is the {\em
  identity on objects}, {\em faithful}, and {\em locally fully faithful}. Furthermore,
any such 2-functor determines a unique \sF-category, so we will generally identify \sF-categories with such 2-functors.

We refer to morphisms in $\sA_\tight$ as \textbf{tight morphisms} and those in $\sA_\loose$ as \textbf{loose morphisms}.
We generally write tight morphisms with straight arrows $A\to B$ and loose ones with wavy arrows $A\lto B$.
We will also omit the subscript on $J$ when it is evident from context, and since it is the identity on objects, we will not notate its application to objects.
However, we will usually notate $J$ when applied to morphisms, or when composed with other 2-functors.

\begin{rmk}
In terms of the generating objects $1_\tight$ and $1_\loose$
introduced in \S\ref{sec:ccc-f}, we have two monoidal functors
$\sF(1_\tight,-), \sF(1_\loose,-)\colon \sF\to \Cat$ related by a
monoidal transformation arising from the inclusion $1_\loose
\hookrightarrow 1_\tight$. The 2-categories $\sA_\tight$ and
$\sA_\loose$ and the 2-functor $J_{\bbA}$ are then the ``change of
base'' of the \sF-category \bbA\ along these functors and
transformation.
\end{rmk}

When we write our \sF-categories as 2-functors in this way, an
\textbf{\sF-functor} $F:\bbA\to\bbB$ consists of 2-functors
$F_\tight:\sA_\tight\to\sB_\tight$ and
$F_\loose:\sA_\loose\to\sB_\loose$ making the evident square commute.
Since $J_{\bbB}$ is monic in \twocat, $F_\tight$ is determined
uniquely, if it exists, by $F_\loose$; thus we can say informally that an
\sF-functor $\bbA\to\bbB$ is a 2-functor $\sA_\loose \to \sB_\loose$
which ``preserves tightness.''

Likewise, an \textbf{\sF-natural transformation} $m:F\to G$ reduces to
a pair of 2-natural transformations $m_\tight:F_\tight\to G_\tight$
and $m_\loose:F_\loose\to G_\loose$ subject to the evident condition.
Since $J_\bbA$ is the identity on objects, the components of $m_\loose$
are determined by those of $m_\tight$, so its existence is a mere
additional property imposed on $m_\tight$  (``naturality with
respect to loose maps, in addition to tight ones'').  On the other
hand, since $J_\bbB$ is faithful, we can equally regard the existence
of $m_\tight$ as a property of $m_\loose$, namely that all of its
components are tight.

\begin{eg}
Any 2-category \sK may be regarded as an \sF-category \bbK in which
$\sK_\tight=\sK_\loose=\sK$, so that ``all morphisms are tight''. We
call such an \sF-category {\bf chordate}. On the other hand, we may
instead take $\sK_\loose=\sK$ but let $\sK_\tight$ be the locally full
sub-2-category of $\sK$ containing all the objects but only the
identity morphisms (``only identities are tight''). We call such an
\sF-category {\bf inchordate}.  Note that in the inchordate case $\sK_\tight$ is
\emph{not} generally a discrete 2-category: it contains only the identity
1-morphisms of \sK, but all the endo-2-cells of these.

For an abstract point of view, recall that $\sF(1_\loose,-)\colon \sF\to\Cat$ induces a 2-functor $\sF\Cat\to\twocat$ which sends an \sF-category to its loose part.
This 2-functor has a left adjoint which sends a 2-category to the corresponding
inchordate \sF-category, and a right
adjoint which sends a 2-category to the corresponding chordate
\sF-category. The latter 2-functor can also
be induced directly by the finite-product-preserving functor
$\Cat\to\sF$ which sends a category $C$ to its identity functor.
\end{eg}

\begin{eg}\label{eg:fcat-alg}
In our motivating examples of \sF-categories, the objects are some
sort of category with structure, the tight morphisms are the functors
which preserve the structure strictly, the loose morphisms are the
functors which preserve the structure in some weaker sense, and the 2-cells are 
suitably compatible with the extra structure.
For example, for any 2-monad $T$ on a 2-category \sK and any
$w=p,c,l$, the inclusion $J\colon T\Alg_s\to T\Alg_w$ is a
prototypical \sF-category, which we denote $T\bbAlg_w$.
It comes with a forgetful \sF-functor $U_w\colon T\bbAlg_w \to \bbK$,
where $\bbK$ is the chordate \sF-category associated to \sK.
\end{eg}

\begin{eg}
Another important class of \sF-categories, less relevant in this
paper, is where $\sKl$ is some 2-category of interest, and $\sKt$ the
sub-2-category of left adjoints.
\end{eg}

\begin{eg}
Our last, and very important, example of an \sF-category comes from
the general fact that any monoidal closed category is enriched over
itself. We shall write \bbF for the \sF-category which arises from \sF
in this way. The hom-objects of \bbF are, of course, given by the
cartesian closed internal hom of \sF as described in
\S\ref{sec:ccc-f}. Thus, the objects of \bbF are the objects of \sF,
its tight morphisms are the morphisms of \sF, its loose morphisms are
functors between loose parts, and its 2-cells are transformations
between the latter. In particular, the 2-category $\sF_\tight$ is just
$\sF$ with its 2-category structure as mentioned previously, while
$\sF_\loose$ can be obtained as the fully-faithful reflection of the
composite $\cod \circ N$, as in the following diagram:
\begin{equation}
\xymatrix{
\sF_\tight \ar[r]^-{N} \ar@{->|}[d]_{J_\bbF} & \Cat^\bbtwo \ar[d]^{\cod} \\
\sF_\loose \ar@{^{(}->}[r]_-{M} & \Cat. }\label{eq:fcatf}
\end{equation}
Here $N$ is the inclusion, $J_\bbF$ is the identity on objects, and
$M$ is 2-fully-faithful (i.e.\ an isomorphism on hom-categories).
We shall sometimes, as in this diagram, display fully faithful maps using
a hooked arrow, and bijective-on-objects ones using a bar at the tip of the arrow. 
Since $\cod\circ N$ is locally fully faithful, so is $J_\bbF$.
($M$ is actually an equivalence of 2-categories, but it is important to maintain the distinction between $\sF_\loose$ and $\Cat$, since $J_\bbF$ is the identity on objects but $M\circ J_\bbF$ is not.)
\end{eg}

Since \sF is a complete and cocomplete symmetric monoidal closed
category, we have all of the basic machinery of enriched category
theory at our disposal. In
\S\S\ref{sec:functor-fcats}--\ref{sec:eg-wgts} we will discuss
enriched functor categories, limits, and colimits in the particular
case of enrichment over \sF.

\subsection{Functor \sF-categories}
\label{sec:functor-fcats}

Given \sF-categories \bbD\ and \bbK\ with \bbD\ small, we can form the functor
\sF-category $[\bbD,\bbK]$ whose objects are \sF-functors from \bbD\ to \bbK.
A morphism in $[\bbD,\bbK]_\tight$ is just an \sF-natural transformation, while
a morphism in $[\bbD,\bbK]_\loose$ between $F,G\colon \bbD\to\bbK$ consists of
a 2-natural transformation $F_\loose\to G_\loose$.
We also have 2-cells in
$[\bbD,\bbK]_\loose$, which are modifications between the 2-natural transformations $F_\loose\to G_\loose$ just considered.
In \S\ref{sec:wf-transf}, we shall  consider weakenings of these notions, where the morphisms are not
required to be strictly natural.

We now turn to the special case where $\bbK=\bbF$. An \sF-functor 
$G\colon\bbD\to\bbF$ is often called a \emph{weight}; it amounts to a commutative
square of 2-functors as in the left half of the following diagram (the right half simply reproduces~\eqref{eq:fcatf}):
$$\xymatrix{
\sD_\tight \ar[d]_{J_\bbD} \ar[r]^{G_\tight} & \sF_\tight \ar[d]_{J_\bbF} \ar[r]^-{N} &
\Cat^\bbtwo \ar[d]^{\cod} \\
\sD_\loose \ar[r]_{G_\loose} & \sF_\loose \ar[r]_-{M} & \Cat 
}$$
Since $M$ is 2-fully-faithful and $J_\bbD$ is the identity on objects,
$G_\loose$ is uniquely determined by $G_\tight$ and the composite $MG_\loose$. 
On the other hand, $G_\tight$ is uniquely determined by the composites 
$\dom N G_\tight$, $\cod N G_\loose$, and a 2-natural
transformation $\dom N G_\tight\to \cod N G_\tight$ whose components are 
full embeddings; we then also need $\cod N G_\tight=MG_\loose J_\bbD$.
Thus, altogether, to give a weight is to give 2-functors 
$\Phi_\tight:\sD_\tight\to\Cat$ and $\Phi_\loose:\sD_\loose\to\Cat$, and a
2-natural transformation $\phi:\Phi_\tight\to\Phi_\loose J_\bbD$ whose components
are full embeddings; we write
$\Phi$ for such a weight $(\Phi_\tight,\Phi_\loose,\phi)$.

Suppose now that $\Psi=(\Psi_\tight,\Psi_\loose,\psi)$ is another such weight.
We compute the \sF-valued hom $[\bbD,\bbF](\Phi,\Psi)$, using our description 
of functor \sF-categories. An object of $[\bbD,\bbF](\Phi,\Psi)_\loose$
(that is, a loose morphism in $[\bbD,\bbF]$) is just
a 2-natural transformation between the corresponding 2-functors 
$\sD_\loose\to\sF_\loose$, but since $M$ is 2-fully-faithful, that is the same
as a 2-natural transformation $\Phi_\loose\to\Psi_\loose$. A morphism in
$[\bbD,\bbF](\Phi,\Psi)_\loose$ (that is, a 2-cell in 
$[\bbD,\bbF]$) is a modification between 2-naturals
$\Phi_\loose\to\Psi_\loose$. In other words, 
$$[\bbD,\bbF](\Phi,\Psi)_\loose = [\sD_\loose,\Cat](\Phi_\loose,\Psi_\loose).$$
A morphism $\Phi\to\Psi$ is tight when, seen as a 2-natural 
$\Phi_\loose\to\Psi_\loose$, it restricts to a
%An object $\Phi_\loose\to\Psi_\loose$ of $[\bbD,\bbF](\Phi,\Psi)_\loose$ lifts to an
%object of $[\bbD,\bbF](\Phi,\Psi)_\tight$ when it is equipped with a compatible
2-natural transformation $\Phi_\tight\to\Psi_\tight$; in other words, we have a 
pullback 
\[
\xymatrix{
  [\bbD,\bbF](\Phi,\Psi)_\tight \ar[dd] \ar[r] &
  [\bbD,\bbF](\Phi,\Psi)_\loose \ar@{=}[r] &
  [\sD_\loose,\Cat](\Phi_\loose,\Psi_\loose) \ar[d]^{[J_\bbD,\Cat]}\\
  &&
  [\sD_\tight,\Cat](\Phi_\loose J_\bbD,\Psi_\loose J_\bbD) \ar[d]^{[\sD_\tight,\Cat](\phi,\Psi_\loose J_\bbD)} \\
  [\sD_\tight,\Cat](\Phi_\tight,\Psi_\tight) \ar[rr]_{[\sD_\tight,\Cat](\Phi_\tight,\psi)} 
  & &
  [\sD_\tight,\Cat](\Phi_\tight,\Psi_\loose J_\bbD) }
\]

%\subsection{Representables}

A particular important class of weights are the representables:
for any \sF-category \bbK and any object $X$ of \bbK, we have a representable
\sF-functor $R=\bbK(X,-):\bbK\to\bbF$. In terms of the previous paragraph, we 
have $\bbK(X,-)_\tight=\sK_\tight(X,-)$ and $\bbK(X,-)_\loose=\sK_\loose(X,-)$, while 
$r\colon R_\tight\to R_\loose J_\bbK$ is the map
\[J_{\bbK}\colon \sK_\tight(X,-)\to\sK_\loose(J_\bbK X,J_\bbK-) = \sK_\loose(X,J_\bbK-)
\]
given by the action of $J_\bbK$ on hom-categories.

\subsection{\sF-weighted limits}
\label{sec:limits}

Suppose now $S:\bbD\to\bbA$ is an \sF-functor, of which we shall shortly 
consider the $\Phi$-weighted limit, and let $A$ be an object of \bbA. These 
induce an \sF-functor $\bbA(A,S):\bbD\to\bbF$.  Writing this functor in the form
$\Psi=(\Psi_\tight,\Psi_\loose,\psi)$ of \S\ref{sec:functor-fcats}, we have
$\Psi_\tight=\sA_\tight(A,S_\tight)$, $\Psi_\loose=\sA_\loose(A,S_\loose)$, and $\psi$ given by 
$$\xymatrix @R1pc {
\sD_\tight \ar[r]^{S_\tight} \ar[dd]_{J_\bbD} & \sA_\tight \ar[dd]_{J_\bbA} \ar[dr]^{\sA_\tight(A,-)}\\
&  {}\rtwocell<\omit>{\hspace{-1cm}J_\bbA} & \Cat \\
\sD_\loose \ar[r]_{S_\loose} & \sA_\loose \ar[ur]_{\sA_\loose(A,-)}
}$$
We are now ready to consider the weighted limit $\{\Phi,S\}$, for some weight $\Phi\colon \bbA\to\bbF$.
This limit, if it exists, is characterized by an isomorphism
$$\bbA(A,\{\Phi,S\}) \cong [\bbD,\bbF](\Phi,\bbA(A,S))$$
in \sF, natural in $A$. This involves an isomorphism
$$\sA_\loose(A,\{\Phi,S\})\cong [\sD_\loose,\Cat](\Phi_\loose,\sA_\loose(A,S_\loose))$$
and a left leg for the square
\begin{equation}
\xymatrix{
\sA_\tight(A,\{\Phi,S\}) \ar[r] \ar@{.>}[ddd] & \sA_\loose(A,\{\Phi,S\}) \ar[d]^{\cong} \\
& [\sD_\loose,\Cat](\Phi_\loose,\sA_\loose(A,S_\loose)) \ar[d]^{[J_{\bbD},\Cat]} \\
& [\sD_\tight,\Cat](\Phi_\loose J_\bbD,\sA_\loose(A,S_\loose J_\bbD)) \ar[d]^{[\sD_\loose,\Cat](\phi,1)} \\
[\sD_\tight,\Cat](\Phi_\tight,\sA_\tight(A,S_\tight)) \ar[r] & [\sD_\tight,\Cat](\Phi_\tight,\sA_\loose(A,J_\bbA S_\tight))
}\label{eq:flim-pb}
\end{equation}
so that it becomes a pullback.
The first isomorphism says that $\{\Phi,S\}$ is the 2-categorical
limit $\{\Phi_\loose,S_\loose\}$ in \sAl; we shall write $p_\loose:\Phi_\loose\to\sAl(\{\Phi,S\},S_\loose)$ for the corresponding unit. The pullback square~\eqref{eq:flim-pb} 
specifies a further universal property involving the tight maps, which we now
analyze.

First of all, to give a dotted map making the square commute is to give a map 
$p_\tight:\Phi_\tight\to\sA_\tight(\{\Phi,S\},S_\tight)$ making 
$$\xymatrix{
\Phi_\tight \ar[r]^{\phi} \ar[d]_{p_\tight} & \Phi_\loose J \ar[d]^{p_\loose J} \\
\sA_\tight(\{\Phi,S\},S_\tight) \ar[r]_{J_\bbA} & \sA_\loose(\{\Phi,S\},S_\loose) }$$
commute. Since the bottom leg of this latter square is injective on objects
and fully faithful, such a $p_\tight$ is unique if it exists, and will exist if 
and only if for each $D\in\bbD$ the composite 
$$\xymatrix{
\Phi_\tight D \ar[r]^{\phi D} & \Phi_\loose D \ar[r]^-{p_\loose D} &
\sA_\loose(\{\Phi,S\},SD) }$$
takes its values in $\sA_\tight(\{\Phi,S\},SD)$. In other words, for any 
$D\in\bbD$ and any $a\in\Phi_\tight D$, the morphism 
$p_{\lambda,\phi(a)}:\{\Phi,S\}\to SD$ is tight.

Second, we require that the resulting square~\eqref{eq:flim-pb} be a pullback.
Since the horizontals are fully faithful, we need only check the universal property
at the level of objects.  This says that a loose morphism $h:A\to\{\Phi,S\}$ is tight if  
the composite 
$$\xymatrix{
\Phi_\tight D \ar[r]^{\phi D} & \Phi_\loose D \ar[r]^-{p_\loose D} & 
\sA_\loose(\{\Phi,S\},SD) \ar[rr]^-{\sA_\loose(h,SD)} & &
\sA_\loose(A,SD) }$$
takes its values in $\sA_\tight(A,SD)$ for each $D\in\bbD$. In other words, $h$
is tight if $p_{\loose,\phi(a)}\circ h$ is tight for each $a\in\Phi_\tight D$. 
We express this by saying
that the $p_{\loose,\phi(a)}$ \textbf{jointly detect tightness}.

Combining the two conditions, we have:

\begin{prop}
 Let $\Phi:\bbD\to\bbF$ be a weight and $S:\bbD\to\bbK$ an \sF-functor. 
 An \sF-categorical limit $\{\Phi,S\}$ in \bbK is a 2-categorical limit
$\{\Phi_\loose,S_\loose\}$ in \sKl with the extra property that for  $D\in\bbD$
and $a\in\Phi_\tight(D)$, the projections $p_{\lambda,\phi(a)}:\{\Phi_\loose,S_\loose\}\to SD$
are tight and jointly detect tightness.
\end{prop}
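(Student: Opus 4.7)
The plan is to unpack the defining universal property of an \sF-enriched weighted limit, using the explicit computation of hom-objects in $[\bbD,\bbF]$ from \S\ref{sec:functor-fcats} and the explicit description of the \sF-valued hom $\bbA(A,S)$ from the start of \S\ref{sec:limits}. The entire argument is essentially a translation exercise: the square~\eqref{eq:flim-pb} encodes exactly the two extra conditions on the tight part of the limit, and the task is to read off these conditions in elementary terms.

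First, I would recall that the data of the required isomorphism $\bbA(A,\{\Phi,S\}) \cong [\bbD,\bbF](\Phi,\bbA(A,S))$ in \sF consists of two pieces: an isomorphism of loose parts, and a commutative square of tight parts that is required to be a pullback (since hom-objects in \sF have the tight part injecting fully faithfully into the loose part). The loose isomorphism, natural in $A$, says precisely that $\{\Phi,S\}$ represents the 2-functor $[\sD_\loose,\Cat](\Phi_\loose,\sA_\loose(-,S_\loose))$, which is the defining property of the 2-categorical limit $\{\Phi_\loose,S_\loose\}$ in $\sA_\loose$; this gives the loose unit $p_\loose\colon \Phi_\loose\to\sA_\loose(\{\Phi,S\},S_\loose)$.

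Next I would analyze the pullback requirement on square~\eqref{eq:flim-pb} in two stages. Setting $A=\{\Phi,S\}$ and tracing the identity through the right-hand column produces a candidate $p_\tight\colon \Phi_\tight\to\sA_\tight(\{\Phi,S\},S_\tight)$; the commutativity constraint $J_\bbA\circ p_\tight=p_\loose J_\bbD\circ\phi$ exists uniquely, if at all, because $J_\bbA$ is injective on objects and locally fully faithful. Its existence is equivalent to requiring that for every $D\in\bbD$ and every $a\in\Phi_\tight D$, the composite $p_{\loose,\phi(a)}\colon \{\Phi,S\}\to SD$ factor through $\sA_\tight$, i.e.\ be tight. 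This handles the first half of the extra property.

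Finally, given that the dotted map exists, the square is a pullback exactly when it is a pullback at the level of objects (the horizontals being fully faithful already). Chasing a loose morphism $h\colon A\to\{\Phi,S\}$ around the two legs, this pullback property says $h$ lies in $\sA_\tight(A,\{\Phi,S\})$ if and only if for every $D$ and every $a\in\Phi_\tight D$ the morphism $p_{\loose,\phi(a)}\circ h$ lies in $\sA_\tight(A,SD)$; that is, the family $\{p_{\loose,\phi(a)}\}$ jointly detects tightness. Combining the loose representability, the existence of $p_\tight$, and the pullback condition yields the statement. There is no real obstacle here beyond careful bookkeeping with the faithful, locally fully faithful 2-functor $J_\bbA$; the only point requiring attention is verifying that ``pullback of hom-objects in \sF'' translates correctly to the two separate clauses of the proposition, which is immediate from the construction of pullbacks in \sF as pullbacks of arrows in \Cat.
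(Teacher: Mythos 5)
Your proposal is correct and follows essentially the same route as the paper: the paper's own ``proof'' is precisely the analysis preceding the proposition in \S\ref{sec:limits}, which decomposes the defining isomorphism in \sF into the loose representability (giving the 2-categorical limit $\{\Phi_\loose,S_\loose\}$) and the pullback square~\eqref{eq:flim-pb}, whose commutativity yields tightness of the $p_{\loose,\phi(a)}$ and whose pullback property (checked only on objects, the horizontals being fully faithful) yields joint detection of tightness. No further comment is needed.
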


In particular, if \bbK is chordate, then an \sF-limit
$\{\Phi,S\}$ in \bbK is nothing but a 2-categorical limit
$\{\Phi_\loose,S_\loose\}$ in the 2-category $\sKt=\sKl$.

Finally, we consider two slightly different notions of ``limits
lifting along a functor,'' one well-known and one less so.

\begin{defn}\label{def:lifting}
  Let \sV be a closed symmetric monoidal category, $U\maps \sA\to\sB$
  a \sV-functor, and $\Phi\maps \sD\to\sV$ a \sV-weight.  If for any
  diagram $G\maps \sD\to\sA$ such that the limit $\{\Phi, U G\}$
  exists, the limit $\{\Phi, G\}$ also exists and is preserved by $U$,
  we say that $\Phi$-weighted limits \textbf{lift along} $U$ or
  \textbf{lift to} \sA, or that $U$ \textbf{lifts} $\Phi$-weighted
  limits.  If $U$ furthermore reflects all such $\Phi$-weighted
  limits, we say that it \textbf{creates} them.
\end{defn}

The notion of ``creation'' of limits is standard, although there is
some variation in its usage.
Some authors use it only when \sB has all $\Phi$-weighted limits,
which we do not generally require.  Others require that $\{\Phi, G\}$
must map exactly onto $\{\Phi, U G\}$ and be literally unique with
that property; this will be true for the \sF-functors $U:\sA\to\sB$ we
consider in this context, but we shall neither use nor verify it.

Note that if $\Phi$-weighted limits lift along $U$, then any
$\Phi$-weighted cone over $G$ in \sA which maps to a limiting cone in
\sB must factor through the limit $\{\Phi, G\}$ in \sA by a map which
is inverted by $U$.  Thus, if $U$ is conservative, reflection is
automatic, and so lifting and creating are equivalent.

The \sF-functor $U_w\colon T\bbAlg_w \to \bbK$ is always conservative,
since the underlying ordinary category of $T\bbAlg_w$ consists of
\emph{strict} $T$-morphisms.  Thus, in this case there is no
difference between lifting and creation.  By contrast, the 2-functor
$U_w\colon T\Alg_w \to \sK$ is not in general conservative for $w=l$
or $c$, and in this case the limits which lift are \emph{not}
generally reflected.  (Thus the statements in~\cite{lack:lim-lax}
referring to ``creation'' of limits are only about ``lifting'' of
limits according to our present terminology.)  We regard this as
another advantage of \sF-categories over 2-categories.

\subsection{Examples of weights}
\label{sec:eg-wgts}

In this section we consider a few specific examples of weights, and
describe the corresponding notions of limit. For now, we focus on
examples which lift to $T\bbAlg_w$ for some $w$, and are thus of
interest in our primary examples. We will mention some more
``pathological'' examples in \S\ref{sec:tightly-rigged}. We shall describe
in each case what is known about lifting the limit to $T\bbAlg_w$ for a 2-monad $T$,
as in Example~\ref{eg:fcat-alg};
we shall see in Proposition~\ref{prop:reduction} that this implies a lifting result for any \sF-monad $T$.

%\paragraph{}
\subsubsection{Tight limits}
\label{sec:tight-limits}

Let \sD be a 2-category, and \bbD the corresponding chordate \sF-category,
with $\sDt=\sDl=\sD$. A 2-functor $M\colon\sDt\to\Cat$ gives rise to an 
\sF-weight
$\Phi:\bbD\to\bbF$ with $\Phi_\tight=\Phi_\loose=M$, and $\phi$ the identity. Then
for any weight $\Psi:\bbD\to\bbF$, the \bbF-valued hom $[\bbD,\bbF](\Phi,\Psi)$ 
is given by the full embedding
$$\xymatrix{
[\sD,\Cat](M,\Psi_\tight) \ar[rr]^{[\sD,\Cat](M,\psi)} && [\sD,\Cat](M,\Psi_\loose). }$$

A diagram $S\colon \bbD\to\bbA$ of shape $\bbD$ is just a 2-functor
$S:\sD\to\sAt$; then the corresponding loose part $\sDl=\sD\to\sAl$ is the composite
$J_\bbA S:\sDt\to\sAl$. Thus the tight part of the universal property of the limit 
$\{\Phi,S\}$ says that $\sAt(A,\{\Phi,S\})\cong[\sD,\Cat](M,\sAt(A,S))$, and so 
$\{\Phi,S\}$ is the 2-categorical limit $\{M,S\}$ in \sAt; while the loose part says
that this limit is preserved by $J_\bbA:\sAt\to\sAl$. 
We call a limit of this type {\bf tight}. 

In the case where \bbA is $T\bbAlg_w$ for a 2-monad $T$, such tight limits
amount to limits in $T\Alg_s$ preserved by the inclusion $T\Alg_s\to T\Alg_w$. 
These tight limits do lift to $T\bbAlg_w$ for any 2-monad $T$ and any $w$;
see~\cite{bkp:2dmonads} and~\cite[Prop.~4.1]{lack:lim-lax}.

\subsubsection{The oplax limit of a loose morphism}
\label{sec:oll}

Let $\sD_\loose$ be the arrow category $\bbtwo=\{d\to c\}$, seen as a
locally discrete 2-category, let $\sD_\tight$ be the discrete 2-category 
with two objects, and let $J_\bbD:\sD_\tight\to\sD_\loose$ be the evident inclusion. Then a diagram $S:\bbD\to\bbA$ is precisely a loose morphism 
in \bbA; we shall write it as $s:Sd\to Sc$.

Let $\Phi_\loose\colon \sD_\loose\to\Cat$ be the 2-functor which picks out the functor $c\colon 1\to\bbtwo$,
let $\Phi_\tight\colon \sD_\tight\to\Cat$ be the 2-functor constant at the 
terminal category $1$, and let $\phi\colon \Phi_\tight\to\Phi_\loose J_\bbD$
have components $1\colon 1\to 1$ and $c\colon 1\to\bbtwo$. 

As always, a limit $\{\Phi,S\}$ must in particular be a limit $\{\Phi_\loose,S_\loose\}$, which means an oplax limit in $\sA_\loose$
of the arrow $s:Sd\to Sc$.  This consists of an object $L$ with loose 
morphisms $u:L\lto Sd$ and $v:L\lto Sc$, and a 2-cell $\sigma:v\to su$,
these data being universal in $\sA_\loose$. The tight aspect of the universal
property now says that we have a pullback 
$$\xymatrix{
  \sA_\tight(A,L) \ar[r] \ar@{.>}[d] &
  \sA_\loose(A,L) \ar[d] \\
  \sA_\tight(A,Sd)\times\sA_\tight(A,Sc) \ar[r] & 
  \sA_\loose(A,Sd)\times\sA_\loose(A,Sc) }$$
in \Cat.  In other words, $u$ and $v$ are tight and jointly detect tightness.

In particular, this means that we have a bijection between loose morphisms
$x:A\lto L$ and pairs of loose morphisms $u:A\lto Sd$, $v:A\lto Sc$
equipped with a 2-cell $\xi:v\to su$; and similarly a bijection between
tight morphisms $x:A\to L$ and pairs of tight morphisms $u:A\to Sd$,
$v:A\to Sc$ equipped with a 2-cell $\xi:J(v)\to s\circ J(u)$.

By~\cite[Theorem~3.2]{lack:lim-lax}, oplax limits of loose morphisms
lift to $T\bbAlg_l$ for any 2-monad $T$. Dually, lax limits of loose
morphisms  (where the 2-cell $\sigma$ is reversed)  lift to $T\bbAlg_c$.
By~\cite[Remark~2.7]{bkp:2dmonads}, lax, oplax, and also pseudo limits
(the case where $\sigma$ is invertible) of loose morphisms all lift to 
$T\bbAlg_p$ for any 2-monad $T$.

\subsubsection{Inserters}
\label{sec:inserters}

Let \bbD be the inchordate \sF-category on a parallel pair of arrows
$(a\rightrightarrows b)$; thus a functor $\bbD\to\bbK$ is a parallel  pair of loose
morphisms in \bbK.  Let $\Phi_\loose\colon \sD_\loose\to\Cat$ pick out
the two distinct functors $1\rightrightarrows \bbtwo$, and let 
 $\Phi_\tight(a)=1$
and $\Phi_\tight(b)=0$.

A $\Phi$-weighted limit of $f,g\colon A \lto B$ is in particular an
inserter, i.e.\ a morphism $i\colon I\to A$ with a 2-cell $f i \lto g
i$ which is universal among such.  Moreover, the morphism $i$ (but not
the composites $f i$ and $g i$) must be tight and must detect
tightness.   We call such a limit a \textbf{$p$-rigged inserter};
by~\cite[Prop.~2.2]{bkp:2dmonads}, such inserters
lift to $T\bbAlg_p$ for any 2-monad $T$.

Now instead suppose we take $\sD_\tight$ to be the arrow category
\bbtwo, equipped with one of its inclusions into the parallel pair
$\sD_\loose$.  We let $\Phi_\loose$ be as before, with the tight
morphism in \bbD going to the functor $d \colon 1\to \bbtwo$, and we
let $\Phi_\tight$ be constant at $1$.

In this case, a \bbD-diagram in \bbK is a parallel pair $f,g\colon
A\to B$ where $f$ is tight and $g$ is loose, and a $\Phi$-weighted
limit of such is an inserter $f i \to g i$ such that $i$ %(hence also $f i$) 
is tight and detects tightness; thus $fi$ is also tight. 
We call such a limit an \textbf{$l$-rigged inserter};
by~\cite[Prop.~4.4]{lack:lim-lax}, such inserters lift to
$T\bbAlg_l$ for any 2-monad $T$.

If instead we require $g$ to be tight,  we obtain the notion of
\textbf{$c$-rigged inserter}, which lifts to $T\bbAlg_c$.

\subsubsection{Equifiers}
\label{sec:equifiers}

Let \bbD be the inchordate \sF-category on a parallel pair of 2-cells
(between a parallel pair of morphisms $a\rightrightarrows b$), so that a \bbD-shaped diagram
is a parallel pair of 2-cells between a parallel pair of loose
morphisms.  Let $\Phi_\loose$ be the diagram
 \[ \xymatrix{ 1 \rtwocell{\Downarrow} & \bbtwo }
 \]
in \Cat, where the two parallel 2-cells are equal, and let 
$\Phi_\tight(a)=1$ and $\Phi_\tight(b)=0$. 

Then a $\Phi$-weighted limit of $\alpha,\beta\colon f \rightrightarrows g$ is an
equifier, i.e.\ a morphism $e$ such that $\alpha .e = \beta .e$ which
is universal with this property, such that moreover $e$ is tight and
detects tightness.  We call such a limit a \textbf{$p$-rigged equifier};
by~\cite[Prop.~2.3]{bkp:2dmonads}, such equifiers lift to $T\bbAlg_p$ for any 2-monad $T$.

Now suppose that in \bbD we require the morphism that is the domain of
the 2-cells to be tight, and take $\Phi_\tight$ to be constant at $1$.
Then a \bbD-diagram is a parallel pair of 2-cells whose common domain
is tight, and a $\Phi$-weighted limit is again an equifier which again
is tight and detects tightness.   We call such a limit an \textbf{$l$-rigged equifier};
by~\cite[Prop.~4.3]{lack:lim-lax},
such equifiers lift to $T\bbAlg_l$ for any 2-monad $T$.  Of
course, dually we have \textbf{$c$-rigged equifiers}, which lift to $T\bbAlg_c$.

\subsubsection{Descent objects}
\label{sec:descobj}

Write $\bbone=\{0\}$ for the terminal category, $\bbtwo=\{0\to 1\}$ for 
the free-living arrow, and $\bbthree=\{0\to 1\to 2\}$ for the free-living
composable pair.  Consider the functors
\[\xymatrix{
\bbone \ar@<2ex>[r]^{\delta_0} \ar@<-2ex>[r]_{\delta_1} &
\bbtwo \ar[l]|{\sigma} \ar@<2ex>[r]^{\delta_0} \ar[r]|{\delta_1} 
\ar@<-2ex>[r]_{\delta_2}  & \bbthree }\]
where $\delta_i$ is the inclusion which omits $i$.
Let \sDl be the locally discrete sub-2-category of \Cat generated by the functors in the diagram, and let 
$\Phi_\loose:\sDl\to\Cat$ be the inclusion. A diagram 
$G:\sDl\to\sA$ in a 2-category has the form 
$$\xymatrix{
A_0 \ar@<2ex>[r]^{\delta_0} \ar@<-2ex>[r]_{\delta_1} &
A_1 \ar[l]|{\sigma} \ar@<2ex>[r]^{\delta_0} \ar[r]|{\delta_1} 
\ar@<-2ex>[r]_{\delta_2} & A_2 }$$
and the $\Phi_\loose$-weighted limit of this diagram we call
an {\bf $l$-descent object}.
This is an object $A$ universally equipped with a morphism $p\colon A \to A_0$ and a 2-cell $\pi\colon\delta_1 a\to \delta_0 a$ such that $\delta_0\pi.\delta_2\pi=\delta_1\pi$ and $\sigma\pi=1$. 

Let $\sDt$ be the sub-2-category of $\sDl$ generated by the functors
\[\xymatrix{
\bbone \ar@<2ex>[r]^{\delta_0}  &
\bbtwo \ar[l]|{\sigma} \ar@<2ex>[r]^{\delta_0} \ar[r]|{\delta_1} 
  & \bbthree }\]
and let $\Phi_\tight:\sDt\to\Cat$ be the 2-functor constant 
at $\bbone$.   Define
$\phi:\Phi_\tight\to\Phi_\loose J_\bbD$ to have components at $\bbone$, $\bbtwo$,
and $\bbthree$ given by the identity, $\delta_0:\bbone\to\bbtwo$, and
$\delta_0\delta_0:\bbone\to\bbthree$, respectively.  This now gives a weight 
$\Phi:\bbD\to\bbF$. The $\Phi$-weighted limit of a diagram 
$$\xymatrix{
A_0 \ar@<2ex>[r]^{\delta_0} \ar@{~>}@<-2ex>[r]_{\delta_1} &
A_1 \ar[l]|{\sigma} \ar@<2ex>[r]^{\delta_0} \ar[r]|{\delta_1} 
\ar@<-2ex>@{~>}[r]_{\delta_2} & A_2 }$$
in \bbA is an $l$-descent object $(p\colon A\to A_0,\pi\colon \delta_1 p\to \delta_0 p)$
in $\sAl$ for which $p$ is tight and detects tightness.
We call such a limit an \textbf{$l$-rigged $l$-descent object}.

The lifting of such descent objects to $T\bbAlg_l$ for a 2-monad $T$ was not
considered explicitly in \cite{lack:lim-lax}, but it could be treated by similar
techniques to those used there: either by giving a direct construction, or by 
constructing the descent object using inserters and equifiers.

% The $c$-descent object has the direction of the 
% 2-cell $\pi$ reversed, and in the $p$-descent object $\pi$ is required to be 
% invertible. 

Dually, there are $c$-descent objects, in which the direction of the 2-cell
$\pi$ is reversed, and the corresponding  
\textbf{$c$-rigged $c$-descent objects} can be shown to lift to $T\Alg_c$.
There are also $p$-descent objects, in which $\pi$ is required to be invertible,
and these lift to $T\Alg$.

Note that $\sDl$ admits an automorphism which swaps the \Cat-weights
for $l$-descent objects and $c$-descent objects, but this is no longer
true for \bbD and the rigged weights.

\subsubsection{Eilenberg-Moore objects}
\label{sec:EM-objects}

Recall from \S\ref{sec:monads-in-2cat} that monads in a 2-category \sK are
in bijection with 2-functors from $\sB\bbDelta_+$ to \sK. We may regard
$\sB\bbDelta_+$ as an inchordate \sF-category \bbD, 
so that $\sD_\loose=\sB\bbDelta_+$ and $\sD_\tight$ is the terminal 2-category. 

If $s$ is a monad on an object $A\in\sK$, and $S:\sB\bbDelta_+\to\sK$ is the 
corresponding 2-functor, an Eilenberg-Moore object $A^s$ for $s$ is a
limit of $S$ weighted by a particular weight $\sB\bbDelta_+\to\Cat$ which
we shall call $\Phi_\loose$.  We obtain a weight $\Phi:\bbD\to\bbF$ 
by setting $\Phi_\tight=\bbone$ and $\phi:\Phi_\tight\to\Phi_\loose J$
the map which picks out the ``generating projection.''

An \bbF-functor $S:\bbD\to\bbK$ is equivalently a monad $s$ on some object
$A$ of  $\sK_\loose$, and a $\Phi$-weighted limit of $S$ is now an 
Eilenberg-Moore object $u:A^s\to A$ for $s$ in $\sK_\loose$ for which $u$ is
tight and detects tightness.

By~\cite[Prop.~4.5]{lack:lim-lax},
limits of this sort lift to $T\bbAlg_c$ for any 2-monad $T$.  Dually,
Eilenberg-Moore objects of comonads lift to $T\bbAlg_l$.  They also lift
to $T\bbAlg_p$, by the results of \cite{bkp:2dmonads}, since they can 
be constructed using inserters and equifiers.

\subsubsection{Powers (cotensors)}
\label{sec:powers}

Let \bbD be the terminal \sF-category, with $\sDt=\sDl=\bbone$. Then 
a weight $\Phi:\bbD\to\bbF$ consists of an object
$X=(x:X_\tight\to X_\loose)$ of \bbF.  A $\Phi$-weighted limit is called a \textbf{power}
or cotensor. 

A diagram $S:\bbD\to\bbK$ consists of an object $S$ of \bbK. The power 
of $S$ by $X$ is written $X\pitchfork S$.
The loose part of its universal
property says that it is the 2-categorical power $L=X_\loose\pitchfork S$ in 
\sKl, defined by a natural isomorphism 
$$\sKl(A,L)\cong\Cat(X_\loose,\sKl(A,S)).$$
The tight part says that a morphism $f:A\to L$ is tight if and only if the 
corresponding $\tilde{f}:X_\loose\to\sKl(A,S)$ restricts to give the dotted 
part of the following commutative square.
$$\xymatrix{
X_\tight \ar[d]_{x} \ar@{.>}[r] & \sKt(A,S) \ar[d]^{J_\bbK} \\
X_\loose \ar[r]_-{\tilde{f}} & \sKl(A,S) }$$
In other words, the projections $X_\loose\pitchfork S\to S$ which correspond to
objects of $X_\tight$ are tight and jointly detect tightness.

Notice, in particular, that if $X$ and \bbK are both chordate, then the tight
part of the universal property is automatic.  Then 
%When $X$ alone is chordate, 
$X$-powers lift to $T\bbAlg_w$, for any
$w$ and any 2-monad $T$
\cite{bkp:2dmonads}.

On the other hand, if $X_\tight$ is empty, then all maps $A\to L$ must be tight.
In particular, if $X_\tight$ is empty and $X_\loose$ is terminal, then all maps 
$A\to L$ are tight and they are in bijection with loose maps $A\to S$, and 
finally this bijection extends to 2-cells. Thus $L$ is a (slightly odd) kind of 
``loose morphism coclassifier''. Such a limit does not generally lift
to $T\bbAlg_w$.

\section{Weak aspects of \sF-category theory}
\label{sec:monads}

In the previous section we developed some of the standard enriched-categorical
notions in the case of enrichment over \sF. In this section we turn to those notions
where some weakness is involved; this is of course absent from general 
enriched category theory.

\subsection{Weak \sF-natural transformations}
\label{sec:wf-transf}

We begin by generalizing the notions of lax, oplax, and pseudo natural
transformations from the  2-categorical setting  to the \sF-categorical
setting.  Given
\sF-categories \bbD and \bbK, we can define an \sF-category
$\bbNat_w(\bbD,\bbK)$ for each flavour of weakness ($w=p,c,l$), where
in each case the objects are the \sF-functors from \bbD to \bbK.

Given \sF-functors $M,N:\bbD\to\bbK$, we define a \textbf{loose $w$-natural transformation} $f\colon M\to N$ to be a $w$-natural transformation (of 2-functors) $f_\loose\colon M_\loose \to N_\loose$, such that $f_\loose J_\bbD \colon M_\loose J_\bbD \to N_\loose J_\bbD$ is strictly 2-natural.
Such a loose $w$-natural transformation is \textbf{tight}
when its components $f_\loose D:M_\loose D\to N_\loose D$ are all tight; 
this amounts to giving a 2-natural transformation $f_\tight:M_\tight\to N_\tight$ such that $f_\loose J_\bbD=J_\bbK f_\tight$.
%Evidently any tight $w$-natural transformation has an underlying loose one.
Note that even a tight $w$-natural transformation is more general than an \sF-natural transformation, in which $f_\loose$ would also have to be 2-natural.

Finally, a \textbf{modification} between $w$-natural transformations $f,g\colon M\to N$ is a modification $f_\loose \to g_\loose$.
When $f$ and $g$ are tight, such a modification induces a unique modification $f_\tight \to g_\tight$, since $J_\bbK$ is locally fully faithful.

We define the \sF-category $\bbNat_w(\bbD,\bbK)$ in the obvious way: its objects are \sF-functors $\bbD\to\bbF$, its tight and loose morphisms are tight and loose $w$-natural transformations, respectively, and its 2-cells are modifications.
For $w=p,l,c$ we may write $\bbPs(\bbD,\bbK)$, $\bbLax(\bbD,\bbK)$, and $\bbOplax(\bbD,\bbK)$ respectively.

Note in particular that being \emph{weak} is independent of being
\emph{loose}; thus  a strict transformation can be either
tight or loose (these are the tight and loose morphisms in $[\bbD,\bbK]$) and
likewise a $w$-transformation can be either tight or loose
(these are the tight and loose morphisms in $\bbNat_w(\bbD,\bbK)$).

\subsection{Weak \sF-transformation classifiers}
\label{sec:weak-transf-class}

We now use the 2-categorical weak morphism classifiers from
\S\ref{sec:weak-morph-class} to build corresponding classifiers for
weak \sF-transformations. Specifically, given a small \sF-category
\bbD and a cocomplete \sF-category \bbK, we shall construct a left
\sF-adjoint to the inclusion $[\bbD,\bbK]\to\bbNat_w(\bbD,\bbK)$. 

First of all, let $\ob\bbD$ be the discrete \sF-category with the same objects 
as \bbD, and $H:\ob\bbD\to\bbD$ the inclusion. Restriction along $H$ 
and left Kan extension induces a comonad $W=W_\bbD$ on $[\bbD,\bbK]$. For
each $F:\bbD\to\bbK$, we form the tight $w$-codescent object $Q_\bbD F$ of
the diagram
$$\xymatrix@C=4pc{
W^3F \ar@<2ex>[r]^{wW^2 F} \ar[r]^{WwWF} \ar@<-2ex>[r]^{W^2wF} & 
W^2 F \ar@<2ex>[r]^{wWF} \ar@<-2ex>[r]^{WwF} & 
WF \ar[l]_{dF} }$$
whose universal property means that loose maps $Q_\bbD F\lto G$ 
in $[\bbD,\bbK]$ correspond 
to $w$-natural transformations $F_\loose\to G_\loose$.  They are tight if their
components are tight. 

Next, let \bbC be $\sDt$, regarded as a chordate \sF-category, with 
$J:\bbC\to\bbD$ the evident inclusion. For an \sF-functor $F:\bbD\to\bbK$,
we may restrict along $J$ and then left Kan extend to obtain an \sF-functor
$\Lan_J(FJ)$, whose universal property means that loose maps 
$\Lan_J(FJ)\lto G$ correspond to (strict) natural transformations $J_\bbK F_\tight\to J_\bbK G_\tight$.
They are tight if their components are tight, or in other words, if they are 
natural transformations $F_\tight\to G_\tight$.

On the other hand, there is a comonad $W_\bbC$ on $[\bbC,\bbK]$ analogous
to $W_\bbD$, and  if  we form the corresponding $Q_\bbC$, then the \sF-functor
$\Lan_J(Q_\bbC(FJ))$ has the universal property that loose maps 
$\Lan_J(Q_\bbC(FJ))\lto G$ correspond to $w$-natural transformations
$J_\bbK F_\tight\to J_\bbK G_\tight$.  They are tight if their components are tight, or in 
other
words, if they are $w$-natural transformations $F_\tight\to G_\tight$. 

The weak morphism classifier is now given by the pushout $\QF F$ as 
in 
$$\xymatrix{
\Lan_J(Q_\bbC(FJ)) \ar[r] \ar[d] & \Lan_J(FJ) \ar[d] \\
Q_\bbD F \ar[r] & \QF F }$$
whose universal property says that a loose morphism $\QF F\lto G$ is a 
$w$-natural $F_\loose\to G_\loose$ for which the induced $F_\loose J\to G_\loose J$
is strict; it is tight if its components are tight. This is exactly what is needed 
for the weak morphism classifier.

Dually, if \bbD is small and \bbK is complete, we have weak morphism
coclassifiers. If \bbK is both complete and cocomplete, then we have both
classifiers and coclassifiers for weak morphisms, giving 
left and right adjoints $\QF_w$ and $\RF_w$ to the inclusion
$[\bbD,\bbK]\to\bbNat_w(\bbD,\bbK)$.

If $\bbK=\bbF$, so that $\Phi\colon\bbD\to\bbF$ can be expressed
as $\phi\colon \Phi_\tight \to \Phi_\loose J$, we can alternatively
construct the weak morphism classifier as follows.
For any \sF-category \bbD, let $Q^\bbD_w$ denote the 2-categorical
weak morphism classifier for the 2-monad on $[\sD_\tight,\Cat]$ whose
category of algebras is $[\sD_\loose,\Cat]$.
We call this the \textbf{relative $w$-transformation
  classifier} for \bbD; its universal property says that strict
natural transformations $Q^\bbD_w F \to G$ correspond to $w$-natural
transformations $F\to G$ which become strict when restricted to $\sDt$.
Then we can define
\begin{align*}
  (\QF_w\Phi)_\tight &= \Phi_\tight\\
  (\QF_w\Phi)_\loose &= Q^\bbD_w (\Phi_\loose).
\end{align*}
The structure map $(\QF_w\Phi)_\tight\to (\QF_w\Phi)_\loose J$ for $\QF_w\Phi$ is 
given by the composite
$$\xymatrix{
\Phi_\tight \ar[r]^-{\phi} & \Phi_\loose J \ar[r]^-{p} & Q^\bbD_w(\Phi_\loose) J. }$$
To see that this is a pointwise full embedding, consider the case $w=l$.
Observe that $\phi$ is a 
pointwise full embedding since $\Phi$ is a weight, while $p$ is a  pointwise
full embedding since it has a left adjoint $qJ:Q^\bbD_w(\Phi_\loose) J\to \Phi_\loose J$
with identity counit (essentially by the argument given in 
Lemma~\ref{lem:w-idempotent} or see \cite{bkp:2dmonads}). Thus 
the composite $p \phi$ is also a pointwise full embedding.

% We use $Q_w$ ambiguously to denote either a 2-categorical or
% \sF-categorical weak transformation classifier, depending on context.
Unlike in the 2-categorical case, the weak \sF-transformation
classifier $\QF_w$ is seemingly not a special case of any construction
that applies to more general \sF-monads; see \S\ref{sec:f-monads}.

As in the 2-categorical case, however, composing $\QF_w$ with the
inclusion gives a comonad on $[\bbD,\bbF]$, which we also call
$\QF_w$, and whose co-Kleisli \sF-category is $\bbNat_w(\bbD,\bbF)$.
Similarly, the composite of $\RF_w$ with the inclusion gives a monad,
also called $\RF_w$, whose Kleisli \sF-category is
$\bbNat_w(\bbD,\bbF)$.

We summarize all the weak transformation classifiers we will need in
this paper, and their notations, as follows.
\begin{itemize}
\item For any small 2-category \sD and any cocomplete 2-category \sK,
  we have a 2-comonad $Q^\sD_w$ on $[\sD,\sK]$, which classifies
  2-categorical $w$-natural transformations.
\item For any small \sF-category \bbD, we have a 2-comonad $Q^\bbD_w$
  on $[\sDl,\Cat]$, which classifies 2-categorical $w$-transformations
  that become strict when restricted to $\sDt$ (the \emph{relative
    $w$-transformation classifier}).  Comparing universal properties,
  we see that if \bbD is inchordate, then $Q^\bbD_w = Q^{\sDl}_w$.
\item For any small \sF-category \bbD and any cocomplete \sF-category
  \bbK, we have an \sF-comonad $\QF^\bbD_w$ on $[\bbD,\bbK]$, which
  classifies both tight and loose weak \sF-transformations as defined
  in \S\ref{sec:wf-transf}.  In the case $\bbK=\bbF$, we have
  $(\QF^\bbD_w\Phi)_\tight = \Phi_\tight$ and $(\QF^\bbD_w\Phi)_\loose
  = Q^\bbD_w (\Phi_\loose)$.
\end{itemize}
We will frequently omit the superscripts and/or subscripts on these
classifiers when there is no danger of confusion.

Note that $\QF$ is left adjoint to $\RF$, since we have
\[ [\bbD,\bbK](\QF_w F,G) \cong \bbNat_w(\bbD,\bbK)(F,G) \cong
[\bbD,\bbK](F,\RF_w G).
\]
Moreover, we also have the following (standard)
``adjointness'' with respect to weighted limits.

\begin{lem}\label{thm:qr-adjt}
  For any complete \sF-category \bbK, any weight $\Phi\colon
  \bbD\to\bbF$, and any diagram $G\colon \bbD\to\bbK$, we have
  $\{\Phi, \RF_w G \} \cong \{ \QF_w \Phi, G\}$.
\end{lem}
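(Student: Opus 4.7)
The plan is to reduce the claim, via the \sF-enriched Yoneda lemma, to two facts: the adjunction $\QF_w\dashv\RF_w$ on $[\bbD,\bbF]$ (noted just before the statement) and the observation that representable \sF-functors preserve all \sF-weighted limits.

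For any $A\in\bbK$, I would apply the defining universal property of the weighted limit $\{\QF_w\Phi,G\}$ and then the adjunction $\QF_w\dashv\RF_w$ on $[\bbD,\bbF]$ to obtain
\[\bbK(A,\{\QF_w\Phi,G\})\cong[\bbD,\bbF](\QF_w\Phi,\bbK(A,G-))\cong[\bbD,\bbF](\Phi,\RF_w\bbK(A,G-)).\]

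The main technical step is to establish a natural isomorphism $\RF_w\bbK(A,G-)\cong\bbK(A,\RF_w G-)$ of weights $\bbD\to\bbF$. Dually to the pushout construction of $\QF_w$ in \S\ref{sec:weak-transf-class}, the coclassifier $\RF_w G$ is built pointwise in $\bbK$ from pullbacks, right Kan extensions, and tight $w$-descent objects; these are all \sF-weighted limits in the $\bbK$-valued variable, and the representable $\bbK(A,-)\colon\bbK\to\bbF$ preserves each of them. Equivalently one may verify directly that $\bbK(A,\RF_w G-)$ satisfies the universal property defining $\RF_w\bbK(A,G-)$ by testing against an arbitrary weight.

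Chaining this with the universal property of $\{\Phi,\RF_w G\}$ (which exists since $\bbK$ is complete) then produces a natural isomorphism
\[\bbK(A,\{\QF_w\Phi,G\})\cong[\bbD,\bbF](\Phi,\bbK(A,\RF_w G-))\cong\bbK(A,\{\Phi,\RF_w G\}),\]
and the lemma follows by the \sF-enriched Yoneda lemma. The main obstacle is the interchange of $\bbK(A,-)$ with $\RF_w$; everything else is formal manipulation of universal properties.
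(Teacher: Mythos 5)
Your proposal is correct and is essentially the paper's own proof read in the opposite direction: the paper also chains the universal property of the weighted limit, the adjunction $\QF_w\dashv\RF_w$, and the interchange $\RF_w\bbK(A,G)\cong\bbK(A,\RF_w G)$, justifying the latter exactly as you do, by noting that $\RF_w$ is a limit construction and hence preserved by the representable $\bbK(A,-)$.
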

\begin{proof}
  For any $A\in\bbK$, we have
  \begin{align*}
    \bbK(A, \{\Phi, \RF_w G \} )
    &\cong [\bbD,\bbF] (\Phi, \bbK(A,\RF_w G) )\\
    &\cong [\bbD,\bbF] (\Phi, \RF_w \bbK(A,G) )\\
    &\cong [\bbD,\bbF] (\QF_w \Phi, \bbK(A,G) )\\
    &\cong \bbK(A, \{\QF_w \Phi,G\} ).
  \end{align*}
  where we have used the fact that since $\RF$ is a limit
  construction, it is preserved by the representable $\bbK(A,-)$.
\end{proof}

In particular, for any $D\in\bbD$ we have
\[ (\RF_w G)(D) \cong \{ \bbD(D,-), \RF_w G \} \cong \{ \QF_w \bbD(D,-), G \}
\]
so that $\RF_w$ is itself a weighted limit construction.  (To those who
are familiar with the behavior of weighted limits, this is also evident
from our construction of $\RF$ out of other weighted limits.)

\subsection{\sF-monads}
\label{sec:f-monads}

By an \textbf{\sF-monad} we mean, of course, a monad $T\maps \bbK\to\bbK$ in
the 2-category $\sF\Cat$ of \sF-categories, \sF-functors, and
\sF-transformations.  In particular, this means that the components of
its multiplication and unit are tight, and strictly natural with
respect to both tight and loose morphisms.  We denote the
Eilenberg-Moore object in $\sF\Cat$ of such a $T$ by $T\bbAlg_s$.  The
objects of $T\bbAlg_s$ are the (strict) $T_\tight$-algebras, and the
tight morphisms are the strict $T_\tight$-morphisms, which we call
\textbf{strict $T$-morphisms}.  The loose morphisms in $T\bbAlg_s$, on
the other hand, are the strict $T_\loose$-morphisms (where we regard
$T_\tight$-algebras as $T_\loose$-algebras in the evident way).
$T\bbAlg_s$ has the usual universal property with respect to
\sF-functors $G\maps \bbX\to \bbK$ equipped with a $T$-algebra
structure $TG\to G$ which is \sF-natural.

If we replace \sF-naturality in this universal property by weak
\sF-naturality of the three kinds considered in \S\ref{sec:wf-transf},
we thereby characterize a trio ($w=p,c,l$) of \sF-categories which we denote
$T\bbAlg_w$.  Explicitly:
\begin{itemize}
\item An object of $T\bbAlg_w$ is a (strict) $T_\tight$-algebra (hence also a $T_\loose$-algebra).
\item A tight morphism in $T\bbAlg_w$ is a strict $T_\tight$-morphism
  (hence also a strict $T_\loose$-morphism).
\item A loose morphism in $T\bbAlg_w$ is a $w$-$T_\loose$-morphism; we
  call these \textbf{$w$-$T$-morphisms}.
\item A transformation is a $T_\loose$-transformation.
\end{itemize}
For instance, a loose morphism $(f,\fbar)\maps (A,a)\lto (B,b)$ in $T\bbAlg_l$
consists of a loose morphism $f\maps A\lto B$ in \bbK\ together with a
2-cell
\[\xymatrix{TA \ar@{~>}[r]^{Tf}\ar[d]_a  \drtwocell\omit{\fbar} & TB \ar[d]^b\\
  A \ar@{~>}[r]_f & B}\]
satisfying the usual axioms.
Note that if \bbK is chordate, so that
$T$ is just a 2-monad on the 2-category $\sK_\tight = \sK_\loose$,
then the \sF-category $T\bbAlg_w$ defined above can be identified with
the \sF-category of the same name defined in \autoref{eg:fcat-alg}.
This is essential for applications to 2-category theory.

The universal property of $T\bbAlg_w$ says that
\begin{equation}\label{eq:univ-prop-Talg_w}
\bbNat_{\wbar}(\bbX,T\bbAlg_w) \iso \bbNat_{\wbar}(\bbX,T)\bbAlg_w.
\end{equation}
(Note the reversal of sense in the weak natural transformations, as in \S\ref{sec:2-monads}.)
As with the 2-categorical version~\eqref{eq:natalg}, we find it
helpful to make~\eqref{eq:univ-prop-Talg_w} more explicit in a couple
of cases.  Suppose that $w=l$ and that $\bbX$ is the inchordate
\bbtwo.  Then an object of $\bbOplax(\bbX,T\bbAlg_l)$ is simply a
loose and lax $T$-morphism $(f,\fbar)\colon (A,a) \lto (B,b)$, as
above.  On the other hand, an $\bbOplax(\bbX,T)$-algebra consists of a
loose morphism $f\colon A\lto B$ in \sK (that is, an object of
$\bbOplax(\bbX,\bbK)$) together with an oplax natural transformation
from $Tf$ to $f$ whose components are tight (since the structure maps
of any algebra for an \sF-monad are tight, and the tight morphisms in
$\bbOplax(\bbX,\bbK)$ have tight components).  This consists of tight
morphisms $a\colon TA\to A$ and $b\colon TB\to B$ and a 2-cell
$\fbar\colon b.Tf \to f.a$, and as before the algebra axioms assert
precisely that $(A,a)$ and $(B,b)$ are $T$-algebras and $(f,\fbar)$ is
a lax $T$-morphism.

Now a loose morphism in $\bbOplax(\bbX,T\bbAlg_l)$ from $(f,\fbar)$ to
$(g,\gbar)\colon (C,c) \lto (D,d)$ is a loose oplax transformation;
thus it consists of loose and lax $T$-morphisms $(h,\hbar)\colon
(A,a)\lto (C,c)$ and $(k,\kbar)\colon (B,b) \lto (D,d)$ (these being
loose morphisms in $T\bbAlg_l$), together with a $T$-transformation
$\alpha\colon (k,\kbar)(f,\fbar) \to (g,\gbar)(h,\hbar)$.
 On the other hand, a loose and lax
morphism of $\bbOplax(\bbX,T)$-algebras consists of a loose oplax
transformation from $f\colon A\to B$ to $g\colon C \to D$, hence loose
morphisms $h\colon A\lto C$ and $k\colon B\lto D$ and a 2-cell
$\alpha\colon k f \to g h$, together with a modification consisting of
2-cells $\hbar\colon c.Th \to h.a$ and $\kbar\colon d.Tk \to k.b$.  As
before, the axioms assert precisely that $(h,\hbar)$ and $(k,\kbar)$
are lax $T$-morphisms and $\alpha$ is a $T$-transformation.

Finally, a tight morphism in $\bbOplax(\bbX,T\bbAlg_l)$ from
$(f,\fbar)$ to $(g,\gbar)$ is a tight oplax transformation; thus it
consists of tight and strict $T$-morphisms $h\colon (A,a) \to (C,c)$
and $k\colon (B,b) \to (D,d)$ and a $T$-transformation $\alpha\colon k
f \to g h$.  On the other side, a tight and strict morphism of
$\bbOplax(\bbX,T)$-algebras consists of a tight oplax transformation
from $f$ to $g$, hence tight morphisms $h\colon A\to C$ and $k\colon
B\to D$ and a 2-cell $\alpha\colon k f \to g h$, such that $c.Th =
h.a$ and $d.Tk = k.b$.  The axioms say exactly that $h$ and $k$ are
strict $T$-morphisms and $\alpha$ is a $T$-transformation.

If instead we take \bbX to be the chordate \bbtwo, then in
$\bbOplax(\bbX,T\bbAlg_l)$ we require that $f$ and $g$ must be tight
and strict $T$-morphisms.  On the other side, we obtain the fact that
$f$ and $g$ are tight from the fact that we have functors out of \bbX,
while the fact that $\fbar$ and $\gbar$ are identities follows from
the requirement that an oplax \sF-transformation be strictly natural
with respect to tight morphisms.

Note that in order to make~\eqref{eq:univ-prop-Talg_w} true, we need
the ``notions of tightness'' in $T\bbAlg_w$ and in
$\bbNat_w(\bbD,\bbK)$ to be different.  Specifically, the tight
morphisms in $T\bbAlg_w$ are strict, whereas the tight morphisms in
$\bbNat_w(\bbD,\bbK)$ are strictly natural only with respect to tight
maps.  Unfortunately, this seems to mean that unlike the situation in
2-category theory, the weak \sF-transformation category
$\bbNat_w(\bbD,\bbK)$ is not of the form $T\bbAlg_w$ for any \sF-monad
$T$.

Finally, as in the 2-categorical case, any lax morphism of \sF-monads
induces a functor between \sF-categories of algebras and weak
morphisms in a straightforward way.

\section{Rigged limits lift}
\label{sec:rigged-limits}

Let $\Phi\maps \bbD\to\bbF$ be an \sF-weight, let \bbK\ be an
\sF-category, and let $T$ be an \sF-monad on \bbK. (The reader is
encouraged to think of \bbK\ as the chordate \sF-category associated
to a 2-category and of $T$ as arising from a 2-monad, since this is
the case of most interest.) Our goal in this section is to show that
$\Phi$-weighted limits are created by $U_w\colon T\bbAlg_w
\to \bbK$ for any \sF-monad $T$
if and only if $\Phi$ is ``rigged'' (and we will define what this means).
Recall that as remarked after \autoref{def:lifting}, since the \sF-categorical
$U_w$ is conservative, it creates any limits that it lifts.

Once again, $w$ could be any of $p$, $l$, or $c$; there will be a
notion of ``$w$-rigged'' for each value of $w$.  We will state the
definitions and theorems of this section for general $w$, but in most
of the proofs we will describe only the case $w=l$ explicitly.  The
case $w=c$ is a formal dual; while the proofs may easily be adapted to
the case $w=p$ by requiring the relevant 2-cells to be invertible.

\subsection{Rigged weights}
\label{sec:rigged-weights}

To make a start, we suppose that our weight $\Phi:\bbD\to\bbF$ is a
$\QF_\wbar$-coalgebra, with structure map $s\colon \Phi \to \QF_{\wbar}
\Phi$.  Note the reversal of sense: we are considering liftings to
$T\bbAlg_w$, but we assume $\Phi$ to be a $\QF_\wbar$-coalgebra.

At the moment, this hypothesis may seem somewhat unmotivated, but at
least in the case $w=p$ it is a strengthening of \emph{flexibility},
while the notion of \emph{PIE-limit} (the limits known to lift when $w=p$) is
also a strengthening of flexibility.  In \S\ref{sec:pie} we will show
that in fact, PIE-limits are precisely the 2-categorical
$Q$-coalgebras.  Moreover, one of the results of~\cite{lack:lim-lax}
is that \emph{oplax limits} lift to $T\Alg_l$, and the weights for
oplax limits are exactly the \emph{cofree} $Q_c$-coalgebras.

In \S\ref{sec:lifting} we will show that under the hypothesis
that $\Phi$ is a $\QF_\wbar$-coalgebra, we can 
construct, for each 
$G:\bbD\to T\bbAlg_w$, a $T$-algebra $L$
with the universal property of the limit $\{\Phi_\loose,G_\loose\}$ in 
$(T\bbAlg_w)_\loose$, lifting the limit $\{\Phi_\loose,UG_\loose\}$ in \sKl.
This hypothesis is not, however, enough to get the full universal property of the 
limit $\{\Phi,G\}$.  Recall from \S\ref{sec:limits} that in
addition to being a limit in the 2-category of loose morphisms, an
\sF-limit must ``detect tightness.''

\begin{eg}
Consider powers (\S\ref{sec:powers})
by an object $0\to C$ of \sF, with $C$ a non-empty category. Let
$T$ be a 2-monad on a 2-category \sK, seen as a chordate \sF-category,
and let $(B,b)$ be a $T$-algebra. 
A power of $B\in\bbK$ by $0\to C$ consists of a power $C\pitchfork B$ in \sK;
the tight part of the universal property is automatic, since all morphisms in \sK
are tight. This power will lift to a power of $(B,b)$ in $T\bbAlg_w$ if (i) the power 
$C\pitchfork B$ lifts to a power $C\pitchfork(B,b)$ in $(T\bbAlg_w)_\loose$, and 
(ii) all morphisms into $C\pitchfork(B,b)$ are tight (that is, strict). Now (i) will
always hold, but (ii) generally will not, even in the case where $C$ is the terminal
category \bbone.
\end{eg}

To ensure the tight aspect of the universal property, we must impose
an additional condition on $\Phi$.
Recall that we write $J\maps \sD_\tight \to\sD_\loose$ for the
2-functor that underlies an \sF-category \bbD, and $\ph\maps
\Phi_\tight \to \Phi_\loose J$ for the structure map of an \sF-weight
$\Phi$.

\begin{defn}\label{def:rigged}
  We say that an \sF-weight $\Phi\maps \bbD\to\bbF$ is
  \textbf{$w$-rigged} if
  \begin{enumerate}
  \item $\Phi$ is a $\QF_\wbar$-coalgebra, and
  \item The induced morphism $\phbar\maps \Lan_J \Phi_\tight \to \Phi_\loose$ is pointwise surjective on objects.
  \end{enumerate}
\end{defn}

The extra condition may seem somewhat odd; its importance is due to
the following alternative characterization.

\begin{lem}\label{thm:so-char}
  The following are equivalent for a 2-natural transformation $f\maps
  \Phi\to \Psi$ between 2-functors $\Phi,\Psi\maps \sD\to\Cat$.
  \begin{enumerate}
  \item $f$ is pointwise surjective on objects.\label{item:so1}
  \item Precomposition with $f$ reflects identities, i.e.\ for any
    $g,h\maps \Psi\to \Upsilon$ and modification $\beta\maps g\to h$,
    if $\beta f$ is an identity then so is $\beta$.\label{item:so2}
  \item As in~\ref{item:so2}, but only when $\beta$ is known to be an
    isomorphism.\label{item:so3}
  \end{enumerate}
\end{lem}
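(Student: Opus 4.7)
The implications (i)$\Rightarrow$(ii) and (ii)$\Rightarrow$(iii) are straightforward. For (i)$\Rightarrow$(ii), assume $f$ is pointwise surjective on objects and $\beta f = \id$. Then for every $D\in\sD$ and every $x\in\Phi(D)$, the component $(\beta_D)_{f_D(x)}$ is an identity arrow. Pointwise surjectivity promotes this to $(\beta_D)_y = \id$ for every $y\in\Psi(D)$, which forces $g_D = h_D$ on objects; naturality of $\beta_D$ then forces agreement on morphisms as well, yielding $\beta = \id$. The step (ii)$\Rightarrow$(iii) is immediate, since identity modifications are in particular invertible.

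The content is in (iii)$\Rightarrow$(i), which I prove by contraposition. Suppose $y_0\in\Psi(D_0)$ is not in the image of $f_{D_0}$. I will exhibit $\Upsilon\maps\sD\to\Cat$, parallel 2-natural transformations $g,h\maps\Psi\to\Upsilon$, and an invertible but non-identity modification $\beta\maps g\to h$ satisfying $\beta f = \id$. Let $I$ denote the free-living isomorphism $\{0 \xrightarrow{\cong} 1\}$, regarded as a constant 2-functor $\sD\to\Cat$, and take
\[
\Upsilon \;:=\; (\Psi\times I) \;\sqcup_{\Phi\times I}\; \Phi
\]
as the pushout in $[\sD,\Cat]$ of $\Psi\times I \xleftarrow{f\times I} \Phi\times I \xrightarrow{\pi_\Phi} \Phi$, where $\pi_\Phi$ is the projection. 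Since colimits in $[\sD,\Cat]$ are pointwise, this is a well-defined 2-functor.

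Define $g$ and $h$ by $g(y) = [(y,0)]$ and $h(y) = [(y,1)]$, and define $\beta\maps g\to h$ with components $\beta_{D,y} = [(\id_y,\,0\to 1)]$, which is invertible with inverse $[(\id_y,\,1\to 0)]$. For any $x\in\Phi(D)$ the pushout relation identifies $(f_D(\id_x),\,0\to 1) \in \Psi(D)\times I$ with the image of $\id_x$ under $\pi_\Phi$, so $\beta_{D,f_D(x)}$ collapses to an identity and $\beta f = \id$. On the other hand, since $y_0 \notin \mathrm{im}(f_{D_0})$, no pushout relation applies at $(y_0,0)$ or $(y_0,1)$, so these remain distinct objects in $\Upsilon(D_0)$ and $\beta_{D_0,y_0}$ is a morphism between distinct objects, hence non-identity.

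The only subtle point I anticipate is verifying that the pointwise pushout does not spuriously identify $(y_0,0)$ with $(y_0,1)$ through the 2-functoriality of $\sD$. This cannot happen: the pushout relations identify only pairs whose first coordinate lies in $\mathrm{im}(f_D)$, and the 2-naturality square $\Psi(\delta)\circ f_D = f_{D'}\circ\Phi(\delta)$ guarantees that $\Psi(\delta)$ preserves the property of being in $\mathrm{im}(f)$. Hence every identification the pushout performs is confined to the image of $f$, which does not contain $y_0$, and $\beta_{D_0,y_0}$ remains a nontrivial isomorphism as claimed.
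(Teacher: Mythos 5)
Your proof is correct, and the two easy implications match the paper's. For the substantive implication (iii)$\Rightarrow$(i) you take a genuinely different route. The paper fixes a single $d_0$ and $y_0\in\Psi(d_0)$ and takes $\Upsilon=\Ran_{d_0}C$, the cofree diagram at $d_0$ on the chaotic two-object category $C$: a 2-natural transformation $\Psi\to\Upsilon$ is then just a functor $\Psi(d_0)\to C$, chaoticity lets one send $y_0$ to $1$ and everything else to $0$, and any two such functors are uniquely isomorphic, producing an invertible modification that is non-identity exactly at $(d_0,y_0)$; assuming (iii), its restriction along $f$ cannot be an identity, forcing $y_0$ into the image. You instead argue the contrapositive with one global construction: the pointwise pushout of $\Psi\times I\leftarrow\Phi\times I\to\Phi$, a mapping-cylinder that collapses the free isomorphism precisely over the image of $f$. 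Your construction works --- the only delicate point, that the pushout in $\Cat$ does not identify $(y_0,0)$ with $(y_0,1)$, is settled because the object-level pushout is a pushout of sets whose generating identifications all have the form $(f_D(x),i)\sim x$ and hence never touch an object outside $\mathrm{im}(f_D)$; this is exactly the observation you make, and the worry about 2-functoriality is moot since the pushout is computed pointwise. The trade-off: the paper's cofree gadget avoids any analysis of colimits in $\Cat$ (pushouts of categories are notoriously hard to describe on morphisms, though here only the object level matters), while your construction witnesses the failure of (iii) for all missing objects simultaneously and makes the contrapositive structure explicit.
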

\begin{proof}
  Since $(\beta f)_{d,x} = \beta_{d,f(x)}$ for $d\in\sD$ and $x\in
  \Phi(d)$, if $f$ is pointwise surjective on objects and $(\beta
  f)_{d,x}$ is an identity for all $d$ and $x$, then $\beta_{d,y}$ is
  an identity for all $d\in\sD$ and $y\in \Psi(d)$.
  Thus~\ref{item:so1} implies~\ref{item:so2}, and
  clearly~\ref{item:so2} implies~\ref{item:so3}, so
  suppose~\ref{item:so3}.  Pick any $d_0\in\sD$ and $y_0\in \Psi(d)$,
  and let $\Upsilon = \Ran_{d_0} C$ be the co-free diagram at $d_0\in
  \sD$ on the chaotic category $C$ with two objects 0 and 1; thus a
  2-natural transformation $\Psi\to\Upsilon$ is determined by a
  functor $\Psi(d_0)\to C$.  Let $g\maps \Psi\to\Upsilon$ be
  determined by the functor $\Psi(d_0)\to C$ constant at 0, and let
  $h$ be determined by the functor $\Psi(d_0)\to C$ sending $y_0$ to 1
  and everything else to 0.  Then there is an invertible modification
  $\beta\maps g\xrightarrow{\iso} h$ such that $\beta_{d,y}$ is an
  identity whenever $d\neq d_0$ or $y\neq y_0$, but $\beta_{d_0,y_0}$
  is not an identity.  Thus $\beta f$ cannot be an identity either,
  and so there must be some $x\in \Phi(d_0)$ with $f(x)=y_0$.  Hence
  $f$ is pointwise surjective on objects, and so~\ref{item:so3}
  implies~\ref{item:so1}.
\end{proof}

\begin{rmk}
Our \sF-categories depend heavily on the class of full embeddings (functors
which are injective on objects and fully faithful). These are the right class 
of a factorization system on \Cat for which the left class consists of the 
functors which are surjective on objects. But we have also referred in 
passing to the more general \sFF-categories, which involve merely fully faithful
functors, and we have promised that all our results extend to the setting of 
\sFF-categories. Since fully faithful functors form the right class of a factorization
system on \Cat for which the left class consists of the functors which are bijective
on objects, one might guess that the notion of rigging for \sFF-categories
would involve $\phi:\Phi_\tight\to\Phi_\loose J$ which is pointwise bijective on 
objects. This is not the case: we still use surjectivity on objects,
and \autoref{thm:so-char} explains why.

On the other hand, we would need to modify surjectivity on objects to
obtain a notion of rigging appropriate for \sF-categories of algebras
which combine pseudo and lax morphisms, instead of strict and lax
ones, as suggested in the introduction.
\end{rmk}

The relationship between the two conditions defining a rigged
weight is further clarified by the following observation.

Recall that  $\QF_\loose = Q^\bbD$ is the 2-categorical relative
$\wbar$-morphism classifier on $[\sDl,\Cat]$, and hence is
$\wbar$-idempotent.  In particular, for $\QF$-coalgebras
$\Phi,\Psi\colon \bbD\to\bbF$, any loose map $\Phi \lto \Psi$ in
$[\bbD,\bbF]$, being a 2-natural transformation $\Phi_\loose \to
\Psi_\loose$, is automatically a $\wbar$-$\QF_\loose$-morphism.
Moreover, any morphism between such loose maps is automatically a
$\QF_\loose$-transformation.  The next lemma says that if $\Phi$ and
$\Psi$ are $w$-rigged, we also have a corresponding property for tight
morphisms.

\begin{lem}\label{thm:rigged-strictmor}
If $\Phi$ and $\Psi$ are $\QF_\wbar$-coalgebras, and $\Phi$ is $w$-rigged,
then any (strict) \sF-natural transformation $\Phi\to\Psi$ is automatically
a strict $\QF_\wbar$-morphism.
\end{lem}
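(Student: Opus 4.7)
The plan is to show the coalgebra-morphism square
\[\xymatrix{
\Phi \ar[r]^{f} \ar[d]_{s_\Phi} & \Psi \ar[d]^{s_\Psi} \\
\QF_\wbar \Phi \ar[r]_{\QF_\wbar f} & \QF_\wbar \Psi
}\]
commutes in $[\bbD,\bbF]$, where $s_\Phi,s_\Psi$ are the coalgebra structure maps. Since a morphism in this \sF-category is determined by its tight and loose parts, I would check these separately. The tight part is immediate: from the construction $(\QF_\wbar X)_\tight = X_\tight$ of \S\ref{sec:weak-transf-class}, both $(s_\Phi)_\tight$ and $(s_\Psi)_\tight$ are identities and $(\QF_\wbar f)_\tight = f_\tight$, so the two composites both reduce to $f_\tight$.

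For the loose part, write $h=p_\Psi \cdot f_\loose$ and $g = Q^\bbD_\wbar f_\loose \cdot p_\Phi$, both 2-natural transformations $\Phi_\loose \to Q^\bbD_\wbar\Psi_\loose$. The key observation is that the \sF-comonad $\QF_\wbar$ is $\wbar$-idempotent (by the \sF-enriched version of \autoref{lem:w-idempotent}, since on loose parts $\QF_\wbar$ is given by the $\wbar$-idempotent 2-comonad $Q^\bbD_\wbar$), so every \sF-morphism between $\QF_\wbar$-coalgebras acquires a unique structure of $\wbar$-$\QF_\wbar$-\sF-morphism. Applied to $f$, this produces a canonical 2-cell $\tilde{f}$ whose loose component is a modification $h \to g$ in the $\wbar$-direction; the loose part of the square commutes precisely when $\tilde{f}$ is the identity.

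I would show $\tilde{f}$ is identity in two steps. First, since the tight parts of $s_\Phi$, $s_\Psi$ and $\QF_\wbar$ are all identities, the tight component of $\tilde{f}$ is the identity 2-cell for trivial reasons, and the \sF-compatibility between the tight and loose components of an \sF-2-cell then forces $\tilde{f}_\loose$ to have identity components at every object of the form $\phi_d(y)$ for $d \in \sDt$ and $y \in \Phi_\tight(d)$. Second, I would apply \autoref{thm:so-char} to $\phbar\colon \Lan_J \Phi_\tight \to \Phi_\loose$, which is pointwise surjective on objects by the rigging hypothesis: precomposition with $\phbar$ reflects identities of modifications, and via the adjunction $\Lan_J \dashv J^*$ this upgrades the fact that $\tilde{f}_\loose$ has identity component at each $\phi_d(y)$ to the conclusion that $\tilde{f}_\loose$ is identity throughout. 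The main obstacle is making precise what the canonical 2-cell $\tilde{f}$ is in the \sF-enriched setting and verifying that its tight component is identity; this should follow from the observation that the adjunction data underpinning the \sF-analogue of \autoref{lem:w-idempotent} trivializes on tight parts, where $\QF_\wbar$ acts as the identity.
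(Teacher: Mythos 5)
Your overall route is the same as the paper's: split the coalgebra-morphism square into tight and loose parts, dispose of the tight part using $(\QF_\wbar)_\tight=\mathrm{id}$, obtain the comparison 2-cell $\tilde f$ from the $\wbar$-idempotence of $Q^\bbD_\wbar$, kill it on the tight points, and conclude with \autoref{thm:so-char} and the pointwise surjectivity of $\phbar$. The second half of your argument (identity components at the $\phi_d(y)$ imply, via $\Lan_J\dashv J^*$ and \autoref{thm:so-char}, that $\tilde f_\loose$ is an identity) is exactly what the paper does.

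The gap is the assertion that ``the tight component of $\tilde f$ is the identity 2-cell for trivial reasons.'' This is not trivial: the two composites $\QF_\wbar f\cdot s_\Phi$ and $s_\Psi\cdot f$ have \emph{equal} tight parts (both are $f_\tight$), so $\tilde f_\tight$ is merely an endo-2-cell of $f_\tight$, and an endo-2-cell need not be an identity. Nor does ``$\QF_\wbar$ acts as the identity on tight parts'' suffice by itself: the adjunction $q_\Psi\dashv s_\Psi$ that supplies $\tilde f$ lives entirely in $[\sDl,\Cat]$, and its unit $\eta$ has no a priori reason to restrict to an identity over the image of $\phi$. (Relatedly, your $h$ and $g$ should be built from the coalgebra structure maps $s$, not from the unit $p$ of the classifier; the two agree on tight points only \emph{because} of the coalgebra axiom $sJ\cdot\phi=p\cdot\phi$, and that axiom is one of the inputs you need here.) What actually closes the gap is a short computation using the explicit form of the comparison cell: $\tilde f$ is, up to the identifications $q\cdot\QF f\cdot s=f$, the whiskering $\eta\cdot\QF_\loose f_\loose\cdot s_\Phi$; restricting along $J$ and precomposing with $\phi$, the coalgebra axioms for $\Phi$ and $\Psi$ together with the strict \sF-naturality of $f$ identify this restriction with $J(\eta\cdot s_\Psi)\cdot\psi\cdot f_\tight$; and $\eta\cdot s_\Psi$ is an identity by a triangle identity, since the counit of $q_\Psi\dashv s_\Psi$ is an identity. (Equivalently, one can take tight components throughout and deduce $\eta_\tight=\mathrm{id}$ from the other triangle identity together with $(q_\Psi)_\tight=\mathrm{id}$.) This is precisely the one computation in the paper's proof; once you supply it, your argument is complete.
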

\begin{proof}
  As usual, we write in the case $w=l$.
  Let $\QF = \QF_c$, let $\Phi,\Psi$ have structure maps 
$s_\Phi\colon \Phi\to \QF \Phi$ and $s_\Psi\colon \Psi\to \QF \Psi$, and let $f\colon \Phi \to \Psi$ be \sF-natural.  Suppose that $\Phi$ is $w$-rigged. 
  We must show that $(\QF f)_\tight (s_\Phi)_\tight = (s_\Psi)_\tight
  f_\tight$ and $(\QF f)_\loose (s_\Phi)_\loose = (s_\Psi)_\loose f_\loose$.
  However, $(s_\Phi)_\tight$ and $(s_\Psi)_\tight$ are the identity, since $\QF_\tight$ is the identity, so the first of these is trivial.
  Moreover, since $\QF_\loose$ is colax-idempotent, we have a unique colax
  $\QF_\loose$-morphism structure map
  $\fbar\colon (\QF f)_\loose. (s_\Phi)_\loose \to (s_\Psi)_\loose. f_\loose$, so it
  suffices to show that \fbar is an identity.

  Now by definition, \fbar is the composite
  \[ (\QF_\loose f_\loose). s_\Phi \xrightarrow{\eta. (\QF_\loose
    f_\loose). s_\Phi } s_\Psi. q_\Psi. (\QF f)_\loose. s_\Phi = s_\Psi . f_\loose . q_\Phi. s_\Phi = s_\Psi f_\loose. \]
  where $\eta$ is the unit of the adjunction  $q_\Phi \dashv s_\Phi$. 
  Now let us apply $J_\bbD$ to this composite and precompose with $\phi\colon \Phi_\tight \to \Phi_\loose J$.
  Since $J(\QF_\loose f_\loose) . J(s_\Phi) . \phi = J(s_\Psi) . \psi . f_\tight$ (using the fact that  $s_\Phi$  is strictly \sF-natural), we obtain
  \[ J(s_\Psi) . \psi . f_\tight \xrightarrow{J(\eta . s_\Psi) . \psi . f_\tight} J(s_\Psi . q_\Psi . s_\Psi) . \psi . f_\tight \]
  But the counit of the adjunction  $q_\Phi\dashv s_\Phi$  
is an identity, hence so also is  $\eta.s_\Phi$. 
  Thus, $J(\fbar). \phi$ is an identity, which equivalently means that
  \[ \xymatrix{ \Lan_J \Phi_\tight \ar[r]^-{\phbar} & \Phi_\loose \rrtwocell{\fbar} && \QF_\loose \Psi_\loose }\]
  is an identity.  Since \phbar is pointwise surjective on objects, by \autoref{thm:so-char} this implies that \fbar is an identity, as desired.
\end{proof}

Thus, the forgetful functor $\QF_\wbar \bbCoalg_\wbar \to
[\bbD,\bbF]$, when restricted to $w$-rigged weights, is fully faithful
in the \sF-enriched sense.  In particular, if an \sF-weight
$\Phi$ admits two \QF-coalgebra structures of which one (hence also
the other) is rigged, then the identity $\Phi\to\Phi$ is a strict
\QF-coalgebra map between them, and hence the two structures coincide.
Thus, ``being $w$-rigged'' (unlike ``being a \QF-coalgebra'') is a mere property of an
\sF-weight, not structure on it.  (However, see also
\S\ref{sec:canonical}.)

\subsection{Reduction to special \bbK}
\label{sec:reduction}

We start with the following simplification, which will be useful at
various stages in the proofs.

\begin{prop}
\label{prop:reduction}
For a weight $\Phi:\bbD\to\bbF$ and a given choice of $w$, 
the following conditions are equivalent:
\begin{enumerate}
\item $\Phi$-weighted limits are created by $U_w:T\bbAlg_w\to\bbK$ for 
all \sF-categories \bbK and all \sF-monads $T$ on \bbK;\label{item:red1}
\item $\Phi$-weighted limits are created by $U_w:T\bbAlg_w\to\bbK$ for 
all complete \sF-categories \bbK and all \sF-monads $T$ on \bbK;\label{item:red2}
\item $\Phi$-weighted limits are created by $U_w:T\bbAlg_w\to\bbK$ 
for all presheaf \sF-categories $\bbK=[\bbC,\bbF]$ and all \sF-monads $T$
on \bbK;\label{item:red3}
\item $\Phi$-weighted limits are created by $U_w:T\bbAlg_w\to\bbK$ for 
all small \sF-categories \bbK and all \sF-monads $T$ on \bbK.\label{item:red4}
\item $\Phi$-weighted limits are created by $U_w:T\bbAlg_w\to\bbK$ for 
all chordate \sF-categories \bbK and all \sF-monads $T$ on \bbK.\label{item:red5}
\end{enumerate}
\end{prop}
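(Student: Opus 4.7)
The implications (i)$\Rightarrow$(ii), (iii), (iv), (v) are immediate, since each of the latter four simply specializes the class of \sF-categories \bbK and \sF-monads $T$ over which a universal statement is made. The task is therefore to recover (i) from one of the restricted cases.

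My plan is to use an \sF-enriched Yoneda-style reduction. Given \bbK, $T$ and a diagram $G\colon\bbD\to T\bbAlg_w$ with $\{\Phi,U_w G\}$ existing in \bbK, form the Yoneda embedding $Y\colon\bbK\to[\bbK\op,\bbF]$, which is \sF-fully-faithful and preserves all existing weighted limits. Since $Y$ is dense, left Kan extension along $Y$ produces an \sF-monad $T' = \Lan_Y(Y T)$ on the presheaf \sF-category $[\bbK\op,\bbF]$, together with a fully faithful lift $\widetilde Y\colon T\bbAlg_w \to T'\bbAlg_w$ satisfying $U'_w\widetilde Y = Y U_w$. Applying (iii) to $T'$ yields a $\Phi$-limit of $\widetilde Y G$ in $T'\bbAlg_w$; since $U'_w$ preserves it and $Y$ preserves $\{\Phi,U_w G\}$, this lifted limit lies in the essential image of $\widetilde Y$, and fully-faithfulness transports it back to a $\Phi$-limit in $T\bbAlg_w$ creating the one in \bbK. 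This shows (iii)$\Rightarrow$(i), and the same argument, restricted to a small full sub-\sF-category $\bbK_0\subseteq\bbK$ containing the image of $UG$, the candidate limit, and enough of $T$'s action to support the relevant algebra data, gives (iv)$\Rightarrow$(i); a parallel argument inside a cocompletion of $\bbK_0$ handles (ii)$\Rightarrow$(i).

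For (v)$\Rightarrow$(i) a slightly different route is required, since passing to chordate \sF-categories collapses the tight/loose distinction. The plan is to apply (v) separately to the chordate \sF-category on $\sK_\loose$ (with the 2-monad $T_\loose$) to lift $\Phi_\loose$-weighted limits of the underlying loose diagram, and then to use the tight part of the structure---controlled by the 2-monad $T_\tight$ on $\sK_\tight$ via the square defining the \sF-monad---to certify that the resulting limit projections are tight and jointly detect tightness in the sense of \S\ref{sec:limits}, thereby upgrading the 2-categorical lift to a full \sF-limit.

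The main obstacle I anticipate is bookkeeping around the lifted monad $T'$ in the Yoneda argument: one must verify carefully that strict, tight, loose, and $w$-$T'$-morphisms in $T'\bbAlg_w$ restrict along $\widetilde Y$ to the corresponding notions in $T\bbAlg_w$, and in particular that the tightness-detection condition---which is exactly what distinguishes an \sF-limit from a 2-categorical limit of the loose part, as highlighted in \S\ref{sec:limits}---is transported faithfully in both directions. For the chordate reduction (v)$\Rightarrow$(i), the corresponding difficulty is the need to reconstitute the tight/loose interaction by a second, compatible application of the hypothesis to $\sK_\tight$, and to check that the two lifts glue along $J_\bbK$ into a genuine \sF-functor out of $\bbD$.
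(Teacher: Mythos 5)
Your reductions among (ii), (iii) and (iv) are essentially the paper's: the paper proves (iii)$\Rightarrow$(iv) by the Yoneda embedding into $\widehat{\bbK}=[\bbK\op,\bbF]$ with the monad $\widehat T$ obtained by left Kan extension, and (iv)$\Rightarrow$(i) by cutting down to a small full subcategory closed under $T$ and containing the image of $UG$ and the candidate limit. Two points of care there: first, your ``direct'' (iii)$\Rightarrow$(i) does not typecheck for large $\bbK$, since $[\bbK\op,\bbF]$ is not a presheaf category on a small $\bbC$; you must pass through the small-subcategory reduction first, so the order of implications matters. Second, in the small-subcategory step the lifted limit acquires its universal property only against objects of the chosen subcategory; the paper completes the argument by enlarging the subcategory ``one object at a time'' to test against every object of $T\bbAlg_w$, a step your sketch omits but which is easily supplied.

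The genuine gap is in (v)$\Rightarrow$(i) (the paper proves (v)$\Rightarrow$(ii), which suffices to close the cycle). Your plan to make a second, parallel application of the hypothesis to the 2-monad $T_\tight$ on $\sKt$ and then ``glue along $J_\bbK$'' does not work: the diagram $G$ has genuinely loose components and does not factor through $\sKt$; a $\Phi$-weighted limit in the chordate $\sKt$ would again be the 2-categorical $\Phi_\loose$-weighted limit (losing $\Phi_\tight$ entirely); and the tight morphisms of $T\bbAlg_w$ are the \emph{strict} $T$-morphisms, which is not what $T_\tight\bbAlg_w$ would produce. The correct mechanism uses only one application of (v), to $T_\loose$ on the chordate $\sKl$, and then exploits the \emph{hypothesis of creation}: the \sF-limit $\{\Phi,UG\}$ is assumed to exist in $\bbK$, so its projections $p_{\loose,\phi(a)}$ are already tight in $\bbK$ and jointly detect tightness there. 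Tightness of the lifted structure map $\ell\colon TL\to L$ then follows because each composite $p_{\loose,\phi(a)}\circ\ell$ equals the tight composite of $Tp_{\loose,\phi(a)}$ with the structure map of $GD$, and the tight projections detect tightness; and tightness-detection in $T\bbAlg_w$ is obtained by combining strictness-detection (from the single chordate application) with tightness-detection in $\bbK$. Without invoking the assumed \sF-limit in $\bbK$ in this way, there is no source for the tight part of the universal property, and your proposed route does not supply one.
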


\begin{proof}
Clearly~\ref{item:red1} implies all the other conditions, and~\ref{item:red2} implies~\ref{item:red3}. We shall show that~\ref{item:red3} implies~\ref{item:red4}, 
that~\ref{item:red4} implies~\ref{item:red1}, and that~\ref{item:red5} implies~\ref{item:red2}.

Suppose~\ref{item:red3} and let $T$ be an \sF-monad on a small \sF-category \bbK,
and let $G:\bbD\to T\bbAlg_w$ be an \sF-functor for which the limit 
$\{\Phi,UG\}$ exists. Since \bbK is small, we may form the presheaf \sF-category 
$\widehat{\bbK}=[\bbK\op,\bbF]$ and the monad $\widehat{T}$ on $\widehat{\bbK}$ 
induced by left Kan extension along $T$. There is an induced
fully faithful $T\bbAlg_w\to\widehat{T}\bbAlg_w$ lifting the 
Yoneda embedding $\bbK\to\widehat{\bbK}$. A $\widehat{T}$-algebra is
in the image of $T\bbAlg_w$ if and only if the underlying object in $\widehat{\bbK}$ 
is in the image of the Yoneda embedding. Now the limit 
$\{\Phi,UG\}$ in \bbK is preserved by Yoneda, and since $\widehat{\bbK}$ is 
a presheaf category this limit lifts to $\widehat{T}\bbAlg_w$; but now
this limit also lies in $T\bbAlg_w$ and so is a limit there. Thus~\ref{item:red3} implies~\ref{item:red4}.

Suppose~\ref{item:red4} and let
$T$ be an \sF-monad on an arbitrary \sF-category \bbK, and let $G:\bbD\to T\bbAlg_w$ be an \sF-functor for which the limit $\{\Phi,UG\}$ exists. 
First choose a small full subcategory \bbB of \bbK which is closed under the action
of $T$ and contains $\{\Phi,UG\}$ and the image of $UG$. Then the limit 
$\{\Phi,UG\}$ lifts to $S\bbAlg_w$, where $S$ is the restriction of $T$ to \bbB. 
Our lifted limit has the correct universal property in $S\bbAlg_w$, but we still
need to check the universal property in the larger \sF-category $T\bbAlg_w$. 
But this can be done one object at a time: for each object $C\in\bbK$, we may
enlarge \bbB to a small full subcategory $\bbC$ of \bbK having the same properties
as before, but also containing $C$. Now our lifted limit also has the correct universal
property in $R\bbAlg_w$, where $R$ is the restriction of $T$ to \bbC, and since
we can do this for any object $C$, it has the correct universal property in all of 
$T\bbAlg_w$. Thus~\ref{item:red4} implies~\ref{item:red1}.

Suppose~\ref{item:red5} and let $T$ be an \sF-monad on a complete \sF-category \bbK, and let $G:\bbD\to T\bbAlg_w$ be an \sF-functor. The \sF-monad $T$ on 
\bbK induces a 2-monad $T_\loose$ on \sKl. As usual, we regard \sKl as a chordate
\sF-category. The canonical \sF-functor $\bbK\to\sKl$ lifts to an \sF-functor
$P:T\bbAlg_w\to T_\loose\bbAlg_w$ whose loose part $P_\loose$ is 2-fully faithful;
a $T_\loose$-algebra $(A,a)$ lies in the image of $P_\loose$ if and only if $a:TA\to A$
is tight. 

By~\ref{item:red5}, the limit $L=\{\Phi,UPG\}$ in \sKl lifts to a limit
$(L,\ell)=\{\Phi,PG\}$ in $T_\loose\bbAlg_w$; this will be a $T$-algebra
if and only if $\ell$ is tight. Furthermore, since $(L,\ell)$ is in particular
a limit $\{\Phi_\loose,P_\loose G_\loose\}$ in $(T_\loose\bbAlg_w)_\loose$, and
$P_\loose$ is 2-fully faithful and so reflects limits, if $(L,\ell)$ is a $T$-algebra
then it will be a limit $\{\Phi_\loose,G_\loose\}$ in $(T\bbAlg_w)_\loose$.

Now the projections $p_{\loose,\phi(a)}:L\to UG_\loose D$, for
$D\in\bbD$ and $a\in\Phi_\tight D$, are tight in \bbK and jointly detect
tightness. They are also strict $T_\loose$-morphisms, and so the
square
\[\xymatrix@C=4pc{
TL \ar[r]^-{Tp_{\loose,\phi(a)}} \ar[d]_{\ell} & TUG_\loose D \ar[d] \\
L \ar[r]_-{p_{\loose,\phi(a)}} & UG_\loose D ,}
\]
in which the right leg is the structure map of $G_\loose D$, is
commutative, and $\ell$ will be tight if and only if the common
composites $TL \to U G_\loose D$ are all tight.  But the right leg is
tight since each $G D$ is a $T$-algebra, and the top leg is tight
since $p_{\loose,\phi(a)}$ is a tight projection. Thus $\ell$ is
tight, and $(L,\ell)$ is the limit $\{\Phi_\loose,G_\loose\}$ in
$(T\bbAlg_w)_\loose$.

Moreover, since the above projections $p_{\lambda,\phi(a)}$ 
are strict $T_\loose$-morphisms,
they are also strict $T$-morphisms, and since they are tight, they are
tight morphisms in $T \bbAlg_w$.  Thus, it remains to show that they
jointly detect tightness in $T \bbAlg_w$.

Let $(A,a)$ be a $T$-algebra, and $(f,\fbar):(A,a)\to(L,\ell)$ a
loose morphism in $T\bbAlg_w$. Supppose that the composite
$$\xymatrix@C=3pc{
  (A,a) \ar[r]^{(f,\fbar)} & (L,\ell) \ar[r]^-{p_{\loose,\phi(a)}} &
  GD }$$ is tight for each $D\in\bbD$ and each $a\in\Phi_\tight D$.
The projections $p_{\loose,\phi(a)}$ jointly detect tightness of
$T_\loose$-morphisms (that is, they jointly detect strictness) and so
$(f,\fbar)$ is strict. On the other hand, the projections
$p_{\loose,\phi(a)}:L\to UGD$ jointly detect tightness of morphisms in
\bbK, and so $f$ is tight. Thus $(f,\fbar)$ is tight in $T\bbAlg_w$,
and so $(L,\ell)$ has the full universal property of $\{\Phi,G\}$.
\end{proof}

Our main interest is in the weights which satisfy~\ref{item:red1}, but
it is also useful to have~\ref{item:red5}, which says that
restricting our attention to 2-monads on 2-categories does not affect
the resulting class of \sF-limits.  In other words, the introduction
of \sF-categories has not ``changed the problem'' from the original
2-categorical question.

The other conditions are more technical.  In particular, it will be convenient 
in our analysis of the lifting of limits to suppose that \bbK is complete, and 
by this last result there is no loss of generality in doing so. 
In fact we could have given still more equivalent conditions; for example that \bbK 
was complete and chordate, or small and chordate.

\subsection{Lifting of limits}
\label{sec:lifting}

We now embark on the actual proof that rigged limits lift.
Suppose that $w=l$ and that \bbK is complete,
and consider a diagram $G:\bbD\to
T\bbAlg_{ l}$ with the limit
$L=\{\Phi,UG\}\in\bbK$. By the isomorphism
\begin{equation}
  \bbOplax(\bbD,T\bbAlg_{
    l})\cong\bbOplax(\bbD,T)\bbAlg_{ l}\label{eq:oplaxalg}
\end{equation}
we have a tight oplax \sF-natural transformation $g\colon TUG\to UG$,
which makes $UG$ into an $\bbOplax(\bbD,T)$-algebra in
$\bbOplax(\bbD,\bbK)$.
Now consider the upper path around the following diagram
\begin{equation}\label{eq:defn-zeta}
\xymatrix @C1pc {
\Phi \ar[rr]^-{\eta} \ar[d]_{s} && 
\bbK(L,UG-) \ar[rr]^-{T} &&
\bbK(TL,TUG-) \ar[d]^-{\bbK(TL,g-)} \\ 
\QF\Phi \ar[rrrr]_-{\zeta} &&&& \bbK(TL,UG-) }
\end{equation}
in which the first two (horizontal) morphisms are tight and strictly \sF-natural, while
the third (vertical) is tight oplax natural.
Therefore, by the universal property of $\QF = \QF_c$, there is 
a unique tight and strict transformation $\zeta\colon \QF \Phi\to\bbK(TL,UG-)$ making the diagram commute. 
Finally, by the universal property of the limit $L$, there
is a unique tight $\ell\colon TL\to L$ making the following diagram commute:
\begin{equation}\label{eq:defn-l}
\xymatrix{
\Phi \ar[r]^-{\eta} \ar[d]_s & \bbK(L,UG-) \ar[d]^{\bbK(\ell,UG-)} \\
\QF \Phi \ar[r]_-{\zeta} & \bbK(TL,UG-) }
\end{equation}
One could now prove directly that this map $\ell\colon TL\to L$ makes
$L$ into a $T$-algebra, but the calculations are lengthy and not particularly 
enlightening.
Thus, we will instead give a more formal approach involving
monad morphisms.

The overall strategy is this. Rather than construct the limit
$\{\Phi,G\}$ separately for each $G\colon \bbD\to T\bbAlg_l$, we do
this functorially.  More precisely, we construct the \sF-functor
$\bbOplax(\bbD,T\bbAlg_l)\to T\bbAlg_l$ which (in a later theorem)
will turn out to send $G$ to the limit $\{\Phi,G\}$. We use
$\bbOplax(\bbD,T\bbAlg_l)$ rather than the simpler $[\bbD,T\bbAlg_l]$
in order to take advantage of the isomorphism~\eqref{eq:oplaxalg}.
This reduces the problem to constructing an \sF-functor
$\bbOplax(\bbD,T)\bbAlg_l\to T\bbAlg_l$, which in turn can be done by
constructing a lax monad morphism from $\bbOplax(\bbD,T)$ to $T$, since
monad morphisms induce liftings not just to their Eilenberg-Moore
objects but also to variants using weak morphisms.

\begin{rmk}\label{rmk:appendix}
It turns out that if we make this construction sufficiently
functorial in the weight $\Phi$ as well, then it is possible to
deduce the universal properties of these limits $\{\Phi,G\}$ from
their functoriality. In an appendix to the paper, we describe 
the resulting proof, which treats weighted limits using profunctors.
In many ways this gives a fuller picture of the situation, but it is 
also somewhat longer, so we have chosen here an intermediate 
approach, which constructs algebra structure using the monad-theoretic 
ideas of the previous paragraph, but then proves the universal property
by showing that the relevant hom-objects can be constructed as certain
descent objects.  
\end{rmk}

\begin{prop}\label{prop:alg-structure-on-L}
If \bbK is complete and $\Phi$ is a $Q_\wbar$-coalgebra, 
the \sF-functor $F=\{\Phi,-\}:[\bbD,\bbK]\to\bbK$
extends to an \sF-functor $F':\bbNat_\wbar(\bbD,\bbK)\to\bbK$ which lifts
to an \sF-functor $F'':\bbNat_\wbar(\bbD,T\bbAlg_w)\to T\bbAlg_w$.
\end{prop}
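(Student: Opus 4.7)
The plan is to exploit the monad-theoretic framework of Sections~\ref{sec:weak-transf-class} and~\ref{sec:f-monads}. For the first extension we identify $\bbNat_\wbar(\bbD,\bbK)$ as the Kleisli \sF-category of the \sF-monad $\RF_\wbar$ on $[\bbD,\bbK]$, and equip $F=\{\Phi,-\}$, as an object of $[[\bbD,\bbK],\bbK]$, with an $\RF_\wbar$-algebra structure; the Kleisli universal property then produces the extension $F'$. For the lift to $T\bbAlg_w$, we use the universal property~\eqref{eq:univ-prop-Talg_w} to identify $\bbNat_\wbar(\bbD,T\bbAlg_w)$ with $\bbNat_\wbar(\bbD,T)\bbAlg_w$, and display $F'$ as the underlying \sF-functor of a lax morphism of \sF-monads from $\bbNat_\wbar(\bbD,T)$ to $T$, which by the general lifting principle then induces the desired $F''$.

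For the $\RF_\wbar$-algebra structure, \autoref{thm:qr-adjt} supplies a natural isomorphism $\{\Phi,\RF_\wbar G\}\iso\{\QF_\wbar\Phi,G\}$, and contravariant functoriality of $\{-,G\}$ in its first variable converts the $\QF_\wbar$-coalgebra structure map $s\colon\Phi\to\QF_\wbar\Phi$ into an \sF-natural map
\[
\alpha_G\colon F\RF_\wbar G\iso\{\QF_\wbar\Phi,G\}\xrightarrow{\{s,G\}}\{\Phi,G\}=FG.
\]
The coassociativity and counitality of $s$ translate, through the adjunction $\QF_\wbar\dashv\RF_\wbar$ and \autoref{thm:qr-adjt}, into the associativity and unit laws for $\alpha$, so that $\alpha$ is a genuine $\RF_\wbar$-algebra structure on $F$ and therefore extends $F$ along the inclusion $[\bbD,\bbK]\to\bbNat_\wbar(\bbD,\bbK)$ to the required \sF-functor $F'$.

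For the lift, we take $\psi\colon TF'\to F'\circ\bbNat_\wbar(\bbD,T)$ to have components
\[
\psi_G\colon T\{\Phi,G\}\to\{\Phi,TG\}
\]
induced by the universal property of $\{\Phi,TG\}$ applied to the $T$-image of the limit cone for $\{\Phi,G\}$. The hexagonal axioms for a lax monad morphism reduce, in the standard manner, to compatibility of the limit cone with the multiplication and unit of $T$. Thus $(F',\psi)$ is a lax morphism of \sF-monads, which by the remark at the end of \S\ref{sec:f-monads} induces an \sF-functor $\bbNat_\wbar(\bbD,T)\bbAlg_w\to T\bbAlg_w$; composing with the isomorphism of~\eqref{eq:univ-prop-Talg_w} yields $F''$.

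The main obstacle is verifying that $\psi$ is \sF-natural with respect to \emph{loose} morphisms in $\bbNat_\wbar(\bbD,\bbK)$, i.e., with respect to honest $\wbar$-natural transformations rather than strict \sF-natural ones. Naturality on strict \sF-natural transformations is immediate from the universal property of weighted limits; but on a $\wbar$-natural $f\colon G\to H$ the action of $F'$ is computed by first transposing $f$ to a strict morphism $G\to\RF_\wbar H$ and then applying $\alpha_H$, so the verification reduces to a diagram chase combining \sF-naturality of the canonical comparison $T\{\Phi,-\}\to\{\Phi,T-\}$ in the second variable, the $\RF_\wbar$-algebra laws for $\alpha$, and the $\QF_\wbar$-coalgebra laws for $s$. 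Once this is established, the remaining verifications — \sF-naturality of $\alpha$ along strict maps, together with the hexagonal axioms for the monad morphism — are routine.
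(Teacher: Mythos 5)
Your proposal is essentially the paper's own proof: both arguments identify $\bbNat_\wbar(\bbD,\bbK)$ as the Kleisli \sF-category of $\RF_\wbar$, use \autoref{thm:qr-adjt} to convert the coalgebra structure $s\colon\Phi\to\QF_\wbar\Phi$ into an opaction $F\RF_\wbar\to F$ yielding $F'$, and then exhibit $F'$ as a lax monad morphism to $T$ with structure 2-cell given by the canonical comparisons $T\{\Phi,G\}\to\{\Phi,TG\}$, so that~\eqref{eq:univ-prop-Talg_w} produces $F''$. The one place you diverge is the step you flag as ``the main obstacle'': rather than verifying by hand that $\psi$ is \sF-natural with respect to loose ($\wbar$-natural) morphisms, the paper invokes the distributive law $k\colon[\bbD,T]\RF_\wbar\to\RF_\wbar[\bbD,T]$ together with \autoref{lem:monad-stuff} and \autoref{lem:monad-stuff-2}, which reduce the entire issue to the single compatibility square~\eqref{eq:monad-stuff}; that square in turn commutes simply because the canonical comparison $T\{-,G\}\to\{-,TG\}$ is natural in the weight, applied to $s$. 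Your direct diagram chase would succeed, but it is precisely the verification this formal machinery is designed to dispatch, so you may as well cite it.
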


\proof
The proof involves three monads and various relationships between them. 
The first two monads are $T$ itself and the induced $[\bbD,T]$ on $[\bbD,\bbK]$. 
The \sF-functor $F:[\bbD,\bbK]\to\bbK$ can be given the structure of 
a monad morphism from $[\bbD,T]$ 
to $T$. Explicitly, the 2-cell $\psi:TF\to F[\bbD,T]$ which makes $F$ a monad
morphism has component at $G:\bbD\to\bbK$ given by the canonical 
comparison map $T\{\Phi,G\}\to\{\Phi,TG\}$. Alternatively, one can construct
the lifting as the composite 
$$\xymatrix{
[\bbD,T]\bbAlg_s \ar[r]^{\cong} & 
[\bbD,T\bbAlg_s] \ar[r] & 
T\bbAlg_s }$$
where the first map is the canonical isomorphism, and the second comes 
from the fact that  limits lift to Eilenberg-Moore objects; then as usual this
lifting of $F$ determines a monad morphism structure on $F$. 

Next we need to introduce weakness into the situation. This is done via the 
weak morphism coclassifier $\RF=\RF_\wbar$, seen as a monad on $[\bbD,\bbK]$. Recall
that $\bbNat_\wbar(\bbD,\bbK)$ is the Kleisli object for $\RF$; in particular 
weak maps $G\to H$ correspond to strict maps $G\to \RF H$. 
Since the monad $[\bbD,T]$ on $[\bbD,\bbK]$ 
extends to a monad $\bbNat_\wbar(\bbD,T)$ on the Kleisli object 
$\bbNat_\wbar(\bbD,\bbK)$ of $\RF$, there is an induced distributive law 
$k:[\bbD,T]\RF\to \RF[\bbD,T]$. 

The last ingredient is the relationship between $F$ and $\RF$. This is
where we use the assumption that $\Phi$ is a $\QF_\wbar$-coalgebra. The
coaction $s:\Phi\to \QF_\wbar\Phi$ induces an opaction 
\[f:F\RF=\{\Phi,\RF-\}\cong\{\QF\Phi,-\}\to \{\Phi,-\}=F\]
of $\RF$ on $F$, and so an
extension $F':\bbNat_\wbar(\bbD,\bbK)\to\bbK$ of $F$ to the Kleisli object of $\RF$. 

Next we put these pieces together to make $F'$ into a monad morphism
from $\bbNat_\wbar(\bbD,T)$ to $T$.   Since $\bbNat_\wbar(\bbD,T)$ is
an extension of $T$ to the Kleisli category of $\RF$, and $T$ is an
extension of itself to the Kleisli category of the identity monad, by
\autoref{lem:monad-stuff-2}  it will suffice to show that the
diagram 
$$\xymatrix{
TF\RF \ar[r]^-{\psi} \ar[d]_{Tf} & F[\bbD,T]\RF \ar[r]^-{Fk} & F\RF[\bbD,T] \ar[d]^{f[\bbD,T]} \\
TF \ar[rr]_-{\psi} && F[\bbD,T] }$$
commutes.  This means that for each $G:\bbD\to\bbK$ the corresponding diagram
$$\xymatrix{
T\{\Phi,\RF G\} \ar[r] \ar[d] & \{\Phi,T\RF G\} \ar[r] & \{\Phi,\RF TG\} \ar[d] \\
T\{\Phi,G\} \ar[rr] && \{\Phi,TG\} }$$
commutes; but this reduces, using the isomorphism $\{\Phi,\RF -\}\cong\{\QF \Phi,-\}$
to commutativity of the diagram 
$$\xymatrix{
T\{\QF \Phi,G\} \ar[r] \ar[d]_{T\{s,G\}} & \{\QF \Phi,TG\} \ar[d]^{\{s,TG\}} \\
T\{\Phi,G\} \ar[r] & \{\Phi,TG\} }$$
expressing the naturality of the canonical comparisons (appearing as the 
horizontal maps) with respect to $s$. 
 
Finally, we can take $F''$ to be the composite 
$$\xymatrix{
\bbNat_\wbar(\bbD,T\bbAlg_w) \ar[r]^{\cong} & 
\bbNat_\wbar(\bbD,T)\bbAlg_w \ar[r] & 
T\bbAlg_w }$$
in which the second map is the lifting of the above monad morphism $F'$ from
$\bbNat_\wbar(\bbD,T)$ to $T$.

Since $F''$ is given by lifting $F'$, for any  diagram $G:\bbD\to T\bbAlg_w$ the 
induced $T$-algebra will have underlying object $L$ calculated by composing 
with $U$ to get $UG:\bbD\to\bbK$ and taking the limit $\{\Phi,UG\}$. The algebra
structure $\ell:TL\to L$ is given by the composite
\begin{equation}
  \xymatrix@C=1.5pc{
    T\{\Phi,UG\} \ar[r] & \{\Phi,TUG\} \ar[r] & \{\Phi,\RF UG\} \ar[r]^{\cong} & 
    \{\QF \Phi,UG\} \ar[r] & \{\Phi,UG\} }\label{eq:lifted-tstr}
\end{equation}
where the first map is the canonical comparison, the second comes from the 
weakly natural actions $TGD\to GD$, the third is the canonical isomorphism,
and the last is induced by composition with $s:\Phi\to \QF \Phi$. This 
agrees with the earlier description of $\ell$. 
\endproof

The proposition shows that we have an algebra structure on $L=\{\Phi,UG\}$, 
and a degree of functoriality, but it does not give any sort of universal property. 
We now turn to this, starting with the universal property with respect to loose maps.

\begin{thm}\label{thm:coalg-looselift}
  Let $\Phi\maps \bbD\to\bbF$ be an \sF-weight which is a
  $\QF_\wbar$-coalgebra, let $T$ be an \sF-monad on a complete \sF-category \bbK,
  and let $G\maps \bbD\to T\bbAlg_w$ be an \sF-functor.
% such that the  limit $\{\Phi,U G\}$ exists in \bbK.  
Then the $T$-algebra structure on $\{\Phi,U G\}$ constructed in 
\autoref{prop:alg-structure-on-L}
%has a  canonical structure of $T$-algebra which makes it into 
gives it the universal property of the limit
  $\{\Phi_\loose, G_\loose\}$ in the 2-category $(T\bbAlg_w)_\loose$.
\end{thm}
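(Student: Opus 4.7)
The plan is to establish, for each $T$-algebra $(A,a)$, the universal-property isomorphism
\[
(T\bbAlg_w)_\loose((A,a),(L,\ell)) \;\cong\; [\sD_\loose,\Cat]\bigl(\Phi_\loose,(T\bbAlg_w)_\loose((A,a),G_\loose(-))\bigr)
\]
by realising both sides as $w$-descent objects in $\Cat$ and comparing them term by term. The projections of the lifted cone are simply the images of the underlying limit projections $p_{\loose,x}\colon L\to UG_\loose D$ under the \sF-functor $F''$ of \autoref{prop:alg-structure-on-L}, equipped with the structure 2-cells coming from $g\colon TUG\to UG$.

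The key preliminary observation is that for any two $T$-algebras $(B,b)$ and $(C,c)$, the hom-category $(T\bbAlg_w)_\loose((B,b),(C,c))$ of $w$-$T$-morphisms is canonically a $w$-descent object (as in \S\ref{sec:descobj}) of a truncated cosimplicial diagram in $\Cat$ whose $n$-th term is $\bbK(T^nB,C)$, with cofaces and codegeneracies built from $\eta$, $\mu$, $b$, and $c$. This is simply a repackaging of the definition of a $w$-$T$-morphism---a 1-cell together with a structure 2-cell of orientation $w$ and two coherence axioms---together with the definition of a $T$-transformation at the level of 2-cells.

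Applying this description to both sides, the left-hand side becomes the $w$-descent object of $n\mapsto \bbK(T^nA,L)$, whose cofaces involve $\ell$, while, using the fact that the 2-categorical limit weighted by $\Phi_\loose$ commutes with the $w$-descent limit, the right-hand side becomes the $w$-descent object of the diagram $n\mapsto [\sD_\loose,\Cat]\bigl(\Phi_\loose,\bbK(T^nA,G_\loose(-))\bigr)$, whose cofaces involve the individual algebra structures on the $G_\loose D$. The universal property of $L=\{\Phi,UG\}$ in $\bbK$ provides, for each $n$, a canonical isomorphism between the $n$-th terms of these two diagrams; all cofaces and codegeneracies \emph{not} using an algebra structure correspond trivially under 2-naturality of the limit cone.

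The main obstacle, and the place where the coalgebra hypothesis is used essentially, is verifying that these termwise isomorphisms also intertwine those cofaces that do involve algebra structures. This is exactly the compatibility built into the defining equations~\eqref{eq:defn-zeta} and~\eqref{eq:defn-l}: the strict factorisation $\zeta\colon \QF_\wbar\Phi\to\bbK(TL,UG-)$---which exists precisely because $\Phi$ is a $\QF_\wbar$-coalgebra, via the coaction $s\colon \Phi\to \QF_\wbar\Phi$---converts the tight oplax \sF-naturality of $g\colon TUG\to UG$ into the strict compatibility required to identify the two cosimplicial diagrams. Once this compatibility is in place, the two $w$-descent objects coincide, yielding the desired isomorphism and hence the universal property of $(L,\ell)$ in $(T\bbAlg_w)_\loose$.
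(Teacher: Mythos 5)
There is a genuine gap at the step where you say that ``the 2-categorical limit weighted by $\Phi_\loose$ commutes with the $w$-descent limit,'' so that the right-hand side becomes the descent object of $n\mapsto [\sD_\loose,\Cat]\bigl(\Phi_\loose,\bbK(T^nA,G_\loose(-))\bigr)$. That interchange would be valid if the cosimplicial diagram $n\mapsto \sKl(T^nA,UG_\loose(-))$ were a diagram in $[\sD_\loose,\Cat]$, but it is not: the cofaces built from the algebra structures $g_D\colon TUG_\loose D\to UG_\loose D$ are only \emph{oplax} natural in $D$ with respect to loose morphisms of \bbD, because $G$ sends those to lax (not strict) $T$-morphisms. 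Consequently the descent presentation of $T_\loose\Alg_l(\bA,G_\loose-)$ lives in the 2-category $M\Alg_c$ of colax morphisms of $[\sD_\loose,\Cat]$-objects, and the representable that preserves it is $M\Alg_c(\Phi_\loose,-)\cong[\sD_\loose,\Cat](Q^\bbD_\wbar\Phi_\loose,-)$, not $[\sD_\loose,\Cat](\Phi_\loose,-)$. In other words, your termwise comparison proves (essentially the paper's Step~1) that $(L',\ell')=\{\QF_\wbar\Phi,UG\}$ has the universal property of $\{Q_\wbar\Phi_\loose,G_\loose\}$; it does not reach $\{\Phi_\loose,G_\loose\}$.

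The coalgebra structure $s\colon\Phi\to\QF_\wbar\Phi$ does let you \emph{define} strictly natural cofaces out of $\Phi_\loose$ (this is how $\zeta$ and $\ell$ arise), but that only repairs the construction of the comparison maps, not the identification of the descent object of the $s$-twisted diagram with $[\sD_\loose,\Cat](\Phi_\loose,T_\loose\Alg_l(\bA,G_\loose-))$. Bridging from $\{Q_\wbar\Phi_\loose,G_\loose\}$ to $\{\Phi_\loose,G_\loose\}$ is where the real work lies, and it is not a formal consequence of~\eqref{eq:defn-zeta} and~\eqref{eq:defn-l}: one uses that the projection $T_\loose\Alg_l(\bA,G_\loose-)\to\sKl(A,UG_\loose-)$ is strict and detects strictness in $M\Alg_c$ to get a pullback square, then uses the adjunction $q\dashv s$ with identity counit to equip $q^*\colon L\to L'$ with a lax $T$-morphism structure $\overline{q}^*$, and finally verifies by an explicit triangle-identity computation that composition with $(q^*,\overline{q}^*)$ fills that pullback. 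You should add this second stage (or some equivalent of it) to your argument; as written, the proof only establishes the universal property for the weight $Q_\wbar\Phi_\loose$.
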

\proof
Once again, we write the proof only for the case $w=l$, with $\QF =\QF _c$, and
we continue writing $L$ for the limit $\{\Phi,UG\}$ %, $\eta:\Phi\to\bbK(L,UG-)$,
and $\ell:TL\to L$ for its induced algebra structure.
%We shall write \sA for the 2-category $T_\loose\Alg_l$.

Write $M$ for the 2-monad on $[\sDt,\Cat]$ whose Eilenberg-Moore
2-category is $[\sDl,\Cat]$ with forgetful 2-functor
$[J_\bbD,\Cat]:[\sDl,\Cat]\to[\sDt,\Cat]$.  Thus an $M$-algebra is a
2-functor $\sDl\to\Cat$, and a colax $M$-morphism is an oplax natural
transformation whose restriction along $J_\bbD$ is strict.  (This
is the 2-monad $M$ for which $\QF _\loose = Q^\bbD$  is the colax morphism
classifier.) We have the 2-category $M\Alg_c$ of $M$-algebras
and colax morphisms.

Let $\bA=(A,a)$ and $\bB=(B,b)$ be $T_\loose$-algebras. 
Then we have a  $c$-descent object
$$\xymatrix{
T_\loose\Alg_l(\bA,\bB) \ar@{.>}[r] &
\sKl(A,B) \ar@{~>}@<2ex>[r]  \ar@<-2ex>[r] &
\sKl(TA,B) \ar[l] \ar@<-2ex>[r] \ar@{~>}@<2ex>[r] \ar[r] &
\sKl(T^2A,B) }$$
in \Cat, in which the straight arrows are all given by precomposition
by some arrow in \sKl, while the wriggly arrows are given by applying $T$ and
then composing with $b$.  In particular, for each $D\in\sDl$ we have a
 $c$-descent object  
$$\small\xymatrix@C=1.5pc{
T_\loose\Alg_l(\bA,G_\loose D) \ar[r] &
\sKl(A,UG_\loose D) \ar@{~>}@<2ex>[r]  \ar@<-2ex>[r] &
\sKl(TA,UG_\loose D) \ar[l] \ar@<-2ex>[r] \ar@{~>}@<2ex>[r] \ar[r] &
\sKl(T^2A,UG_\loose D). }$$
Now the straight arrows are strictly natural in $D$ with respect to
all arrows of \sDl, 
while the wriggly ones are strictly natural with respect to tight maps and 
oplax natural with respect to loose ones (since $G$ takes tight maps
to strict $T_\loose$-morphisms and loose maps to lax $T_\loose$-morphisms).
In other words, the straight arrows are strict morphisms of
$M$-algebras, while the wriggly arrows are
colax morphisms of $M$-algebras.

Now as remarked in \S\ref{sec:descobj}, $c$-rigged $c$-descent
objects are created, and in particular reflected, by the forgetful
\sF-functor $M\bbAlg_c\to[\sDt,\Cat]$ (viewing $[\sDt,\Cat]$ as a
chordate \sF-category.)  Thus  we have a $c$-rigged $c$-descent object 
\begin{equation}
  \footnotesize\xymatrix@C=1.5pc{ T_\loose\Alg_l(\bA,G_\loose -) \ar@{.>}[r]
    & \sKl(A,UG_\loose -) \ar@{~>}@<2ex>[r] \ar@<-2ex>[r] &
    \sKl(TA,UG_\loose -) \ar[l] \ar@<-2ex>[r] \ar@{~>}@<2ex>[r] \ar[r]
    & \sKl(T^2A,UG_\loose -) }\label{eq:malgc-descent}
\end{equation}
in $M\bbAlg_c$, which is to say a $c$-descent object in $M\Alg_c$ for which 
the projection $T_\loose\Alg_l(\bA,G_\loose-)\to\sKl(A,UG_\loose-)$ is strict and
detects strictness.

{\bf Step 1: $\QF \Phi$-limits lift. }
Here is a rough sketch of this step.  By \autoref{thm:qr-adjt}, the
limit $\{\QF \Phi,U G\}$ is also the limit $\{\Phi, \RF (U G)\}$.  But
since $\RF$ takes colax transformations to strict ones, $\RF(U G)$
lifts to a diagram ``$\RF(G)$'' of \emph{strict} $T$-morphisms, and so
the limit $\{\Phi, \RF (U G)\}$ lifts to $T\bbAlg_l$.

We then show that this limit also
has the universal property of $\{\QF \Phi _\loose ,G_\loose\}$ by considering 
Figure~\ref{fig:descent}. We are
to prove that the two objects at the top are isomorphic. The strategy for this is 
to show that the vertical columns exhibit the objects at the top as descent 
objects, and therefore deduce the invertibility of the top horizontal map from
the invertibility of the other horizontal maps.
\begin{figure}[h]
\centering
\xymatrix@C=2pc{
[\sDl,\Cat](\QF \Phi_\loose, T_\loose\Alg_l(\bA,G_\loose-)) \ar[d] \ar@{.>}[r]^-{\cong} & 
[\sDl,\Cat](\Phi_\loose, T_\loose\Alg_l(\bA,\RF (G)_\loose-)) \ar[d]\\
[\sDl,\Cat](\QF \Phi_\loose,\sKl(A,UG_\loose-)) \ar@<-2ex>[d] \ar@{~>}@<2ex>[d] 
\ar[r]^-{\cong} &  
[\sDl,\Cat](\Phi_\loose,\sKl(A,\RF (UG)_\loose-)) \ar@<-2ex>[d] \ar@{~>}@<2ex>[d] \\
[\sDl,\Cat](\QF \Phi_\loose,\sKl(TA,UG_\loose-)) \ar@<-2ex>[d] \ar@{~>}@<2ex>[d] 
\ar[d] \ar[u] \ar[r]^-{\cong} & 
[\sDl,\Cat](\Phi_\loose,\sKl(TA,\RF (UG)_\loose-)) \ar@<-2ex>[d] \ar@{~>}@<2ex>[d] 
\ar[d] \ar[u]\\
[\sDl,\Cat](\QF \Phi_\loose,\sKl(T^2A,UG_\loose-)) \ar[r]^-{\cong} &
[\sDl,\Cat](\Phi_\loose,\sKl(T^2A,\RF (UG)_\loose-))
 }
\caption{Two descent objects\label{fig:descent}}
\end{figure}

We now turn to the details, including an explanation of the arrows in
Figure~\ref{fig:descent}.   Write $L'=\{\QF \Phi,UG\}$, and
$\ell'\colon TL'\to L'$ for the corresponding structure map
constructed as in \autoref{prop:alg-structure-on-L}.  Since \bbK is
complete, \autoref{thm:qr-adjt} implies that $L'$ is also the limit
$\{\Phi,\RF (UG)\}$ in \bbK, and hence also the 2-categorical
limit $\{ \Phi_\loose, \RF (UG)_\loose \}$ in \sKl.

Now recall from \autoref{prop:alg-structure-on-L}
that we have a distributive law $k:[\bbD,T]\RF \to \RF [\bbD,T]$,
according to which $T$ extends to a monad $\bbOplax(\bbD,T)$ on the
Kleisli category $\bbOplax(\bbD,\bbK)$ of $\RF $.  It also follows that
the right adjoint $\RF \colon \bbOplax(\bbD,\bbK) \to [\bbD,\bbK]$ is a
lax monad morphism from $\bbOplax(\bbD,T)$ to $[\bbD,T]$.  Therefore,
since $UG$ is a $\bbOplax(\bbD,T)$-algebra, $\RF (UG)$ is a
$[\bbD,T]$-algebra, with structure map
\[ [\bbD,T]\RF (UG) \xrightarrow{k} \RF (TUG) \xrightarrow{\RF (g)} \RF (UG). \]
where $g:TUG_\loose\to UG_\loose$ has components given by the structure maps
of the $T$-algebras $GD$. 
Since $[\bbD,T]\bbAlg_s \cong [\bbD, T\bbAlg_s]$, we have a
functor $\RF (G)\colon \bbD \to T\bbAlg_s$.

In particular, $\RF (G)_\loose$ is a functor $\sDl \to
(T\bbAlg_s)_\loose$.  But $(T\bbAlg_s)_\loose$ is the full
sub-2-category of $T_\loose \Alg_s$ consisting of those $T_\loose$-algebras
with tight structure map, and we know that all
2-categorical limits lift to 2-categories of algebras and strict
morphisms, and that these limits are preserved by the functor into the
2-category of weak morphisms.

Therefore,  the object $L'$ acquires a $T_\loose$-algebra
structure which makes it into the limit $\{\Phi_\loose, \RF
(G)_\loose\}$ in $T_\loose \Alg_s$, hence also in $T_\loose \Alg_l$.
Tracing through the definition of this $T_\loose$-algebra structure,
we find that it is equal to $\ell'$ as defined above, and in
particular is tight.  Thus, the $T$-algebra $(L',\ell')$ has the
universal property of $\{\Phi_\loose, \RF (G)_\loose\}$ in $T_\loose
\Alg_l$, hence also in its full sub-2-category $(T
\bbAlg_l)_\loose$.  It thus remains only to identify this
universal property with that of the desired limit $\{\QF \Phi_\loose,
G_\loose\}$.

Now the representable $M\Alg_c(\Phi_\loose ,-)$ preserves any existing limits, and in 
particular preserves the descent object~\eqref{eq:malgc-descent}. But for any \Cat-weight 
$\Psi$, we have $M\Alg_c(\Phi_\loose,\Psi)\iso [\sDl,\Cat](\QF \Phi_\loose,\Psi)$, and so the
left-hand column of Figure~\ref{fig:descent} is a descent object  in \Cat.

We have continued to use wriggly arrows in this column, although this 
has no formal meaning in the 2-category \Cat, in order to draw attention to 
the fact that the definition of these arrows is less straightforward than the 
straight ones, which are all induced by composition with some arrow in \sKl.
For example, 
the wriggly arrow at the middle level of the left-hand column is defined (on objects)
like this. Given a 2-natural $x:\QF \Phi_\loose\to\sKl(A,UG_\loose-)$ form the 
lax-natural composite 
$$\xymatrix @C=2pc {
\QF \Phi_\loose \ar[r]^-{x} & 
\sKl(A,UG_\loose-) \ar[r]^-{T} &
\sKl(TA,TUG_\loose-) \ar@{~>}[rr]^-{\sKl(TA,g-)} && 
\sKl(TA,UG_\loose-) }$$
where $g$ is as above.
This corresponds to a unique 2-natural
$y:\QF ^2\Phi_\loose\to\sKl(TA,UG_\loose-)$, which we compose with the 
comultiplication $d:\QF \Phi_\loose\to \QF ^2\Phi_\loose$ to obtain the
map $yd:\QF \Phi_\loose\to \sKl(TA,UG_\loose-)$ which is the image of our $x$. The 
case of the lower wriggly map in the left-hand column of Figure~\ref{fig:descent}
is similar.

The solid horizontal isomorphisms in Figure \ref{fig:descent}
are instances of the adjointness of $\QF$ and $\RF$, as in \autoref{thm:qr-adjt}. 
The straight maps on the
right-hand side are again just composition, and these obviously
commute with the horizontal isomorphisms.  Moreover, tracing through
the definitions, we see that in order for the wriggly arrows to
commute with the horizontal isomorphisms, the middle wriggly map on the right-hand
side must be given by composing with the map
\[
\sKl(A, \RF (UG)_\loose)  \xrightarrow{T} 
\sKl(TA, [\bbD,T]\RF (UG)_\loose) \xrightarrow{\sKl(TA,\RF (g).k)}
\sKl(TA, \RF (UG)_\loose)
\]
where $\RF (g).k$ is the (strictly 2-natural) $[\bbD,T]$-algebra
structure of $\RF (G)$, as above.  The lower wriggly map is similar.  Therefore,
the $c$-descent object of the right-hand column is exactly
$[\sDl,\Cat](\Phi_\loose, T_\loose\Alg_l(\bA,\RF (G)_\loose-))$, as
shown, and so we have the dotted isomorphism across the top.  But this
says exactly that to give a limit $\{\Phi_\loose, \RF (G)_\loose\}$ is
the same as to give a limit $\{\QF  \Phi_\loose, G_\loose \}$.  Thus
$(L',\ell')$ is the latter limit, as desired.

{\bf Step 2: $\Phi_\loose$-limits lift.} % if $\QF \Phi$-limits exist.}
Since  $T_\loose\Alg_l(\bA,G_\loose-)\to\sKl(A,UG_\loose-)$, as a morphism
in $M\Alg_c$,  is strict
and detects strictness, the following square 
$$\xymatrix{
[\sDl,\Cat](\Phi_\loose,T_\loose\Alg_l(\bA,G_\loose-)) \ar[r] \ar[d] &
M\Alg_c(\Phi_\loose,T_\loose\Alg_l(\bA,G_\loose-))  \ar[d] \\
[\sDl,\Cat](\Phi_\loose,\sKl(A,UG_\loose-)) \ar[r]  &
M\Alg_c(\Phi_\loose,\sKl(A,UG_\loose-)) }$$
is a pullback in \Cat. 
We can also write this, using the $c$-morphism classifier 
$Q^\bbD$ for $M\Alg_c$, as a pullback
\begin{equation}\label{ref:rhspbk}
\xymatrix{
[\sDl,\Cat](\Phi_\loose,T_\loose\Alg_l(\bA,G_\loose-)) \ar[r]^-{q^*} \ar[d] &
[\sDl,\Cat](Q^\bbD \Phi_\loose,T_\loose\Alg_l(\bA,G_\loose-)) \ar[d] \\
[\sDl,\Cat](\Phi_\loose,\sKl(A,UG_\loose-)) \ar[r]_-{q^*}  &
[\sDl,\Cat](Q^\bbD \Phi_\loose,\sKl(A,UG_\loose-)) }
\end{equation}
in which the horizontal arrows are given by composition with $q\colon \QF \Phi\to\Phi$.
As above, write $\bLL=(L',\ell')$ for $\{\QF \Phi_\loose,G_\loose\}$, and $\bL=(L,\ell)$ for 
the $T$-algebra which we 
are to show has the universal property of $\{\Phi_\loose,G_\loose\}$.
The map $q:\QF \Phi\to\Phi$ induces a morphism $q^*=\{q,UG_\loose\}:L\to L'$. 
In the diagram
\begin{equation}\label{ref:lhspbk}
\xymatrix{
T_\loose\Alg_l(\bA,\bL) \ar@{.>}[r] \ar[d] & 
T_\loose\Alg_l(\bA,\bLL) \ar[d] \\
\sKl(A,L) \ar[r]_-{\sKl(A,q^*)} & \sKl(A,L') }
\end{equation}
all vertices except the top left are known to be isomorphic to the corresonding
vertices in the previous square~\eqref{ref:rhspbk}, and these isomorphisms are 
compatible with the edges of the square. We need to show that the top left
vertices in the two squares are also isomorphic;
this is equivalent to filling in the dotted arrow in the square~\eqref{ref:lhspbk} 
 in such a way as to give a pullback.

The structure map $s:\Phi\to \QF \Phi$ induces a map $s^*=\{s,UG_\loose\}:L'\to L$.
By the description of the $T$-actions $T\{\Phi,UG\}\to\{\Phi,UG\}$  given
at the end of the proof of Proposition~\ref{prop:alg-structure-on-L}, it is 
clear that these actions are strictly natural with respect to strict morphisms of 
$\QF $-coalgebras, and so in particular with respect to $s:\Phi\to \QF \Phi$. Thus
$s^*$ is a strict morphism $\bLL\to\bL$ of $T$-algebras. Since $q\dashv s$
with identity counit, $s^*:L'\to L$ is left adjoint to $q^*:L\to L'$ with 
identity counit. The unit $\eta:1\to q^*s^*$ induces a 2-cell
$$\xymatrix@C=1.5pc{ 
\ell'.Tq^* \ar[rr]^-{\eta.\ell'.Tq^*} && q^*.s^*.\ell'.Tq^* \ar@{=}[r] &
q^*.\ell.Ts^*.Tq^* \ar@{=}[r] & q^*.\ell }$$
which makes $q^*$ into a lax $T_\loose$-morphism $(q^*,\overline{q}^*):\bL\to\bLL$.
Then composition with $(q^*,\overline{q}^*)$ gives the dotted arrow making the 
square~\eqref{ref:lhspbk} commute; it remains
to show that the square is a pullback. Since the horizontal arrows are
both given by composition with a morphism having a left adjoint
with identity counit, they are both full embeddings.
Therefore, the square being a pullback amounts to saying  that a lax $T$-morphism 
$(f,\bar{f}):\bA\to\bLL$ factorizes through $(q^*,\overline{q}^*):\bL\to\bLL$ 
provided that $f$ factorizes through $q^*$. But if $f=q^*g$, then $g=s^*f$,
and we now define $(g,\gbar)$ to be the composite $s^*(f,\fbar)$. 
We must show that $(q^*,\overline{q}^*)s^*(f,\bar{f})=(f,\fbar)$. 

Now $q^*.s^*.f=f$ by assumption. The 2-cell part of $(q^*,\overline{q}^*)s^*(f,\fbar)$
is given by the top path around the square
$$\xymatrix{
\ell'.Tq^*.Ts^*.Tq^*.Tg \ar[r]^-{\eta.1} \ar@{=}[d] & 
q^*.s^*.\ell'.Tq^*.Ts^*.Tq^*.Tg \ar@{=}[d] \\
\ell'.Tq^*.Tg \ar[dd]_{\fbar} & q^*.\ell.Ts^*.Tq^*.Tg \ar@{=}[d] \\
& q^*.s^*.\ell'. Tq^*.Tg \ar[d]^{q^*.s^*.\fbar} \\
q^*.g.a \ar[r]_-{\eta.q^*.g.a} & q^*.s^*.q^*.g.a}$$
which is equal to the bottom path. But $\eta.q^*.g.a$ is the identity 
by one of the triangle equations, so this bottom path is just $\fbar$.
\endproof

\begin{thm}\label{thm:rigged-lift}
  If $\Phi$ is a $w$-rigged \sF-weight, then for  any \sF-monad $T$ on an
  \sF-category \bbK, the forgetful functor
  $U_w\maps T\bbAlg_w \to \bbK$ creates $\Phi$-weighted limits.
\end{thm}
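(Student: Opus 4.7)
The plan is to combine the machinery already developed: \autoref{prop:reduction} reduces the problem (by taking \bbK to be complete), \autoref{prop:alg-structure-on-L} produces the algebra structure on $L = \{\Phi, U_w G\}$, and \autoref{thm:coalg-looselift} establishes the universal property of $L$ as $\{\Phi_\loose, G_\loose\}$ in the 2-category $(T\bbAlg_w)_\loose$---all of which use only the $\QF_{\wbar}$-coalgebra half of riggedness. Since $U_w\maps T\bbAlg_w\to\bbK$ is conservative, it moreover suffices to prove lifting in order to conclude creation. What remains is the \emph{tight} half of the \sF-limit universal property: the projections $p_{\loose,\ph(a)}\maps L\to U_w G D$ for $a\in\Phi_\tight D$ must be tight in $T\bbAlg_w$ (i.e., strict $T$-morphisms), and they must jointly detect tightness in $T\bbAlg_w$. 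I will sketch both arguments for $w=l$; the other cases follow by the standard dualities.

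For strictness of the individual projections, I would first observe that $s_\tight$ is the identity on $\Phi_\tight = (\QF_{\wbar}\Phi)_\tight$, since the counit $q\maps\QF_{\wbar}\Phi\to\Phi$ is the identity on tight parts and $q\cdot s = 1_\Phi$. Then I would trace the explicit formula \eqref{eq:lifted-tstr} for $\ell$: it factors through $s^*\maps\{\QF_{\wbar}\Phi, U_w G\}\to L$ precomposed with data coming from the lifted diagram $\RF_{\wbar} G\maps\bbD\to T\bbAlg_s$ appearing in Step~1 of the proof of \autoref{thm:coalg-looselift}. Under the isomorphism $\{\QF_{\wbar}\Phi, U_w G\}\iso\{\Phi, \RF_{\wbar} U_w G\}$ of \autoref{thm:qr-adjt}, the composite $p_{\loose,\ph(a)}\circ s^*$ is identified with the projection at $a$ of the limit $\{\Phi, \RF_{\wbar} U_w G\}$, and the latter is a strict $T$-morphism because $\RF_{\wbar} G$ lands in $T\bbAlg_s$ and strict limits of such diagrams are preserved.

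For joint detection of tightness, let $(f,\fbar)\maps \bA\lto \bL$ be a loose morphism in $T\bbAlg_l$ and suppose each composite $p_{\loose,\ph(a)}\circ (f,\fbar)$ is tight in $T\bbAlg_l$. The underlying map $f$ is then tight in \bbK (because the projections already jointly detect tightness in \bbK, as part of the \sF-limit structure of $L$ there), and each 2-cell $p_{\loose,\ph(a)}\cdot\fbar$ is an identity. Now view $\fbar$, via the representable isomorphism $\bbK(TA, L)\iso[\sDl,\Cat](\Phi_\loose, \bbK(TA, U_w G-))$, as a modification between 2-natural transformations; its component at $b\in\Phi_\loose D$ is $p_{\loose,b}\cdot\fbar$. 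Our hypothesis says that this modification becomes an identity after precomposition with $\ph$, equivalently after precomposition with $\phbar\maps\Lan_J\Phi_\tight\to\Phi_\loose$. By condition (ii) of $w$-riggedness, $\phbar$ is pointwise surjective on objects, so \autoref{thm:so-char} forces the modification---and hence $\fbar$---to be an identity. Thus $(f,\fbar)$ is tight in $T\bbAlg_l$.

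The main obstacle I expect is the verification of strictness of the individual projections in the second paragraph: while essentially a diagram chase, making precise the identification of $p_{\loose,\ph(a)}\circ s^*$ with a projection of the lifted limit $\{\Phi,\RF_{\wbar}U_w G\}$ requires keeping careful track of the adjointness $\QF_{\wbar}\dashv\RF_{\wbar}$ together with the \sF-naturality of $s$, which (unlike mere 2-naturality) supplies the compatibility $s_\loose\cdot\ph = p\cdot\ph\cdot s_\tight$ needed to pass between tight and loose components.
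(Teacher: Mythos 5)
Your proposal is correct and follows essentially the same route as the paper: reduce to complete $\bbK$ via \autoref{prop:reduction}, invoke \autoref{thm:coalg-looselift} for the loose universal property, deduce strictness of the projections $p_{\loose,\phi(a)}$ from the description of $\ell$ (your unwinding of~\eqref{eq:lifted-tstr} and the paper's appeal to~\eqref{eq:defn-zeta} and~\eqref{eq:defn-l} are two presentations of the same computation, both resting on $s_\loose\cdot\phi=p\cdot\phi$), and establish detection of strictness exactly as the paper does, by passing along the adjunct $\phbar$ and applying \autoref{thm:so-char} to the surjective-on-objects condition.
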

\begin{proof}
Once again, we treat only the case $w=l$ of lax $T$-morphisms.
By~\ref{prop:reduction} we may suppose that \bbK is complete. 

Let $G:\bbD\to T\bbAlg_w$ be the diagram of which we wish to calculate
the limit $\{\Phi,G\}$. We know that the limit $L=\{\Phi_\loose,UG_\loose\}$ in 
\sKl lifts to a limit $\bL=(L,\ell)=\{\Phi_\loose,G_\loose\}$ in $(T\bbAlg_w)_\loose$;
we want to show that it is also the limit $\{\Phi,G\}$ in $T\bbAlg_w$.
This amounts to proving that the family of projections $p_{\loose,\phi(a)}:\bL\to GD$,
where $D\in\bbD$ and $a\in\Phi_\tight D$, are tight and jointly detect tightness
in $T\bbAlg_w$. Tightness in $T\bbAlg_w$ has two aspects: strictness as
a $T$-morphism, and tightness at the level of the underlying morphism in \bbK.

We know, by the tight part of the universal property of $\{\Phi,UG\}$, that 
the $Up_{\loose,\phi(a)}:L\to UGD$ are tight and jointly 
detect tightness in \bbK, so we only need to worry about strictness. The
fact that the $p_{\loose,\phi(a)}$ are strict follows from commutativity of the 
diagrams~\eqref{eq:defn-zeta} and~\eqref{eq:defn-l} on 
page~\pageref{eq:defn-l}. It remains to show
that they jointly detect strictness. 

Suppose then that $\bA=(A,a)$ is a $T$-algebra and $\beff=(f,\bar{f}):\bA\to\bL$
a lax $T$-morphism, with $f$ tight in \bbK, whose composite with each $p_{\loose,\phi(a)}$ is
strict. Then the composite 2-cell
$$\xymatrix{
\Phi_\tight \ar[r]^{\phi} & 
\Phi_\loose J \ar[r]^-{p_\loose J} &
\sKl(L,UG_\loose J-) \ar@/^2ex/[rr]^{\sKl(\ell.Tf,1)}_(0.4){~}="1" 
\ar@/_2ex/[rr]^(0.4){~}="2"_{\sKl(fa,1)} &&
\sKl(TA,UG_\loose J-) 
\ar@{=>}"1";"2"^{\sKl(\bar{f},1)} 
}$$
%\rrtwocell^{\sKl(\ell.Tf,1)}_{\sKl(fa,1)}{~~~\sKl(\bar{f},1)} && 
is an identity, and so by adjointness the composite 2-cell
$$\xymatrix{
\Lan_J\Phi_\tight \ar[r]^-{\overline{\phi}} & 
\Phi_\loose  \ar[r]^-{p_\loose} &
\sKl(L,UG_\loose-) \ar@/^2ex/[rr]^{\sKl(\ell.Tf,1)}_(0.4){~}="1" 
\ar@/_2ex/[rr]^(0.4){~}="2"_{\sKl(fa,1)} &&
\sKl(TA,UG_\loose-) 
\ar@{=>}"1";"2"^{\sKl(\bar{f},1)} 
}$$
is an identity. But $\overline{\phi}$ is pointwise surjective on objects, 
so by \autoref{thm:so-char}, the 2-cell
$$\xymatrix{
\Phi_\loose  \ar[r]^-{p_\loose} &
\sKl(L,UG_\loose-) \ar@/^2ex/[rr]^{\sKl(\ell.Tf,1)}_(0.4){~}="1" 
\ar@/_2ex/[rr]^(0.4){~}="2"_{\sKl(fa,1)} &&
\sKl(TA,UG_\loose-) 
\ar@{=>}"1";"2"^{\sKl(\bar{f},1)} 
}$$
is an identity, and now finally by the universal property of the limit
$L=\{\Phi_\loose,UG_\loose\}$ it follows that $\bar{f}$ is an identity. 
\end{proof}

\subsection{All limits that lift are rigged}
\label{sec:lifting-limits}

In the previous subsection we showed that all rigged limits lift; here
we prove the converse.

\begin{lem}\label{thm:yon-qcoalg}
  Every representable $Y_D = \bbD(D,-)$ is $w$-rigged, for any $w$.
\end{lem}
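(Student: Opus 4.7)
The plan is to verify the two clauses of \autoref{def:rigged} separately.

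For the second clause, I use the coend formula $\Lan_J\sD_\tight(D,-)(E)\cong\int^{E'}\sD_\loose(E',E)\times\sD_\tight(D,E')$, under which the morphism $\phbar_E$ is induced by composition, sending $(E',g,f)$ to $g\circ Jf$. Given any $h\in\sD_\loose(D,E)$, the representative $[D,h,1_D]$ is sent to $h$, witnessing pointwise surjectivity on objects.

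For the first clause, I construct a coalgebra structure $s\colon Y_D\to\QF_\wbar Y_D$. Using the formula from \S\ref{sec:weak-transf-class} giving $(\QF_\wbar Y_D)_\tight=\sD_\tight(D,-)$ and $(\QF_\wbar Y_D)_\loose=Q^\bbD_\wbar\sD_\loose(D,-)$, with structure map $pJ\circ J$, I take $s_\tight$ to be the identity and let $s_\loose\colon\sD_\loose(D,-)\to Q^\bbD_\wbar\sD_\loose(D,-)$ be the strict 2-natural transformation corresponding under 2-Yoneda to the object $p_D(1_D)\in Q^\bbD_\wbar\sD_\loose(D,-)(D)$, where $p\colon\sD_\loose(D,-)\lto Q^\bbD_\wbar\sD_\loose(D,-)$ is the $\wbar$-natural unit of $Q^\bbD_\wbar$ (which is strictly natural on $\sD_\tight$ by the universal property of $Q^\bbD_\wbar$). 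The \sF-naturality of $s$ then reduces, by compatibility with the structure maps $\ph$, to the identity $s_\loose\circ J=pJ$; this holds because both sides send $h\in\sD_\tight(D,E)$ to $p_E(h)$, using the Yoneda formula for $s_\loose$ and the strictness of $p$ on tight morphisms. The counit identity $q\circ s=1_{Y_D}$ is trivial on tight parts, and on loose parts Yoneda reduces it to $q_D(p_D(1_D))=(qp)_D(1_D)=1_D$, using $qp=1$ from \S\ref{sec:weak-morph-class}.

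Coassociativity of $s$ should then follow automatically from the $\wbar$-idempotence of the \sF-comonad $\QF_\wbar$, by the standard fact that coalgebras for a $\wbar$-idempotent comonad are uniquely determined by the existence of a section of the counit. The main obstacle I anticipate is justifying this $\wbar$-idempotence: \autoref{lem:w-idempotent} gives $\wbar$-idempotence for the 2-monadic classifier $Q_\wbar$, from which the loose-part comonad $Q^\bbD_\wbar$ on $[\sD_\loose,\Cat]$ is $\wbar$-idempotent directly (applied to the 2-monad $M$); passing to the full \sF-comonad $\QF_\wbar$ on $[\bbD,\bbF]$ requires combining this with the triviality of the tight part of $\QF_\wbar$ and rerunning the oplax-limit-of-arrows construction in the proof of \autoref{lem:w-idempotent} in the \sF-enriched functor \sF-category.
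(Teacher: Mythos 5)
Your construction of $s$ from $p_D(1_D)$ via Yoneda, the counit verification, the check of \sF-naturality, and the surjectivity of $\phbar$ all match the paper's proof (which disposes of the last point even more quickly by noting that $\phbar$ is the canonical isomorphism $\Lan_J\sDt(D,-)\cong\sDl(D,-)$). The genuine gap is the coassociativity step. The ``standard fact'' you appeal to is false: for a $w$-idempotent comonad, a section of the counit need not be a (strict) coalgebra structure. The paper itself supplies the counterexample in \S\ref{sec:weak-morph-class}: the \emph{flexible} algebras are exactly those admitting a strict section of $q\colon Q_pA\to A$, and ``not every flexible object is a $Q_p$-coalgebra,'' even though $Q_p$ is pseudo-idempotent; as the introduction emphasizes, flexibility is only ``half'' of being a $Q$-coalgebra, the missing half being precisely the coassociativity of the section. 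Idempotence in the paper's sense is a full-faithfulness statement about \emph{morphisms} between coalgebras; at best it yields that $\QF_{\wbar}s\circ s$ and $d\circ s$ are canonically \emph{isomorphic}, not equal, whereas a strict coalgebra requires equality on the nose.

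For the representable, coassociativity does hold, but it must be computed rather than deduced from idempotence. By Yoneda it suffices to evaluate both sides at $1_D$, where
\[ \QF s.s.1_D = \QF s.p.1_D = p.s.1_D = p.p.1_D = \QF p.p.1_D = d.p.1_D = d.s.1_D, \]
using $s.1_D=p.1_D$ (the definition of $s$), the naturality of the unit $p$ of the adjunction defining $\QF$ (applied at the morphisms $s$ and $p$ themselves), and the identification of the comultiplication $d$ with $\QF p$. This is the paper's argument; substituting it for the appeal to idempotence also renders moot your anticipated difficulty of establishing $\wbar$-idempotence of the \sF-comonad $\QF_{\wbar}$.
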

\begin{proof}
  We define the \QF-coalgebra structure of $Y_D = \bbD(D,-)$ to be the
  map $s\colon Y_D \to \QF Y_D$ which corresponds under the Yoneda lemma to
  the element $p(1_D) \in \QF Y_D(D)$.  The coassociativity and counit 
axioms both assert the equality of maps with domain $Y_D$, so can 
be verified using the Yoneda lemma with the  calculations
$$  q.s.1_D = q.p.1_D = 1_D $$
for the counit and 
$$  Qs.s.1_D = Qs.p.1_D = p.s.1_D = p.p.1_D = Qp.p.1_D = d.p.1_D= d.s.1_D$$
for coassociativity. Finally, observe that 
$\Lan_J\sDt(D,-)\cong\sDl(D,-)$, and that this isomorphism is precisely
the $\phbar$ which must be pointwise surjective on objects in order
that $Y_D$ be $w$-rigged. 
%Naturality of $p$ implies the
 % coalgebra associativity.  $Y_D$ is rigged since the map $\phbar$ is
 % an isomorphism.
\end{proof}

\begin{thm}\label{thm:lifts-rigged}
  Suppose that $\Phi\maps \bbD\to\bbF$ is an \sF-weight such that
  $\Phi$-weighted limits lift to $T\bbAlg_w$ for any \sF-monad $T$.
  Then $\Phi$ is $w$-rigged.
\end{thm}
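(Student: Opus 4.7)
The plan is to apply the lifting hypothesis to a single, carefully chosen pair $(\bbK, T)$ together with a diagram $y\colon \bbD \to \bbK$ whose $\Phi$-weighted limit is canonically $\Phi$ itself. The lifted $T$-algebra structure on this limit will then supply the $\QF_\wbar$-coalgebra structure on $\Phi$ demanded by condition~(i) of \autoref{def:rigged}, and the joint detection of tightness by the lifted projections will be used to force condition~(ii).

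For condition~(i), I would take $\bbK = [\bbD,\bbF]^{\mathrm{op}}$ and let $T = \QF_{\wbar}^{\mathrm{op}}$ be the \sF-monad on $\bbK$ arising from the \sF-comonad $\QF_\wbar$ on $[\bbD,\bbF]$; its strict algebras are precisely the $\QF_\wbar$-coalgebras in $[\bbD,\bbF]$. The contravariant Yoneda embedding becomes a covariant \sF-functor $y\colon \bbD \to \bbK$ with $y(D) = \bbD(D,-)$, and by \autoref{thm:yon-qcoalg} each $y(D)$ is a $\QF_\wbar$-coalgebra, so $y$ factors through $T\bbAlg_s$. Using the enriched Yoneda identification $\bbK(X, y(D)) = [\bbD,\bbF](y(D), X) = X(D)$ one computes
\[
  \bbK(X, \{\Phi, y\}) \;\cong\; [\bbD,\bbF](\Phi, \bbK(X, y(-))) \;\cong\; [\bbD,\bbF](\Phi, X)
\]
for all $X \in \bbK$, whence $\{\Phi, y\} \cong \Phi$ in $\bbK$. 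By hypothesis this limit lifts to $T\bbAlg_w$, and the resulting strict $T$-algebra structure on $\Phi$ is exactly a $\QF_\wbar$-coalgebra structure.

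For condition~(ii), I would exploit the joint detection of tightness for the lifted limit: the projections $p_{\loose,\phi(a)}\colon \Phi \to y(D)$ indexed by $D \in \bbD$ and $a \in \Phi_\tight(D)$ are tight in $T\bbAlg_w$ and jointly detect tightness of $w$-$T$-morphisms into $\Phi$. The end of the proof of \autoref{thm:rigged-lift} uses pointwise surjectivity of $\phbar$ via \autoref{thm:so-char} to pass from ``a 2-cell whiskered with $\phbar$ is an identity'' to ``the 2-cell is an identity''; the idea here is to run that argument in the contrapositive. If $\phbar$ were not pointwise surjective, the construction in the proof of \autoref{thm:so-char} would supply 2-natural transformations $g, h\colon \Phi_\loose \rightrightarrows \Upsilon$ and a non-identity invertible modification $\beta\colon g\cong h$ with $\beta\cdot\phbar$ an identity. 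Packaging $(\Upsilon, g, h)$ as a suitable $T$-algebra $X$ and upgrading $\beta$ to weakness data, I would build a loose but non-tight $w$-$T$-morphism $f\colon X \to \Phi$ whose composite with every tight projection $p_{\loose,\phi(a)}$ is nonetheless tight, contradicting joint detection.

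The main obstacle will be the construction in the second step: one must carefully assemble the 2-categorical witness $(\Upsilon,\beta)$ into a genuine $\QF_\wbar$-coalgebra $X$ and a non-tight $w$-$T$-morphism $X\to \Phi$, essentially inverting the final calculation in the proof of \autoref{thm:rigged-lift}. Some additional bookkeeping is needed because opping an \sF-monad reverses the direction of the 2-cell defining a $w$-morphism, so one must confirm that $T\bbAlg_w$ for $T = \QF_{\wbar}^{\mathrm{op}}$ on $\bbK = [\bbD,\bbF]^{\mathrm{op}}$ genuinely encodes the direction of weakness that appears in the theorem statement.
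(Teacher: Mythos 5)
Your step for condition (i) is essentially the paper's argument: the paper also observes that $\Phi = \Phi\star Y$ is the $\Phi$-weighted colimit of the Yoneda embedding, lifts $Y$ to $\QF_\wbar\bbCoalg_\wbar$ via \autoref{thm:yon-qcoalg}, and reads off the coalgebra structure on $\Phi$ from the lifted colimit; your phrasing via limits in $[\bbD,\bbF]\op$ for the \sF-monad $\QF_\wbar\op$ is the same thing after the dualization you flag at the end. The problem is condition (ii).

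The gap is in the sentence ``Packaging $(\Upsilon, g, h)$ as a suitable $T$-algebra $X$ and upgrading $\beta$ to weakness data.'' If $T$ is still the monad $\QF_\wbar\op$ from step (i), this cannot be done: $\QF_\wbar$ is $\wbar$-idempotent, so every morphism between coalgebras carries a \emph{unique} $\wbar$-morphism structure and every 2-cell is automatically a coalgebra 2-cell. You therefore have no freedom to install the modification $\beta$ as the weakness datum of a loose morphism, and joint detection of tightness for this particular lifted (co)limit is a statement about 1-cells that never sees $\beta$. What the paper does instead is invoke the lifting hypothesis a \emph{second} time, for a different \sF-comonad $W = (-)^{\mathbf{N}}$ on $[\bbD,\bbF]$, whose coalgebras are weights equipped with an endomorphism and whose colax morphisms carry a genuinely free 2-cell. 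The witness is then the weight $\Psitil\times\Psitil$ with the shift endomorphism $(\pi_2,\pi_2)$ and the map $(f,g)\colon \Phi_\loose\to\Psi\times\Psi$, whose naturality square has top-right composite $(g,g)$ and left-bottom composite $(f,g)$, so that $(\beta,1)$ fills it; this loose colax $W$-morphism is tight precisely when $\beta$ is an identity. There is also a preliminary step you would need: the lifted colimit of $iY$ in $W\bbCoalg_{\wbar}$ a priori equips $\Phi$ with \emph{some} endomorphism $e_\Phi$, and one must argue (via the cocone $ic$ and the induced map to $(\Phi,1)$) that $e_\Phi$ is the identity before the tight coprojections can be used. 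Without the change of comonad and this normalization, the contradiction you aim for cannot be assembled.
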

\begin{proof}
  By duality, if $\Phi$-weighted limits lift to \sF-categories $T\bbAlg_w$ of
  algebras for \sF-monads $T$, then $\Phi$-weighted \emph{colimits} must
  lift to \sF-categories $W\bbCoalg_\wbar$ of \emph{coalgebras} for \sF-\emph{comonads} $W$.
  (Note the reversal of sense from $w$ to $\wbar$, as remarked in \S\ref{sect:comonad}.)
  But by general enriched category theory, $\Phi$ itself is the
  $\Phi$-weighted colimit in $[\bbD,\bbF]$ of the Yoneda embedding
  $Y\maps \bbD\op\to [\bbD,\bbF]$, i.e.\ $\Phi = \Phi\star Y$.

   Since the forgetful functor $\QF_\wbar \bbCoalg_\wbar \to [\bbD,\bbF]$ is
  \sF-fully-faithful on rigged weights, by \autoref{thm:yon-qcoalg}
  the Yoneda embedding lifts uniquely to a functor
  $\tilde{Y}\maps \bbD\op \to \QF_\wbar\bbCoalg_\wbar$.
  By assumption, the $\Phi$-weighted colimit of $Y$,
  namely $\Phi$ itself, thereby acquires a $\QF_\wbar$-coalgebra structure.
  Thus it remains to show that $\phbar\maps \Lan_J \Phi_\tight \to
  \Phi_\loose$ is pointwise surjective on objects; we will also do
  this by lifting $\Phi\star Y$ to an appropriate category of
  coalgebras.

  For any 2-functor $\Psi\maps \sD_\loose \to \Cat$, define
  $\Psitil\maps \bbD\to\bbF$ by $\Psitil_\loose=\Psi$,
  $\Psitil_\tight=\Psi J$.  Then for any $\Phi\maps \bbD\to\bbF$, any
  2-natural transformation $\Phi_\loose\to\Psi$ extends uniquely to an
  \sF-natural transformation $\Phi\to\Psitil$.  That is,
  $\widetilde{(-)}$ is right adjoint to $(-)_\loose$.

  As usual, we write in the case $w=l$.  Let $W$
  be the \sF-comonad $(-)^{\mathbf{N}}$ on $[\bbD,\bbF]$; that is,
  cotensoring with the discrete monoid of natural numbers.  Then a
  $W$-coalgebra is an \sF-weight $\Phi$ equipped with an endomorphism
  $e_\Phi\maps \Phi\to\Phi$.  A tight arrow in $W\bbCoalg_c$ is an
  \sF-natural transformation commuting strictly with the
  endomorphisms, while a loose one is a map $f\maps \Phi_\loose\to
  \Upsilon_\loose$ equipped with a modification
  \[\xymatrix{\Phi_\loose \ar[r]^f\ar[d]_{(e_\Phi)_\loose} & \Upsilon_\loose \ar[d]^{(e_\Upsilon)_\loose}\\
    \Phi_\loose\ar[r]_f & \Upsilon_\loose \ultwocell\omit{\fbar}.}\]
  We have an evident \sF-functor $i\maps [\bbD,\bbF]\to W\bbCoalg_c$ that
  equips an \sF-weight with its identity endomorphism.

  Now, as before we have $\Phi = \Phi\star Y$ in $[\bbD,\bbF]$,
  with colimiting cocone
  $c\maps \Phi\to [\bbD,\bbF](Y,\Phi)$.  We also have the composite
  functor $iY\maps \bbD\op\to W\bbCoalg_c$ which lifts $Y$.
  Therefore, the forgetful \sF-functor $W\bbCoalg_c \to [\bbD,\bbF]$
  creates a colimit of $iY$, which must be given by a $W$-coalgebra
  structure $e_\Phi$ on $\Phi$.

  However, we also have the $\Phi$-weighted cocone $ic\maps \Phi \to
  W\bbCoalg_c(iY,i\Phi)$, so there is a unique induced tight $W$-map
  $h\maps (\Phi,e_\Phi) \to (\Phi,1) = i\Phi$.  Since both the
  colimiting cocone and the cocone $ic$ project to $c$ in $[\bbD,\bbF]$,
  the map $h$ must project to the identity of $\Phi$, and hence $e_\Phi
  = 1_\Phi$ as well.  Thus $ic$ is actually itself colimiting, i.e.\
  $i\Phi \iso \Phi \star i Y$.

  Now let $\xymatrix{\Phi_\loose \rtwocell^f_g{\beta} & \Psi}$ be a
  modification such that the composite
  \[\xymatrix{\Phi_\tight \rrtwocell<5>^{fJ. \varphi}_{gJ.\varphi}{\quad\beta J
      . \varphi} && \Psi J}
  \]
  is an identity, where $\varphi\maps \Phi_\tight \to \Phi_\loose J$ is the
  structure map of $\Phi$.  We equip $\Psitil\times\Psitil$ with the
  endomorphism $(\pi_2,\pi_2)$ sending $(x,y)$ to $(y,y)$, making it a
  $W$-coalgebra.  Then $\beta$ defines a loose arrow $i\Phi \lto
  \Psitil\times\Psitil$ in $W\bbCoalg_c$:
  \[\xymatrix{\Phi_\loose \ar[r]^<>(.5){(f,g)}\ar[d]_{1} & \Psi\times\Psi \ar[d]^{(\pi_2,\pi_2)}\\
    \Phi_\loose\ar[r]_<>(.5){(f,g)} & \Psi\times\Psi \ultwocell\omit{(\beta,1)\quad}}\]
  (the top-right composite being $(g,g)$ and the bottom-right composite
  being $(f,g)$).  We claim that in fact, this is a tight arrow, and
  therefore $\beta$ itself is an identity.

  Since $i\Phi = \Phi\star iY$ in $W\bbCoalg_c$, and $(f,g)$ does extend
  to a tight map $\Phi\to \Psitil\times\Psitil$ in $[\bbD,\bbF]$, to
  show that $\beta$ is tight in $W\bbCoalg_c$ it suffices to show that
  for every tight coprojection $iYD \xrightarrow{k} i\Phi$, the composite
  $iYD \xrightarrow{k} i\Phi \lto \Psitil\times\Psitil$ is tight.  This
  amounts to saying that the composite
  \[\xymatrix{(YD)_\loose \rrtwocell<5>^{f.k_\loose}_{g.k_\loose}{\quad\beta .k_\loose} && \Psi}
  \]
  is an identity.  But by assumption and since $k$ is tight, the
  composite
  \[\xymatrix@C=4pc{(YD)_\tight \rrtwocell<5>^{(f.k_\loose)J.y}_{(g.k_\loose)J.y}{\qquad\quad(\beta .k_\loose) J.y} && \Psi}
  \]
  is the identity, and therefore so is
  \[\xymatrix@C=3pc{\Lan_y (YD)_\tight \rrtwocell<5>^{f.k_\loose.\ybar}_{g.k_\loose.\ybar}{\qquad\beta .k_\loose.\ybar} && \Psi}
  \]
  where $y\maps (YD)_\tight \to (YD)_\loose J$ is the structure map of $YD$ and
  $\ybar\maps \Lan_J(YD)_\tight \to (YD)_\loose$ is its adjunct.  But since $YD$
  is representable, \ybar\ is an isomorphism, so this implies that
  $\beta.k_\loose$ is also an identity, as desired.

  We have proven that given any modification $\xymatrix{\Phi_\loose
    \rtwocell^f_g{\beta} & \Psi}$, if $\beta J.\varphi$, or
  equivalently $\beta.\phbar$, is an identity, then $\beta$ is an
  identity.  By \autoref{thm:so-char}, this implies that $\phbar\maps
  \Lan_J \Phi_\tight \to \Phi_\loose$ must be pointwise surjective on
  objects.
\end{proof}

Combining the results of this section and the previous one, we finally obtain
our characterization theorem:

\begin{thm}
  For an \sF-weight $\Phi$, the following are equivalent.
  \begin{enumerate}
  \item $\Phi$ is $w$-rigged, as in \autoref{def:rigged}.
  \item For any \sF-monad $T$ on an \sF-category \bbK, the functor $U_w\colon T\bbAlg_w \to \bbK$ creates $\Phi$-weighted limits.
  \item For any 2-monad $T$ on a 2-category \sK, the functor $U_w\colon T\bbAlg_w \to \bbK$ (where $\bbK$ denotes the chordate \sF-category on \sK) creates $\Phi$-weighted limits.
  \end{enumerate}
\end{thm}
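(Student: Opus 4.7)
The plan is to assemble the pieces that have already been established in \S\ref{sec:rigged-limits}, with the equivalences essentially amounting to book-keeping about which class of \sF-monads is under consideration.

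The implication (1) $\Rightarrow$ (2) is precisely the content of \autoref{thm:rigged-lift}: once $\Phi$ is $w$-rigged, the forgetful \sF-functor $U_w\colon T\bbAlg_w \to \bbK$ creates $\Phi$-weighted limits for any \sF-monad $T$ on any \sF-category \bbK. The implication (2) $\Rightarrow$ (3) is then immediate, since any 2-monad $T$ on a 2-category \sK canonically determines an \sF-monad on the chordate \sF-category \bbK associated to \sK, and the \sF-category $T\bbAlg_w$ as built from this \sF-monad coincides with the one described in \autoref{eg:fcat-alg}.

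For the remaining implication (3) $\Rightarrow$ (1), I would first upgrade (3) to (2) using \autoref{prop:reduction}, which says (via the equivalence of conditions~\ref{item:red5} and~\ref{item:red1} of that proposition) that creation of $\Phi$-weighted limits by $U_w$ for all chordate $\sF$-categories automatically implies creation for arbitrary \sF-categories. Having established (2), I would then invoke \autoref{thm:lifts-rigged} to conclude that $\Phi$ must be $w$-rigged. Here one uses the remark made after \autoref{def:lifting} that $U_w\colon T\bbAlg_w \to \bbK$ is conservative, so that creation and lifting of $\Phi$-weighted limits coincide in this context; hence the hypothesis of \autoref{thm:lifts-rigged} is satisfied.

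There is essentially no obstacle to overcome: all three implications reduce to citations of results proved in \S\ref{sec:rigged-limits}, with only the bookkeeping observation that a 2-monad on a 2-category corresponds to an \sF-monad on the associated chordate \sF-category. The genuine content lies in \autoref{thm:rigged-lift} (the lifting direction), \autoref{thm:lifts-rigged} (the converse obtained via the comonad $(-)^{\mathbf{N}}$ on $[\bbD,\bbF]$), and \autoref{prop:reduction} (the reduction to chordate targets); the theorem under consideration simply collects these into their final equivalent form.
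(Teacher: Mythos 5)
Your proposal is correct and is exactly the argument the paper intends: the theorem is stated as a summary ("combining the results of this section and the previous one") and its proof is precisely the assembly of \autoref{thm:rigged-lift}, \autoref{prop:reduction} (conditions~\ref{item:red5}$\Leftrightarrow$\ref{item:red1}), and \autoref{thm:lifts-rigged} that you describe, together with the observation that an \sF-monad on a chordate \sF-category is the same thing as a 2-monad. Nothing is missing.
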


\subsection{Rigged colimits}
\label{sec:colimits}

We end this section by briefly considering colimits in
categories of algebras, which are generally more subtle than
limits.  Even in the case of ordinary categories, colimits are
not in general created by monadic functors.  One thing one can say is
that if a monad $T$ preserves colimits with a given weight, then the
category of $T$-algebras has colimits of that sort created by the
forgetful functor.  We now show that rigged \sF-weights satisfy an
analogous property for categories of algebras and weak morphisms.  The
analogous result for PIE-colimits in the 2-categorical context has
been proven independently by John Bourke.

\begin{thm}
  Let $\Phi$ be $\wbar$-rigged, and let $T$ be an \sF-monad on an
  \sF-category \bbK such that \bbK has, and $T$ preserves,
  $\Phi$-weighted colimits.  Then the forgetful functor $U_w\colon
  T\bbAlg_w \to \bbK$ creates $\Phi$-weighted colimits.
\end{thm}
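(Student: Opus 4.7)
The strategy is to adapt the proof of \autoref{thm:rigged-lift}, with the hypothesis that $T$ preserves $\Phi$-colimits replacing the completeness of $\bbK$ used in the limit case. As usual I focus on $w=l$; the other cases are formally similar.

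Given $G\colon\bbD\to T\bbAlg_w$, the plan is to form the colimit $C=\Phi*UG$ in $\bbK$ with cocone $\epsilon\colon UG\to C$, and then endow $C$ with a $T$-algebra structure. Preservation of $\Phi$-colimits by $T$ supplies a comparison isomorphism $TC\cong\Phi*TUG$. The $w$-$T$-morphism structures on the loose edges of $G$ assemble the algebra structure maps $a_D\colon TUGD\to UGD$ into a $\wbar$-natural transformation $a\colon TUG\to UG$ in $[\bbD,\bbK]$, the cocone-theoretic analogue of the universal property~\eqref{eq:univ-prop-Talg_w} of $T\bbAlg_w$. Composing with $\epsilon$ yields a $\wbar$-natural $\Phi$-cocone on $TUG$ with vertex $C$, which the $\QF$-coalgebra structure $s\colon\Phi\to\QF_w\Phi$ granted by $\wbar$-riggedness converts into a strict $\Phi$-cocone, in turn determining a unique $c\colon TC\to C$. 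The algebra axioms, \sF-functoriality in $G$, and the fact that each $\epsilon_D$ is a strict $T$-morphism can then be verified by a dualised monad-morphism argument modelled on \autoref{prop:alg-structure-on-L}: $\Phi*-\colon[\bbD,\bbK]\to\bbK$ is a pseudo monad morphism from $[\bbD,T]$ to $T$ (pseudo, because $T$ preserves the colimit), and the coalgebra $s$ extends this through the distributive law between $[\bbD,T]$ and $\RF_\wbar$ to a lax monad morphism from the appropriate Kleisli-style monad on $\bbNat_\wbar(\bbD,\bbK)$.

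To establish the universal property of $(C,c)$ as the $\Phi$-weighted colimit in $T\bbAlg_w$, I would follow the two-stage pattern of \autoref{thm:coalg-looselift} and \autoref{thm:rigged-lift}. For the loose stage, present the hom $T_\loose\Alg_l(G_\loose -,(X,x))$ as a $c$-descent object in the 2-category of algebras for the 2-monad on $[\sDl,\Cat]$ underlying $\QF_\loose$, with its projection to $\bbK(UG_\loose -,X)$ strict and strictness-detecting. Apply $[\sDl,\Cat](\Phi_\loose,-)$ to this presentation and invoke the adjointness $\{\QF_w\Phi,-\}\cong\{\Phi,\RF_w-\}$ of \autoref{thm:qr-adjt}, using preservation of $\Phi$-colimits by $T$ exactly where the limit case used completeness to identify the two descent-object presentations of Figure~\ref{fig:descent}. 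The outcome is a bijection between loose $T$-morphisms $(C,c)\lto(X,x)$ and $\Phi_\loose$-cocones of loose $T$-morphisms under $G_\loose$, establishing the universal property in $(T\bbAlg_w)_\loose$. For the tight stage, suppose $(f,\fbar)\colon(C,c)\lto(X,x)$ is a loose $T$-morphism with each $f\epsilon_D$ a strict $T$-morphism for $D\in\bbD$. Then $\fbar$ whiskered with the cocone and restricted along $\phi\colon\Phi_\tight\to\Phi_\loose J$ is an identity; taking the adjunct, the same 2-cell is an identity after restriction along $\phbar$. Because $\phbar$ is pointwise surjective on objects by $\wbar$-riggedness, \autoref{thm:so-char} forces $\fbar$ itself to be the identity.

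The main technical obstacle is the descent-object identification in the loose stage. In the absence of completeness of $\bbK$, one must verify that preservation of the single weight $\Phi$ by $T$ is enough to run the interlocking adjointness-and-descent argument of Figure~\ref{fig:descent}, and that the variances of the $\wbar$-naturality of $a$, of the $\QF_w$-coalgebra structure on $\Phi$, and of the adjointness between $\QF_w$ and $\RF_w$ combine consistently. Once this is secured, the remaining algebra and tightness verifications go through as in the limit case.
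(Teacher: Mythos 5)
Your overall strategy---dualize the proof of \autoref{thm:rigged-lift} directly, with preservation of $\Phi$-colimits by $T$ standing in for completeness of \bbK---is not the paper's, and it founders on exactly the point you flag as ``the main technical obstacle.'' The machinery you invoke lives in $[\bbD,\bbK]$, not just in $[\bbD,\bbF]$: the weak transformation (co)classifiers $\QF_\wbar$ and $\RF_\wbar$ on $[\bbD,\bbK]$, the distributive law with $[\bbD,T]$, the identification of $\bbNat_\wbar(\bbD,\bbK)$ as a Kleisli object, and \autoref{thm:qr-adjt} all require \bbK to be complete (dually, cocomplete) so that the codescent objects, Kan extensions and pushouts of \S\ref{sec:weak-transf-class} exist. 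Under the hypothesis that \bbK merely has $\Phi$-weighted colimits, the intermediate object $\QF_w\Phi * UG$ (the analogue of $L'=\{\QF\Phi,UG\}\cong\{\Phi,\RF(UG)\}$ from Step 1 of \autoref{thm:coalg-looselift}) and the ``lax-ified'' diagram playing the role of $\RF(UG)$ need not exist, so one side of the two-column descent comparison of Figure~\ref{fig:descent} is simply not there. Preservation of the single weight $\Phi$ by $T$ is not a like-for-like substitute for completeness: in the limit proof, completeness is what builds these auxiliary objects, and it is not used to compare $T$ of a limit with a limit of $T$'s. Your construction of the structure map $c\colon TC\to C$ and your treatment of the tight part via \autoref{thm:so-char} are fine as far as they go, but the loose universal property---the heart of the matter---is left unproved, and closing that gap is the entire content of the theorem rather than a verification to be deferred.

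The paper's proof avoids the issue by a reduction rather than a dualized construction. One first cuts down to small \bbK as in \autoref{prop:reduction}, then embeds \bbK into the \sF-category $\widehat{\bbK}\subset[\bbK\op,\bbF]$ of presheaves taking $\Phi$-weighted colimits in \bbK to limits in \bbF; this embedding preserves $\Phi$-colimits, and by \cite[6.17]{kelly:enriched} (here is where the hypothesis that $T$ preserves $\Phi$-colimits enters) $T$ extends to a monad $\widehat{T}$ on $\widehat{\bbK}$ admitting a right adjoint $\widehat{T}^*$. As in \S\ref{sect:comonad}, $\widehat{T}^*$ is then a comonad with $\widehat{T}^*\bbCoalg_w = \widehat{T}\bbAlg_w$, so the formal dual of the already-proven \autoref{thm:rigged-lift}, applied to the $\wbar$-rigged weight $\Phi$ and the comonad $\widehat{T}^*$, lifts $\Phi$-colimits to $\widehat{T}\bbAlg_w$; since the underlying presheaf of the lifted colimit is representable, it descends to $T\bbAlg_w$. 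To rescue your direct argument you would need this completion step, or some replacement for it, before any of the classifier-based machinery can be brought to bear.
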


\begin{proof}
   First of all, we observe that just as in
  \autoref{prop:reduction}, we may assume that \bbK is small; for
  otherwise we can pick a small full subcategory of it, closed under
  $\Phi$-weighted colimits and the action of $T$, and containing the
  image of $G$ and any other  given object.

  Now, if \bbK is small, let $\widehat{\bbK} \subset
  [\bbK\op,\bbF]$ be the subcategory consisting of those presheaves on
  $\bbK$ which preserve $\Phi$-weighted limits (i.e.\ which take 
$\Phi$-weighted colimits in \bbK to limits in $\bbF$).  Since \bbK has all
  $\Phi$-weighted colimits, by~\cite[6.17]{kelly:enriched} $T$
  extends to an \sF-monad $\widehat{T}$ on $\widehat{\bbK}$ which has
  a right adjoint $\widehat{T}^*$.  Therefore, as in
  \S\ref{sect:comonad}, $\widehat{T}^*$ becomes an \sF-comonad whose
  coalgebras are the same as $\widehat{T}$-algebras.

  However, since $\Phi$ is $\wbar$-rigged, $\Phi$-weighted colimits
  lift from $\widehat{\bbK}$ to $\widehat{T}^*\bbCoalg_w$, hence
  also to $\widehat{T}\bbAlg_w$.  Moreover, the embedding
  $\bbK\hookrightarrow \widehat{\bbK}$ preserves $\Phi$-weighted
  colimits.  Thus, any $G\colon \bbD \to T\bbAlg_w$ has a colimit
  $\Phi*G$ in \bbK, which remains a colimit in $\widehat{\bbK}$ and
  thus lifts to $\widehat{T}\bbAlg_w$.  But the underlying presheaf of
  this colimit $\widehat{T}$-algebra is representable, hence it is a
  $T$-algebra and thus a $\Phi$-weighted colimit of $G$.
\end{proof}

\section{On rigged weights}
\label{sec:rigged}

Our goal in this section is to analyze the notion of rigged weight a
little further, to clarify the relationship between the two parts of
the definition and the connection to 2-categorical weights such as
PIE-weights.

\subsection{The structure of $\QF$-coalgebras}
\label{sec:struct-q-coalg}

We begin by unpacking the notion of $\QF_{\wbar}$-coalgebra a little.
Let \bbD be an \sF-category, and $Q_\wbar \coloneqq Q^\bbD_\wbar$ the
$\wbar$-transformation classifier for the monad on $[\sDt,\Cat]$ whose
category of algebras is $[\sDl,\Cat]$.  Recall from
\S\ref{sec:weak-transf-class} that $\QF_{\wbar}$ can be constructed as
$(\QF_\wbar \Phi)_\tight = \Phi_\tight$ and $(\QF_\wbar \Phi)_\loose =
Q_\wbar (\Phi_\loose)$, with the structure map being the composite
\[ \xymatrix{ \Phi_\tight \ar[r]^-{\phi} &
  \Phi_\loose J \ar[r]^-{p} &
  Q_\wbar (\Phi_\loose) J.}
\]
Therefore, the coaction $s\colon \Phi \to \QF_\wbar \Phi$ of a
$\QF_\wbar$-coalgebra must be of the form
\[\xymatrix{
\Phi \ar[d] & 
\Phi_\tight \ar[r]^-{\phi} \ar@{=}[d] & \Phi_\loose J \ar[d]^{sJ} \\
\QF_\wbar\Phi & \Phi_\tight \ar[r]_-{p.\phi} & (Q_\wbar\Phi_\loose)J }
\]
It follows that $\Phi$ is a $\QF_\wbar$-coalgebra if and only if
\begin{enumerate}
\item $\Phi_\loose$ is a $Q_\wbar$-coalgebra, with coaction $s\colon
  \Phi_\loose \to Q_\wbar \Phi_\loose$, and
\item $sJ.\phi = p.\phi$.
\end{enumerate}

We can refine the second of these conditions a little further.
Suppose that $w=l$, and recall that $Q_c$ is colax-idempotent.
Therefore, if $s\colon \Phi_\loose\to Q_c\Phi_\loose$ makes
$\Phi_\loose$ into a $Q_c$-coalgebra, then $s$ is right adjoint to
$q\colon Q_c\Phi_\loose\to\Phi_\loose$ with identity counit, and hence
$sJ$ is right adjoint to $qJ$ with identity counit.  On the other
hand, $p\colon \Phi_\loose J\to Q_c\Phi_\loose J$ is left adjoint to
$qJ$ with identity unit, so we get a string of adjunctions $p\dashv
qJ\dashv sJ$.  We write $\alpha\colon 1\to sq$ for the unit of the
adjunction $q\dashv s$ and $\beta\colon p.qJ\to 1$ for the counit of
the adjunction $p\dashv qJ$.  Then the diagram
\[\xymatrix{
p \ar[r]^-{\alpha J.p} \ar@{=}[d] & sJ.qJ.p \ar@{=}[d] \\
p.qJ.sJ \ar[r]_-{\beta.sJ} & sJ }
\]
commutes, and we write $\tau\colon p\to sJ$ for the common value.

In the case $w=c$ there is an analogous $\tau\colon sJ\to p$, while
for $w=p$ the 2-cell is invertible.

\begin{lem}\label{thm:eqisid}
  If $\Psi$ is a $Q_{\wbar}$-coalgebra, then a morphism
  $\upsilon\colon \Upsilon \to \Psi J$ satisfies $sJ.\upsilon =
  p.\upsilon$ if and only if $\tau.\upsilon$ is an identity.
\end{lem}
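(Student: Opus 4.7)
The plan is to exploit the two adjunctions $p \dashv qJ \dashv sJ$ arising from the $\wbar$-idempotence of $Q_\wbar$ (as set up in the paragraph defining $\tau$), together with their triangle identities. One direction of the biconditional is immediate: if $\tau\cdot\upsilon$ is the identity 2-cell, then its source $p\cdot\upsilon$ and its target $sJ\cdot\upsilon$ must coincide.

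For the converse, I would assume $sJ\cdot\upsilon = p\cdot\upsilon$, so that $\tau\cdot\upsilon$ is now a 2-endomorphism of $p\cdot\upsilon$. The adjunction $p \dashv qJ$ has identity unit, so the transpose bijection between endo-2-cells of $p\cdot\upsilon$ and endo-2-cells of $qJ\cdot p\cdot\upsilon=\upsilon$ is simply given by whiskering with $qJ$, and it carries identities to identities in both directions. Hence it suffices to show that $qJ\cdot\tau$ is the identity 2-cell on $p$, for then $qJ\cdot\tau\cdot\upsilon$ is an identity and transposing back yields the claim for $\tau\cdot\upsilon$.

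To see that $qJ\cdot\tau$ is an identity, I would use the presentation $\tau = (\alpha J)\cdot p$ supplied in the preceding paragraph, giving $qJ\cdot\tau = (q\alpha)J\cdot p$. But $q\alpha$ is the identity on $q$ by the triangle identity for the adjunction $q\dashv s$, whose counit is the identity; this is the key computational input. The cases $w=c$ and $w=p$ then follow from the same argument with the direction of $\tau$ reversed (respectively, with $\tau$ invertible), invoking the other triangle identity when appropriate. I do not anticipate any real obstacle: the whole lemma is a formal consequence of the identity-unit/identity-counit nature of the two adjunctions, and the only thing requiring care is tracking which triangle identity is being used in each of the three cases.
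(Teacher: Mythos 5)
Your proof is correct, and it is a close cousin of the paper's argument rather than a carbon copy. The paper exploits the \emph{other} presentation of $\tau$: writing $\tau = \beta.sJ$ (with $\beta$ the counit of $p \dashv qJ$), the hypothesis $sJ.\upsilon = p.\upsilon$ gives $\tau.\upsilon = \beta.sJ.\upsilon = \beta.p.\upsilon$, and $\beta.p$ is an identity by a triangle identity for $p \dashv qJ$ (whose unit is the identity) --- that is the whole converse, with no transposition step. You instead use $\tau = \alpha J.p$, kill $qJ.\tau$ via the other triangle identity $q.\alpha = 1_q$ for $q \dashv s$, and then recover $\tau.\upsilon$ from $qJ.\tau.\upsilon$ by noting that whiskering with $qJ$ is the transpose along $p \dashv qJ$ and hence reflects identities among endo-2-cells of $p.\upsilon$. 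Both routes are formal consequences of the adjunction string $p \dashv qJ \dashv sJ$; the paper's is one line shorter, while yours makes visible that each of the two presentations of $\tau$ pairs with ``its own'' triangle identity. One small slip to fix: $qJ.\tau$ is an endo-2-cell of the identity $1_{\Psi J}$ (since $qJ.p = 1 = qJ.sJ$), not ``the identity 2-cell on $p$''; this does not affect the argument, since what you actually use is that $qJ.\tau.\upsilon = 1_\upsilon$.
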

\begin{proof}
  If $\tau.\upsilon\colon p.\upsilon \to sJ.\upsilon$ is an identity, its
  source and target must be equal.  On the other hand, by definition
  $\tau = \beta. sJ$, so if $sJ.\upsilon=p.\upsilon$ then
  \[ \tau.\upsilon = \beta.sJ.\upsilon = \beta.p.\upsilon, \] and
  $\beta.p$ is an identity by one of the triangle identities for the
  adjunction between $p$ and $qJ$.
\end{proof}

In particular, this applies to $\phi\colon \Phi_\tight \to \Phi_\loose
J$ whenever $\Phi$ is a $\QF_\wbar$-coalgebra.

\subsection{Canonical riggings}
\label{sec:canonical}

It is natural to ask, given a coalgebra for the comonad given by the
2-categorical relative $w$-morphism classifier
$Q^\bbD_\wbar$, how can we extend it to a $w$-rigged \sF-weight?  In all
the examples in \S\ref{sec:eg-wgts}, there was an obvious
``canonical'' choice of which projections to make tight.  The
following proposition says that this situation is generic.

\begin{prop}\label{thm:canonical}
  If $\Psi$ is a $Q_{\wbar}$-coalgebra, then the category of all
  $\QF_\wbar$-coalgebras $\Phi$ with $\Phi_\loose = \Psi$ (as
  $Q_\wbar$-coalgebras) is a preorder with a greatest element.
  Moreover, if any such $\Phi$ is $w$-rigged, so is the greatest one.
\end{prop}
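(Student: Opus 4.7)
The plan is to first rephrase the data in question using \autoref{thm:eqisid}. A $\QF_\wbar$-coalgebra $\Phi$ with $\Phi_\loose = \Psi$ amounts to a 2-functor $\Phi_\tight\maps \sD_\tight\to\Cat$ together with a pointwise full embedding $\phi\maps \Phi_\tight\to\Psi J$ satisfying $\tau.\phi = 1$, where $\tau$ is the canonical 2-cell associated to the coaction of $\Psi$ (with sense reversed if $w=c$, or invertible if $w=p$). A morphism $\Phi\to\Phi'$ in the category of such coalgebras over $\Psi$ is an \sF-natural transformation whose loose component is the identity on $\Psi$, which is therefore given by a natural transformation $f_\tight\maps \Phi_\tight\to\Phi'_\tight$ satisfying $\phi'.f_\tight = \phi$. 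Since $\phi'$ is pointwise injective on objects, such an $f_\tight$ is uniquely determined when it exists; thus the category in question is automatically a preorder, with $\Phi \le \Phi'$ exactly when $\phi$ factors (necessarily uniquely) through $\phi'$.

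To produce the greatest element $\Phi^{\max}$, define, for each $D\in\sD_\tight$, the category $\Phi^{\max}_\tight(D)$ to be the full subcategory of $\Psi(JD)$ spanned by those objects $x$ for which the component $\tau_{D,x}$ is an identity, and take $\phi^{\max}_D$ to be the inclusion. The one nontrivial verification is that this assignment is 2-functorial, i.e.\ stable under the action of 1-cells $f\maps D\to D'$ in $\sD_\tight$; this follows from the modification axiom for $\tau$, which gives $\tau_{D',\Psi(Jf)(x)} = Q_\wbar(\Psi)(Jf)(\tau_{D,x})$, so identities are sent to identities. Stability under 2-cells is automatic because the subcategory is full. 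By construction $\tau.\phi^{\max}$ is an identity, so $\Phi^{\max}$ really is a $\QF_\wbar$-coalgebra with loose part $\Psi$, and any other $\phi$ factors through $\phi^{\max}$ by the defining property.

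For the final claim, suppose some $\Phi$ in this preorder is $w$-rigged, so that $\phbar\maps \Lan_J\Phi_\tight\to\Psi$ is pointwise surjective on objects. Since $\Phi \le \Phi^{\max}$, the induced $f_\tight\maps \Phi_\tight\to\Phi^{\max}_\tight$ extends by left Kan extension to a map $g\maps \Lan_J\Phi_\tight\to \Lan_J\Phi^{\max}_\tight$ making the triangle
\[
\xymatrix@R=1pc{
\Lan_J\Phi_\tight \ar[rr]^{g} \ar[dr]_{\phbar} && \Lan_J\Phi^{\max}_\tight \ar[dl]^{\phbar^{\max}} \\
& \Psi & }
\]
commute. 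Since the composite $\phbar = \phbar^{\max}\circ g$ is pointwise surjective on objects, so is $\phbar^{\max}$, and hence $\Phi^{\max}$ is $w$-rigged. The only genuine obstacle in the argument is verifying the 2-functoriality of $\Phi^{\max}_\tight$, which is where the modification axiom for $\tau$ has to be deployed in exactly the right form; everything else is a direct consequence of the universal property of full embeddings and of left Kan extension.
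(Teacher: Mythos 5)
Your proof is correct and follows essentially the same route as the paper: reduce via \autoref{thm:eqisid} to pointwise full embeddings $\phi$ with $\tau.\phi$ an identity, note these form a preorder because full embeddings are monic, take the greatest element to be the identifier of $\tau$ (which you construct explicitly, pointwise, and check is 2-functorial), and transfer surjectivity on objects through the factorization $\phbar = \phbar^{\max}\circ\Lan_J(f_\tight)$. The only difference is that you unpack the identifier by hand where the paper simply invokes it, which adds nothing but harmless detail.
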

\begin{proof}
  We saw in \S\ref{sec:struct-q-coalg} that to give a $\QF_\wbar$-coalgebra
  $\Phi$ with $\Phi_\loose = \Psi$ is precisely to give a pointwise
  full embedding $\phi\colon \Phi_\tight \to \Psi J$ such that
  $sJ.\phi = p.\phi$, or equivalently such that $\tau.\phi$ is an
  identity.  Since full embeddings are monic and fully faithful, the
  category of such (for fixed $\Psi$) is a preorder.  Its greatest
  element is the identifier of $\tau$ (note that any identifier is a
  full embedding).

  Finally, if $\Phi$ is $w$-rigged with $\Phi_\loose = \Psi$, and
  $\psi\colon \Psi_\tight \to \Psi J$ is the identifier of $\tau$,
  then $\phi\colon \Phi_\tight \to \Phi_\loose = \Psi J$ factors
  through $\psi$ via some $k\colon \Phi_\tight \to \Psi_\tight$.  Thus
  the composite
  \[ \xymatrix@C=3pc{ \Lan_J\Phi_\tight \ar[r]^{\Lan_J(k)} &
    \Lan_J\Psi_\tight \ar[r]^-{\overline{\psi}} &
    \Psi }
  \]
  is the pointwise surjective on objects $\overline{\phi}$, so
  $\overline{\psi}$ must also be pointwise surjective on objects.
\end{proof}

Note that \autoref{thm:eqisid} implies that the identifier of $\tau$
is also the equalizer of $sJ$ and $p$.

The canonical rigging constructed in \autoref{thm:canonical} does,
however, depend on the choice of $Q_\wbar$-coalgebra structure on
$\Psi$.  Since $Q_\wbar$ is $\wbar$-idempotent, any two such coalgebra
structures are uniquely isomorphic, but they need not be identical.
Here is an example which admits two distinct $Q_\wbar$-coalgebra
structures, with correspondingly distinct canonical riggings.
 (This should be contrasted with the remark after \autoref{thm:rigged-strictmor} that an \sF-weight with a \emph{given} tight part can ``be rigged'' in at most one way.)

\begin{eg}\label{eg:two-qcoalg}
  Let $\sD_\loose$ have two objects $a$ and $b$, with two morphisms
  $r\colon b\to a$ and $i\colon a\to b$ such that $r i = 1_a$, and
  hence $f\coloneqq i r$ is idempotent.  Let $\Phi_\loose$ be constant
  at $\bbone$.  Since $a$ is initial in $\sD_\loose$ (or,
  equivalently, $\Phi_\loose$ is the representable $\sD_\loose(a,-)$),
  for any 2-categorical diagram $G\colon \sD_\loose \to \sK$, the
  object $G(a)$ is a $\Phi_\loose$-weighted limit of $G$.

  Let $\sD_\tight$ contain the identities together with the idempotent
  $f$.  Then $Q_w \Phi_\loose$ is constant at the free-living isomorphism
  $\bbtwo_{\iso}$, and so $\Phi_\loose$ has two distinct (but, of
  course, isomorphic) $Q_w$-coalgebra structures.  One of the
  corresponding identifiers has $\Phi_\tight(a)=\bbone$ and
  $\Phi_\tight(b)=\emptyset$, while the other has
  $\Phi_\tight(a)=\emptyset$ and $\Phi_\tight(b)=\bbone$.  Both
  resulting \sF-weights are rigged.
  
  More explicitly, suppose $T$ is a 2-monad on a 2-category \sK.  Then
  a \bbD-diagram in $T\bbAlg_w$ consists of a strict idempotent $f$ of
  a $T$-algebra $\bB$, together with a splitting of the underlying
  morphism $f$ in \sK, and a $T$-algebra structure \bA on the
  splitting making the section and retraction into \emph{weak}
  $T$-morphisms (though their composite, being the idempotent, is
  strict).

  On the one hand, clearly \bA is a limit of this diagram in
  $T\bbAlg_w$, and its identity projection to itself is strict and
  detects strictness.  But on the other hand, since $f$ is a strict
  $T$-morphism, we can give the underlying object $A$ a different
  $T$-algebra structure induced directly from \bB, in which case the
  section $i$ becomes strict and strictness-detecting.  These two
  $T$-algebra structures on $A$ are isomorphic in $T\Alg_w$, by an
  isomorphism whose 1-morphism part is the identity $1_A$, but they
  are generally not equal (i.e.\ this isomorphism is not generally a
  strict $T$-morphism).
\end{eg}

On the other hand, a 2-categorical $Q_\wbar$-coalgebra may have no
extension to a $w$-rigged \sF-weight at all.

\begin{eg}\label{eg:not-rigged}
  Let \bbD be locally discrete, with two objects $a$ and $b$, and with
  morphisms generated by two tight morphisms $r,s\colon
  a\rightrightarrows b$ and a loose morphism $g\colon b\lto a$,
  subject to $sg=1$ and $rgr=rgs$.  Then $f \coloneqq rg$ is a loose
  idempotent with $fr=fs$.  We can write out all the homsets
  explicitly as follows:
  \begin{alignat*}{2}
    \sD_\loose(a,a) &= \{ 1_a, gr, gs, gfr \} &\qquad
    \sD_\loose(b,b) &= \{ 1_b, f \}\\
    \sD_\loose(a,b) &= \{ r, s, fr \} &\qquad
    \sD_\loose(b,a) &= \{ g, gf \}
  \end{alignat*}
  The only tight morphisms are the identities and $r,s$.

  Let $\Phi_\loose$ be constant at \bbone and $\Phi_\tight$ be
  constant at $\emptyset$.  Then $\Phi_\loose J\colon \sD_\tight \to
  \Cat$ is the quotient of the representable $\sD_\tight(a,-)$ by the
  equivalence relation setting $[r]=[s]$.  Therefore, $\Lan_J
  (\Phi_\loose J)$ is the quotient of $\sD_\loose(a,-)$ by the
  equivalence relation generated by $[r]=[s]$, which implies
  $[gr]=[gs]$ but no more relations.

  Since for any $w$, $Q_w\Phi_\loose$ is a type of codescent object
  of $\Lan_J (\Phi_\loose J)$, it has the same objects as the latter.
  We can therefore define a morphism $\Phi_\loose \to
  Q_w\Phi_\loose$ which picks out $[gfr] = [gfs] \in
  Q_w\Phi_\loose(a)$ and $[fr] = [fs] \in Q_w\Phi_\loose(b)$.
  Verifying the coassociativity and counit axioms, and recalling that
  $\Phi_\tight\equiv \emptyset$ so that this extends to a morphism $\Phi \to
  \QF_w \Phi$, we see that $\Phi$ is a $\QF_w$-coalgebra for any $w$.

  Of course, with $\Phi_\tight\equiv \emptyset$ and $\Phi_\loose$
  nontrivial, $\Phi$ is not rigged.  In fact, however,
  $\Phi_\tight\equiv \emptyset$ is the \emph{only} $\Phi_\tight$ for
  which the above morphism $\Phi_\loose \to Q_w\Phi_\loose$ extends
  to an \sF-natural transformation.  This is because the inclusion
  $p\colon \Phi_\loose J \to (Q_w\Phi)_\loose J$ picks out instead
  $[1_a]$ and $[r]=[s]$, and so the identifier constructed above is
  itself empty.

  More concretely, a $\Phi$-weighted limit of a \bbD-shaped diagram is
  really just a splitting of the loose idempotent $f$.  Splitting of
  general loose idempotents is flexible but not rigged, but in this
  case the additional existence of the tight morphisms $r$ and $s$
  enables us to make the weight into a $\QF$-coalgebra, or equivalently
  to show that the limit of a \bbD-diagram in $T\bbAlg_w$ actually
  gets a strict $T$-algebra structure.  But neither of the
  projections is necessarily a strict $T$-morphism, and so the weight
  cannot be rigged.
\end{eg}

We can, however, identify conditions on the \sF-category \bbD which
ensure that any $Q_{\wbar}$-coalgebra can be rigged.  For a 2-category
\sC, we write $\sC_0$ for its underlying 1-category, and similarly for
2-functors.

\begin{prop}\label{thm:corefleq-rigged}
  Suppose that $\Lan_{J_0}\colon [(\sDt)_0,\Set] \to [(\sDl)_0,\Set]$
  preserves coreflexive equalizers.  Then the maximal extension of any
  $Q_\wbar$-coalgebra to a $\QF_\wbar$-coalgebra is $w$-rigged, and
  moreover its structure map $\phbar\colon \Lan_J \Phi_\tight \to
  \Phi_\loose$ is pointwise bijective on objects.
\end{prop}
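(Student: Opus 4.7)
The plan is to invoke \autoref{thm:canonical}, which tells us that the maximal extension has $\phi\maps\Phi_\tight\to\Phi_\loose J$ equal to the identifier of $\tau\maps p\to sJ$ (I describe the case $w=l$; the case $w=c$ is dual, and $w=p$ is analogous with the relevant 2-cells invertible). By \autoref{thm:eqisid} this identifier coincides with the equalizer in $[\sDt,\Cat]$ of the parallel pair $p,sJ\maps\Phi_\loose J\rightrightarrows Q_\wbar(\Phi_\loose)J$. The chain of adjunctions $p\dashv qJ\dashv sJ$ recorded in \S\ref{sec:struct-q-coalg} exhibits $qJ$ as a common retraction of $p$ and $sJ$, so this equalizer is \emph{coreflexive}.

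Next I transport the equalizer to the $\Set$-enriched world. The object-set functor $\ob\maps\Cat\to\Set$ is both a left and a right adjoint (via the discrete and codiscrete 2-functors respectively), so it preserves all limits and colimits, and in particular $\ob\circ\Lan_J=\Lan_{J_0}\circ\ob$. Hence $\phi^0\coloneqq\ob\phi$ remains the coreflexive equalizer of $(pJ)^0$ and $(sJ)^0$ in $[(\sDt)_0,\Set]$, with common retraction $(qJ)^0$. The hypothesis then tells us that $\Lan_{J_0}(\phi^0)\maps\Lan_{J_0}\Phi^0_\tight\to\Lan_{J_0}(\Phi^0_\loose J_0)$ is the equalizer in $[(\sDl)_0,\Set]$ of $\Lan_{J_0}((pJ)^0)$ and $\Lan_{J_0}((sJ)^0)$. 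Unpacking explicitly via the coend formula for $\Lan_{J_0}$, a class $[e,x,f]\in\Lan_{J_0}(\Phi^0_\loose J_0)(d)$ lies in $\Lan_{J_0}\Phi^0_\tight(d)$ precisely when $[e,p^0_e(x),f]=[e,s^0_e(x),f]$ in $\Lan_{J_0}((Q_\wbar\Phi_\loose)^0 J_0)(d)$.

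The main obstacle is the final step: showing that the evaluation map $\phbar^0_d\colon[e,x,f]\mapsto\Phi^0_\loose(f)(x)$ is bijective at every $d\in\sDl$. The argument should exploit the asymmetry between $s$ and $p$. The coalgebra structure $s$ is \emph{strictly} 2-natural on all of $\sDl$, so its adjoint transpose equals $s^0\circ\epsilon$ (with $\epsilon$ the counit of $\Lan_{J_0}\dashv J_0^*$), whereas $p$ is only $\wbar$-natural and carries nontrivial compatibility 2-cells. For surjectivity, each $y\in\Phi^0_\loose(d)$ admits the obvious class $[d,y,1_d]$ in $\Lan_{J_0}(\Phi^0_\loose J_0)(d)$, and one uses the coalgebra axioms together with the coend presentation of $(Q_\wbar\Phi_\loose)^0$ to replace this class modulo coend equivalences by a representative $[e,x,f]$ with $x\in\Phi^0_\tight(e)$. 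For injectivity, two such tight representatives of the same $y$ give rise to coend relations in $\Lan_{J_0}(\Phi^0_\loose J_0)$ whose transport through $\Lan_{J_0}((pJ)^0)$ versus $\Lan_{J_0}((sJ)^0)$ is exactly the equalizer condition displayed above, forcing the two representatives to coincide. Since bijectivity is strictly stronger than surjectivity on objects, the $w$-riggedness of $\Phi$ is then automatic.
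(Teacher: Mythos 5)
Your setup is sound and matches the first half of the paper's argument: identifying $\phi$ with the identifier of $\tau$, hence (by \autoref{thm:eqisid}) with the equalizer of the coreflexive pair $(sJ,p)$ split by $qJ$, and transporting this along $\ob$ (which preserves the enriched $\Lan_J$, since it has both adjoints) so that the hypothesis applies. The gap is in what you yourself flag as ``the main obstacle'': you never actually prove that $\phbar$ is pointwise bijective on objects. Your surjectivity sketch asks to replace the class $[d,y,1_d]$ by a coend-equivalent representative coming from $\Phi_\tight$ --- but producing such a representative is exactly the surjectivity of $\phbar$ being claimed, and nothing in the coalgebra axioms hands it to you directly; note in particular that $[d,y,1_d]$ need not even lie in the equalizer, since $p_d(y)$ and $s_d(y)$ are merely connected by the 2-cell $\tau$ rather than equal. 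The injectivity sketch has the same circularity: ``forcing the two representatives to coincide'' is an assertion, not an argument.

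The missing idea is a comparison of equalizers rather than a direct coend computation. The paper maps the $\Lan_J$-applied row
\[
\Lan_J\Phi_\tight \longrightarrow \Lan_J(\Phi_\loose J) \rightrightarrows \Lan_J(Q\Phi_\loose J)
\]
down to the split (hence object-level) equalizer
\[
\Phi_\loose \xrightarrow{\ s\ } Q\Phi_\loose \rightrightarrows QQ\Phi_\loose
\]
via the verticals $\phbar$, $\overline{p_{\Phi_\loose}}$, $\overline{p_{Q\Phi_\loose}}$. The two right-hand verticals are pointwise \emph{bijective} on objects because $Q\Psi$ is by construction a codescent object of a diagram beginning with $W\Psi=\Lan_J(\Psi J)$, and codescent objects (built from coinserters and coequifiers) do not change objects. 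The squares commute by adjunction --- the left one is precisely the coalgebra equation $p.\phi=sJ.\phi$, the right ones are naturality of $p$ --- and then two-out-of-three for bijections between equalizers of sets gives that $\phbar$ is pointwise bijective on objects. Without this comparison (or an equally concrete substitute), your proof does not go through; I'd suggest you either supply it or carry out the coend manipulation in full detail, which I expect will be considerably harder.
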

\begin{proof}
  Let $\Phi$ be the maximal extension of $\Phi_\loose$ to a
  $\QF_\wbar$-coalgebra, and write $Q=Q_\wbar = (\QF_\wbar)_\loose$.
  Then we have an equalizer diagram
  \[ \xymatrix{ \Phi_\tight  \ar[r]^-{\phi} &
    \Phi_\loose J \ar@<1mm>[r]^-{sJ} \ar@<-1mm>[r]_-{p} &
    Q \Phi_\loose J }
  \]
  in $[\sDt,\Cat]$.  Moreover, the parallel pair $(sJ,p)$ is
  coreflexive, since $q J$ is a common splitting.  Thus, by
  assumption, the top row of the following diagram becomes an
  equalizer diagram in $[(\sDl)_0,\Set]$ after composing with
  $\ob\colon \Cat_0 \to\Set$.
  \[ \xymatrix@C=3pc{ \Lan_J \Phi_\tight  \ar[r]^-{\Lan_J \phi} \ar[d]_{\phbar} &
    \Lan_J (\Phi_\loose J) \ar@<1mm>[r]^-{\Lan_J sJ}
    \ar@<-1mm>[r]_-{\Lan_J p} \ar[d]_{\overline{p_{\Phi_\loose}}} &
    \Lan_J (Q \Phi_\loose J) \ar[d]^{\overline{p_{Q\Phi_\loose}}}\\
    \Phi_\loose \ar[r]_-{s} &
    Q \Phi_\loose \ar@<1mm>[r]^-{Q s} \ar@<-1mm>[r]_-{Q p} &
    Q Q \Phi_\loose
  }
  \]
  Since ${\overline{p_{\Phi_\loose}}}$ and
  ${\overline{p_{Q\Phi_\loose}}}$ exhibit their codomains as
  codescent objects of their domains (by construction of $Q$),
  they are pointwise bijective on objects.  Moreover, the bottom row
  is also an equalizer diagram, since $\Phi_\loose$ is a
  $Q$-coalgebra and $Q p$ is the comultiplication of
  $Q$.

  Thus, if we can show that each square in the diagram commutes, it
  will follow that $\phbar$ is also pointwise bijective on objects.
  For this, it suffices to verify that the adjunct diagram commutes in
  $[\sDt,\Cat]$:
  \[ \xymatrix@C=3pc{ \Phi_\tight  \ar[r]^-{\phi} \ar[d]_{\phi} &
    \Phi_\loose J \ar@<1mm>[r]^-{sJ} \ar@<-1mm>[r]_-{p} \ar[d]_{p_{\Phi_\loose}} &
    Q \Phi_\loose J \ar[d]^{p_{Q\Phi_\loose}}\\
    \Phi_\loose J \ar[r]_-{sJ} &
    (Q \Phi_\loose) J \ar@<1mm>[r]^-{(Q s) J} \ar@<-1mm>[r]_-{(Q p)J} &
    (Q Q \Phi_\loose) J
  }
  \]
  But now the left-hand square is just the equation $p.\phi = sJ.\phi$
  which $\Phi$ must satisfy to be a $\QF_\wbar$-coalgebra, while the two
  right-hand squares are naturality squares for $p$.
\end{proof}

The hypothesis of \autoref{thm:corefleq-rigged} holds in particular if
\bbD is inchordate, so that $(\sDt)_0$ is discrete.  In this case,
$Q_\wbar$ is just the 2-categorical $\wbar$-transformation classifier
on $[\sDl,\Cat]$, so we have:

\begin{cor}\label{thm:2catcoalg-rigged}
  If $\Psi\colon \sD\to\Cat$ is a coalgebra for the 2-categorical
  $\wbar$-transformation classifier $Q_\wbar$, then it has a canonical
  extension to a $w$-rigged \sF-weight, whose domain is the inchordate
  \sF-category on \sD, and for which $\phbar$ is pointwise bijective
  on objects.
\end{cor}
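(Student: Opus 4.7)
The plan is to deduce this as a direct application of Proposition~\ref{thm:corefleq-rigged}. Let $\bbD$ be the inchordate \sF-category on $\sD$, so that $\sDl = \sD$ and $\sDt$ consists only of the identity 1-morphisms of $\sD$ (together with their endo-2-cells). As noted in the summary of weak transformation classifiers in \S\ref{sec:weak-transf-class}, in this inchordate case the relative $\wbar$-transformation classifier $Q^\bbD_\wbar$ on $[\sDl,\Cat]$ coincides with the ordinary 2-categorical $\wbar$-transformation classifier $Q^\sD_\wbar$. Hence the hypothesized $Q_\wbar$-coalgebra structure on $\Psi$ is precisely the structure of a $Q^\bbD_\wbar$-coalgebra on $\Psi = \Phi_\loose$, and we may form the maximal extension of $\Psi$ to a $\QF^\bbD_\wbar$-coalgebra via Proposition~\ref{thm:canonical}.

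To conclude, it will suffice by Proposition~\ref{thm:corefleq-rigged} to verify that the left Kan extension
\[
\Lan_{J_0}\maps [(\sDt)_0,\Set] \longrightarrow [(\sDl)_0,\Set]
\]
preserves coreflexive equalizers. Since $\bbD$ is inchordate, $(\sDt)_0$ is a discrete category (its only morphisms are identities), so $J_0\maps (\sDt)_0 \to (\sDl)_0$ is the bijective-on-objects inclusion of the set of objects. The coend formula for left Kan extension then collapses to
\[
(\Lan_{J_0} F)(d') \;\cong\; \coprod_{d\in\ob\sD} (\sDl)_0(d,d') \times F(d),
\]
so $\Lan_{J_0}$ may be described pointwise as a coproduct of copies of $F(d)$ indexed by a set.

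The key step is then the standard fact that in $\Set$ (and hence pointwise in any presheaf category over $\Set$) arbitrary coproducts commute with coreflexive equalizers: given a coreflexive pair $f,g\maps A\rightrightarrows B$ in $\Set$ with common splitting $s$, the equalizer is the subset $\{a\in A : f(a)=g(a)\}$, and since $f$ and $g$ share the splitting $s$, they preserve the fibres over any decomposition of $B$ as a coproduct, so this subset operation commutes with arbitrary disjoint unions. The main obstacle is simply to record this fact in enough detail that one can apply it pointwise to our explicit formula for $\Lan_{J_0}$; once that is done, the hypothesis of Proposition~\ref{thm:corefleq-rigged} is satisfied, and its conclusion gives both that the canonical extension is $w$-rigged and that $\phbar$ is pointwise bijective on objects.
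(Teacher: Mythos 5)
Your proposal is correct and follows essentially the same route as the paper: the paper deduces the corollary from \autoref{thm:corefleq-rigged} by observing that for inchordate $\bbD$ the category $(\sDt)_0$ is discrete (and that $Q^\bbD_\wbar$ then agrees with the 2-categorical classifier), leaving implicit the fact that $\Lan_{J_0}$ along the inclusion of a discrete category is a pointwise coproduct and hence preserves coreflexive equalizers. You have simply made that last step explicit, which is a fine (and harmless) elaboration.
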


One easy application of this result is to \emph{cofree}
$Q_\wbar$-coalgebras, i.e.\ weights of the form $Q_\wbar \Psi \colon
\sD \to \Cat$.  Since a $Q_c \Psi$-weighted limit is simply an
\emph{oplax} $\Psi$-weighted limit, \autoref{thm:2catcoalg-rigged}
implies that \emph{oplax limits are canonically $l$-rigged}, and
dually.  The lifting of oplax limits to $T\bbAlg_l$, for $T$ a
2-monad, was also proven directly in~\cite{lack:lim-lax}, and the
lifting of pseudo limits to $T\bbAlg_p$ was shown
in~\cite{bkp:2dmonads}.

Note that in the cofree case, we have a split equalizer
\[ \xymatrix@C=4pc{
  \Psi J \ar[r]^-p  &
  (Q \Psi) J \ar@<1mm>[r]^-{(Q p_\Psi)J} \ar@<-1mm>[r]_-{p_{Q\Psi}} 
  \ar@/^10mm/[l]^{q J} &
  (Q Q \Psi) J \ar@/^10mm/[l]^{(Q q) J} }
\]
so that the canonical rigging of $Q \Psi$ is just $p\colon \Psi
J \to (Q \Psi)J$.  In other words, the tight projections are the
obvious ``generating'' ones of the oplax limit.

\subsection{Tightly rigged weights}
\label{sec:tightly-rigged}

In \S\ref{sec:canonical} we were concerned with constructing a rigging
of a weight that was known to be a $\QF$-coalgebra, i.e.\ with
deducing the second part of the definition of $w$-rigged weight from
the first.  We cannot hope to do the reverse in general, but there is
one case in which we can.

\begin{prop}\label{thm:tightly-rigged}
  If $\Phi$ is an \sF-weight such that $\phbar\colon \Lan_J
  \Phi_\tight \to \Phi_\loose$ is pointwise bijective on objects, then
  $\Phi$ is $p$-rigged.
\end{prop}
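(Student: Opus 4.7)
The hypothesis of pointwise bijectivity immediately yields pointwise surjectivity, so the second condition in the definition of $p$-rigged is free. It remains to equip $\Phi$ with a $\QF_p$-coalgebra structure. By the analysis of \S\ref{sec:struct-q-coalg}, since $(\QF_p\Phi)_\tight = \Phi_\tight$ the tight component of any coaction is forced to be the identity, so the task reduces to producing a strict 2-natural transformation $s\colon\Phi_\loose\to Q_p\Phi_\loose$ making $\Phi_\loose$ into a $Q_p$-coalgebra and satisfying the compatibility $sJ\cdot\phi = p\cdot\phi$ (equivalently, by \autoref{thm:eqisid}, that $\tau\cdot\phi$ is an identity).

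The key observation I would exploit is that $\Lan_J\Phi_\tight$ is canonically a $Q_p$-coalgebra. Indeed, it is the coend $\int^{D'\in\sDt}\sDl(JD',-)\cdot\Phi_\tight(D')$, and hence a conical colimit of loose representables $\sDl(JD',-)$. Each loose representable is a $Q_p$-coalgebra via the Yoneda-style construction of \autoref{thm:yon-qcoalg} (the coaction sending $1_{JD'}$ to $p(1_{JD'})$), and $Q_p$-coalgebras are closed under the colimits at issue because $Q_p$ is pseudo-idempotent, so the coalgebra 2-category is the essential image of a coreflection. Write $s'\colon\Lan_J\Phi_\tight\to Q_p\Lan_J\Phi_\tight$ for the resulting coaction.

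I would then define $s$ so that the square
\[\xymatrix{\Lan_J\Phi_\tight\ar[r]^{s'}\ar[d]_{\phbar}&Q_p\Lan_J\Phi_\tight\ar[d]^{Q_p\phbar}\\\Phi_\loose\ar[r]_{s}&Q_p\Phi_\loose}\]
commutes. Pointwise bijectivity of $\phbar$ pins down $s$ on objects: each $y\in\Phi_\loose(D)$ has a unique coend-representative $(a,f)$ with $a\in\Phi_\tight(D')$ and $f\colon JD'\to D$ satisfying $\Phi_\loose(f)(\phi(a))=y$, forcing $s(y) := Q_p\Phi_\loose(f)(p(\phi(a)))$. Well-definedness on coend-equivalent representatives follows from naturality of $\phi$ together with the fact that $p$ is strict after restriction along $J$; the compatibility $sJ\cdot\phi = p\cdot\phi$ is immediate since $(a,1_{JD'})$ is the canonical representative of $\phi(a)$; and the $Q_p$-coalgebra axioms for $s$ transfer from those of $s'$ via the commutative square.

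The principal obstacle I expect is verifying that this objectwise prescription extends to a strict 2-natural transformation, since $\phbar$ is not assumed fully faithful and so morphisms of $\Phi_\loose(D)$ can exceed those of $\Lan_J\Phi_\tight(D)$. My plan here is to use the coend universal property of $\Lan_J\Phi_\tight$ to govern the action on morphisms coming from representatives, and then to invoke pseudo-idempotence of $Q_p$ to reduce coherence for the remaining morphisms to a naturality check against the $M$-algebra structure of $\Phi_\loose$, where $M$ is the 2-monad on $[\sDt,\Cat]$ whose algebras are $[\sDl,\Cat]$.
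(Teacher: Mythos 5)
Your reduction of the problem is correct and matches \S\ref{sec:struct-q-coalg}: the tight component of the coaction is forced, and everything comes down to producing a strict 2-natural $s\colon \Phi_\loose\to Q_p\Phi_\loose$ with $q s=1$, with coassociativity, and with $sJ\cdot\phi = p\cdot\phi$; your object-level formula for $s$ also agrees with the one the paper constructs (your $Q_p\phbar\cdot s'$ is the paper's $\overline{p\cdot\phi}$). However, the justification that $\Lan_J\Phi_\tight$ is a $Q_p$-coalgebra is wrong as stated. Pseudo-idempotence makes the $Q_p$-coalgebras only a \emph{bi}coreflective full sub-2-category, and strict coalgebras are \emph{not} closed under strict colimits: the paper notes that flexible objects are exactly the retracts of $Q_p$-coalgebras and yet need not be coalgebras, so closure fails already for splittings of idempotents. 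What is true is that the coend diagram for $\Lan_J\Phi_\tight$ is indexed by \emph{tight} morphisms of \bbD, which by \autoref{thm:yon-qcoalg} and \autoref{thm:rigged-strictmor} induce \emph{strict} coalgebra morphisms between representables, so the diagram lifts to the strict coalgebra 2-category, whose comonadic forgetful functor creates the colimit. You need that argument, or you can bypass the coalgebra structure on $\Lan_J\Phi_\tight$ entirely and work directly with the single map $\overline{p\cdot\phi}$, as the paper does.

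The more serious gap is the step you yourself flag as the principal obstacle: extending $s$ from objects to morphisms and verifying 2-naturality is where the content of the proof lies, and your plan (``invoke pseudo-idempotence \dots to reduce coherence for the remaining morphisms to a naturality check against the $M$-algebra structure'') does not identify the mechanism that makes it work. The mechanism is orthogonality: $q\colon Q_p\Phi_\loose\to\Phi_\loose$ is a pointwise equivalence --- this is the one place where $w=p$ is essential --- hence pointwise fully faithful, and pointwise bijective-on-objects maps are orthogonal to pointwise fully faithful ones. So the square with $\overline{p\cdot\phi}$ on top, $\phbar$ on the left, $q$ on the right and the identity below has a \emph{unique} diagonal filler $s$; full faithfulness of $q$ is exactly what determines $s$ on the morphisms of $\Phi_\loose(D)$ not hit by $\Lan_J\Phi_\tight(D)$, and uniqueness of the filler gives functoriality and 2-naturality for free. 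Likewise coassociativity does not simply ``transfer'' along $\phbar$, which is bijective on objects but not an epimorphism; it requires a second orthogonality argument, this time against the pointwise fully faithful $Q_p q$. Without these two orthogonality steps the coaction has not actually been constructed.
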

\begin{proof}
  Since $qJ.p$ is an identity, we have $\phi = qJ.p.\phi$, and hence
  by adjunction the following square commutes in $[\sDl,\Cat]$.
  \[\xymatrix{\Lan_J \Phi_\tight \ar[r]^{\overline{p.\phi}} \ar@{->|}[d]_{\phbar} & Q_\wbar \Phi_\loose \ar@{ >->}[d]^q\\
    \Phi_\loose \ar@{=}[r] & \Phi_\loose }\]
  The left-hand map is bijective on objects (by assumption), and the
  right-hand map is fully faithful, since it is an equivalence in
  $\Ps(\sDl,\Cat)$.  (Here, and the analogous assertion later on, is
  where we use the restriction to $w=p$.)  Therefore, by
  orthogonality, there exists a unique diagonal $s\maps \Phi_\loose
  \to Q_\wbar \Phi_\loose$ satisfying $q.s=1$ and $s \phbar =
  \overline{p.\phi}$.  The former condition says that $s$ is a section
  of $q$; the latter is equivalent to $sJ.\phi = p.\phi$, so that $s$
  is actually a morphism $\Phi \to \QF_\wbar \Phi$ of \sF-weights.

  Thus, for $\Phi$ to be a $\QF_\wbar$-coalgebra, it remains only to
  show that $Qp.s =Qs.s$.  We claim that both $Qp . s$ and $Qs. s$ are
  diagonal fillers in a square of the following form:
  \begin{equation}
    \xymatrix{\Lan_J \Phi_\tight \ar[r]\ar@{->|}[d]_\phbar &
      QQ\Phi_\loose \ar@{ >->}[d]^{Qq}\\
      \Phi_\loose \ar[r] \ar@{.>}[ur] & Q \Phi_\loose.}\label{eq:uniqsq}
  \end{equation}
  This is equivalent to saying that
  \begin{align*}
    Qq . Qp . s &= Qq . Qs. s \qquad\text{and}\\
    Qp . s . \phbar &=  Qs . s . \phbar
  \end{align*}
  The first equation is easy; both sides are equal to $s$ since $q . p =
  1 = q . s$.  For the second, we consider the adjuncts and compute
  \begin{alignat*}{2}
    (Qs)J . sJ . \phi
    &= (Qs)J . p_{\Phi} . \phi
    &\qquad& \text{(definition of $s$)}\\
    &= p_{Q\Phi} . sJ . \phi
    &\qquad& \text{(naturality of $p$)}\\
    &= p_{Q\Phi} . p_\Phi . \phi
    &\qquad& \text{(definition of $s$)}\\
    &= Q p_\Phi . p_\Phi . \phi
    &\qquad& \text{(naturality of $p$)}\\
    &= Q p_\Phi . sJ . \phi
    &\qquad& \text{(definition of $s$)}
  \end{alignat*}
  Thus, there does exist a square~\eqref{eq:uniqsq} in which $Qp.s$
  and $Qs.s$ are both diagonal fillers.  Since $Qq$ is pointwise fully
  faithful and \phbar\ is bijective on objects, by orthogonality any
  such square has a unique diagonal filler; thus $Qp . s = Qs. s$ as
  desired.
\end{proof}

We call an \sF-weight \textbf{tightly rigged} if $\phbar$ is pointwise
bijective on objects.  All of the rigged weights we have encountered
so far are tightly rigged; Propositions \ref{thm:corefleq-rigged} and
\ref{thm:tightly-rigged} provide some reasons why many rigged
weights are tightly rigged.  However, not every rigged weight is
tightly rigged, or can even be made so by changing the tight part.

\begin{eg}\label{eg:not-tr2}
  Let $\sD_\loose$ be the set of natural numbers, regarded as a poset
  (hence a locally discrete 2-category) with the reverse of
  its usual ordering, and define the unique morphism $n\to m$ to be
  tight if either (1) $n=m$ or (2) $n$ is even and $m$ is odd.  Let
  $\Phi_\loose$ be constant at \bbone; then
  \[Q_l \Phi_\loose(n) =
  \begin{cases}
    \bbone & \quad \text{if } n \text{ is odd}\\
    \bbtwo & \quad \text{if } n \text{ is even}
  \end{cases}
  \]
  so we can make $\Phi_\loose$ into a $Q_l$-coalgebra (or a $Q_c$- or
  $Q_p$-coalgebra).

  Now for any $\Phi_\tight$ and $\phi\colon\Phi_\tight \to \Phi_\loose
  J$ such that the resulting \sF-weight $\Phi$ is a $\QF_w$-coalgebra,
 $\Phi_\tight$ can be supported only on the odd numbers (at each of
  which, it may be \bbone or $\emptyset$). Then for any $n$, $\Lan_J
  \Phi_\tight(m)$ is a discrete category with one object for every odd
  $m\ge n$ such that $\Phi_\tight(m)=\bbone$.  Hence, if $\Phi$ is to
  be rigged, then $\Phi_\tight$ must be nonempty at arbitrarily large
  odd numbers; but \phbar cannot be bijective at $n$ if $\Phi_\tight$
  is nonempty at more than one $m\ge n$.  Thus, there are many choices
  of $\Phi_\tight$ for which $\Phi$ is rigged, but none for which it
  is tightly rigged.

  If we modify \bbD by stipulating that $n\to m$ is tight if either
  (1) $n=m$ or (2) $m$ is odd and $n-m$ is congruent to $0$, $1$, or
  $3$ mod $4$, then there are two incompatible choices of
  $\Phi_\tight$ for which $\Phi$ is tightly rigged: we can take
  $\Phi_\tight$ to be nonempty at exactly the numbers that are $1$ mod
  $4$, or at exactly those that are $3$ mod $4$.  Neither of these is
  the maximal choice from \S\ref{sec:canonical}, which would be
  nonempty at all odd numbers; in that case the weight is rigged but
  not tightly rigged.  So although the canonical rigging is often
  tightly rigged, by \autoref{thm:corefleq-rigged}, it is not always
  so, even if a tight rigging exists.
\end{eg}

\subsection{2-categories and PIE-limits}
\label{sec:pie}

We now consider what the theory we have developed has to say about
purely 2-categorical weights.  Here the statements are simpler, since
the distinction between rigged weights and $Q$-coalgebras evaporates.
Specifically, we have the following.

\begin{prop}\label{thm:2cat-char}
  Let $\Phi\maps \sD\to\Cat$ be a \Cat-weight.  Then $\Phi$-weighted
  limits lift along $U_w\maps T\Alg_w \to\sK$, for any 2-monad $T$ on
  a 2-category \sK, if and only if $\Phi$ is a $Q_\wbar$-coalgebra,
  where $Q_\wbar$ is the 2-categorical $\wbar$-transformation
  classifier on $[\sD,\Cat]$.
\end{prop}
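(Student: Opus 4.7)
My plan is to reduce both directions to the $\sF$-categorical results of the previous section, working with the inchordate $\sF$-category $\bbD$ on $\sD$ and the chordate $\sF$-category $\bbK$ on $\sK$.

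For the ``if'' direction, I will start from the assumption that $\Phi$ is a $Q_\wbar$-coalgebra and apply \autoref{thm:2catcoalg-rigged} to obtain a canonical extension to a $w$-rigged $\sF$-weight $\tilde\Phi\colon \bbD \to \bbF$ with $\tilde\Phi_\loose = \Phi$. Any diagram $G\colon \sD \to T\Alg_w$ extends uniquely to an $\sF$-functor $\tilde G\colon \bbD \to T\bbAlg_w$, since the only nontrivial tight data in inchordate $\bbD$ are the endo-2-cells on identities, which $G$ sends to $T$-transformations in the obvious way. \autoref{thm:rigged-lift}, applied to $\tilde\Phi$ and the $\sF$-monad $T$ on the chordate $\bbK$, then creates $\tilde\Phi$-weighted limits along $U_w\colon T\bbAlg_w \to \bbK$; restricting to loose parts, using $(T\bbAlg_w)_\loose = T\Alg_w$ and $\bbK_\loose = \sK$, gives the desired lifting of $\Phi$-weighted limits to $T\Alg_w$. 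The tight part of the $\sF$-universal property, relative to the chordate $\bbK$, carries no additional 2-categorical information.

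For the converse, I will adapt the duality argument of \autoref{thm:lifts-rigged}. The identity $T\op\Coalg_\wbar = (T\Alg_w)\op$ converts the hypothesis into the dual statement that $\Phi$-weighted colimits lift along the forgetful 2-functor out of $W\Coalg_\wbar$ for every 2-comonad $W$ on any 2-category; I will apply this with $W = Q_\wbar$, viewed as a 2-comonad on $[\sD,\Cat]$. The 2-categorical analogue of \autoref{thm:yon-qcoalg} equips each representable $\sD(D,-)$ with a canonical $Q_\wbar$-coalgebra structure, and since $Q_\wbar$ is $\wbar$-idempotent (\autoref{lem:w-idempotent}) the forgetful 2-functor $Q_\wbar\Coalg_\wbar \to [\sD,\Cat]$ is 2-fully-faithful, so the Yoneda embedding $Y\colon \sD\op \to [\sD,\Cat]$ lifts uniquely to $\tilde Y\colon \sD\op \to Q_\wbar\Coalg_\wbar$. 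The density isomorphism $\Phi \cong \Phi \star Y$ exhibits $\Phi$ as the $\Phi$-weighted colimit of $Y$, and the dualised hypothesis transfers this colimit to $Q_\wbar\Coalg_\wbar$, endowing $\Phi$ itself with a $Q_\wbar$-coalgebra structure.

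Both directions are essentially bookkeeping on top of machinery already in place. The main care required will be in correctly tracking the $w/\wbar$ swap induced by the Eilenberg--Moore/Kleisli duality, and in verifying that an $\sF$-limit in $T\bbAlg_w$ with chordate base $\bbK$ coincides on loose parts with a 2-categorical limit in $T\Alg_w$. No genuinely new technical input is needed: the proposition simply records that when every object is chordate, the distinction between $w$-rigged weights and $Q_\wbar$-coalgebras collapses, and the 2-categorical story becomes a direct specialisation of the $\sF$-categorical one.
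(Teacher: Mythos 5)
Your proposal is correct and follows essentially the same route as the paper: the forward direction invokes \autoref{thm:2catcoalg-rigged} to produce the canonical rigged extension and then restricts the \sF-categorical lifting to loose parts, and the converse imitates the easy half of \autoref{thm:lifts-rigged} via the duality $T\op\Coalg_\wbar=(T\Alg_w)\op$, the $Q_\wbar$-coalgebra structure on representables, $\wbar$-idempotence, and the density formula $\Phi\cong\Phi\star Y$. No substantive differences from the paper's argument.
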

\begin{proof}
  If $\Phi$ is a $Q_\wbar$-coalgebra, then by
  \autoref{thm:2catcoalg-rigged} it has a canonical extension to a
  $w$-rigged \sF-weight $\Psi$, so that $\Psi$-weighted limits
  lift to the \sF-category $T\bbAlg_w$
  for any $T$.  Hence, in particular, $\Phi$-weighted limits lift to
  the 2-category $T\Alg_w = (T\bbAlg_w)_\loose$ for any 2-monad $T$.
  We could also prove this by imitating the proof of
  \autoref{thm:coalg-looselift} in the 2-categorical world.

  For the converse, we seem to have no alternative to imitating (the
  easy part of) the proof of \autoref{thm:lifts-rigged}: if
  $\Phi$-weighted limits lift to $T\Alg_w$, then the $\Phi$-weighted
  colimit $\Phi = \Phi * Y$ lifts to $Q_\wbar\Coalg_\wbar$ (the
  diagram $Y$ lying in $Q_\wbar\Coalg_\wbar$ since representables are
  $Q_\wbar$-coalgebras and $Q_\wbar$ is $\wbar$-idempotent).  Thus,
  $\Phi$ is a $Q_\wbar$-coalgebra.
\end{proof}

Note that the characterization is weaker than the \sF-categorical
version: since the 2-categorical $U_w$ is not conservative, lifting of
limits does not imply their creation.

Moreover, when $w=l$ or $c$, this 2-categorical version of the theorem is not
very useful, since in these cases there seem to be few purely
2-categorical $Q_w$-coalgebras aside from the cofree ones.  Furthermore,
this version contains no information about which projections are
strict and detect strictness, a detail which was important
in~\cite{bkp:2dmonads} even when $w=p$.

However, when $w=p$, it does turn out that the $Q_p$-coalgebras are
precisely the class of limits already known to lift to $T\Alg$ for all
2-monads $T$, namely the PIE-limits (those constructible from
Products, Inserters, and Equifiers).  In the rest of this section we
give a proof of this equivalence.

Let \sD\ be a 2-category and $\Phi\maps \sD\to\Cat$ a 2-functor.  We
write $\sD_0$ for the underlying ordinary category of \sD, and
$\ob\Phi_0\maps \sD_0\to\Set$ for the composite $\sD_0
\xrightarrow{\Phi_0} \Cat_0 \xrightarrow{\ob} \Set$.  For any functor
$F\maps \mathbf{C}\to\Set$ we write $\mathrm{el}(F)$ for its
\emph{category of elements} (aka ``Grothendieck construction'').
Recall the following theorem from~\cite{pr:pie-limits}.

\begin{thm}\label{thm:pie-char}
  A weight $\Phi\maps \sD\to\Cat$ is a PIE-weight if and only if
  $\mathrm{el}(\ob\Phi_0)$ is a disjoint union of categories with initial
  objects.
\end{thm}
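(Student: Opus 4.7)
The plan is first to reformulate the condition in a more tractable form. Since the projection $\mathrm{el}(F)\to\sD_0$ is a discrete fibration for any $F\colon\sD_0\to\Set$, any connected component of $\mathrm{el}(F)$ which admits an initial object lying over $d\in\sD_0$ must, by discreteness of the fibration, be canonically isomorphic to the coslice $d/\sD_0$. Consequently, the condition on $\mathrm{el}(\ob\Phi_0)$ stated in the theorem is equivalent to $\ob\Phi_0\colon\sD_0\to\Set$ being a coproduct of representable functors in $[\sD_0,\Set]$, and I shall work with this equivalent form.

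For the forward direction, I induct on the construction of $\Phi$ from representables via products, inserters, and equifiers --- noting that in the weighted-limit formulation, products of PIE-limits correspond to coproducts of weights (on coproducts of diagrams). Representables $\sD(d,-)$ satisfy the reformulated condition trivially, since $\ob\sD(d,-)_0 = \sD_0(d,-)$. Coproducts of weights preserve the property because $\ob(\coprod_i\Phi_i)_0 = \coprod_i\ob\Phi_{i,0}$ as $\Set$-valued functors, so a coproduct of coproducts of representables is again a coproduct of representables. For an inserter $\mathrm{Ins}(f,g)$ of a parallel pair $f,g\colon\Phi\rightrightarrows\Psi$, the set $\ob\mathrm{Ins}(f,g)(d)$ is indexed by pairs consisting of a generator of $\Phi(d)$ together with a 2-cell in $\Psi(d)$ between its images under $f$ and $g$; one then checks that the resulting fibration over $\sD_0$ still decomposes into coslices. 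Equifiers are the simplest case: they cut $\ob\Phi_0$ down to the sub-coproduct of generators on which the two prescribed 2-cells agree.

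For the backward direction, given $\Phi$ with $\ob\Phi_0 \cong \coprod_{i\in I}\sD_0(d_i,-)$, I build $\Phi$ as a PIE-weight. I begin with $\Phi^{(0)} = \coprod_i \sD(d_i,-)$, which is itself a PIE-weight (as a coproduct of representables) and has the correct object functor, but only trivial morphism and 2-cell structure between generators coming from different components. I then iteratively employ inserters to adjoin each required non-trivial morphism between generators in some $\Phi(d)$, and equifiers to impose the requisite 2-cell relations. The main obstacle is the delicate combinatorial bookkeeping in both directions: in the forward direction, verifying that the inserter step truly preserves the coproduct-of-representables form of the object functor (since inserters can \emph{a priori} introduce new generators corresponding to arbitrary 2-cells); and in the backward direction, organizing the iterated construction --- which may need to be transfinite --- so that each stage remains a PIE-weight, no spurious generators are introduced, and the colimit of the sequence is isomorphic to $\Phi$.
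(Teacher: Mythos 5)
First, a point of comparison: the paper offers no proof of this statement. It is recalled verbatim from Power and Robinson \cite{pr:pie-limits}, and the only related argument in the paper is \autoref{thm:pie-lan}, which is precisely your opening reformulation ($\mathrm{el}(\ob\Phi_0)$ splits into coslices if and only if $\ob\Phi_0$ is a coproduct of representables). That part of your proposal is correct and agrees with the paper, modulo the paper's cleaner justification via the comprehensive factorization.

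The genuine gap is in your forward direction: you have the variance of the Albert--Kelly closure backwards. As recalled in \S\ref{sec:saturation}, $\Phi$ is a PIE-weight if and only if it lies in the closure of the representables under \emph{colimits} in $[\sD,\Cat]$ weighted by the product, inserter and equifier weights --- that is, under coproducts, \emph{coinserters} and \emph{coequifiers} of weights. You translate products correctly (into coproducts of weights), but for the other two you form the inserter and equifier of a parallel pair of maps of weights, i.e.\ the corresponding \emph{limits} in $[\sD,\Cat]$. The class of weights with familially representable object part is not closed under those: as you yourself notice, $\ob\,\mathrm{Ins}(f,g)(d)$ acquires new elements indexed by arbitrary 2-cells $f(x)\to g(x)$, and its category of elements need not split into coslices; likewise an equifier carves out a cosieve of $\mathrm{el}(\ob\Phi_0)$, and a cosieve in a coslice can be connected without an initial object. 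The difficulty you flag is not bookkeeping but the symptom of using the wrong constructions; with the correct ones the forward direction is essentially immediate, because coinserters and coequifiers in $[\sD,\Cat]$ are computed pointwise and are bijective on objects, hence leave $\ob(-)_0$ unchanged, while coproducts visibly preserve ``coproduct of representables''. Your backward direction does describe the right operations (freely adjoining morphisms and imposing relations are the coinserter and the coequifier, despite the names you give them), which is indeed how Power and Robinson argue; but as written it remains a plan, and the transfinite worry is unnecessary --- a single coinserter (adjoining all morphisms of the categories $\Phi(d)$ at once, indexed by a further coproduct of representables) followed by a single coequifier (imposing functoriality and the remaining identifications) presents $\Phi$ from $\coprod_i\sD(d_i,-)$ in finitely many steps.
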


We require the following easy lemma.  We write $\ob(\mathbf{C})$ for
the set of objects of a category $\mathbf{C}$, regarded as a discrete
category, with the obvious inclusion functor $J\maps
\ob(\mathbf{C})\to \mathbf{C}$.

\begin{lem}\label{thm:pie-lan}
  For any functor $F\maps \mathbf{C}\to\Set$, $\mathrm{el}(F)$ is a
  disjoint union of categories with initial objects if and only if
  there exists a functor $G\maps \ob(\mathbf{C})\to \Set$ 
  and an isomorphism $\Lan_J G\iso F$.
\end{lem}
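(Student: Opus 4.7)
My plan is to recognize both conditions as expressing that $F$ is a coproduct of representable functors in $[\mathbf{C}, \Set]$, so the key step is to unpack the left Kan extension along the discrete inclusion $J\colon \ob(\mathbf{C}) \to \mathbf{C}$. Since $\ob(\mathbf{C})$ is discrete, the coend formula collapses to give
$$(\Lan_J G)(c) \iso \coprod_{c_0 \in \ob(\mathbf{C})} \mathbf{C}(c_0, c) \times G(c_0),$$
exhibiting $\Lan_J G$ as a coproduct of representables with $G(c_0)$ copies of $\mathbf{C}(c_0,-)$ appearing for each $c_0$. Conversely, any decomposition $F \iso \coprod_{(c_0, x)} \mathbf{C}(c_0, -)$ yields a functor $G\colon \ob(\mathbf{C})\to\Set$ with $\Lan_J G\iso F$. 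Thus the problem reduces to showing that $\mathrm{el}(F)$ is a disjoint union of categories with initial objects if and only if $F$ is a coproduct of representables.

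For the forward direction, I would use that $\mathrm{el}$ preserves coproducts (a decomposition $F = F_1 \sqcup F_2$ gives $F(c) = F_1(c)\sqcup F_2(c)$ for each $c$, with morphisms split accordingly), together with the fact that $\mathrm{el}(\mathbf{C}(c_0,-))$ is the slice $c_0/\mathbf{C}$, which has initial object $(c_0, 1_{c_0})$.

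For the converse, suppose $\mathrm{el}(F) = \coprod_i \mathbf{E}_i$ with $(c_{0,i}, x_i)$ initial in $\mathbf{E}_i$. Each connected component $\mathbf{E}_i$ is closed under the action of $\mathbf{C}$, because for any $(c,y)\in\mathbf{E}_i$ and $f\colon c\to c'$ in $\mathbf{C}$ the morphism $f\colon (c,y)\to (c',F(f)(y))$ of $\mathrm{el}(F)$ keeps us in the same component. This decomposition thus lifts to $F = \coprod_i F_i$ with $\mathrm{el}(F_i) = \mathbf{E}_i$, and the Yoneda lemma translates the initial element $x_i \in F_i(c_{0,i})$ into a natural transformation $\mathbf{C}(c_{0,i}, -) \to F_i$ sending $f$ to $F(f)(x_i)$, whose bijectivity at each $c$ is exactly the initiality of $(c_{0,i}, x_i)$ in $\mathbf{E}_i$. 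Setting $G(c_0) = \{i : c_{0,i} = c_0\}$ now yields the desired $G$.

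I do not anticipate any significant obstacles: the only non-routine point is the reflection claim that a coproduct decomposition of $\mathrm{el}(F)$ arises from one of $F$, and this is immediate from the closure observation recorded above.
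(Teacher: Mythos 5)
Your proof is correct. The forward direction is essentially identical to the paper's: both unwind $\Lan_J G$ as a coproduct of representables $\coprod_{c_0,\,x\in G(c_0)}\mathbf{C}(c_0,-)$ and use that $\mathrm{el}$ sends this to $\coprod c_0/\mathbf{C}$, each slice having an initial object. The converse is where you genuinely diverge. The paper chooses an initial object $(c_i,x_i)$ in each component $\mathbf{D}_i$, observes that $1\to\mathbf{D}_i\to\mathbf{C}$ factors $c_i\colon 1\to\mathbf{C}$ as an initial functor followed by a discrete fibration, and invokes the \emph{uniqueness} of comprehensive factorizations (Street--Walters) to conclude $\mathbf{D}_i\iso c_i/\mathbf{C}$ and hence $F\iso\Lan_J G$. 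You instead argue directly: the component decomposition of $\mathrm{el}(F)$ lifts to a decomposition $F=\coprod_i F_i$ (since morphisms of $\mathrm{el}(F)$ stay within a component), and the Yoneda map $\mathbf{C}(c_{0,i},-)\to F_i$ determined by the initial element $x_i$ is bijective at each $c$ precisely because initiality of $(c_{0,i},x_i)$ in $\mathbf{E}_i$ asserts unique existence of $f$ with $F(f)(x_i)=y$ for each $(c,y)\in\mathbf{E}_i$. Your route trades the appeal to an external factorization-system theorem for a short self-contained verification; the paper's route is slicker once that machinery is granted. Both are sound, and your implicit use of the fact that a category with an initial object is connected (so the summands really are the connected components) is harmless.
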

\begin{proof}
  If $G$ exists, then we have
  \[F(c) = \sum_{c'\in \mathbf{C}\atop x\in G(c')}\mathbf{C}(c',c).\]
  and therefore
  \[\mathrm{el}(F) = \sum_{c'\in \mathbf{C}\atop x\in G(c')} c'/\mathbf{C},
  \]
  and $c'/\mathbf{C}$ certainly has an initial object.  Conversely, if
  $\mathrm{el}(F)$ is a disjoint union of categories with initial
  objects, we choose for each component $\mathbf{D}_i$ an initial
  object $(c_i,x_i)$ and let $G(c) = \{x_i \mid c_i=c\}$.  Then
  \[1 \xrightarrow{(c_i,x_i)} \mathbf{D}_i \to \mathbf{C}\] is a
  factorization of $1\xrightarrow{c_i} \mathbf{C}$ as an initial
  functor followed by a discrete fibration.  Since such factorizations
  are unique (see~\cite{sw:cmprh-fact-funct}), we must have
  $\mathbf{D}_i \iso (c_i/\mathbf{C})$, and therefore $F\iso \Lan_J
  G$.
\end{proof}

\begin{thm}\label{thm:pie-qpcoalg}
  A \Cat-weight $\Phi\colon \sD\to\Cat$ is a $Q_p$-coalgebra if and
  only if it is a PIE-weight.
\end{thm}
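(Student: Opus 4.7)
The plan is to reduce both directions to the Grothendieck-construction characterization that combines \autoref{thm:pie-char} and \autoref{thm:pie-lan}: $\Phi$ is a PIE-weight if and only if there exists $G\colon \ob\sD_0 \to \Set$ with $\Lan_{J_0} G \cong \ob\Phi_0$, where $J_0\colon \ob\sD_0 \hookrightarrow \sD_0$ is the inclusion of the object set.

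For the implication from $Q_p$-coalgebra to PIE-weight, I would regard \sD as the loose part of the inchordate \sF-category \bbD on it, so that (by the observation at the end of \S\ref{sec:weak-transf-class}) the 2-categorical $Q_p$ on $[\sD,\Cat]$ coincides with the relative classifier $Q_p^\bbD$. Since $\wbar = p$, \autoref{thm:2catcoalg-rigged} then supplies a canonical extension of the $Q_p$-coalgebra $\Phi$ to a $p$-rigged \sF-weight $\Phi'$ on \bbD with $\Phi'_\loose = \Phi$ and with structure map $\phbar\colon \Lan_J \Phi'_\tight \to \Phi$ pointwise \emph{bijective} on objects. Because the tight part of an inchordate \sF-category has only identity $1$-cells, $(\sDt)_0$ is discrete and coincides with $\ob\sD_0$; hence $G \coloneqq \ob(\Phi'_\tight)_0$ is precisely a functor $\ob\sD_0 \to \Set$. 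Since $\ob\colon \Cat \to \Set$ preserves colimits (having both adjoints) it commutes with left Kan extensions, and so applying it pointwise to \phbar produces the desired isomorphism $\Lan_{J_0} G \cong \ob\Phi_0$; by the reduction, $\Phi$ is a PIE-weight.

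For the converse, I would use the standard fact that every PIE-weight is built from representables by iterated coproducts, coinserters, and coequifiers in $[\sD,\Cat]$, these being the colimits dual to products, inserters, and equifiers. Representables are $Q_p$-coalgebras by \autoref{thm:yon-qcoalg}. The endofunctor $Q_p$ on $[\sD,\Cat]$ factors as $J \widetilde{Q}_p$, where $\widetilde{Q}_p \dashv J\colon [\sD,\Cat] \hookrightarrow \Ps(\sD,\Cat)$ is the pseudo morphism classifier; since $J$ admits also a right adjoint (the coclassifier from \S\ref{sec:weak-morph-class}), both $J$ and $\widetilde{Q}_p$ preserve colimits, and hence so does $Q_p$. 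Coalgebras for a colimit-preserving comonad are closed under colimits in the ambient category, so every PIE-weight inherits a $Q_p$-coalgebra structure.

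The main obstacle is the forward direction, specifically the need for \phbar to be pointwise \emph{bijective} and not merely surjective on objects; this is the sharper content of \autoref{thm:2catcoalg-rigged} (itself resting on \autoref{thm:corefleq-rigged} applied to the discrete $(\sDt)_0$), without which one would only obtain a quotient-type description of $\ob\Phi_0$ insufficient to invoke \autoref{thm:pie-lan}. Every other step is a routine consequence of earlier results in the paper.
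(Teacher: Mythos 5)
Your converse direction ($Q_p$-coalgebra implies PIE) is correct and is essentially the paper's own argument: apply \autoref{thm:2catcoalg-rigged} to the inchordate \sF-category on \sD to obtain a rigging with $\phbar$ pointwise bijective on objects, set $G=\ob(\Phi_\tight)_0$, and conclude via \autoref{thm:pie-char} and \autoref{thm:pie-lan}; your remark about $\ob$ commuting with the left Kan extension is a correct (and slightly more explicit) version of the step the paper leaves implicit.

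The forward direction, however, has a genuine gap. The assertion that ``coalgebras for a colimit-preserving comonad are closed under colimits in the ambient category'' is not a correct general principle. What is true is that the forgetful functor $Q_p\Coalg_s\to[\sD,\Cat]$ \emph{creates} those colimits that $Q_p$ preserves---but only for diagrams that already live in $Q_p\Coalg_s$, i.e.\ whose connecting maps are \emph{strict} coalgebra morphisms. In the Albert--Kelly closure of the representables under coproducts, coinserters and coequifiers, the connecting maps (for instance the parallel pair whose coinserter one takes) are arbitrary 2-natural transformations between coalgebras; since $Q_p$ is only pseudo-idempotent, such a map carries a canonical \emph{pseudo} coalgebra-morphism structure, not a strict one, so the diagram need not lift to $Q_p\Coalg_s$ and your argument stalls at the first coinserter. (Compare \autoref{thm:rigged-strictmor}, where the rigging hypothesis is exactly what is used to force coalgebra maps to be strict.) The gap is repairable---one can check that the explicit Power--Robinson presentation of a PIE-weight uses only maps induced by Yoneda, which are strict coalgebra maps---but that is real work, not a routine citation. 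The paper avoids the issue entirely: either it quotes the known lifting of PIE-limits to $T\Alg_p$ from \cite{bkp:2dmonads} and applies \autoref{thm:2cat-char}, or, for a self-contained proof, it uses \autoref{thm:pie-char} and \autoref{thm:pie-lan} to produce a bijective-on-objects map $\Lan_J G\to\Phi$, extends $\Phi$ to a tightly rigged \sF-weight, and invokes \autoref{thm:tightly-rigged}, which manufactures the coalgebra structure directly by orthogonality of bijective-on-objects maps against fully faithful ones.
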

\begin{proof}
  In one direction the proof is obvious: PIE-weights are known to lift
  to $T\Alg_p$ for any 2-monad $T$, hence by \autoref{thm:2cat-char}
  they must be $Q_p$-coalgebras.

  Alternatively, by \autoref{thm:pie-char} and \autoref{thm:pie-lan} we
  have $\ob\Phi_0 \cong \Lan_J G$ for some $G$, which equivalently
  means we have a bijective-on-objects map $k\colon \Lan_J G \to
  \Phi$.  Thus, if we define $\psi\colon \Phi_\tight \hookrightarrow
  \Phi J$ to be the full image of the adjunct $\kbar\colon G \to \Phi
  J$, then we have a tightly rigged \sF-weight, whose domain is the
  inchordate \sF-category on \sD.  By \autoref{thm:tightly-rigged}, it
  is a $\QF_p$-coalgebra, so its loose part, namely $\Phi$, is a
  $Q_p$-coalgebra.  Combining this argument with
  \autoref{thm:2cat-char}, we obtain a new proof that PIE-weights lift
  to $T\Alg_p$ for any $T$.

  For the converse, we invoke \autoref{thm:corefleq-rigged} for the
  inchordate \sF-category on \sD, and conclude that any
  $Q_p$-coalgebra $\Phi$ can be made into a $w$-rigged \sF-weight for
  which $\Lan_J \Phi_\tight \to \Phi$ is pointwise bijective on
  objects.  Defining $G \coloneqq \ob (\Phi_\tight)_0$, we have
  $\Lan_{J_0} G \cong \ob \Phi_0$; hence by \autoref{thm:pie-char} and
  \autoref{thm:pie-lan} $\Phi$ is a PIE-weight.
\end{proof}

\subsection{Saturation}
\label{sec:saturation}

The \textbf{saturation} of a class $\mathcal{X}$ of \sV-weights is the class
of weights $\Phi$ such that every $\mathcal{X}$-complete \sV-category
is $\Phi$-complete and every $\mathcal{X}$-continuous \sV-functor is
$\Phi$-continuous.  This notion was introduced in~\cite{ak:closure} under the
name \textbf{closure}, but ``saturation'' is now standard.
The main result of~\cite{ak:closure} is that
$\Phi\maps \sD\to\sV$ lies in the saturation of $\mathcal{X}$ if and only
if it lies in the closure of the representables under
$\mathcal{X}$-colimits in $[\sD,\sV]$.

A class of weights is called \textbf{saturated} if it is its own saturation.

\begin{thm}\label{thm:saturated}
  For any $w$, the class of $w$-rigged weights is saturated.
\end{thm}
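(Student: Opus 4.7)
My plan is to deduce saturation of the class of $w$-rigged weights directly from the main characterization theorem. Writing $\mathcal{X}$ for this class, the inclusion $\mathcal{X} \subseteq \mathrm{Sat}(\mathcal{X})$ is automatic, so I need only show the converse: any $\Phi \in \mathrm{Sat}(\mathcal{X})$ is $w$-rigged. By the definition of saturation just recalled, this hypothesis means that every $\mathcal{X}$-complete \sF-category is $\Phi$-complete, and every $\mathcal{X}$-continuous \sF-functor is $\Phi$-continuous.

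By the main characterization theorem, combined with the reduction to complete $\bbK$ in \autoref{prop:reduction}, to conclude that $\Phi$ is $w$-rigged it suffices to show that $U_w\colon T\bbAlg_w \to \bbK$ creates $\Phi$-weighted limits whenever $T$ is an \sF-monad on a \emph{complete} \sF-category $\bbK$. Fix such $T$ and complete $\bbK$. Applying the characterization theorem to the class $\mathcal{X}$ itself, $U_w$ creates all $\mathcal{X}$-weighted limits; since $\bbK$ is complete, this makes $T\bbAlg_w$ an $\mathcal{X}$-complete \sF-category and $U_w$ an $\mathcal{X}$-continuous \sF-functor. The saturation hypothesis then yields that $T\bbAlg_w$ is $\Phi$-complete and that $U_w$ preserves $\Phi$-weighted limits.

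It remains to upgrade preservation to creation. Given $G\colon\bbD\to T\bbAlg_w$ with $\{\Phi, UG\}$ existing in $\bbK$, the limit $\{\Phi, G\}$ exists in $T\bbAlg_w$ by the $\Phi$-completeness just established, and $\Phi$-continuity of $U_w$ identifies its image with $\{\Phi, UG\}$; this is precisely the lifting condition of \autoref{def:lifting}. Conservativity of $U_w$---its underlying $1$-category of morphisms consists of strict $T$-morphisms---gives reflection for free, so $U_w$ creates $\Phi$-weighted limits, as required. The main obstacle to watch out for is the apparent circularity of invoking saturation-style inheritance inside $T\bbAlg_w$ before knowing that $T\bbAlg_w$ admits enough $\mathcal{X}$-limits; the reduction to complete $\bbK$ is precisely what defuses it, since it ensures that every $\mathcal{X}$-weighted limit diagram we consider in $T\bbAlg_w$ has its underlying $\bbK$-limit available to be lifted.
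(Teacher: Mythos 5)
Your proposal is correct and follows essentially the same route as the paper's own proof: reduce via \autoref{prop:reduction} to complete $\bbK$, observe that $T\bbAlg_w$ is then rigged-complete and $U_w$ rigged-continuous, apply the saturation hypothesis to get $\Phi$-completeness and $\Phi$-continuity, and conclude via \autoref{thm:lifts-rigged}. The only cosmetic difference is that you spell out the passage from preservation to creation using conservativity of $U_w$, which the paper handles by an earlier general remark.
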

\begin{proof}
  Let $\mathscr{R}$ denote the class of $w$-rigged weights, and let $\Phi$
  be an \sF-weight such that every $\mathscr{R}$-complete \sF-category
  is $\Phi$-complete and every $\mathscr{R}$-continuous \sF-functor is
  $\Phi$-continuous.  By \autoref{prop:reduction}, to show $\Phi$ is
  $w$-rigged it suffices to show that it lifts to $T\bbAlg_w$ for any
  \sF-monad $T$ on a \emph{complete} \sF-category \bbK.  But in this
  case, $T\bbAlg_w$ is $\mathscr{R}$-complete and $U_w\colon T\bbAlg_w
  \to\bbK$ is $\mathscr{R}$-continuous. Hence, by assumption,
  $T\bbAlg_w$ is also $\Phi$-complete and $U_w$ is also
  $\Phi$-continuous. But this is just to say that $U_w$  lifts 
  $\Phi$-weighted limits; so by \autoref{thm:lifts-rigged}, $\Phi$ is
  $w$-rigged.
\end{proof}

Recall that essentially by definition, the PIE-weights are the
saturation of the class consisting of products, inserters, and
equifiers.  However, we do not know of any manageable collection of
weights which generates the $w$-rigged weights, even for $w=p$.

\appendix

\section{Alternative proof of the lifting theorem}

Here, as promised, we give an alternative proof of the lifting theorem. 
This could replace all of Section~\ref{sec:lifting} after 
Remark~\ref{rmk:appendix}.   We suppose throughout that $w=l$.

Suppose that rather than an
individual $\QF^\bbD$-coalgebra $\Phi$, we have a functor $\Psi\colon
\bbE\op\to \QF^\bbD\bbCoalg_c$ for some other small \sF-category \bbE.
Then $\Psi$ has an underlying functor $\bbE\op \to [\bbD,\bbF]$, which
is equivalently an \sF-profunctor $\bbD\pto \bbE$, and so
(if we assume, as before, that \bbK is complete) 
we have an induced functor
\[\{\Psi,-\} \colon [\bbD,\bbK] \to [\bbE,\bbK].
\]
where for $M:\bbD\to\bbK$ and $E\in\bbE$ we have 
$\{\Psi,M\}E=\{\Psi E,M\}$. 
We would like to lift this to a functor $\bbOplax(\bbD,T\bbAlg_l)\to
\bbOplax(\bbE,T\bbAlg_l)$, and by the same arguments as  in \S\ref{sec:lifting}, it
suffices to construct a lax monad morphism from $\bbOplax(\bbD,T)$ to
$\bbOplax(\bbE,T)$.

However, just as in \S\ref{sec:lifting}, $\bbOplax(\bbD,T)$ is a
lifting of $[\bbD,T]$ to the Kleisli category $\bbOplax(\bbD,\bbK)$ of
$\RF^\bbD$ induced by a distributive law
\[k^\bbD\colon [\bbD,T] \RF^\bbD \to \RF^\bbD [\bbD,T],
\]
and likewise for $\bbOplax(\bbE,T)$ and $\RF^\bbE$.
Thus, by \autoref{lem:monad-stuff-2}, it suffices to show that
$\{\Psi,-\}$ is a lax monad morphism from $[\bbD,T]$ to $[\bbE,T]$ and
also a colax monad morphism from $\RF^\bbD$ to $\RF^\bbE$, in such a way
that the diagram~\eqref{eq:monad-stuff} from \autoref{lem:monad-stuff}
commutes.

\begin{prop}\label{thm:functoriality-1}
  Let \bbK be complete and \bbD and \bbE small, and let $\Psi\colon
  \bbE\op\to \QF^\bbD_{c}\bbCoalg_{c}$ be an \sF-functor.
  Then the \sF-functor $F=\{\Psi,-\}\colon[\bbD,\bbK]\to[\bbE,\bbK]$
  naturally has the structure of a
  morphism in $\Mnd_l(\Mnd_c(\sF\Cat))$ from $k^\bbD$ to $k^\bbE$,
   and therefore lifts to a functor
  \[\llim{\Psi,-}\colon \bbOplax(\bbD,T\bbAlg_l) \to \bbOplax(\bbE,T\bbAlg_l).\]
\end{prop}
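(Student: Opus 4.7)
The plan is to exhibit $F=\{\Psi,-\}$ as a morphism in $\Mnd_l(\Mnd_c(\sF\Cat))$ from $k^\bbD$ to $k^\bbE$, whence \autoref{lem:monad-stuff-2} applied in $\sF\Cat$ produces the extension to Kleisli objects; the isomorphism~\eqref{eq:univ-prop-Talg_w} then identifies this with the asserted \sF-functor $\bbOplax(\bbD,T\bbAlg_l)\to\bbOplax(\bbE,T\bbAlg_l)$. By \autoref{lem:monad-stuff}, the task decomposes into producing a lax monad morphism structure $\psi\colon [\bbE,T]F\to F[\bbD,T]$, a colax monad morphism structure $\chi\colon F\RF^\bbD\to\RF^\bbE F$, and then verifying the compatibility square~\eqref{eq:monad-stuff}.

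For $\psi$, I would take the component at $M\colon\bbD\to\bbK$ and $E\in\bbE$ to be the canonical comparison $T\{\Psi E,M\}\to\{\Psi E,TM\}$ induced by the universal property of the limit; this is precisely the pointwise specialization of the lax monad morphism on $\{\Psi E,-\}\colon [\bbD,\bbK]\to\bbK$ used in the proof of \autoref{prop:alg-structure-on-L}, and its axioms are immediate from uniqueness in the universal property. For $\chi$, I would proceed in two stages. First, for each $E$ the coaction $s_E\colon\Psi E\to\QF^\bbD\Psi E$ produces, via the isomorphism $\{\Psi E,\RF^\bbD M\}\cong\{\QF^\bbD\Psi E,M\}$ of \autoref{thm:qr-adjt}, a map $\{\Psi E,\RF^\bbD M\}\to\{\Psi E,M\}$; since $\Psi$ is an \sF-functor into $\QF^\bbD\bbCoalg_c$, the family $s_E$ is strictly $\sF$-natural in $E$, so these assemble into a strict transformation $F\RF^\bbD M\to FM$ in $[\bbE,\bbK]$. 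Second, transposing across the adjunction between the inclusion $[\bbE,\bbK]\hookrightarrow\bbOplax(\bbE,\bbK)$ and $\RF^\bbE$ yields the required $\chi_M\colon F\RF^\bbD M\to\RF^\bbE FM$. The colax monad morphism axioms for $\chi$ reduce to the counit and coassociativity axioms for the $\QF^\bbD$-coalgebras $\Psi E$, exactly as in the single-weight proof.

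The main obstacle is the compatibility square~\eqref{eq:monad-stuff}, whose two legs are parallel maps $[\bbE,T]F\RF^\bbD\to\RF^\bbE F[\bbD,T]$. By the universal property of $\RF^\bbE$, equality may be tested after post-composing with the counit $\RF^\bbE\to(\text{incl})$ in $\bbOplax(\bbE,\bbK)$, and by the universal property of the limits defining $F$ this check can be made pointwise at each $E\in\bbE$. Pointwise, the equation expresses the naturality of the canonical limit-comparison $T\{\Psi E,-\}\to\{\Psi E,T-\}$ with respect to the strictly \sF-natural map $s_E$, together with the compatible descriptions of $k^\bbD$ and $k^\bbE$ in terms of the same canonical comparison. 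All three verifications thus ultimately rest on universal properties and on the strict naturality of $\Psi$ (not merely its weak naturality as a profunctor), after which \autoref{lem:monad-stuff-2} immediately produces $\llim{\Psi,-}$.
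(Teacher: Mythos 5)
Your overall architecture --- decomposing via \autoref{lem:monad-stuff} into a lax monad morphism structure $\psi$ from $[\bbD,T]$ to $[\bbE,T]$, a colax monad morphism structure $\chi$ from $\RF^\bbD$ to $\RF^\bbE$, and the compatibility square, and then applying \autoref{lem:monad-stuff-2} --- is exactly the paper's, and your treatment of $\psi$ and of the compatibility square matches the paper's intended argument. However, there is a genuine error in your construction of $\chi$: you assert that, because $\Psi$ is an \sF-functor into $\QF^\bbD_{c}\bbCoalg_{c}$, the coalgebra structure maps $s_E\colon \Psi E\to\QF^\bbD\Psi E$ are \emph{strictly} \sF-natural in $E$, so that the induced maps $\{\Psi E,\RF^\bbD M\}\to\{\Psi E,M\}$ assemble into a strict transformation in $[\bbE,\bbK]$. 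This is false: the morphisms of $\QF^\bbD_{c}\bbCoalg_{c}$ are \emph{colax} coalgebra morphisms, so the naturality square for $s$ at a loose morphism of $\bbE\op$ commutes only up to a generally non-identity 2-cell. Using the isomorphism $\bbLax(\bbE\op,\QF^\bbD\bbCoalg_c)\cong\bbLax(\bbE\op,\QF^\bbD)\bbCoalg_c$, what one actually obtains is that $s_E$ is tight and \emph{lax} \sF-natural in $E$; hence the family $\{\Psi E,\RF^\bbD M\}\cong\{\QF^\bbD\Psi E,M\}\xrightarrow{s_E^*}\{\Psi E,M\}$ is only tight and \emph{oplax} \sF-natural, i.e.\ a morphism of $\bbOplax(\bbE,\bbK)$ rather than of $[\bbE,\bbK]$.

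The repair is local and is essentially your own second step: since $\RF^\bbE$ coclassifies oplax \sF-transformations, this oplax transformation $F\RF^\bbD M\to FM$ transposes to a strict $\chi_M\colon F\RF^\bbD M\to\RF^\bbE FM$, and the counit and coassociativity axioms for the coalgebras $\Psi E$ then yield the colax monad morphism axioms. But as written your argument misses the one point that genuinely distinguishes this proposition from \autoref{prop:alg-structure-on-L}, where $\bbE$ is trivial and there is no naturality in $E$ to track: the failure of strict naturality of $s$ in $E$ is precisely why $\chi$ must land in $\RF^\bbE F$ via the coclassifying property rather than arising by whiskering a strict map with a unit. Your closing remark that the verifications ``rest on the strict naturality of $\Psi$'' indicates this is a substantive misunderstanding rather than a slip of terminology, so the compatibility-square check should also be revisited with the oplax 2-cells of $s$ in view (they disappear only after transposition into $\RF^\bbE$).
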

\begin{proof}
  The proof is mostly a straightforward generalization of
  \autoref{prop:alg-structure-on-L}; the one somewhat different thing
  is that we need $F$ to be a colax monad morphism from $\RF^\bbD$ to
  $\RF^\bbE$, rather than merely a $\RF^\bbD$-opalgebra.  For this,
  recall that $\bbLax (\bbE\op, \QF^\bbD \bbCoalg_c)
  \cong \bbLax (\bbE\op, \QF^\bbD) \bbCoalg_c$.  Since $\Psi$ is an
  object of the former, it is equivalently an object of the latter,
  i.e.\ the coalgebra structure maps $s_e\colon \Psi_e \to \QF^\bbD
  \Psi_e$ are (tight) lax \sF-natural in $e$.  Therefore, the
  composite
  \begin{equation}
    (F \RF^\bbD)_e = \{ \Psi_e, \RF^\bbD - \} \cong \{ \QF^\bbD
    \Psi_e, -\} \xrightarrow{s_e^*} \{\Psi_e , -\} = F_e\label{eq:colax-str}
  \end{equation}
  is (tight) oplax \sF-natural in $e$.   (The
  isomorphism is from \autoref{thm:qr-adjt}.)
  Since $\RF^\bbE$ coclassifies
  oplax \sF-natural transformations, we have an induced tight and
  strict transformation
  \[ \chi\colon F \RF^\bbD \to \RF^\bbE F.
  \]
  The unit and associativity axioms for the \QF-coalgebra structure of
  $\Psi$ directly imply that $\chi$ makes $F$ into a colax morphism of
  monads from $\RF^\bbD$ to $\RF^\bbE$.

   The rest of the proof is basically the same as the proof of
  \autoref{prop:alg-structure-on-L}, so we leave it to the reader.
  In particular, the same argument shows that the induced $T$-algebra
  structures are the same as those satisfying~\eqref{eq:defn-l}.
\end{proof}

We can also make this lifting functorial on morphisms between
profunctors of the above sort.

\begin{prop}\label{thm:alg-str-2cell}
  Let $\Psi,\Upsilon\colon \bbE\op\to \QF^\bbD_{c}
\bbCoalg_{c}$
  be as in \autoref{thm:functoriality-1}, and let $\alpha\colon
  \Psi\to\Upsilon$ be a tight strict \sF-natural transformation (whose
  components are thus tight strict \QF-morphisms).  Then $\alpha^* \colon
  \{\Upsilon,-\} \to \{\Psi,-\}$ is a 2-cell in
  $\Mnd_l(\Mnd_c(\sF\Cat))$,  and hence induces a natural
  transformation 
  \[\alpha^*\colon \llim{\Upsilon,-} \to \llim{\Psi,-}.\]
\end{prop}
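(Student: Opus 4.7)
The plan is to verify the two conditions of \autoref{lem:monad-stuff} characterising 2-cells in $\Mnd_l(\Mnd_c(\sF\Cat))$: namely that $\alpha^*$ is at once a colax monad 2-cell with respect to the colax structures for $\RF^\bbD,\RF^\bbE$ and a lax monad 2-cell with respect to the lax structures for $[\bbD,T],[\bbE,T]$. Once these equalities are verified, the natural transformation $\llim{\Upsilon,-}\to\llim{\Psi,-}$ will be obtained by applying the 2-functors $\Mnd_l(\Mnd_c(\sF\Cat))\to\Mnd_l(\sF\Cat)\to\sF\Cat$ used in \autoref{thm:functoriality-1} to build the lifts: the first is the Kleisli-object 2-functor of \autoref{lem:monad-stuff-2} and the second is the Eilenberg--Moore 2-functor, and both carry 2-cells to 2-cells.

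For the lax condition, the two whiskered composites $[\bbE,T]\alpha^*\circ\psi^\Upsilon$ and $\psi^\Psi\circ\alpha^*[\bbD,T]$ are natural transformations $[\bbE,T]\{\Upsilon,-\}\to\{\Psi,-\}[\bbD,T]$; their $(e,M)$-components are the two evident maps $T\{\Upsilon_e,M\}\to\{\Psi_e,TM\}$ built from the canonical comparison $T\{-,M\}\to\{-,TM\}$ and precomposition with $\alpha_e\colon\Psi_e\to\Upsilon_e$. They agree by naturality of this comparison in its first variable.

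For the colax condition, the two composites $\RF^\bbE\alpha^*\circ\chi^\Upsilon$ and $\chi^\Psi\circ\alpha^*\RF^\bbD$ are strict transformations $\{\Upsilon,-\}\RF^\bbD\to\RF^\bbE\{\Psi,-\}$, and by the universal property of $\RF^\bbE$ as the oplax-transformation coclassifier it suffices to check that their adjunct oplax transformations $\{\Upsilon,-\}\RF^\bbD\to\{\Psi,-\}$ coincide. Tracing through the isomorphism $\{\Upsilon_e,\RF^\bbD -\}\cong\{\QF^\bbD\Upsilon_e,-\}$ of \autoref{thm:qr-adjt} together with the construction~\eqref{eq:colax-str} of the colax structure maps, the $e$-component of the adjunct of the left-hand side is precomposition with $s^\Upsilon_e\circ\alpha_e\colon\Psi_e\to\QF^\bbD\Upsilon_e$, while that of the right-hand side is precomposition with $\QF^\bbD\alpha_e\circ s^\Psi_e$. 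These morphisms are equal precisely because $\alpha$ being a strict $\QF^\bbD$-coalgebra morphism is the statement $s^\Upsilon\circ\alpha=\QF^\bbD\alpha\circ s^\Psi$.

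The main obstacle is the colax verification, which requires unwinding the definition of $\chi^{(-)}$ through the universal property of $\RF^\bbE$ and the adjunction isomorphism of \autoref{thm:qr-adjt}; the lax equation is essentially formal. No further work is required to promote the resulting 2-cell in $\Mnd_l(\Mnd_c(\sF\Cat))$ to the desired natural transformation, as the Kleisli and Eilenberg--Moore 2-functors invoked in \autoref{thm:functoriality-1} transport it automatically.
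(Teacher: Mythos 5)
Your proposal is correct and follows essentially the same route as the paper's (much terser) proof: the lax monad 2-cell condition is the naturality of the canonical comparison $T\{-,M\}\to\{-,TM\}$ in the weight, which is what the paper phrases as the existence of the lifting $\{\alpha,-\}$ at the level of strict algebras, and the colax condition reduces, exactly as you compute via \autoref{thm:qr-adjt} and~\eqref{eq:colax-str}, to the identity $s^\Upsilon\circ\alpha=\QF^\bbD\alpha\circ s^\Psi$ expressing that $\alpha$ is a strict \QF-morphism. The final transport through the Kleisli- and algebra-assigning 2-functors is likewise how the paper obtains the induced transformation.
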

\begin{proof}
  We must verify that $\alpha$ is a monad 2-cell for both the lax and
  the colax structures.  For the lax monad morphism structures from
  $[\bbD,T]$ to $[\bbE,T]$, this follows from the existence of a
  lifting $\{\alpha,-\}$ at the level of strict algebras.  For the
  colax monad morphism structures from $\RF^\bbD$ to $\RF^\bbE$, this
  follows because they are constructed out of the $\QF$-coalgebra
  structures on $\Psi$ and $\Upsilon$, and $\alpha$ consists of strict
  \QF-morphisms.
\end{proof}

There is one further sort of functoriality we need, which involves
profunctor composition.  Let $\sF\Prof$ denote the bicategory of small
\sF-categories and \sF-profunctors.  (Recall that
% an \sF-profunctor
% $\bbD\pto\bbE$ is an \sF-functor $\bbE\op \to [\bbD,\bbF]$;
composition of profunctors is defined as a coend.)  Then for any
complete \sF-category \bbK, we have a pseudofunctor
\begin{equation}
  [-,\bbK]\colon \sF\Prof\co\to \sF\Cat,\label{eq:kpsfr}
\end{equation}
which sends $\bbD$ to $[\bbD,\bbK]$ and $\Psi\colon \bbD\pto \bbE$ to
$\{\Psi,-\}\colon [\bbD,\bbK] \to [\bbE,\bbK]$.

Our goal is to lift this to a pseudofunctor sending \bbD to
$\bbOplax(\bbD,T\bbAlg_l)$.  However, since we are only considering
profunctors that are ``pointwise \QF-coalgebras,'' we need a
bicategory of such.  This is the purpose of the following sequence of
lemmas, analyzing how \QF interacts with profunctor composition.

\begin{lem}
  For \sF-profunctors $\Upsilon\colon \bbC\pto\bbD$ and $\Psi\colon
  \bbD\pto\bbE$, we have $\QF^\bbC_{c}(\Psi\otimes_\bbD \Upsilon) \cong
  \Psi \otimes_\bbD \QF^\bbC_{c} \Upsilon$.
\end{lem}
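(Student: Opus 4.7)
The plan is to exploit the adjunction $\QF^\bbC_c \dashv \RF^\bbC_c$ between endo-\sF-functors on $[\bbC,\bbF]$ established in \S\ref{sec:weak-transf-class}, which makes $\QF^\bbC_c$ into an \sF-enriched left adjoint, and hence cocontinuous for all \sF-weighted colimits on $[\bbC,\bbF]$. Combined with the standard fact that profunctor composition is itself a coend, this will give the result essentially by formal manipulation.

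First I would unpack what $\QF^\bbC_c$ applied to a profunctor means. Given $\Upsilon\colon \bbC\pto\bbD$, regarded as an \sF-functor $\bbD\op\to [\bbC,\bbF]$, the profunctor $\QF^\bbC_c\Upsilon$ is defined by post-composition, i.e.\ pointwise in $\bbD$. Likewise, for a profunctor $\Theta\colon\bbC\pto\bbE$ viewed as $\bbE\op\to[\bbC,\bbF]$, the profunctor $\QF^\bbC_c\Theta$ is $\QF^\bbC_c$ applied pointwise in $\bbE$. Next I would recall the coend formula for profunctor composition in the \sF-enriched setting: as an \sF-functor $\bbE\op\to[\bbC,\bbF]$,
\[(\Psi\otimes_\bbD\Upsilon)(e) \;\cong\; \int^{d\in\bbD}\Psi(d,e)\cdot \Upsilon(d,-),\]
where $\cdot$ is the \sF-copower in $[\bbC,\bbF]$ and the coend is formed in $[\bbC,\bbF]$.

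Applying $\QF^\bbC_c$ pointwise in $e$ and using that it preserves \sF-copowers and coends (as an \sF-left adjoint) gives, naturally in $e\in\bbE\op$,
\[\QF^\bbC_c\!\Bigl(\int^{d}\Psi(d,e)\cdot \Upsilon(d,-)\Bigr) \;\cong\; \int^{d}\Psi(d,e)\cdot \QF^\bbC_c\Upsilon(d,-) \;\cong\; (\Psi\otimes_\bbD\QF^\bbC_c\Upsilon)(e).\]
Assembling these isomorphisms over $e$ gives the claimed isomorphism of profunctors $\bbC\pto\bbE$.

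The only point that needs any care is verifying that the adjunction $\QF^\bbC_c\dashv\RF^\bbC_c$ is genuinely \sF-enriched, so that $\QF^\bbC_c$ preserves \sF-weighted (and not merely conical) colimits. This is immediate from the hom-isomorphism $[\bbC,\bbF](\QF^\bbC_c F,G)\cong [\bbC,\bbF](F,\RF^\bbC_c G)$ of \S\ref{sec:weak-transf-class}, which is an isomorphism in \sF (its tight and loose parts realising the universal properties for tight and loose weak transformations respectively). The rest of the argument is purely formal manipulation of enriched coends, and the naturality of the resulting isomorphism in $\Psi$ and $\Upsilon$ is inherited from the naturality of the coend formula together with the \sF-naturality of the left adjoint $\QF^\bbC_c$.
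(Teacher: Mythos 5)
Your proof is correct. The paper's own argument is a two-sentence sketch: $\QF^{\bbC}_{c}$ is constructed using colimits in $[\bbC,\bbF]$, while $\Psi\otimes_\bbD-$ is a weighted colimit, hence the two commute (or one compares universal properties directly). Yours is the same commutation-of-colimits idea run in the opposite direction: rather than observing that the weighted-colimit functor $\Psi\otimes_\bbD-$ preserves the colimits (pushouts, Kan extensions, codescent objects) out of which $\QF^{\bbC}_{c}$ is built, you observe that $\QF^{\bbC}_{c}$, being an \sF-enriched left adjoint to $\RF^{\bbC}_{c}$ (as recorded at the end of \S\ref{sec:weak-transf-class}), preserves the copowers and coend defining $\Psi\otimes_\bbD-$. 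Both are sound; your version has the mild advantage of not needing to re-inspect the explicit construction of $\QF^{\bbC}_{c}$, only the already-established hom-isomorphism $[\bbC,\bbF](\QF^{\bbC}_{c}F,G)\cong[\bbC,\bbF](F,\RF^{\bbC}_{c}G)$, and you are right that the only point requiring care is that this adjunction is \sF-enriched, which it is since it arises from the \sF-adjunctions of $[\bbC,\bbF]\to\bbNat_{c}(\bbC,\bbF)$ with its left and right adjoints.
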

\begin{proof}
  We can observe that $\QF^\bbC_{c}$ is constructed using colimits in
  $[\bbC,\bbF]$, and $\Psi \otimes_\bbD -$ is a weighted colimit;
  hence the two commute.  Or, we can verify that both satisfy the same
  universal property.
  % It suffices to consider the case when \bbE is the unit \sF-category.
  % \begin{align*}
  %   [\bbC,\bbF](\QF^\bbC(\Psi\otimes_\bbD \Upsilon), G)
  %   &\cong [\bbC,\bbF](\Psi\otimes_\bbD \Upsilon, \RF^\bbC G)\\
  %   &\cong [\bbD,\bbF]\big(\Psi, [\bbC,\bbF](\Upsilon, \RF^\bbC G)\big)\\
  %   &\cong [\bbD,\bbF]\big(\Psi, [\bbC,\bbF](\QF^\bbC \Upsilon, G)\big)\\
  %   &\cong [\bbC,\bbF](\Psi \otimes_\bbD \QF^\bbC \Upsilon, G).
  % \end{align*}
\end{proof}

\begin{lem}\label{thm:qq-coadjt}
  For \sF-profunctors $\Upsilon\colon \bbC\pto\bbD$ and $\Psi\colon
  \bbD\pto\bbE$, we have $\QF^\bbD_{c } \Psi \otimes_{\bbD}
  \Upsilon \cong \Psi \otimes_\bbD \QF^{\bbD\op}_{l}\Upsilon$.
\end{lem}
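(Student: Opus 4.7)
The plan is to show that both sides corepresent the same \sF-valued functor on test \sF-profunctors $H\colon\bbC\pto\bbE$, and then conclude by the Yoneda lemma. Morally, the content is a variance duality: \emph{colax} \sF-transformations of $\bbD$-indexed diagrams are the same data as \emph{lax} \sF-transformations of the corresponding $\bbD\op$-indexed diagrams, and profunctor composition in the middle variable sees only this common data.

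First, I will fix $H\colon\bbC\pto\bbE$ and apply the tensor-hom adjunction for profunctor composition to the left-hand side, giving
\[
[\QF^\bbD_c\Psi\otimes_\bbD\Upsilon,\,H]\;\cong\;[\QF^\bbD_c\Psi,\,\{\Upsilon,H\}],
\]
where $\{\Upsilon,H\}\colon\bbD\pto\bbE$ is the profunctor-hom computed by the end $\{\Upsilon,H\}(E,D)=\int_{C\in\bbC}[\Upsilon(D,C),H(E,C)]$. Invoking the universal property of the classifier $\QF^\bbD_c$, applied pointwise in the $\bbE$-variable (strictly \sF-natural there), rewrites the right-hand side as the \sF-object of tight $c$-\sF-natural transformations $\Psi\to\{\Upsilon,H\}$ in the sense of \S\ref{sec:wf-transf}. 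Unwinding via the defining end of $\{\Upsilon,H\}$ then yields a concrete description: maps $f_D\colon\Psi(E,D)\times\Upsilon(D,C)\to H(E,C)$ strictly \sF-natural in $E$ and $C$, together with coherence 2-cells $\alpha_g$ attached to each $g\colon D\to D'$ of $\bbD$, satisfying the colax naturality axioms in $\bbD$ --- a sort of ``colax coend'' between $\Psi$ and $\Upsilon$ relative to $H$.

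Second, the symmetric calculation on the right-hand side, using the other direction of the tensor-hom adjunction and the universal property of $\QF^{\bbD\op}_l$ applied pointwise in $\bbC$, will give
\[
[\Psi\otimes_\bbD\QF^{\bbD\op}_l\Upsilon,\,H]\;\cong\;[\QF^{\bbD\op}_l\Upsilon,\,\{\Psi,H\}],
\]
which becomes the \sF-object of tight $l$-\sF-natural transformations $\Upsilon\to\{\Psi,H\}$ of functors $\bbD\op\to[\bbC,\bbF]$. Unwound, this is again the same data: maps $f_D\colon\Psi(E,D)\times\Upsilon(D,C)\to H(E,C)$ natural in $E$ and $C$, with 2-cells attached to each morphism of $\bbD\op$ --- i.e.\ each morphism $g\colon D\to D'$ of $\bbD$ running in the opposite direction --- satisfying the lax axioms in $\bbD\op$. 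Since the lax axioms in $\bbD\op$ coincide with the colax axioms in $\bbD$ under variance reversal, the two descriptions agree, and Yoneda delivers the desired isomorphism.

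The main obstacle I expect will be the variance bookkeeping in this last step: matching colax-in-$\bbD$ transformations with lax-in-$\bbD\op$ ones while preserving the \sF-enrichment (in particular, identifying ``tight'' on both sides). A secondary subtlety is verifying that applying $\QF^\bbD_c$ only fiberwise in $\bbE$ still lets its universal property classify exactly the mixed transformations needed; this should hold because $\QF^\bbD_c\Psi$ is defined pointwise and $\QF^\bbD_c$ is itself an \sF-functor. Should these adjointness arguments prove cumbersome, an alternative route is to invoke an explicit coend presentation $\QF^\bbD_c F\cong F\otimes_\bbD K$ for a ``colax-hom'' kernel $K\colon\bbD\pto\bbD$, and reduce both sides by Fubini to the common iterated coend $\int^{D,D'}\Psi(E,D')\times K(D',D)\times\Upsilon(D,C)$, using the identification of the colax kernel of $\bbD$ with the lax kernel of $\bbD\op$.
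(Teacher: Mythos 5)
Your argument is correct and is a fleshed-out version of one of the routes the paper itself offers for this lemma (``show directly that they have the same universal property''): testing both sides against $H$ via the tensor--hom adjunction and matching relative colax transformations $\Psi\to\{\Upsilon,H\}$ in the $\bbD$-direction with relative lax transformations $\Upsilon\to\{\Psi,H\}$ in the $\bbD\op$-direction. The variance bookkeeping you flag does work out: transposing through the two-variable hom, both kinds of coherence $2$-cell for $g\colon D\to D'$ point from $f_{D'}\circ(\Psi(g)\times 1)$ to $f_{D}\circ(1\times\Upsilon(g))$, and tightness of morphisms in $\bbD$ and $\bbD\op$ agree, so the two descriptions coincide and Yoneda finishes the proof.
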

\begin{proof}
  Again, we can prove this using the construction of both sides out of
  colimits, or show directly that they have the same universal
  property.   It can also be regarded as a special case of (the
  dual of) \autoref{thm:qr-adjt}. (Note the reversal of sense
  from  $l$ to $c$  on the two sides of the isomorphism.)
\end{proof}

\begin{lem}\label{thm:compose-coalg}
  Given $\Psi\colon \bbE\op\to \QF^\bbD_{c}\bbCoalg_{c}$ and
  $\Upsilon\colon \bbD\op\to \QF^\bbC_{c}\bbCoalg_{c}$, the
  composite $\Psi \otimes_\bbD \Upsilon$ has the structure of a
  functor $\bbE\op \to \QF^\bbC_{c}\bbCoalg_{c}$.
\end{lem}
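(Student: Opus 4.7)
The plan is to equip each object $(\Psi\otimes_\bbD\Upsilon)(e) = \Psi_e\otimes_\bbD\Upsilon$ with a $\QF^\bbC_c$-coalgebra structure induced by transporting the pointwise coaction on $\Upsilon$ through the coend. Since $\Upsilon$ is an \sF-functor into $\QF^\bbC_c\bbCoalg_c$, at each $d\in\bbD$ we have a coaction $s_d\colon\Upsilon_d\to\QF^\bbC_c\Upsilon_d$, and these assemble (using that $\QF^\bbC_c$ is a left adjoint and hence commutes with the coend defining $\Psi_e\otimes_\bbD -$, which is the content of the first lemma above) into a coaction
\[ \sigma_e \colon \Psi_e\otimes_\bbD\Upsilon \xrightarrow{\;1\otimes s\;} \Psi_e\otimes_\bbD\QF^\bbC_c\Upsilon \xrightarrow{\;\cong\;} \QF^\bbC_c(\Psi_e\otimes_\bbD\Upsilon). \]

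The counit and coassociativity axioms for $\sigma_e$ reduce, via the universal property of the coend together with the naturality of the first lemma's isomorphism, to the corresponding axioms for each individual coaction $s_d$ on $\Upsilon_d$. For \sF-functoriality of $e\mapsto\Psi_e\otimes_\bbD\Upsilon$ into $\QF^\bbC_c\bbCoalg_c$, note that for any morphism $\alpha\colon e\to e'$ in $\bbE\op$, the induced map $\Psi_\alpha\otimes_\bbD\Upsilon$ commutes with the coactions $\sigma_e$ and $\sigma_{e'}$ strictly, since the coaction involves only the $\bbD$-leg while $\Psi_\alpha$ acts in the $\bbE$-direction: the commutativity follows from bifunctoriality of $\otimes$ and naturality of the first lemma's isomorphism. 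In particular, every $\Psi_\alpha\otimes_\bbD\Upsilon$ (whether $\alpha$ is tight or loose) is automatically a strict $\QF^\bbC_c$-coalgebra morphism, so the lifted functor factors through the tight part of $\QF^\bbC_c\bbCoalg_c$.

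The main subtlety to manage is the construction of $\sigma_e$ itself, since the family $(s_d)_{d\in\bbD}$ is only colax \sF-natural in $d$ on loose arrows of $\bbD$ (where $\Upsilon$ sends them to colax coalgebra morphisms in $\QF^\bbC_c\bbCoalg_c$). The way around this is precisely the first lemma: both sides of its isomorphism are concretely the same coend ($\QF^\bbC_c$ being a left adjoint), so the coaction is defined componentwise in $d$ and assembled via the coend's universal property, with the colax 2-cells on loose arrows of $\bbD$ giving coherent data that is absorbed into the equalizer defining the coend.
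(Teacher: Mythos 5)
There is a genuine gap, and it sits exactly at the step you flag as the ``main subtlety.'' The map $1\otimes s\colon \Psi_e\otimes_\bbD\Upsilon\to\Psi_e\otimes_\bbD\QF^\bbC_c\Upsilon$ that your $\sigma_e$ is built from does not exist. The coaction $s\colon\Upsilon\to\QF^\bbC_c\Upsilon$ is a morphism in $\bbLax(\bbD\op,[\bbC,\bbF])$ but \emph{not} in $[\bbD\op,[\bbC,\bbF]]$: it is only lax natural with respect to the loose arrows of $\bbD$, precisely because $\Upsilon$ sends those arrows to weak, not strict, coalgebra morphisms. The coend defining $\Psi_e\otimes_\bbD(-)$ only accepts families that are strictly (di)natural in $d$; the coequalizer identifies elements according to the two actions of $\bbD$, but it has no mechanism to ``absorb'' a non-identity naturality 2-cell --- if those 2-cells were killed by the coend, $s$ would already have been strictly natural and there would be no subtlety to begin with. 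A telltale symptom that something is missing is that your argument never uses the hypothesis that $\Psi$ takes values in $\QF^\bbD_c\bbCoalg_c$; if the proof went through as written, the lemma would hold for an arbitrary profunctor $\Psi\colon\bbD\pto\bbE$, which it does not.

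The paper's construction repairs exactly this. First, the lax natural $s$ is transposed across the relative classifier to a \emph{strict} natural transformation $\QF^{\bbD\op}_l\Upsilon\to\QF^\bbC_c\Upsilon$, which \emph{can} be tensored with $\Psi$ over $\bbD$. To feed $\Psi\otimes_\bbD\Upsilon$ into its source one then needs a map into $\Psi\otimes_\bbD\QF^{\bbD\op}_l\Upsilon$, and this is where the coalgebra structure of $\Psi$ enters: by \autoref{thm:qq-coadjt} we have $\QF^\bbD_c\Psi\otimes_\bbD\Upsilon\cong\Psi\otimes_\bbD\QF^{\bbD\op}_l\Upsilon$, and the coaction of $\Psi$ (transposed to $\QF^{\bbE\op}_l\Psi\to\QF^\bbD_c\Psi$) supplies the required map, the whole composite being a single morphism $\QF^{\bbE\op}_l(\Psi\otimes_\bbD\Upsilon)\to\QF^\bbC_c(\Psi\otimes_\bbD\Upsilon)$ encoding both the pointwise coactions and their naturality. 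This also corrects your final overclaim: because the coaction genuinely involves the structure map of $\Psi$, which is only laxly natural in $\bbE\op$, the induced functor sends loose arrows of $\bbE\op$ to \emph{weak} coalgebra morphisms; it does not factor through the tight part of $\QF^\bbC_c\bbCoalg_c$. (Your first two steps --- the commutation $\QF^\bbC_c(\Psi\otimes_\bbD\Upsilon)\cong\Psi\otimes_\bbD\QF^\bbC_c\Upsilon$ and its use for the outer isomorphism --- are fine and agree with the paper.)
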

\begin{proof}
  Since $\bbLax(\bbD\op, \QF^\bbC_c\bbCoalg_c) \cong \bbLax(\bbD\op,
  \QF^\bbC_c)\bbCoalg_c$, the structure map $\Upsilon \to \QF^\bbC_c
  \Upsilon$ is lax natural in $\bbD\op$, so it may equivalently be regarded
  as a map $\QF^{\bbD\op}_l \Upsilon \to \QF^\bbC_c \Upsilon$.  Similarly,
  we have $\QF^{\bbE\op}_l \Psi \to \QF^\bbD_c \Psi$, and the desired
  structure on $\Psi \otimes_\bbD \Upsilon$ ought to be a morphism
  \[\QF^{\bbE\op}_l (\Psi \otimes_\bbD \Upsilon) \to \QF^\bbC_c (\Psi \otimes_\bbD \Upsilon).\]
  We can define this morphism to be the composite
  \begin{multline}
     \QF^{\bbE\op}_l( \Psi \otimes_\bbD \Upsilon)
      \xrightarrow{\cong} \QF^{\bbE\op}_l \Psi \otimes_\bbD \Upsilon
      \xrightarrow{s \otimes 1} \QF^\bbD_c \Psi \otimes_\bbD \Upsilon\\
      \xrightarrow{\cong} \Psi \otimes_\bbD \QF^{{\bbD\op}}_l \Upsilon
      \xrightarrow{1 \otimes s} \Psi \otimes_\bbD \QF^\bbC_c \Upsilon
      \xrightarrow{\cong} \QF^\bbC_c (\Psi \otimes_\bbD \Upsilon).\label{eq:compstr}
  \end{multline}
  The axioms follow straightforwardly from those for $\Psi$ and $\Upsilon$.
\end{proof}

There is another way to obtain a \QF-coalgebra structure on $\Psi
\otimes_\bbD \Upsilon$: we can apply \autoref{thm:functoriality-1} to
$\Psi$ and the monad $(\QF^\bbC_{c})\op$ on $[\bbC,\bbF]\op$.  This gives a
lifting of the functor
\[ \{ \Psi,-\} \colon [\bbD, [\bbC,\bbF]\op] \longrightarrow [\bbE, [\bbC,\bbF]\op] 
\]
(which is just $(-\otimes_{\bbD} \Psi)\op$) to a functor
\[ [\bbD, (\QF^\bbC_{c}\bbCoalg_{c})\op] \longrightarrow [\bbE, (\QF^\bbC_{c}\bbCoalg_{c})\op].
\]
Comparing~\eqref{eq:compstr} to~\eqref{eq:lifted-tstr}, and recalling
that \autoref{thm:qq-coadjt} is the dual of \autoref{thm:qr-adjt}, we
see that these two definitions agree.  

It is also easy to see from the above definition that tight strict
transformations (consisting of strict \QF-morphisms) induce similar
tight strict transformations between composites of profunctors.
We conclude:

\begin{lem}
  There is a bicategory $\QF\Prof$ whose objects are small
  \sF-categories, and whose hom-categories are
  \[ \QF\Prof(\bbD,\bbE) =
  \sF\Cat(\bbE\op, \QF^\bbD_{c}\bbCoalg_{c}).
  \]
\end{lem}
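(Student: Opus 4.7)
The plan is to assemble $\QF\Prof$ by specifying composition, identities, and coherence, and checking that all the data inherited from $\sF\Prof$ respects the pointwise $\QF$-coalgebra structure.

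For composition, \autoref{thm:compose-coalg} together with the unnumbered lemma immediately preceding the statement (tight strict transformations compose to tight strict ones) gives the required functors $\QF\Prof(\bbD,\bbE)\times\QF\Prof(\bbC,\bbD)\to\QF\Prof(\bbC,\bbE)$. For identities, I would take the hom-profunctor $Y_\bbD\colon \bbD\op\to[\bbD,\bbF]$ with $Y_\bbD(D)=\bbD(D,-)$. By \autoref{thm:yon-qcoalg} each representable $Y_D$ is a $\QF^\bbD_c$-coalgebra; the coalgebra structure $s\colon Y_D\to\QF^\bbD_c Y_D$ picks out $p(1_D)$ under Yoneda, so $s$ is \sF-natural in $D$, producing an object of $\sF\Cat(\bbD\op,\QF^\bbD_c\bbCoalg_c)$.

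The associator and unitors would be inherited from the bicategory $\sF\Prof$; the only thing to verify is that each component is a strict $\QF$-morphism (the remark before the statement already reduces everything to component-wise checks). For the associator $(\Phi\otimes_\bbE\Psi)\otimes_\bbD\Upsilon\cong \Phi\otimes_\bbE(\Psi\otimes_\bbD\Upsilon)$, both sides carry the $\QF$-structure built from the same three atomic pieces: the coalgebra maps of $\Phi,\Psi,\Upsilon$ and the tensor-hom isomorphisms of \autoref{thm:qq-coadjt}. Since the coend isomorphisms implementing associativity are natural in all three slots, conjugating the associator through the two composites in~\eqref{eq:compstr} gives the same structure map. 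For the unitors, one uses that the $\QF$-structure on $Y_\bbD$ is the canonical one built from $p$, so that after applying $-\otimes_\bbD\Upsilon$ and contracting via the unit isomorphism $Y_\bbD\otimes_\bbD\Upsilon\cong\Upsilon$, the composite $\QF$-structure~\eqref{eq:compstr} collapses (via the triangle identity for $p\dashv qJ$) to the given structure on $\Upsilon$.

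The coherence axioms (pentagon, triangle) would then follow because they already hold in $\sF\Prof$ and the forgetful map $\QF\Prof(\bbD,\bbE)\to\sF\Prof(\bbD,\bbE)$ is (essentially by \autoref{thm:rigged-strictmor}) faithful enough to detect equality of 2-cells. The main obstacle will be the bookkeeping required to check that the associator is a strict $\QF$-morphism, since the composite $\QF$-structure~\eqref{eq:compstr} involves alternating applications of \autoref{thm:qq-coadjt} and the individual coalgebra maps, and one must interleave these with the associator coherence; but this reduces to a naturality chase and does not require any new ideas beyond those already used in \autoref{thm:compose-coalg}.
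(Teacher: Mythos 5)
Your proposal is correct and follows essentially the same route as the paper, which simply cites \autoref{thm:yon-qcoalg} for the units, \autoref{thm:compose-coalg} for composition, and leaves the verification that the associativity and unit isomorphisms of $\sF\Prof$ are strict $\QF$-morphisms as ``a computation'' — the computation you sketch via~\eqref{eq:compstr} and the triangle identity for $p\dashv qJ$. (One small quibble: faithfulness of $\QF\Prof(\bbD,\bbE)\to\sF\Prof(\bbD,\bbE)$ on 2-cells is immediate, since a 2-cell in $\QF\Prof$ is by definition a 2-cell in $\sF\Prof$ with an extra property; \autoref{thm:rigged-strictmor} is what gives \emph{fullness} on pointwise-rigged profunctors, which you do not need here.)
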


\begin{proof}
   The unit profunctors lie in $\QF\Prof$ by
  \autoref{thm:yon-qcoalg}, while composition is given by \autoref{thm:compose-coalg}. 
  A computation shows that  the associativity and unitality
  isomorphisms in $\sF\Prof$ are strict \QF-morphisms.
\end{proof}

The 2-cells in $\QF\Prof$ are tight strict \QF-morphisms.  There is a
forgetful functor $\QF\Prof \to \sF\Prof$ which is bijective on
objects and faithful on 2-cells; \autoref{thm:rigged-strictmor}
implies that it is full on 2-cells between profunctors that are
pointwise rigged.\footnote{ Although we will not need it, we
  observe that the pointwise-rigged profunctors actually form a
  sub-bicategory of $\QF\Prof$.  This follows from
  \autoref{thm:saturated}, which implies that rigged weights are
  closed under rigged colimits.}
We can now deduce the following strong functoriality statement.

\begin{prop}\label{thm:psfr}
  For a complete \sF-category \bbK and an \sF-monad $T$ on \bbK, the
  pseudofunctor $[-,\bbK]\colon \QF\Prof\co \to \sF\Prof\co\to
  \sF\Cat$ lifts to a pseudofunctor $\QF\Prof\co\to \sF\Cat$ sending
  \bbD to $\bbOplax(\bbD,T\bbAlg_{l})$.
\end{prop}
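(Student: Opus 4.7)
The plan is to promote the assignment $\bbD \mapsto \bbOplax(\bbD,T\bbAlg_l)$, $\Psi \mapsto \llim{\Psi,-}$, $\alpha \mapsto \alpha^*$ — already constructed via Propositions~\ref{thm:functoriality-1} and~\ref{thm:alg-str-2cell} — into a pseudofunctor by lifting the pseudofunctoriality constraints of the underlying $[-,\bbK]\colon \sF\Prof\co \to \sF\Cat$ from~\eqref{eq:kpsfr}. Concretely, for each composable pair $\Upsilon\colon \bbC \pto \bbD$ and $\Psi\colon \bbD \pto \bbE$ in $\QF\Prof$ one has an underlying natural isomorphism $\{\Psi \otimes_\bbD \Upsilon,-\} \cong \{\Psi,\{\Upsilon,-\}\}$ and a unit isomorphism $\{\bbD(-,-),-\} \cong \id$, and one needs to show that each of these is in fact a 2-cell in $\Mnd_l(\Mnd_c(\sF\Cat))$ between the relevant monad-morphism structures. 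Once this is verified, the lifting mechanism used in the proof of \autoref{thm:functoriality-1} — monad-morphism structures induce \sF-functors between the corresponding Eilenberg-Moore \sF-categories via \autoref{lem:monad-stuff-2} — promotes each underlying constraint to a natural isomorphism between the two composite \sF-functors $\bbOplax(\bbC,T\bbAlg_l) \to \bbOplax(\bbE,T\bbAlg_l)$.

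The central computation concerns the composition constraint. Over the lax monad-morphism structures arising from the $[-,T]$ monads, agreement is essentially automatic because the assignment $\bbD \mapsto [\bbD,T]$ forms a pseudonatural family of monads along the pseudofunctor $[-,\bbK]$, so that the canonical comparison maps $T\{\Psi,M\} \to \{\Psi,TM\}$ compose in the expected way. Over the colax monad-morphism structures arising from the $\RF^-$ monads, the structures are built from \QF-coalgebra structures on the profunctors via~\eqref{eq:colax-str}, and the two agree by the identification recorded just after \autoref{thm:compose-coalg}: the \QF-coalgebra structure on $\Psi \otimes_\bbD \Upsilon$ defined as a composite in $\QF\Prof$ coincides with the one obtained by iterating \autoref{thm:functoriality-1} applied to the \sF-monad $(\QF^\bbC_c)\op$ on $[\bbC,\bbF]\op$. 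The compatibility diagram~\eqref{eq:monad-stuff} then holds on both sides by construction and is preserved by the underlying isomorphism. The unit constraint is handled similarly, invoking \autoref{thm:yon-qcoalg} to identify the canonical \QF-coalgebra structure on representables.

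The main obstacle is the diagrammatic bookkeeping needed to verify that the constraints of $[-,\bbK]$ are simultaneously 2-cells of lax $[-,T]$-morphisms and of colax $\RF^-$-morphisms, and commute with the distributive laws $k^-\colon [\bbD,T]\RF^\bbD \to \RF^\bbD[\bbD,T]$ of \S\ref{sec:lifting}. Once this is in hand, the pentagon and triangle coherence axioms for the lifted pseudofunctor follow from those for $[-,\bbK]$: each side of each axiom is a natural transformation lifting the corresponding one in $\sF\Cat$, and the correspondence between 2-cells in $\Mnd_l(\Mnd_c(\sF\Cat))$ and their Eilenberg-Moore lifts is faithful, so two lifts sharing the same underlying transformation must coincide.
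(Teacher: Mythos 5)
Your proposal is correct and follows essentially the same route as the paper: reduce to lifting $[-,\bbK]$ to a pseudofunctor into $\Mnd_l(\Mnd_c(\sF\Cat))$, note that the lifting already exists on morphisms and 2-cells via Propositions~\ref{thm:functoriality-1} and~\ref{thm:alg-str-2cell}, and verify compatibility of the composition (and unit) constraints separately over the lax $[\,-,T]$ structures (functorial lifting of limits to strict algebras) and the colax $\RF$ structures (the identification of the composite \QF-coalgebra structure following \autoref{thm:compose-coalg}). The paper's proof is terser but contains exactly these ingredients, with coherence likewise delegated to the local faithfulness of the forgetful 2-functor to $\sF\Cat$.
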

\begin{proof}
  It suffices to lift $[-,\bbK]$ to a pseudofunctor
  \begin{equation}
    \QF\Prof\co \to \Mnd_l(\Mnd_c(\sF\Cat)),\label{eq:qprof-lifting}
  \end{equation}
  since then we can apply the Kleisli-object-assigning functor
  $\Mnd_c(\sF\Cat)\to \sF\Cat$, followed by the functor
  $\Mnd_l(\sF\Cat)\to \sF\Cat$ which constructs $T\bbAlg_l$ from an
  \sF-monad $T$.

  However, we have already constructed the
  lifting~\eqref{eq:qprof-lifting} on morphisms
  (\autoref{thm:functoriality-1}) and 2-cells
  (\autoref{thm:alg-str-2cell}), so it remains to verify its
  functoriality.  Functoriality on 2-cells is immediate, so we need to
  check that for $\Psi\colon \bbC\pto\bbD$ and $\Upsilon\colon
  \bbD\pto\bbE$ in $\QF\Prof$, the colax and lax monad morphism
  structures on $\{\Upsilon \otimes_\bbD \Psi, -\}$ are the composites
  of those on $\{\Upsilon,-\}$ and $\{\Psi,-\}$.

  For the lax monad morphism structures from $[\bbC,T]$ to $[\bbE,T]$,
  this follows easily since all limits lift, functorially, to
  categories of strict algebras.  And for the colax monad morphism
  structures from $\RF^\bbC$ to $\RF^\bbE$, it follows from the
  construction in \autoref{thm:compose-coalg} of the $\QF$-coalgebra
  structure on $\Upsilon\otimes_\bbD\Psi$.
\end{proof}

Now we need a supply of good profunctors to which to apply this
functoriality.  Let $H\colon \bbD\to\bbE$ be any \sF-functor, and
$H^\bullet\colon \bbE \pto \bbD$ the profunctor defined by
$H^\bullet(d,e) = \bbE(H(d),e)$.  Then $\{H^\bullet,-\}\colon
[\bbE,\bbK] \to [\bbD,\bbK]$ is simply given by precomposition with
$H$.  Moreover, since $H^\bullet\colon \bbD\op \to [\bbE,\bbF]$ is the
composite of $H\op$ with the Yoneda embedding, by
\autoref{thm:yon-qcoalg} it lifts naturally to a morphism in
$\QF\Prof$.

\begin{lem}\label{lem:restriction}
  The lifted functor
  \[\overline{\{H^\bullet,-\}} \colon \bbOplax(\bbE,T\bbAlg_{l})\to
  \bbOplax(\bbD,T\bbAlg_{l})
  \]
  is also given by precomposition with $H$.
\end{lem}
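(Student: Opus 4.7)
The approach is to unravel the construction of $\overline{\{H^\bullet,-\}}$ from \autoref{thm:psfr} applied to the profunctor $H^\bullet$, and to verify that the lax and colax monad morphism structures on $\{H^\bullet,-\}$ collapse to the canonical identifications which express the fact that precomposition with $H$ strictly commutes, up to canonical isomorphism, with all monads involved. First, at the level of underlying \sF-functors, each fibre $H^\bullet_d = \bbE(Hd,-) = Y_{Hd}$ is a representable on $\bbE$, so by the strict Yoneda lemma $\{H^\bullet_d, M\} \cong M(Hd)$ naturally in $M$, confirming that $\{H^\bullet,-\}$ is the precomposition functor $H^*\colon [\bbE,\bbK]\to[\bbD,\bbK]$. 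The $\QF$-coalgebra structure used on each fibre is supplied by \autoref{thm:yon-qcoalg}, namely the unique $s_d\colon Y_{Hd}\to \QF^\bbE Y_{Hd}$ whose Yoneda transpose is $p(1_{Hd})$.

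Next I will analyze the lax monad morphism structure $\psi\colon [\bbD,T]\{H^\bullet,-\} \to \{H^\bullet,-\}[\bbE,T]$ and the colax monad morphism structure $\chi\colon \{H^\bullet,-\}\RF^\bbE \to \RF^\bbD\{H^\bullet,-\}$ produced by \autoref{thm:functoriality-1}. Componentwise at $d\in\bbD$, $\psi_d$ is the canonical comparison $T\{Y_{Hd},M\} \to \{Y_{Hd},TM\}$, which under the Yoneda identification is just the identity on $TM(Hd)$. For $\chi$, formula \eqref{eq:colax-str} presents it as the adjunct under the universal property of $\RF^\bbD$ of the $d$-indexed oplax transformation
\[
\{Y_{Hd},\RF^\bbE M\} \xrightarrow{\cong} \{\QF^\bbE Y_{Hd},M\} \xrightarrow{s_d^*} \{Y_{Hd},M\}.
\]
Tracing $s_d$ through the $\QF\dashv\RF$ adjointness of \autoref{thm:qr-adjt} and the strict Yoneda identifications $\{Y_{Hd},\RF^\bbE M\}\cong \RF^\bbE M(Hd)$ and $\{Y_{Hd},M\}\cong M(Hd)$, this composite is precisely the component at $Hd$ of the Kleisli counit $q_M\colon\RF^\bbE M \to M$, which is an isomorphism by the weak Yoneda identity for $\RF^\bbE$. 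Hence $\chi$ is an isomorphism realizing the canonical identification $H^*\RF^\bbE \cong \RF^\bbD H^*$.

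Having pinned down $\psi$ (an identity) and $\chi$ (the canonical isomorphism), the compatibility diagram \eqref{eq:monad-stuff} in \autoref{lem:monad-stuff} holds automatically. By \autoref{lem:monad-stuff-2} the resulting $\overline{\{H^\bullet,-\}}$ is the composite of the induced functor on Kleisli objects of $\RF$, which at the $\bbOplax$ level is precomposition by $H$ (because $\chi$ is exactly the identification making $H^*$ a strict-up-to-iso $\RF$-monad morphism), with the lifting to $T\bbAlg_l$, which acts pointwise (because $\psi$ is the identity). The overall functor is therefore precomposition with $H$.

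The main obstacle is the verification that $s_d^*$ agrees with the Kleisli counit component $q_M(Hd)$ under the adjointness of \autoref{thm:qr-adjt}; this requires transporting the Yoneda transpose $p(1_{Hd})$ of $s_d$ through the universal properties of $\QF^\bbE$ and $\RF^\bbE$, with the triangle identity $q_{Y_{Hd}}\circ s_d = 1$ for the coalgebra $(Y_{Hd},s_d)$ providing the key simplification to the expected counit component.
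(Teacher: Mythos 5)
Your overall strategy---unwinding the construction of \autoref{thm:functoriality-1} for $H^\bullet$ and identifying the lax and colax structure maps with canonical comparisons---is exactly what the paper's (one-line) proof intends, and your identification of $\psi$ as an identity and of $s_d$ via its Yoneda transpose $p(1_{Hd})$ is correct. The gap is in your treatment of $\chi$: the claim that the composite \eqref{eq:colax-str} is an isomorphism $\RF^\bbE M(Hd)\to M(Hd)$, and hence that $\chi$ realizes an isomorphism $H^*\RF^\bbE\cong\RF^\bbD H^*$, is false. The map in question sends a weak transformation $\alpha\colon Y_{Hd}\lto M$ (an object of $\RF^\bbE M(Hd)\cong\{\QF^\bbE Y_{Hd},M\}$) to its component $\alpha_{Hd}(1_{Hd})\in M(Hd)$; for lax or oplax transformations this ``evaluation at the identity'' is not invertible, and it is not even strictly natural in $d$---which is precisely why the universal property of $\RF^\bbD$ is needed to transpose it into a strict $\chi$ in the first place. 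Concretely, $H^*\RF^\bbE M(d)$ consists of weak maps $\bbE(Hd,-)\lto M$ while $\RF^\bbD H^*M(d)$ consists of weak maps $\bbD(d,-)\lto MH$; taking $\bbD=\bbone$ and $\bbE=\bbtwo$ already shows these are not isomorphic.

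What is true, and what the lemma needs, is that $\chi$ equals the canonical \emph{non-invertible} ``restriction along $H$'' comparison $H^*\RF^\bbE\to\RF^\bbD H^*$, since it is that comparison which induces precomposition by $H$ on Kleisli objects. To see this one compares the weak transformations that the two strict maps classify: by the construction of $s_d$ in \autoref{thm:yon-qcoalg} (its transpose is $p(1_{Hd})$), the composite \eqref{eq:colax-str} classifies evaluation at identities, and so does restriction along $H$ followed by the counit $\RF^\bbD H^*M\lto H^*M$; uniqueness of transposes under the universal property of $\RF^\bbD$ then gives the equality. This is the content of the paper's proof, which simply says ``by construction of the \QF-coalgebra structure in \autoref{thm:yon-qcoalg}''. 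With that correction your argument goes through; as written, the step justifying that the induced Kleisli functor is precomposition rests on a false isomorphism.
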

\begin{proof}
  By construction of the \QF-coalgebra structure in \autoref{thm:yon-qcoalg}.
\end{proof}

\begin{cor}\label{thm:prof-restriction}
  For $H\colon \bbD\to\bbE$ and $\Psi\colon \bbE\op \to
  \QF^\bbC_{c}\bbCoalg_{c}$, the composite $H^\bullet \otimes_\bbE
  \Psi$ in $\QF\Prof$ is naturally isomorphic to the composite functor
  \[ \bbD \xrightarrow{H} \bbE \xrightarrow{\Psi} \QF^\bbC_{c}\bbCoalg_{c}.
  \]
\end{cor}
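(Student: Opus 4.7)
The plan is to obtain the isomorphism in two stages: first at the underlying level of \sF-profunctors via the co-Yoneda lemma, and then to check compatibility with the \QF-coalgebra structures.

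At the underlying level, the composite $H^\bullet \otimes_\bbE \Psi$ is, by definition, the coend of a representable in its $\bbE$-variable, and the co-Yoneda lemma provides a natural isomorphism of \sF-profunctors
\[ H^\bullet \otimes_\bbE \Psi \;\iso\; \Psi \circ H\op, \]
where $\Psi \circ H\op\colon \bbD\op \to [\bbC,\bbF]$ is the evident composite. This is the precise dual of \autoref{lem:restriction}, which said that $\{H^\bullet,-\}$ is just precomposition with $H$; it is visibly natural in $\Psi$ and in $H$.

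It remains to verify that this iso respects the \QF-coalgebra structures on each side, so as to lift to an iso in $\QF\Prof(\bbC,\bbD)$. The structure on $\Psi\circ H\op$ is simply $s_\Psi \circ H\op$, while the structure on $H^\bullet \otimes_\bbE \Psi$ is given by the four-step composite~\eqref{eq:compstr} of \autoref{thm:compose-coalg}, built from the coactions $s_{H^\bullet}$ and $s_\Psi$ together with the coend-preservation isomorphisms of \autoref{thm:qq-coadjt} and its preceding lemma. Since $H^\bullet$ at each $d\in\bbD$ is the representable $\bbE(H(d),-)$ with the canonical coaction from \autoref{thm:yon-qcoalg} (determined pointwise by $p(1_{H(d)})$), the plan is to show that, after the co-Yoneda identification, the $s_{H^\bullet}$-step of~\eqref{eq:compstr} collapses to the identity, leaving only the $s_\Psi$-step, which then corresponds to $s_\Psi\circ H\op$.

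The main obstacle is precisely this compatibility check: the formula~\eqref{eq:compstr} threads together representable coactions and strictifications through several adjointnesses, so one must carefully track how each step transforms under co-Yoneda, and in particular witness that the interaction of \autoref{thm:qq-coadjt} with the canonical representable coaction of $H^\bullet$ collapses the $s_{H^\bullet}$-contribution. Once this is done, naturality of the whole isomorphism in $\Psi$ along tight strict \QF-morphisms (the 2-cells of $\QF\Prof$, cf.\ \autoref{thm:alg-str-2cell}) and in $H$ follows from naturality of co-Yoneda, completing the proof.
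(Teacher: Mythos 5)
Your underlying-level step is exactly right: co-Yoneda identifies $H^\bullet \otimes_\bbE \Psi$ with $\Psi\circ H\op$ as \sF-profunctors, and the whole content of the corollary is the compatibility of the \QF-coalgebra structures. But the route you propose for that compatibility differs from the paper's, and as stated it has a soft spot. The paper does not unwind~\eqref{eq:compstr} at all: it uses the observation recorded immediately after \autoref{thm:compose-coalg} that the coalgebra structure on a composite in $\QF\Prof$ agrees with the one obtained by applying the \emph{lifted limit functor} of \autoref{thm:functoriality-1} (for the monad $(\QF^\bbC_{c})\op$), and then \autoref{lem:restriction}, which says that the lifted functor $\overline{\{H^\bullet,-\}}$ is literally precomposition with $H$. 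Those two facts together make the corollary immediate: the composite, \emph{with} its coalgebra structure, is computed by restricting $\Psi$ along $H$, which is the displayed composite functor. Your plan instead re-derives this by tracking~\eqref{eq:compstr} through co-Yoneda, which is viable but is precisely the computation the paper's packaging is designed to avoid.

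The genuine issue is your claim that ``the $s_{H^\bullet}$-step collapses to the identity.'' Taken literally this cannot be right: the composite of the first three maps of~\eqref{eq:compstr} is a morphism $\QF^{\bbD\op}_l(H^\bullet\otimes_\bbE\Psi)\to H^\bullet\otimes_\bbE\QF^{\bbE\op}_l\Psi$, and under co-Yoneda its domain becomes $\QF^{\bbD\op}_l(\Psi H)$ while its codomain becomes $(\QF^{\bbE\op}_l\Psi)\circ H$ --- two different objects (the relative classifiers for $\bbD\op$ and $\bbE\op$ do not commute with restriction along $H$). What that composite actually becomes is the canonical comparison between these two classifiers, and the remaining work is to check that this comparison followed by $(1\otimes s_\Psi)$ equals the mate of $s_\Psi\circ H$; this is a mate calculation using the representable coaction $p(1_{H(d)})$ from \autoref{thm:yon-qcoalg} and the adjointness of \autoref{thm:qq-coadjt}. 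Your proposal names this as ``the main obstacle'' but does not carry it out, so as written the argument is a plan rather than a proof. Either execute that mate calculation explicitly, or replace it by the paper's shortcut via \autoref{lem:restriction} and the two equivalent descriptions of composition in $\QF\Prof$.
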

\begin{proof}
  This follows from \autoref{lem:restriction}, together with the
  observation after \autoref{thm:compose-coalg} that composition in
  $\QF\Prof$ can also be described as a lifted limit.
\end{proof}

Similarly, for $H:\bbD\to\bbE$ as above, we have a profunctor $H_\bullet\colon
\bbD\pto\bbE$ defined by $H_\bullet(e,d) = \bbE(e,H(d))$, such that
$\{H_\bullet,-\}$ is right Kan extension along $H$.

\begin{lem}\label{lem:good}
  Suppose that $H\colon \bbD\hookrightarrow \bbE$ is the inclusion of
  a full subcategory such that  for any $E\in \bbE \setminus \bbD$,
  \begin{itemize}
  \item the weight $\bbE(E,H-)\colon \bbD \to \bbF$ is a \QF-coalgebra, and
  \item if there exists a nonidentity tight morphism $E'\to E$ in
    \bbE, then $\bbE(E,H-)$ is rigged.
  \end{itemize}
  Then $H_\bullet$  has an induced structure of a morphism in $\QF\Prof$.
\end{lem}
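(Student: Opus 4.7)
The plan is to lift the \sF-functor $H_\bullet\colon\bbE\op\to[\bbD,\bbF]$ to an \sF-functor $\bbE\op\to\QF^\bbD_c\bbCoalg_c$. Since the forgetful \sF-functor $\QF^\bbD_c\bbCoalg_c\to[\bbD,\bbF]$ is faithful on both tight and loose morphisms and on 2-cells, it will suffice to provide a \QF-coalgebra structure on each weight $\bbE(E,H-)$ for $E\in\bbE$, and to check that each tight (respectively loose) morphism of $\bbE$ induces a strict (respectively colax) \QF-morphism; preservation of composition and identities then follows automatically from faithfulness.

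For each $E\in\bbE$ I would define the coalgebra structure as follows. If $E\in\bbD$, then fullness of $H$ gives $\bbE(E,H-)\iso \bbD(E,-)$, which is a rigged \QF-coalgebra by \autoref{thm:yon-qcoalg}. If $E\notin\bbD$, we take the \QF-coalgebra structure supplied by the first hypothesis of the lemma.

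Next, for a tight morphism $f\colon E'\to E$ in $\bbE$, I need to show that $f^*\colon \bbE(E,H-)\to \bbE(E',H-)$ is a strict \QF-morphism. The strategy is to invoke \autoref{thm:rigged-strictmor}, so it suffices to verify that the domain $\bbE(E,H-)$ is rigged. If $E\in\bbD$ the weight is representable, hence rigged. If $E\notin\bbD$, then either $f=1_E$, in which case $f^*$ is the identity and the conclusion is trivial, or $f$ is a nonidentity tight morphism with codomain $E$, and the second hypothesis of the lemma supplies precisely the riggedness required. This is where the slightly unusual condition about nonidentity tight morphisms in the lemma's statement does its work, and it is the main (and essentially the only nontrivial) obstacle in the proof.

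The remaining data of the lift is automatic. A loose morphism $f\colon E'\lto E$ in $\bbE$ induces a 2-natural transformation between \QF-coalgebras, which by the remarks preceding \autoref{thm:rigged-strictmor} is automatically a $c$-$\QF_\loose$-morphism, hence a loose morphism in $\QF^\bbD_c\bbCoalg_c$; likewise any modification between such loose maps is automatically a \QF-transformation. Finally, \sF-functoriality of the lifted assignment (compatibility with composition and with identities, for both tight and loose morphisms) reduces to the corresponding property for $H_\bullet$ regarded as a morphism in $\sF\Prof$, since the forgetful \sF-functor $\QF^\bbD_c\bbCoalg_c\to[\bbD,\bbF]$ is faithful.
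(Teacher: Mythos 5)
Your proof is correct and follows essentially the same route as the paper's: objects go to \QF-coalgebras via \autoref{thm:yon-qcoalg} and the first hypothesis, loose morphisms are handled automatically by the weak idempotence of $\QF_\loose$, and tight morphisms are handled by \autoref{thm:rigged-strictmor} applied to the (rigged) weight at the codomain, using the second hypothesis. The extra remarks on functoriality via faithfulness of the forgetful functor are a reasonable elaboration of what the paper leaves implicit.
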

\begin{proof}
  By \autoref{thm:yon-qcoalg} and the first assumption, $H_\bullet$
  takes each object of $\bbE$ to a \QF-coalgebra.  Since $\QF_\lambda$
  is weakly idempotent, $H_\bullet$ then necessarily takes each loose
  morphism of \bbE to a (loose) weak \QF-morphism.  Finally, by
  \autoref{thm:rigged-strictmor} and the fact that all representables
  are rigged, the second assumption implies that $H_\bullet$ takes
  tight morphisms to (tight) strict \QF-morphisms.
\end{proof}

\begin{cor}\label{thm:lan-coalg}
  If $H\colon \bbD\hookrightarrow \bbE$ satisfies the hypotheses of
  \autoref{lem:good} and $\Phi\colon \bbD \to \bbF$ is a
  $\QF^\bbD$-coalgebra, then $\Lan_H \Phi\colon \bbE\to\bbF$ is
  a $\QF^\bbE$-coalgebra.  Moreover, if $\Phi$ is rigged, so is
  $\Lan_H \Phi$.
\end{cor}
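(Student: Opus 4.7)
The plan is to realize $\Lan_H\Phi$ as a composite of profunctors in the bicategory $\QF\Prof$, so that the required $\QF^\bbE_c$-coalgebra structure emerges automatically from the bicategory structure. Via the identification $\QF\Prof(\bbD,\mathbf{1})\cong\QF^\bbD_c\bbCoalg_c$ (with $\mathbf{1}$ the unit \sF-category), the coalgebra $\Phi$ is itself a morphism $\bbD\to\mathbf{1}$ in $\QF\Prof$, and $\Lan_H\Phi$ will arise by precomposing $\Phi$ with a suitable morphism $\bbE\to\bbD$.

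First I would verify that the profunctor $H^\bullet\colon\bbE\pto\bbD$, defined by $H^\bullet(d,e)=\bbE(Hd,e)$, lifts canonically to a morphism in $\QF\Prof$. As an \sF-functor $\bbD\op\to[\bbE,\bbF]$ it sends each $d$ to the representable $\bbE(Hd,-)$, which is a rigged $\QF^\bbE$-coalgebra by \autoref{thm:yon-qcoalg}; tight \sF-naturality in $d$ lands among strict $\QF^\bbE$-morphisms by \autoref{thm:rigged-strictmor}, and loose \sF-naturality lands among colax $\QF^\bbE$-morphisms automatically by the colax-idempotence of $\QF^\bbE_c$. No hypothesis on $H$ is needed here, since every value of $H^\bullet$ is already a representable on $\bbE$; \autoref{lem:good} is needed rather for $H_\bullet$, whose values are representables on $\bbD$ only when indexed by $e\in\bbD\subseteq\bbE$. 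Then by \autoref{thm:compose-coalg} and the coend formula for composition,
\[
(\Phi\otimes_\bbD H^\bullet)(e)\;=\;\int^{d\in\bbD}\Phi(d)\otimes\bbE(Hd,e)\;\cong\;(\Lan_H\Phi)(e),
\]
so $\Lan_H\Phi\cong\Phi\otimes_\bbD H^\bullet$ inherits a $\QF^\bbE_c$-coalgebra structure.

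For the riggedness clause I would argue pointwise. Using $(\Lan_H\Phi)_\loose\cong\Lan_{H_\loose}\Phi_\loose$ and the identification $\Lan_{J_\bbE}\Lan_{H_\tight}\Phi_\tight\cong\Lan_{H_\loose}\Lan_{J_\bbD}\Phi_\tight$ (from $J_\bbE H_\tight=H_\loose J_\bbD$), the composite
\[
\Lan_{J_\bbE}\Lan_{H_\tight}\Phi_\tight \to \Lan_{J_\bbE}(\Lan_H\Phi)_\tight \xrightarrow{\phbar_{\Lan_H\Phi}} (\Lan_H\Phi)_\loose
\]
is identified with $\Lan_{H_\loose}\phbar_\Phi$, whose value at $e'\in\sE_\loose$ is the coend $\int^{d\in\bbD}\sE_\loose(H_\loose d,e')\times(\phbar_\Phi)_d$. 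Since surjectivity on objects in $\Cat$ is preserved both by cartesian product with a fixed category and by coends, and $\phbar_\Phi$ is pointwise surjective on objects (as $\Phi$ is $w$-rigged), this composite is pointwise surjective on objects, and hence so is its factor $\phbar_{\Lan_H\Phi}$, establishing $w$-riggedness of $\Lan_H\Phi$. The main obstacle I expect is confirming that the $\QF^\bbE_c$-coalgebra structure supplied by the profunctor composition in $\QF\Prof$ is compatible with the induced tight part of $\Lan_H\Phi$ as an \sF-weight; by colax-idempotence of $\QF^\bbE_c$ any such extension is unique, so existence reduces to unpacking the composition construction from \autoref{thm:compose-coalg}.
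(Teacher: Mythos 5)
Your proposal is correct and follows the same basic strategy as the paper: realize $\Lan_H\Phi$ as a composite in $\QF\Prof$ so that \autoref{thm:compose-coalg} supplies the coalgebra structure, and then handle riggedness by a pointwise colimit computation. The one substantive difference is which $H$-induced profunctor you compose with, and here your version is the more careful one. The paper's one-line proof writes the composite as $H_\bullet \otimes_\bbD \Phi$; but $\{H_\bullet,-\}$ is \emph{right} Kan extension, and $H_\bullet$ and $\Phi$ are both profunctors with source $\bbD$, so that composite does not typecheck under the conventions of \autoref{thm:compose-coalg}. The coend $\int^{d}\bbE(Hd,-)\times\Phi(d)$ computing $\Lan_H\Phi$ is exactly $\Phi\otimes_\bbD H^\bullet$, as you say, and $H^\bullet$ lies in $\QF\Prof$ for \emph{any} $H$ by \autoref{thm:yon-qcoalg} together with \autoref{thm:rigged-strictmor} (tight naturality lands in strict $\QF$-morphisms because representables are rigged). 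You are therefore also right that the hypotheses of \autoref{lem:good} are not actually used in your argument --- they are harmlessly available in every application of the corollary, and they are what the paper's invocation of $H_\bullet$ would require, but your proof establishes the statement without them. For the riggedness clause, your factorization argument is a sound expansion of the paper's hint: the key point is that you never need to identify $(\Lan_H\Phi)_\tight$ itself with $\Lan_{H_\tight}\Phi_\tight$ (which would be false in general, because of the reflection into $\sF$); you only need that the composite $\Lan_{J_\bbE}\Lan_{H_\tight}\Phi_\tight\to(\Lan_H\Phi)_\loose$ agrees with $\Lan_{H_\loose}\phbar_\Phi$ up to the canonical isomorphisms and that surjectivity on objects passes through products and coends and is inherited by the last factor of a composite. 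The compatibility squares you leave implicit are routine adjointness checks, at the same level of detail as the paper's own ``straightforward''.
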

\begin{proof}
  If $\Phi$ is identified with a profunctor in $\QF\Prof$ from $\bbD$
  to the unit \sF-category, then $\Lan_H \Phi$ can be identified with
  $H_\bullet \otimes_\bbD \Phi$, with \QF-coalgebra structure from
  \autoref{thm:compose-coalg}.  The second statement is
  straightforward using the fact that colimits in \sF, including left
  Kan extensions, are obtained as the full-embedding reflections of
  colimits in $\Cat^\bbtwo$.
\end{proof}

\begin{cor}\label{thm:unit-coalg}
  If $H\colon \bbD\hookrightarrow \bbE$ satisfies the hypotheses of
  \autoref{lem:good}, then we have an isomorphism $H^\bullet
  \otimes_\bbE H_\bullet \cong 1_{\bbD}$ in $\QF\Prof$.
\end{cor}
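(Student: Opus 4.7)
The plan is to build the desired isomorphism first at the level of $\sF\Prof$ using the full faithfulness of $H$, and then lift it automatically to $\QF\Prof$ by invoking the rigidity result \autoref{thm:rigged-strictmor}. Since $H\colon\bbD\hookrightarrow\bbE$ is the inclusion of a full \sF-subcategory, it is \sF-fully-faithful, and hence the usual Yoneda coend computation yields an \sF-profunctor isomorphism
\[\eta\colon H^\bullet\otimes_\bbE H_\bullet \xrightarrow{\ \cong\ } \bbD(-,-) = 1_\bbD,\]
with $(H^\bullet\otimes_\bbE H_\bullet)(d_1,d_2)=\int^{e\in\bbE}\bbE(Hd_1,e)\times\bbE(e,Hd_2)\cong \bbE(Hd_1,Hd_2)\cong\bbD(d_1,d_2)$. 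Viewing this as a natural transformation of \sF-functors $\bbD\op\to[\bbD,\bbF]$, the task becomes showing that each component $\eta_{-,d_2}$, together with its inverse, is a strict $\QF^\bbD_c$-morphism between the pointwise \QF-coalgebra structures on either side.

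At each $d_2\in\bbD$, the target $\bbD(-,d_2)$ is a representable and hence $w$-rigged by \autoref{thm:yon-qcoalg}. The source $(H^\bullet\otimes_\bbE H_\bullet)(-,d_2)$ carries a $\QF^\bbD_c$-coalgebra structure by \autoref{thm:compose-coalg}, using that $H^\bullet$ lies in $\QF\Prof$ by \autoref{thm:yon-qcoalg} and that $H_\bullet$ does so by \autoref{lem:good}. Moreover, both clauses of \autoref{def:rigged}---carrying a $\QF$-coalgebra structure, and pointwise surjectivity on objects of $\phbar$---depend only on the underlying \sF-weight up to \sF-isomorphism, so the \sF-iso to the rigged weight $\bbD(-,d_2)$ forces the source also to be rigged.

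It then follows from \autoref{thm:rigged-strictmor}, applied once to $\eta_{-,d_2}$ and once to its inverse (each time with rigged source and $\QF$-coalgebra target), that $\eta_{-,d_2}$ is a strict $\QF^\bbD_c$-isomorphism, and hence $\eta$ assembles into an isomorphism in $\QF\Prof$. I anticipate no substantive obstacle: the only point requiring a little care is the transport of rigged-ness along an \sF-isomorphism, which is a routine unwinding of the two defining clauses, since both transport along any \sF-iso.
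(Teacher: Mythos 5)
Your proof is correct and takes essentially the same route as the paper's: first obtain the isomorphism in $\sF\Prof$ from the full faithfulness of $H$, then use the rigidity of rigged weights (representables being rigged) to conclude that it lies in $\QF\Prof$. The paper packages the second step as the uniqueness of the $\QF$-coalgebra structure on the pointwise-rigged $1_\bbD$, whereas you transport rigged-ness to the source and invoke \autoref{thm:rigged-strictmor} directly on $\eta$ and its inverse; these are the same rigidity fact in two guises.
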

\begin{proof}
  Since $H$ is fully faithful, we have such an isomorphism in
  $\sF\Prof$.  And since the profunctor $1_\bbD$ is pointwise rigged
  (as observed after \autoref{thm:yon-qcoalg}), it has a unique
  \QF-coalgebra structure; thus the isomorphism lies in $\QF\Prof$.
  % By \autoref{thm:prof-restriction}, the \QF-coalgebra
  % structure on $H^\bullet \otimes_\bbE H_\bullet$ is the restriction
  % to \bbD of that on $H^_\bullet$.  The latter is induced by
  % \autoref{lem:good}, whose proof invoked \autoref{thm:yon-qcoalg} for
  % the \QF-coalgebra structures on $\bbE(E,H-)$ when $E$ is of the form
  % $H(D)$.  Thus, when we restrict it along $H$, we obtain precisely
  % the structure of \autoref{thm:yon-qcoalg}, which is the same as that
  % of the profunctor $1_{\bbD}$.
\end{proof}

% Let $\Phi\colon \bbD\to\bbF$ be a weight, which we regard as a
% profunctor from $\bbD$ to the unit \sF-category, let $|\Phi|$ be its
% collage, and let $H\colon \bbD\to |\Phi|$ be the inclusion.  A diagram
% of shape $|\Phi|$ in an \sF-category is just a diagram of shape $\bbD$
% together with a $\Phi$-weighted cone over it.

% Thus, for any diagram $G\colon \bbD\to T\bbAlg_l$, we have an induced
% diagram $\llim{H_\bullet,G}\colon |\Phi|\to T\bbAlg_l$.  By
% functoriality and \autoref{lem:restriction}, we have
% \[ \llim{H_\bullet,G} H \cong
% \llim{ H^\bullet, \llim{H_\bullet,G} } \cong
% \llim{ H^\bullet \otimes_{|\Phi|} H_\bullet, G } \cong
% G.
% \]
% Thus $\llim{H_\bullet,G}$ is a $\Phi$-weighted cone over $G$.

% The proposition shows that we have an algebra structure on $L=\{\Phi,UG\}$, 
% and a degree of functoriality, but it does not give any sort of universal property. 
% We now turn to this, starting with the universal property with respect to loose maps.

Finally, recall that given any profunctor $\Psi\colon \bbD\pto\bbE$, its
\emph{collage} is the category $|\Psi|$ whose objects are the disjoint
union of those of \bbD and \bbE, and whose morphisms are
\begin{equation*}
  \begin{array}{lcl}
    |\Psi|(d,d') &=& \bbD(d,d')\\
    |\Psi|(e,e') &=& \bbE(e,e')\\
    |\Psi|(e,d) &=& \Psi(e,d)\\
    |\Psi|(d,e) &=& 0.
  \end{array}
\end{equation*}
We can now prove the loose part of the universal property.

\begin{thm}\label{thm:coalg-looselift-app}
  Let $\Phi\maps \bbD\to\bbF$ be an \sF-weight which is a
  $\QF_{c}$-coalgebra, let $T$ be an \sF-monad on a complete
  \sF-category \bbK, and let $G\maps \bbD\to T\bbAlg_w$ be an
  \sF-functor.  Then the $T$-algebra structure on $\{\Phi,U G\}$
  obtained from \autoref{thm:functoriality-1} gives it the
  universal property of the limit $\{\Phi_\loose, G_\loose\}$ in the
  2-category $(T\bbAlg_w)_\loose$.
\end{thm}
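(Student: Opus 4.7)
The plan is to realise $\{\Phi,UG\}$ with its $T$-algebra structure as the value at a point of a right Kan extension constructed inside $\QF\Prof$, and to extract the loose universal property through the pseudofunctor of \autoref{thm:psfr}, replacing the descent-object computations of the main-text proof by an appeal to a Kan-extension adjunction at the level of $\bbOplax$.

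I would begin by constructing a ``loose collage'' $\bbB$: the \sF-category with objects $\ob\bbD\sqcup\{*\}$, coinciding with $\bbD$ on $\bbD$, with $\bbB(*,d)_\loose=\Phi_\loose(d)$, $\bbB(*,d)_\tight=\emptyset$, no morphisms $d\to *$, and composition inherited from the functoriality of $\Phi_\loose$ on $\sD_\loose$. The inclusion $H\colon\bbD\hookrightarrow\bbB$ is fully faithful, and the weight $\bbB(*,H-)$ has loose part $\Phi_\loose$ and empty tight part. By the analysis of \S\ref{sec:struct-q-coalg}, $\Phi_\loose$ inherits a $Q_c$-coalgebra structure from $\Phi$, and the equation $sJ\cdot\phi=p\cdot\phi$ is vacuous on an empty tight part, so $\bbB(*,H-)$ is a $\QF_c$-coalgebra. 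Since $*$ admits no non-identity tight morphism as its codomain, the rigging hypothesis of \autoref{lem:good} is vacuous, so $H_\bullet$ is a morphism in $\QF\Prof$; $H^\bullet$ is such by \autoref{thm:yon-qcoalg}, and \autoref{thm:unit-coalg} gives $H^\bullet\otimes H_\bullet\cong 1_\bbD$ inside $\QF\Prof$.

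Applying the pseudofunctor $F=\bbOplax(-,T\bbAlg_l)\colon\QF\Prof\co\to\sF\Cat$ of \autoref{thm:psfr}, I obtain \sF-functors $F(H^\bullet)=(-)\circ H$ (precomposition, by \autoref{lem:restriction}) and $F(H_\bullet)$, and the isomorphism $H^\bullet\otimes H_\bullet\cong 1_\bbD$ becomes $F(H^\bullet)F(H_\bullet)\cong \mathrm{id}$, which exhibits $F(H_\bullet)$ as a right Kan extension along $H$ right adjoint to precomposition. The value $F(H_\bullet)(G)(*)$ is therefore a $T$-algebra whose underlying object in $\bbK$ is the 2-categorical limit $\{\Phi_\loose,UG_\loose\}$; letting $\iota\colon\{*\}\hookrightarrow\bbB$ denote the inclusion at $*$, \autoref{thm:prof-restriction} identifies $\iota^\bullet\otimes H_\bullet$ with the weight $\bbB(*,H-)$, and since only the loose part of the $\QF$-coalgebra structure enters formula~\eqref{eq:lifted-tstr}, the resulting algebra structure agrees with that produced by \autoref{thm:functoriality-1} applied to $\Phi$.

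For the loose universal property, I would use the defining property of $\bbB$: an \sF-functor $\bbB\to T\bbAlg_l$ extending $G$ and sending $*$ to a given $T$-algebra $A$ is exactly a 2-natural transformation $\Phi_\loose\to(T\bbAlg_l)_\loose(A,G_\loose-)$. The Kan-extension adjunction $F(H^\bullet)\dashv F(H_\bullet)$, specialised to loose oplax \sF-transformations into $F(H_\bullet)(G)$ restricting to identities on $\bbD$, then yields the required bijection between loose morphisms $A\to F(H_\bullet)(G)(*)$ in $T\bbAlg_l$ and 2-natural cones. The main obstacle will be pinning down that the Kan-extension adjunction, naturally stated for \sF-natural transformations between \sF-functors, specialises correctly to loose morphisms out of the extra object $*$; this involves unpacking how $F(H_\bullet)$ acts on hom-\sF-objects of $\bbOplax(\bbB,T\bbAlg_l)$, and will also confirm that the algebra at $*$ coincides with the $\ell$ of \autoref{prop:alg-structure-on-L}.
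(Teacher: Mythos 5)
Your setup follows the paper's appendix proof quite closely --- collages, \autoref{lem:good}, \autoref{thm:unit-coalg}, \autoref{lem:restriction}, and the pseudofunctor of \autoref{thm:psfr} --- but the step that is supposed to deliver the universal property does not go through as stated. First, the isomorphism $F(H^\bullet)F(H_\bullet)\cong\mathrm{id}$ obtained from $H^\bullet\otimes_\bbB H_\bullet\cong 1_\bbD$ is only one triangle of the putative adjunction $F(H^\bullet)\dashv F(H_\bullet)$; by itself it does not ``exhibit'' $F(H_\bullet)$ as right adjoint to precomposition. To transport the adjunction through \autoref{thm:psfr} you would also need the unit, a map of profunctors $H_\bullet\otimes_\bbD H^\bullet\to 1_\bbB$, to be a 2-cell of $\QF\Prof$, i.e.\ to consist of tight \emph{strict} \QF-morphisms. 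That is not automatic: a map between coalgebras for a colax-idempotent comonad is only automatically a colax morphism, and \autoref{thm:rigged-strictmor} upgrades this to strictness only when the domain is rigged; here the component of $H_\bullet\otimes_\bbD H^\bullet$ at $*$ is (up to isomorphism) $\Lan_H\ls{\Phi}$, whose tight part is empty while its loose part is not, so it is not rigged and strictness of the unit must be verified by hand --- a computation of essentially the same kind as the profunctor-square verifications that make up the paper's proof.

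Second, even granting the adjunction, it relates hom-objects of the functor \sF-categories $\bbOplax(\bbB,T\bbAlg_l)$ and $\bbOplax(\bbD,T\bbAlg_l)$, i.e.\ oplax transformations between whole diagrams, whereas the theorem asserts a bijection between loose morphisms $A\lto L$ in $T\bbAlg_l$ and loose cones $\Phi_\loose\to(T\bbAlg_l)_\loose(A,G_\loose)$. Passing from the former to the latter is exactly what you defer to ``unpacking'': one must encode a loose cone as a diagram on the collage of $\ls{\Phi}$, a factorization as a diagram on a further collage, establish naturality of the specified factorizations, show that the specified factorization of the limit cone through itself is the identity (the commutativity of~\eqref{eq:profsq3} in $\QF\Prof$, which again requires explicit identification of \QF-coalgebra structures), and treat 2-cells separately. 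That is where essentially all of the work of the theorem lies, and none of it is supplied. A smaller point: building the collage on $\ls{\Phi}$ rather than on $\Phi$ means $F(H_\bullet)(G)(*)$ has underlying object the \sF-limit $\{\ls{\Phi},UG\}$ rather than $\{\Phi,UG\}$; these agree in $\sK_\loose$ but are a priori different objects of a general complete \bbK, so the claimed identification with the algebra structure of \autoref{thm:functoriality-1} applied to $\Phi$ also needs an argument.
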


\proof Let $\bL=(L,\ell)$ be the $T$-algebra constructed in 
\autoref{thm:functoriality-1}.
We must exhibit a $\Phi$-weighted cone $\eta:\Phi\to
T\bbAlg_w(\bL,G)$ such that for any loose $\Phi$-weighted cone
$\alpha:\Phi_\loose\to T\bbAlg_w(\bA,G_\loose)$ there is a unique factorization 
$\alpha':\bA\to\bL$ (plus a similar unique factorization of 2-cells).
% , and $\eta:\Phi\to T\bbAlg_w(\bL,G)$ the 
% $\Phi$-weighted cone.
% We must show that for any loose $\Phi$-weighted cone
% $\alpha:\Phi_\loose\to T\bbAlg_w(\bA,G_\loose)$ there is a unique factorization 
% $\alpha':\bA\to\bL$.
We do this by first specifying a factorization $\alpha'$
for each $\alpha$, then showing that this is natural in $\bA$, in the sense that 
for any loose map $f:\bB\to \bA$ in $T\bbAlg_w(\bA,G_\loose)$, the assigned 
factorization of the loose $\Phi$-weighted cone
$$\xymatrix{
\Phi_\loose \ar[r]^-{\alpha} & T\bbAlg_w(\bA,G_\loose) \ar[rr]^{T\bbAlg_w(f,G_\loose)} &&
T\bbAlg_w(\bB,G_\loose) }$$
is the composite $\alpha' f:\bB\to\bA$. 
Finally we show that the specified  factorization of the loose $\Phi$-weighted cone 
$\eta_\loose:\Phi_\loose\to T\bbAlg_w(\bL,G_\loose)$ is the identity. This gives 
the uniqueness of the factorization. 

The strategy is to define various auxiliary \sF-categories, \sF-functors out of which describe
the various structures (weak cones, factorizations, etc.) involved in the previous 
paragraph. These are listed below.
\begin{itemize}
\item Let $\bbE$ be the collage of $\Phi$.  An \sF-functor $\bbE\to\bbA$ 
consists of an \sF-functor $G:\bbD\to\bbA$, an object $A\in\bbA$, and a
$\Phi$-weighted cone $\Phi\to\bbA(A,G)$. We write $*$ for the object of $\bbE$
not in \bbD. 
\item Let $\bbD^{l}$ be the collage of the weight $\ls{\Phi}$,
  defined by $\ls{\Phi}_\loose = \Phi_\loose$ and $\ls{\Phi}_\tight =
  0$.
  % Note that we have a tight inclusion $\ls{\Phi}\to\Phi$ which
  % has a loose inverse.
  An \sF-functor $\bbD^{l}\to\bbA$ consists of an \sF-functor
  $G:\bbD\to\bbA$, and object $A\in\bbA$, and a loose $\Phi$-weighted
  cone; that is, a 2-natural $\Phi_\loose\to\sAl(A,G_\loose)$.  We
  write $x$ for the object of $\bbD^{l}$ not in \bbD.
\item Let $\bbE^l$ be the \sF-category obtained from $\bbE$ by adjoining an 
object $x$ and a loose morphism $x\lto *$. An \sF-functor $\bbE^l\to\bbA$ 
consists of an \sF-functor $G:\bbD\to\bbA$, objects $A,B\in\bbA$, a 
$\Phi$-weighted cone $\eta:\Phi\to\bbA(A,G)$, and a loose morphism $f:B\lto A$.
(Of course there is then an induced loose $\Phi$-weighted cone 
$\Phi_\loose\to\sAl(B,G_\loose)$, and which factorizes through $\eta$ by $f$.)
\item Let $\bbD^{ll}$ be the \sF-category obtained from $\bbD^l$ by adjoining
an object $y$ and a loose morphism $y\lto x$. An \sF-functor $\bbD^l\to\bbA$
consists of an \sF-functor $G:\bbD\to\bbA$, objects $A,B\in\bbA$, a 
loose $\Phi$-weighted cone $\Phi_\loose\to\sAl(A,G_\loose)$, and a loose morphism
$B\lto A$. (Once again this induces a second  loose $\Phi$-weighted cone which
factorizes through the first by the morphism $B\lto A$.)
\item Finally let $\bbE^{ll}$ be the \sF-category obtained from $\bbE^l$ by adjoining
an object $y$ and a loose morphism $y\lto x$. An \sF-functor $\bbE^{ll}\to\bbA$
consists of an \sF-functor $G:\bbD\to\bbA$, objects $A,B,C\in\bbA$, 
a $\Phi$-weighted cone $\eta:\Phi\to\bbA(A,G)$ and loose morphisms 
$C\lto B\lto A$.
\end{itemize}
There is a diagram of fully faithful \sF-functors
\[\xymatrix{
  \bbD \ar[r]^{M} \ar[d]_{H} &
  \bbD^{l} \ar@<1mm>[r]^{M^x} \ar@<-1mm>[r]_{M^{y}} \ar[d]_{H^{l}} & 
  \bbD^{ll} \ar[d]^{H^{ll}} \\
  \bbE \ar[r]_{N} &
  \bbE^{l} \ar@<1mm>[r]^{N^x} \ar@<-1mm>[r]_{N^{y}} &
  \bbE^{ll} }
\]
all of which are literal inclusions except for $M^y$ and $N^y$; these each send 
$x$ to $y$ and fix all other objects.
% where $M^l$ and $M^{ll}$ send $x$ to $x$ and to $y$, respectively, and
% similarly for $N^l$ and $N^{ll}$.  
There is also an \sF-functor
$K\colon \bbD^l\to\bbE$ sending $x$ to $*$ and satisfying $K M=H$; and
there is an \sF-functor $P:\bbE^l\to\bbE$ satisfying $P N=1$ and
$P(x)=*$.

We now want to apply \autoref{lem:good} to conclude that the
profunctors $H_\bullet$, $(H^l)_\bullet$, and $(H^{ll})_\bullet$ all
lie in $\QF\Prof$.  In all three cases, the only object we have to
worry about is $*$, and there are no nonidentity tight morphisms with
this target, so the second condition of \autoref{lem:good} is vacuous.

For $H_\bullet$, the weight mentioned in the first condition is just
$\Phi$, which is assumed to be a \QF-coalgebra.  In the other two
cases, the weight in question is the left Kan extension of $\Phi$ to
$\bbD^l$ or $\bbD^{ll}$, respectively.  Thus, by
\autoref{thm:lan-coalg} it suffices to show that the functors $M$ and
$M^x M = M^y M$ satisfy the hypotheses of \autoref{lem:good}.  In both
cases, the second condition is again vacuous, while the weight we have
to check for the first condition is $\ls{\Phi}$, and this is easily
seen to inherit a \QF-structure from $\Phi$.

Thus, $H_\bullet$, $(H^l)_\bullet$, and $(H^{ll})_\bullet$ all lie in
$\QF\Prof$.  In particular, for any diagram $G\colon \bbD\to
T\bbAlg_l$, we have an induced diagram $\llim{H_\bullet,G}\colon
\bbE\to T\bbAlg_l$.  By \autoref{thm:unit-coalg} and
\autoref{lem:restriction}, the restriction of $\llim{H_\bullet,G}$ to
\bbD is $G$; hence it is a $\Phi$-weighted cone over $G$.  We aim to
show that it is a limit cone.

Given any $\ls{\Phi}$-weighted cone over $G$, seen as an
\sF-functor $F:\bbD^l\to T\bbAlg_w$, we have a canonical diagram
\[\llim{(H^l)_\bullet, F} \colon \bbE^l\to T\bbAlg_l.\]
As before, by \autoref{thm:unit-coalg} and \autoref{lem:restriction},
the restriction of $\llim{(H^l)_\bullet, F}$ along $H^l$ gives us $F$
back again; hence 
% \[ \llim{(H^l)_\bullet,F} (H^l) \cong
% \llim{ (H^l)^\bullet, \llim{(H^l)_\bullet,F} } \cong
% \overline{\{ (H^l)^\bullet \otimes_{\bbE^l} (H^l)_\bullet, F \} } \cong
% F.
% \]
% so that
$\llim{(H^l)_\bullet, F}$ is actually a loose factorization of
$F$ through some $\Phi$-weighted cone.
Now we claim that the following diagram of profunctors commutes (up
to isomorphism) in $\QF\Prof$.
\begin{equation}
  \vcenter{\xymatrix{ \bbD^l \ar[r]|@{|}^{M^\bullet}
    \ar[d]|@{|}_{(H^l)_\bullet} &
    \bbD \ar[d]|@{|}^{H_\bullet} \\
    \bbE^l \ar[r]|@{|}_{N^\bullet} & \bbE }}\label{eq:profsq1}
\end{equation}
If this is so, then since $M^\bullet$ and $N^\bullet$ are given
by restriction, we will be able to conclude that the above
$\Phi$-weighted cone through which $F$ factors is actually the
putative limit cone $\llim{H_\bullet,G}$.

We leave to the reader the proof that~\eqref{eq:profsq1} commutes in
$\sF\Prof$ (and we will do likewise for all future such assertions).
% For this it suffices to have
% \begin{equation}
%   \{ H_\bullet, \{ M^\bullet, -\} \} \cong \{ N^\bullet, \{(H^l)_\bullet, -\} \}\label{eq:profsq1a}
% \end{equation}
% as functors acting on any complete \sF-category, since we can then
% apply this isomorphism to the representables in $[\bbD^l,\bbF]\op$ to
% conclude $H_\bullet \otimes M^\bullet \cong N^\bullet \otimes
% (H^l)_\bullet$.  But~\eqref{eq:profsq1a} follows from the fact that
% $\{ M^\bullet, -\}$ and $\{ N^\bullet, -\}$ are restriction along $M$
% and $N$, while $\{H_\bullet, -\}$ and $\{ (H^l)_\bullet, -\}$ are
% pointwise right Kan extensions.
For commutativity in $\QF\Prof$, it remains to check that the
\QF-coalgebra structures coincide.
But by \autoref{thm:prof-restriction}, for any $E\in\bbE$, the
$\QF^{\bbD^l}$-coalgebra structure of $(N^\bullet\otimes
(H^l)_\bullet)(E)$ is that induced by \autoref{lem:good} applied to
$H^l$, restricted to $\bbE$.  When $E\in\bbD$, this is the unique
structure of a representable, while for $E=*$ we took it to be the
left Kan extension of $\Phi$ (according to \autoref{thm:lan-coalg}).
But this left Kan extension is exactly what $H_\bullet \otimes
M^\bullet$ computes.  Hence the \QF-coalgebra structures agree,
and~\eqref{eq:profsq1} commutes in $\QF\Prof$.

We have shown that any $\ls{\Phi}$-weighted cone over $G$
factors through $\llim{H_\bullet,G}$ in a specified way; we next show
the naturality of these specified factorizations.  Let $\alpha\colon
F\lto F'$ be a loose morphism of $\ls{\Phi}$-weighted cones
over $G$.  We can regard this as a diagram $F^{(2)}$ of shape
$\bbD^{ll}$, and then form the $\bbE^{ll}$-diagram
$\llim{(H^{ll})_\bullet,F^{(2)}}$.  As before,
\autoref{thm:unit-coalg} and \autoref{lem:restriction} imply that
restricting this diagram along $H^{ll}$ gives us back $F^{(2)}$, so
that it consists of loose factorizations of $F$ and $F'$ through some
$\Phi$-weighted cone, and moreover these factorizations
commute with $\alpha$.  Now by an argument just like that given
above for~\eqref{eq:profsq1}, we can conclude that the diagrams of
profunctors
\[\vcenter{\xymatrix@C=3pc{
  \bbD^{ll} \ar[r]|@{|}^{(M^x)^\bullet} \ar[d]|@{|}_{(H^{ll})_\bullet} & 
  \bbD^l \ar[d]|@{|}^{(H^l)_\bullet} \\
  \bbE^{ll} \ar[r]|@{|}_{(N^x)^\bullet} &
  \bbE^l }}
\qquad\text{and}\qquad
\vcenter{\xymatrix@C=3pc{
  \bbD^{ll} \ar[r]|@{|}^{(M^{y})^\bullet} \ar[d]|@{|}_{(H^{ll})_\bullet} & 
  \bbD^l \ar[d]|@{|}^{(H^l)_\bullet} \\
  \bbE^{ll} \ar[r]|@{|}_{(N^{y})^\bullet} &
  \bbE^l }}
\]
commute in $\QF\Prof$.  This implies that the $\Phi$-weighted cone
appearing in $\llim{(H^{ll})_\bullet,F^{(2)}}$ must be
$\llim{H_\bullet,G}$, and the factorizations of $F$ and $F'$ through
it must be those produced by $\llim{(H^l)_\bullet,-}$.

Thus, $\llim{(H^l)_\bullet,-}$ gives us a natural
transformation from the identity functor of the 1-category of
$\ls{\Phi}$-weighted cones over $G$ to the functor constant at
$\llim{H_\bullet,G}$.  As is well-known, to conclude from this that
$\llim{H_\bullet,G}$ is a terminal object of this category (and hence
that factorizations through it are unique), it suffices to check that
the component of this transformation at $\llim{H_\bullet,G}$ itself is
the identity.  This will follow if we can show that the following
diagram of profunctors commutes in $\QF\Prof$.
\begin{equation}
  \vcenter{\xymatrix{ \bbD \ar[r]|@{|}^{H_\bullet} \ar[dr]|@{|}_{H_\bullet} &
    \bbE \ar[r]|@{|}^{K^\bullet} &
    \bbD^l \ar[d]|@{|}^{(H^l)_\bullet} \\
    & \bbE \ar[r]|@{|}_{P^\bullet} & \bbE^l }}\label{eq:profsq3}
\end{equation}
To show this, first note that for all $D\in\bbD$,
$(H^l)_\bullet(D) \cong \bbD^l(D,-)$, while $(H^l)_\bullet(x) \cong
\bbD^l(x,-)$.  Since left Kan extension preserves representables, we
also have
\begin{align*}
  ((H^l)_\bullet \otimes K^\bullet)(D) &\cong \bbE(D,-) \hspace{4cm}\text{and}\\
  ((H^l)_\bullet \otimes K^\bullet)(x) &\cong \bbE(K(x),-) = \bbE(*,-).
\end{align*}
But by definition we also have $P^\bullet(D) = \bbE(D,-)$ and $P^\bullet(x)
= \bbE(P(x),-) = \bbE(*,-)$, so that
\[((H^l)_\bullet \otimes K^\bullet)(E) \cong P^\bullet(E)\] as
\QF-coalgebras for all $E \neq *$, and this remains so after composing
with $H_\bullet$.

It remains to deal with $E=*$.  By definition, $(H^l)_\bullet(*)$ is
the left Kan extension of $\Phi$ to $\bbD^l$, with \QF-coalgebra
structure as in \autoref{thm:lan-coalg}; which is to say
$(H^l)_\bullet(*) \cong \Phi\otimes M^\bullet $.  Thus, we also have
\[((H^l)_\bullet \otimes K^\bullet)(*) \cong \Phi\otimes M^\bullet
\otimes K^\bullet \cong \Phi \otimes H^\bullet.
\]
and hence
\[((H^l)_\bullet \otimes K^\bullet \otimes H_\bullet )(*) \cong
\Phi \otimes H^\bullet \otimes H_\bullet \cong \Phi
\]
as \QF-coalgebras (using \autoref{thm:unit-coalg}).  But $P^\bullet(*)
= \bbE(*,-)$, and so also
\[ (P^\bullet \otimes H_\bullet)(*) \cong \bbE(*,H(-)) = \Phi \] as
\QF-coalgebras, since the structure of $H_\bullet$ as a morphism in
$\QF\Prof$ is induced by \autoref{lem:good} from the \QF-coalgebra
structure of $\Phi$.  This shows that~\eqref{eq:profsq3} commutes in
$\QF\Prof$, and hence factorizations of $\ls{\Phi}$-weighted cones
through $\llim{H_\bullet,G}$ are unique.

To complete the loose part of the universal property of
$\llim{H_\bullet,G}$, we need to deal with 2-cells.  Let
$\bbE^l_2$ be the \sF-category obtained from \bbE by adjoining an
object $x$ and two loose morphisms $\xymatrix@C=1.5pc{ x \ar@<1mm>@{~>}[r]
  \ar@<-1mm>@{~>}[r] & \ast}$ with a 2-cell between them.
% $\bbP$ be the
% free-living 2-cell between a parallel pair of loose morphisms, and let
% $\bbE^l_2$ be the collage of the functor $\bbP\op \to [\bbD,\bbF]$
% which picks out the 2-cell
% \[\xymatrix@C=3pc{
%   \Phi \ar@<2.5mm>@{~>}[r]_(.35){}="a"
%   \ar@<-2.5mm>@{~>}[r]_(.35){}="b"
%   \ar@{=>}"a";"b"
%   & \ls{\Phi} \times \bbtwo}.
% \]
Thus a diagram $\bbE^l_2 \to \bbK$ consists of $G\colon \bbD\to\bbK$,
a cone $\Phi \to \bbK(L,G)$, and a parallel pair of loose morphisms
$\xymatrix@C=1.5pc{ A \ar@<1mm>@{~>}[r] \ar@<-1mm>@{~>}[r] & L}$
with a 2-cell between them (inducing a
transformation between two loose cones $\Phi_\loose \to \sK(A,G)$).

Let $\bbD^l_2$ be the full subcategory of $\bbE^l_2$ on all the
objects except $*$; thus 
%the collage of $\ls{\Phi} \times \bbtwo$, so that
a diagram of shape $\bbD^l_2$ is a 2-cell between two
$\ls{\Phi}$-weighted cones over the same diagram.  We have
another diagram of fully faithful functors:
\[ \xymatrix{ \bbD \ar[r]^{M_2} \ar[d]_{H} & \bbD^l_2 \ar[d]^{H^l_2}\\
  \bbE \ar[r]_{N_2} & \bbE^l_2.}
\]
We claim that $H^l_2$ satisfies the hypotheses of \autoref{lem:good}.
The second condition is again vacuous, and the relevant weight for the
first condition is the left Kan extension of $\Phi$ to $\bbD^l_2$
along $M_2$; thus it suffices for $M_2$ to satisfy the hypotheses of
\autoref{lem:good}.  In this case the weight we have to check is
$\ls{\Phi} \times \bbtwo$, which becomes a \QF-coalgebra by lifting
the colimit $(-\times \bbtwo)$ to the category of \QF-coalgebras (as
we can do for any comonad).  Thus, by \autoref{lem:good},
$(H^l_2)_\bullet$ lies in $\QF\Prof$.

Now applying \autoref{thm:unit-coalg} and
\autoref{lem:restriction} again, $\{(H^l_2)_\bullet,-\}$ shows that any
2-cell between $\ls{\Phi}$-weighted cones factors through some
specified $\Phi$-weighted cone.  This $\Phi$-weighted cone will be
$\llim{H_\bullet,G}$ if we can show that the following diagram of
profunctors commutes in $\QF\Prof$.
\[ \xymatrix@C=3pc{ \bbD^l_2 \ar[d]|@{|}_{(H^l_2)_\bullet} \ar[r]|@{|}^{(M_2)^\bullet} &
  \bbD \ar[d]|@{|}^{H_\bullet} \\
  \bbE^l_2 \ar[r]|@{|}_{(N_2)^\bullet} &
  \bbE }
\]
% The same arguments as before show that it commutes in $\sF\Prof$.
As before, by restriction and the definition of the \QF-coalgebra
structure of $(H^l_2)_\bullet$, we have that $((N_2)^\bullet \otimes
(H^l_2)_\bullet)(*)$ is the left Kan extension of $\Phi$ along $M_2$
with structure as in \autoref{thm:lan-coalg}; but this is
exactly what $(H_\bullet \otimes (M_2)^\bullet)(*)$ computes.

Thus, any 2-cell between $\ls{\Phi}$-weighted cones factors through
$\llim{H_\bullet,G}$.  The uniqueness of this factorization is
automatic since $T\bbAlg_l \to \bbK$ is faithful on 2-cells.
\endproof

As we saw in \S\ref{sec:lifting}, the hypotheses of
\autoref{thm:coalg-looselift-app} are not strong enough to conclude that
the $T$-algebra structure induced on $\{\Phi,U G\}$ is actually the
$\Phi$-weighted \sF-limit of $G$; for that
we need $\Phi$ to be $w$-rigged.

% Recall from \S\ref{sec:limits} that in addition to being a limit in
% the 2-category of loose morphisms, an \sF-limit must ``detect
% tightness.''  To ensure this, we must impose some additional
% condition on $\Phi_\tight$ and $\Phi_\loose$.

\begin{thm}\label{thm:rigged-lift-app}
  If $\Phi$ is a $w$-rigged \sF-weight, then for  any \sF-monad $T$ on an
  \sF-category \bbK, the forgetful functor
  $U_w\maps T\bbAlg_w \to \bbK$ creates $\Phi$-weighted limits.
\end{thm}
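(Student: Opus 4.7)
The plan is to mirror the structure of the proof of Theorem~\ref{thm:rigged-lift}, the only substantive change being that the loose part of the universal property is now supplied by Theorem~\ref{thm:coalg-looselift-app} instead of Theorem~\ref{thm:coalg-looselift}. As before, I would write the proof in the case $w=l$; the cases $w=c$ and $w=p$ are the obvious variants. By Proposition~\ref{prop:reduction} I may assume \bbK is complete, and then for $G\colon \bbD\to T\bbAlg_l$ I set $L=\{\Phi,UG\}$ with its induced $T$-algebra structure $\ell\colon TL\to L$ from Proposition~\ref{prop:alg-structure-on-L} (equivalently, from the pseudofunctor of Proposition~\ref{thm:psfr} applied to $\Phi$ viewed as a profunctor into the unit \sF-category). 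Since $\Phi$ is in particular a $\QF_c$-coalgebra, Theorem~\ref{thm:coalg-looselift-app} tells me that $(L,\ell)$ has the universal property of $\{\Phi_\loose,G_\loose\}$ in $(T\bbAlg_l)_\loose$.

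What remains is the tight part of the \sF-limit universal property, namely that the projections $p_{\loose,\phi(a)}\colon (L,\ell)\to GD$, for $D\in\bbD$ and $a\in\Phi_\tight D$, are tight in $T\bbAlg_l$ and jointly detect tightness. Tightness in $T\bbAlg_l$ has two aspects: tightness of the underlying morphism in \bbK, and strictness as a $T$-morphism. The first is automatic from the tight universal property of $L=\{\Phi,UG\}$ in \bbK. The second follows by inspection of the defining diagrams~\eqref{eq:defn-zeta} and~\eqref{eq:defn-l}, exactly as in the proof of Theorem~\ref{thm:rigged-lift}: these diagrams express that precomposition with the chosen $p_{\loose,\phi(a)}$ intertwines $\ell$ with the algebra structures on $GD$.

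For joint detection, the tightness-in-\bbK half again follows from the universal property of the limit in \bbK, so the real content is the detection of strictness. Suppose $(f,\bar f)\colon (A,a)\to(L,\ell)$ is lax with $f$ tight, and that each composite $p_{\loose,\phi(a)}(f,\bar f)$ is strict. This says that the pasting
\[
\Phi_\tight \xrightarrow{\phi}\Phi_\loose J\xrightarrow{p_\loose J}\sKl(L,UG_\loose J-)
\]
followed by the 2-cell $\sKl(\bar f,1)$ between $\sKl(\ell.Tf,1)$ and $\sKl(fa,1)$ is an identity. Taking adjuncts under $\Lan_J\dashv (-)\circ J$, this equivalently says that the same 2-cell precomposed with $\bar\phi\colon\Lan_J\Phi_\tight\to\Phi_\loose$ is an identity. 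The rigging hypothesis is precisely that $\bar\phi$ is pointwise surjective on objects, so by Lemma~\ref{thm:so-char} the 2-cell $\sKl(\bar f,1)$ composed with $p_\loose$ is already an identity, and then the universal property of the limit $L=\{\Phi_\loose,UG_\loose\}$ in \sKl forces $\bar f$ itself to be an identity. Thus $(f,\bar f)$ is strict, completing the proof. The only step requiring real care is this last adjunction-and-surjectivity argument, but it is identical to the corresponding step in Theorem~\ref{thm:rigged-lift} and so presents no new obstacle.
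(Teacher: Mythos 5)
Your proof is correct, but it does not follow the route the paper actually takes for this statement. The paper's proof of \autoref{thm:rigged-lift-app} stays entirely inside the profunctor formalism of the appendix: it adjoins a \emph{tight} morphism $z\to *$ to the collage $\bbE$ of $\Phi$ to form $\bbE^t$, checks via \autoref{lem:good} that $(H^t)_\bullet$ lies in $\QF\Prof$ (this is where the rigging hypothesis enters, through the requirement that the left Kan extension of $\Phi$ to $\bbE$ be rigged, since now there \emph{is} a nonidentity tight morphism into $*$), and then deduces both tightness and tightness-detection of the projections from \autoref{thm:unit-coalg}, \autoref{lem:restriction}, and the commutativity in $\QF\Prof$ of a square of profunctors, the latter being automatic from \autoref{thm:rigged-strictmor} because all the weights involved are rigged. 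What you do instead is graft the loose universal property supplied by \autoref{thm:coalg-looselift-app} onto the direct argument from \autoref{thm:rigged-lift} in the main text: strictness of the projections from the defining diagrams~\eqref{eq:defn-zeta} and~\eqref{eq:defn-l} (legitimate, since \autoref{thm:functoriality-1} records that its algebra structure agrees with the one satisfying~\eqref{eq:defn-l}), and detection of strictness via the adjunction $\Lan_J\dashv(-)\circ J$ and \autoref{thm:so-char}. Both arguments are sound, and at bottom they use the same mechanism --- \autoref{thm:rigged-strictmor} is itself proved from \autoref{thm:so-char} --- but packaged differently: your version is shorter and more concrete, while the paper's version buys the uniformity of treating the tight universal property, like the loose one and the 2-cells, as functoriality of $\llim{-,-}$ with respect to one more profunctor in $\QF\Prof$, which is the whole point of the appendix's alternative development.
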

\begin{proof}
  We must show that the limit projections corresponding to
  $\Phi_\tight$ are tight and detect tightness in $T\bbAlg_l$.  They
  are certainly tight, since $\llim{H_\bullet,G}$ is a $\Phi$-weighted
  cone and not merely a $\ls{\Phi}$-weighted one.  To show
  that they detect tightness, we continue the pattern of argument from
  the proof of \autoref{thm:coalg-looselift-app}.

  Let $\bbE$ be the collage of $\Phi$, as before, with $H\colon
  \bbD\to\bbE$ the inclusion, and let $\bbE^t$ be the \sF-category
  obtained from \bbE by adjoining an object $z$ and a \emph{tight}
  morphism $z\to *$.  Thus an \sF-functor $\bbE^t\to \bbA$ consists of
  a tight factorization of one $\Phi$-weighted cone through another.

  Let $V\colon \bbE\to\bbE^t$ be the inclusion (which in particular
  sends $*$ to $*$), and let $H^t\colon \bbE \to \bbE^t$ be the
  evident functor satisfying $H^t H = V H$ and $H^t(*) = z$.  Then
  $H^t$ satisfies the conditions of \autoref{lem:good}: the one weight
  we have to worry about is the left Kan extension of $\Phi$ to
  $\bbE$, which is rigged by \autoref{thm:lan-coalg}.  Thus,
  $(H^t)_\bullet$ lies in $\QF\Prof$.

  Therefore, using \autoref{thm:unit-coalg} and
  \autoref{lem:restriction} as before, we conclude that
  $\llim{(H^t)_\bullet,-}$ factors any $\Phi$-weighted cone over $G$
  through some other specified $\Phi$-weighted cone.  To show that the
  latter cone is in fact $\llim{H_\bullet,G}$, it suffices to show
  that the following diagram of profunctors commutes in $\QF\Prof$:
  \[\xymatrix{
    \bbE \ar[r]|@{|}^{H^\bullet} \ar[d]|@{|}_{(H^t)_\bullet} & 
    \bbD \ar[d]|@{|}^{H_\bullet} \\
    \bbE^t \ar[r]|@{|}_{V^\bullet} &
    \bbE }
  \]
  But since all the weights in question are now rigged,
  this  is automatic from its commutativity in
  $\sF\Prof$, by \autoref{thm:rigged-strictmor}.
\end{proof}

\bibliographystyle{alpha}
%\bibliography{limlax}

\begin{thebibliography}{BKPS89}

\bibitem[AK88]{ak:closure}
M.~H. Albert and G.~M. Kelly.
\newblock The closure of a class of colimits.
\newblock {\em J. Pure Appl. Algebra}, 51(1-2):1--17, 1988.

\bibitem[BKP89]{bkp:2dmonads}
R.~Blackwell, G.~M. Kelly, and A.~J. Power.
\newblock Two-dimensional monad theory.
\newblock {\em J. Pure Appl. Algebra}, 59(1):1--41, 1989.

\bibitem[BKPS89]{bkps:flexible}
G.~J. Bird, G.~M. Kelly, A.~J. Power, and R.~H. Street.
\newblock Flexible limits for {$2$}-categories.
\newblock {\em J. Pure Appl. Algebra}, 61(1):1--27, 1989.

\bibitem[BM99]{bm:dbl-thin-conn}
Ronald Brown and Ghafar~H. Mosa.
\newblock Double categories, {$2$}-categories, thin structures and connections.
\newblock {\em Theory Appl. Categ.}, 5:No. 7, 163--175 (electronic), 1999.

\bibitem[Day72]{Day-reflection}
Brian Day.
\newblock A reflection theorem for closed categories.
\newblock {\em J. Pure Appl. Algebra}, 2(1):1--11, 1972.

\bibitem[Gar09]{garner:soa}
Richard Garner.
\newblock Understanding the small object argument.
\newblock {\em Appl. Categ. Structures}, 17(3):247--285, 2009.
\newblock arXiv:0712.0724.

\bibitem[GP99]{GrandisPare:LimitsDblCats}
Marco Grandis and Robert Pare.
\newblock Limits in double categories.
\newblock {\em Cahiers Topologie G\'eom. Diff\'erentielle Cat\'eg.},
  40(3):162--220, 1999.

\bibitem[Gra74]{gray:formal-ct}
John~W. Gray.
\newblock {\em Formal category theory: adjointness for {$2$}-categories}.
\newblock Springer-Verlag, Berlin, 1974.
\newblock Lecture Notes in Mathematics, Vol. 391.

\bibitem[GT06]{gt:nwfs}
Marco Grandis and Walter Tholen.
\newblock Natural weak factorization systems.
\newblock {\em Arch. Math. (Brno)}, 42(4):397--408, 2006.

\bibitem[Kel82]{kelly:enriched}
G.~M. Kelly.
\newblock {\em Basic concepts of enriched category theory}, volume~64 of {\em
  London Mathematical Society Lecture Note Series}.
\newblock Cambridge University Press, 1982.
\newblock Also available in Reprints in Theory and Applications of Categories,
  No. 10 (2005) pp. 1-136, at
  \url{http://www.tac.mta.ca/tac/reprints/articles/10/tr10abs.html}.

\bibitem[Kel89]{kelly:2limits}
G.~M. Kelly.
\newblock Elementary observations on 2-categorical limits.
\newblock {\em Bull. Austral. Math. Soc}, 39(2):301--317, 1989.

\bibitem[KL97]{kl:property-like}
G.~M. Kelly and Stephen Lack.
\newblock On property-like structures.
\newblock {\em Theory Appl. Categ.}, 3(9):213--250, 1997.

\bibitem[Koc95]{kock:kzmonads}
Anders Kock.
\newblock Monads for which structures are adjoint to units.
\newblock {\em J. Pure Appl. Algebra}, 104(1):41--59, 1995.

\bibitem[Koc73]{kock:kzmonads1}
Anders Kock.
\newblock Monads for which structures are adjoint to units.
\newblock Technical Report~35, Aarhus Preprint Series, 1972/73.

\bibitem[Lac00]{lack:psmonads}
Stephen Lack.
\newblock A coherent approach to pseudomonads.
\newblock {\em Adv. Math.}, 152(2):179--202, 2000.

\bibitem[Lac02]{lack:codescent-coh}
Stephen Lack.
\newblock Codescent objects and coherence.
\newblock {\em J. Pure Appl. Algebra}, 175(1-3):223--241, 2002.
\newblock Special volume celebrating the 70th birthday of Professor Max Kelly.

\bibitem[Lac05]{lack:lim-lax}
Stephen Lack.
\newblock Limits for lax morphisms.
\newblock {\em Appl. Categ. Structures}, 13(3):189--203, 2005.

\bibitem[Lac07]{lack:htpy-2monads}
Stephen Lack.
\newblock Homotopy-theoretic aspects of 2-monads.
\newblock {\em J. Homotopy Relat. Struct.}, 2(2):229--260, 2007.

\bibitem[PR91]{pr:pie-limits}
John Power and Edmund Robinson.
\newblock A characterization of {PIE}-limits.
\newblock {\em Math. Proc. Cambridge Philos. Soc.}, 110(1):33--47, 1991.

\bibitem[Shu08]{shulman:frbi}
Michael Shulman.
\newblock Framed bicategories and monoidal fibrations.
\newblock {\em Theory and Applications of Categories}, 20(18):650--738
  (electronic), 2008.

\bibitem[Str72]{street:ftm}
Ross Street.
\newblock The formal theory of monads.
\newblock {\em J. Pure Appl. Algebra}, 2(2):149--168, 1972.

\bibitem[Str81]{street:cauchy-enr}
Ross Street.
\newblock Cauchy characterization of enriched categories.
\newblock {\em Rend. Sem. Mat. Fis. Milano}, 51:217--233 (1983), 1981.
\newblock Reprinted as Repr. Theory Appl. Categ. 4:1--16, 2004.

\bibitem[SW73]{sw:cmprh-fact-funct}
Ross Street and R.~F.~C. Walters.
\newblock The comprehensive factorization of a functor.
\newblock {\em Bull. Amer. Math. Soc.}, 79:936--941, 1973.

\bibitem[Ver92]{Verity-thesis}
Dominic Verity.
\newblock {\em Enriched categories, internal categories, and change of base}.
\newblock PhD thesis, University of Cambridge, April 1992.
\newblock Macquarie Mathematics Report No. 93-123.

\bibitem[Woo82]{wood:proarrows-i}
R.~J. Wood.
\newblock Abstract proarrows. {I}.
\newblock {\em Cahiers Topologie G\'eom. Diff\'erentielle}, 23(3):279--290,
  1982.

\bibitem[Z{\"o}b76]{zoberlein:kzmonads}
Volker Z{\"o}berlein.
\newblock Doctrines on {$2$}-categories.
\newblock {\em Math. Z.}, 148(3):267--279, 1976.

\end{thebibliography}

\end{document}